\newtheorem{theorem}{Theorem}[section]
\newtheorem{lemma}{Lemma}[section]
\newtheorem{proposition}{Proposition}[section]
\newtheorem{remark}{Remark}[section]
\newtheorem{definition}{Definition}[section]
\numberwithin{equation}{section}
\newenvironment{proof}{\medskip\par\noindent{\bf Proof.}\ }{\qquad
\raisebox{-0.5mm}{\rule{1.5mm}{4mm}}\vspace{6pt}}
\newcommand{\bbr}{\mathbb{R}}
\newcommand{\h}{H^1_0(\Omega)}
\newcommand{\bbn}{\mathbb{N}}
\newcommand{\ve}{\varepsilon}
\numberwithin{equation}{section}
\begin{document}


\title
{\Large\bf Spikes of the two-component elliptic system in $\bbr^4$ with
Sobolev critical exponent\thanks{Supported by NSFC (11701554, 11771319) and the
Fundamental Research Funds for the Central Universities (2017XKQY091). \;\; E-mails: wuyz850306@cumt.edu.cn (Y. Wu); wzou@math.tsinghua.edu.cn (W. Zou)}}

\author{\bf Yuanze Wu$^{a}$    \&    Wenming Zou$^b$    \\
\footnotesize$^a${\it School of Mathematics, China
University of Mining and Technology,}\\
\footnotesize{\it Xuzhou 221116, P. R. China }\\
\footnotesize$^b${\it  Department of Mathematical Sciences, Tsinghua University,}\\
\footnotesize{\it Beijing 100084, P. R. China }}
\date{}
\maketitle

\noindent{\bf Abstract:} Consider the following elliptic system:
\begin{equation*}
\left\{\aligned&-\ve^2\Delta u_1+\lambda_1u_1=\mu_1u_1^3+\alpha_1u_1^{p-1}+\beta u_2^2u_1\quad&\text{in }\Omega,\\
&-\ve^2\Delta u_2+\lambda_2u_2=\mu_2u_2^3+\alpha_2u_2^{p-1}+\beta u_1^2u_2\quad&\text{in }\Omega,\\
&u_1,u_2>0\quad\text{in }\Omega,\quad u_1=u_2=0\quad\text{on }\partial\Omega,\endaligned\right.
\end{equation*}
where $\Omega\subset\bbr^4$ is a bounded domain, $\lambda_i,\mu_i,\alpha_i>0(i=1,2)$ and $\beta\not=0$ are constants, $\ve>0$ is a small parameter and $2<p<2^*=4$.  By using the variational method, we study the existence of the ground state solution to this system for $\ve>0$ small enough.  The concentration behavior of the ground state solution as $\ve\to0^+$ is also studied.  Furthermore, by combining the elliptic estimates and local energy estimates, we also obtain the location of the spikes as $\ve\to0^+$.  To the best of our knowledge, this is the first attempt  devoted to the spikes in the Bose-Einstein condensate   in  $\bbr^4$.

\vspace{6mm} \noindent{\bf Keywords:} Elliptic system; Sobolev critical exponent; Spike; Semi-classical solution; Variational method.

\vspace{6mm}
\noindent{\it Mathematical Subject Classification 2010}:  35B09; 35B33; 35J50.

\maketitle

\section{Introduction}
In this paper, we study the following elliptic system:
\begin{equation*}
\left\{\aligned&-\ve^2\Delta u_1+\lambda_1u_1=\mu_1u_1^3+\alpha_1u_1^{p-1}+\beta u_2^2u_1\quad&\text{in }\Omega,\\
&-\ve^2\Delta u_2+\lambda_2u_2=\mu_2u_2^3+\alpha_2u_2^{p-1}+\beta u_1^2u_2\quad&\text{in }\Omega,\\
&u_1,u_2>0\quad\text{in }\Omega,\quad u_1=u_2=0\quad\text{on }\partial\Omega,\endaligned\right.\eqno{(\mathcal{S}_{\ve})}
\end{equation*}
where $\Omega\subset\bbr^4$ is a bounded domain, $\lambda_i,\mu_i,\alpha_i>0(i=1,2)$ and $\beta\not=0$ are constants, $\ve>0$ is a small parameter and $2<p<2^*=4$.

It is well known that the solution of $(\mathcal{S}_{1})$ (i.e., $\ve=1$) in low dimension space $\bbr^N$ ($1\leq N\leq3$) for $\alpha_1=\alpha_2=0$ is related to the solitary wave
solutions of the following two coupled nonlinear Schr\"odinger equations which is also known in the literature as Gross-Pitaevskii equations (e.g. \cite{HMEWC98,TV09}):
\begin{equation*}
\left\{\aligned&-\iota\frac{\partial}{\partial t}\Psi_1=\Delta \Psi_1+\mu_1|\Psi_1|^2\Psi_1+\beta|\Psi_2|^{2}\Psi_1,\\
&-\iota\frac{\partial}{\partial t}\Psi_2=\Delta \Psi_2+\mu_2|\Psi_2|^2\Psi_2+\beta|\Psi_1|^{2}\Psi_2,\\
&\Psi_i=\Psi_i(t,x)\in H^1(\bbr^N; \mathbb{C}),\ \ i=1,2,\ \ N=1,2,3.\endaligned\right.%
\end{equation*}
Here, $\iota$ is the imaginary unit.  Such a system appears in many different physical problems.  For example, in the Hartree-Fock theory, the Gross-Pitaevskii equations can be used to describe a
binary mixture of Bose-Einstein condensates in two different hyperfine states $|1\rangle$ and $|2\rangle$ (cf. \cite{EGBB97}).  The solutions $\Psi_j(j=1,2)$ are the corresponding condensate amplitudes and $\mu_i$ are the intraspecies and interspecies scattering lengths.  $\beta$ is the interaction of the states $|1\rangle$ and $|2\rangle$ and the interaction is attractive if $\beta>0$ and repulsive if $\beta<0$.  The Gross-Pitaevskii equations also arises in nonlinear optics (cf. \cite{AA99}).  To obtain the solitary wave
solutions, we set $\Psi_i(t,x)=e^{-\iota t\lambda_i}u_i(x)$ for both $i=1,2$.  Then $u_i$ satisfy $(\mathcal{S}_{1})$ for $\alpha_1=\alpha_2=0$.
Due to the important applications in physics, the System~$(\mathcal{S}_{1})$ in low dimensions ($1\leq N\leq3$) for $\alpha_1=\alpha_2=0$ has been studied extensively in the last decades.  We refer the readers to \cite{BDW10,CTV05,CLZ141,DW09,LW051,LW06,LW08,LW10,NTTV10,R14,S07,TV09,WW08,WWZ17} and the references therein, where various existence theorems of the solitary wave solutions were established.

\vskip0.12in

Recently, the System~$(\mathcal{S}_{1})$ (i.e., $\ve=1$) for $\alpha_1=\alpha_2=0$ in higher dimension  space $\bbr^N$ $(N\geq4)$ has begun to attract attention (cf. \cite{CZ121,TT12}).  Note that the cubic nonlinearities and the coupled terms   are all of  critical growth when   $N=4$ and even super critical for $N\geq5$ with respect to the Sobolev critical exponent.  Thus, the study on these cases is much more complicated than that in low dimensions in the view point of calculus of variation.  By applying the truncation argument, Tavares and Terracini in \cite{TT12} proved that the $k$--component System~$(\mathcal{S}_{1})$ for $\alpha_1=\alpha_2=0$ has infinitely many sign-changing solution for all $N\geq2$ and $k\geq2$ with $\mu_1,\mu_2\leq0$, but where $\lambda_i>0$ for all $i=1,2,\cdots,k$ are appeared as the Lagrange multipliers, not given in advance.  In \cite{CZ121}, by establishing the threshold for the compactness of the $(PS)$ sequence to the System~$(\mathcal{S}_{1})$ for $\alpha_1=\alpha_2=0$ in dimension four and making some careful and complicated analysis, Chen and Zou proved that the System~$(\mathcal{S}_{1})$ for $\alpha_1=\alpha_2=0$ has a positive ground state solution for $N=4$ and $-\sigma_1<\lambda_i<0$ for all $i=1,2$, where $\sigma_1$ is the first eigenvalue of $-\Delta$ in $L^2(\Omega)$.  There are also some other studies on elliptic systems with the Sobolev critical exponent, we refer the readers to \cite{AFP09,CZ131,CLZ141,CZ15,CL15,LZ16} and the references therein.

\vskip0.16in

When $\ve>0$ is a small parameter, the solutions of the System~$(\mathcal{S}_{\ve})$ are  called as the semi-classical bound state solutions.  Such solutions have been studied extensively for the single equation in the past thirty years, we refer the readers to \cite{B10,BZZ13,NW95,W93} and the references therein.  To the best knowledge, the first result devoted to such solutions of the elliptic systems is contributed by Lin and Wei in \cite{LW05}, where the System~$(\mathcal{S}_{\ve})$ with $\alpha_1=\alpha_2=0$ in the low dimensions ($1\leq N\leq3$) has been studied.  By using the variational method, the authors proved that such system has a ground state solution for $\ve>0$ small enough.  They also studied the concentration behaviors of this ground state solution and described the locations of the spikes as $\ve\to0^+$.  Since then, many works have been devoted to the semi-classical solutions of the elliptic systems in low dimensions ($N=1, 2, 3$) and various similar results have been established, we refer the readers to \cite{B15,IT11,LW06,LW13,LP14,MPS06,WS16} and the references therein.

\vskip0.16in

Motivated by the above facts, we wonder what happens to the System~$(\mathcal{S}_{\ve})$ in dimension four and does similar results hold?
It is   worth pointing out that in dimension four case, the system~$(\mathcal{S}_{\ve})$ is of critical growth, i.e.,
the cubic terms $u_1^3, u_2^3$ and the coupling terms $u_2^2u_1,  u_1^2u_2$ are all of critical growth in the sense of the Sobolev embedding.
It is well know that, for the Sobolev critical equation or system, the existence of nontrivial solution is very fragile!  Recall that, by using the Pohozaev identity as in \cite{CZ15} and \cite{ZZ12}, we can see that
\begin{equation}\label{zwm=1}
\left\{\aligned&-\Delta u+\lambda_iu=\mu_iu^3\quad&\text{in }\bbr^4,\\
&u>0\text{ in }\bbr^4, \endaligned\right.
\end{equation}
and
\begin{equation}\label{zwm=2}
\left\{\aligned&-\Delta u_1+\lambda_1u_1=\mu_1u_1^3+\beta u_2^2u_1\quad&\text{in }\bbr^4,\\
&-\Delta u_2+\lambda_2u_2=\mu_2u_2^3+\beta u_1^2u_2\quad&\text{in }\bbr^4,\\
&u_1,u_2>0\text{ in }\bbr^4, \endaligned\right.
\end{equation}
has no solution for $\lambda_1,\lambda_2>0$,  \eqref{zwm=1}-\eqref{zwm=2} are the limit equation and system of $(\mathcal{S}_{\ve})$ respectively for $\beta<0$ and $\beta>0$ if $\alpha_1=\alpha_2=0$.  In fact, from the view point of calculus of variation, $\lambda_1,\lambda_2>0$ and  $\alpha_1,\alpha_2>0$ seem to be necessary.  We may also observe this fact from the other hand. Indeed, without loss of generality, we assume $0\in\Omega$.  Then $\Omega_{\ve}\to\bbr^4$ as $\ve\to0^+$, where $\Omega_{\ve}=\{x\in\bbr^4\mid \ve x\in\Omega\}$.  Now, if $\lambda_1,\lambda_2<0$, then by scaling, the system~$(\mathcal{S}_{\ve})$ is equivalent to the following
\begin{equation*}
\left\{\aligned&-\Delta v_1+\lambda_1v_1=\mu_1(v_1)^3+\alpha_1(v_1)^{p-1}+\beta (v_2)^2v_1\quad&\text{in }\Omega_{\ve},\\
&-\Delta v_2+\lambda_2v_2=\mu_2(v_2)^3+\alpha_2(v_2)^{p-1}+\beta (v_1)^2v_2\quad&\text{in }\Omega_{\ve},\\
&v_1,v_2>0\quad\text{in }\Omega_{\ve},\quad v_1=v_2=0\quad\text{on }\partial\Omega_{\ve}.\endaligned\right.
\end{equation*}
Since $\lambda_1^*(\Omega_{\ve})\to0^+$ as $\ve\to0^+$, we can easy to see that $\lambda_i<-\lambda_1^*(\Omega_{\ve})$ for $\ve$ small enough, where $\lambda_1^*(\Omega_{\ve})$ is the first eigenvalue of $-\Delta$ in $L^2(\Omega_\ve)$.  By multiplying the above system with $\varphi_{1,\ve}$ and integral by parts, we can see that the above system has no solution for $\ve$ small enough with $\beta>0$, where $\varphi_{1,\ve}$ is the positive eigenfunction to $\lambda_1^*(\Omega_{\ve})$.  It implies that the system~$(\mathcal{S}_{\ve})$ has {\it no} solution for $\ve$ small enough with $\beta>0$ if $\lambda_1,\lambda_2<0$.  In the case $\lambda_1,\lambda_2>0$, but  {if $\alpha_1=\alpha_2=0$}, then as stated in \cite{CZ121}, by the Pohozaev identity, it is easy to see that $(\mathcal{S}_{\ve})$ also has {\it no}  solution when $\Omega$ is star-shaped.
Therefore, the subcritical terms $u_1^{p-1}, u_2^{p-1}$ in  $(\mathcal{S}_{\ve})$ will  play  an important role which will ensure the existence
of nontrivial solutions.

\vskip0.23in

 To the best of our knowledge, the concentration behavior of the ground state solution and the location of the spikes as $\ve\to0^+$
 of the System~$(\mathcal{S}_{\ve})$ with $N=4$ have not been studied yet in the literatures.  Thus, we
shall explore this problem in the current paper. We first give the existence results for the System~$(\mathcal{S}_{\ve})$ in  $\bbr^4$, which is stated as follows.
\begin{theorem}\label{thm0001}
Let $\lambda_i,\mu_i,\alpha_i>0(i=1,2)$ and $\beta\not=0$.  Then there exist three positive constant $\alpha_0$, $\beta_0$ and $\beta_1$ with $\beta_0<\beta_1$ such that $(\mathcal{S}_{\ve})$ has a nontrivial solution $\overrightarrow{\mathbf{u}}_\ve$ when  $\ve>0$ is small enough and one of the  following three cases holds:
\begin{enumerate}
\item[$(1)$]  $\beta>\beta_1$,
\item[$(2)$]  $-\sqrt{\mu_1\mu_2}\leq\beta<\beta_0$,
\item[$(3)$]  $\beta<-\sqrt{\mu_1\mu_2}$ and $|\overrightarrow{\mathbf{\alpha}}|<\alpha_0$,
\end{enumerate}
where $\overrightarrow{\mathbf{\alpha}}=(\alpha_1, \alpha_2)$.
\end{theorem}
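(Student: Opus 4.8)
The plan is to prove existence via the variational method on the rescaled system, using the Nehari manifold and a careful analysis of the limiting energy.

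\medskip

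\noindent\textbf{Step 1: Rescaling and the functional setup.}
First I would pass to the rescaled domain $\Omega_\ve=\{x\in\bbr^4\mid\ve x\in\Omega\}$ and write the problem as
\begin{equation*}
\left\{\aligned&-\Delta v_1+\lambda_1v_1=\mu_1v_1^3+\alpha_1v_1^{p-1}+\beta v_2^2v_1\quad&\text{in }\Omega_{\ve},\\
&-\Delta v_2+\lambda_2v_2=\mu_2v_2^3+\alpha_2v_2^{p-1}+\beta v_1^2v_2\quad&\text{in }\Omega_{\ve},\endaligned\right.
\end{equation*}
with the natural energy functional on $H^1_0(\Omega_\ve)\times H^1_0(\Omega_\ve)$, and consider its restriction to the Nehari-type set $\mathcal{N}_\ve$ of nontrivial pairs $(v_1,v_2)$ with both components nonzero (or the appropriate Nehari manifold capturing the ground state). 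The ground state energy $c_\ve$ is the infimum over $\mathcal{N}_\ve$. Because $p<4=2^*$, the subcritical terms $\alpha_iv_i^{p-1}$ give the geometry (mountain pass / bounded Nehari minimizing sequences) and, crucially, pull the ground state energy strictly below the threshold where the critical terms would destroy compactness. This last point is where the three sign regimes on $\beta$ enter: one needs the limiting problem on $\bbr^4$ (the system \eqref{zwm=2} with the extra $\alpha_i$-terms, or in the repulsive case a decoupled pair of equations \eqref{zwm=1}-type with $\alpha_i$-terms) to actually possess a ground state whose energy serves as the comparison level.

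\medskip

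\noindent\textbf{Step 2: The limiting problem and the role of $\beta$.}
I would establish existence of a positive ground state for the autonomous limit system on $\bbr^4$. For $\beta>\beta_1$ large (strongly attractive) one expects a genuinely coupled positive solution, obtained by minimizing the limit energy on its Nehari manifold and ruling out semi-trivial minimizers by a test-function computation showing the coupled level lies below $\min$ of the two scalar levels; the bound $\beta_1$ is precisely what makes this comparison work. For $-\sqrt{\mu_1\mu_2}\le\beta<\beta_0$ (weak coupling of either sign), a perturbation/implicit-function or direct minimization argument near the decoupled pair of scalar ground states gives a solution; $\beta_0$ quantifies how weak the coupling must be. For $\beta<-\sqrt{\mu_1\mu_2}$ (strongly repulsive) the coupled ground state degenerates toward a semi-trivial configuration, so one instead works with the decoupled limit and needs $|\vec{\alpha}|<\alpha_0$ small so that the scalar problems with subcritical perturbation still have controllable ground state energies; here the smallness of $\alpha_0$ prevents the $\alpha_i$-terms from interacting badly with the critical nonlinearity. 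In all three cases the output is a finite limiting energy level $c_\infty$ and a compactness threshold below which $(PS)_c$ sequences converge.

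\medskip

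\noindent\textbf{Step 3: Energy estimate $c_\ve<c_\infty$ and compactness.}
Using a rescaled ground state of the limit problem, cut off and transplanted into $\Omega_\ve$ (recall $\Omega_\ve\to\bbr^4$), as a test family in the Nehari minimization, I would show $c_\ve\le c_\infty+o(1)$, and more importantly that $c_\ve$ stays strictly below the first ``bad'' energy level at which a nontrivial bubble could split off to infinity or concentrate critically. The delicate estimate is the interaction between the $u^3$/coupling critical terms and the $\alpha u^{p-1}$ subcritical term in dimension four: the $L^p$-contribution of the Aubin--Talenti-type profile must dominate the error terms, which is exactly where $2<p<4$ is used. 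Combined with a Brezis--Lieb / concentration-compactness splitting, this yields a convergent minimizing sequence for $\ve$ small, hence a nontrivial critical point $\vec{v}_\ve$; undoing the rescaling gives $\vec{\mathbf u}_\ve$. Positivity follows from working with $|v_i|$ and the strong maximum principle.

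\medskip

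\noindent\textbf{Main obstacle.}
The hard part is Step 2--3 in the strongly repulsive regime $\beta<-\sqrt{\mu_1\mu_2}$: there is no coupled limit solution, so one must rule out vanishing/dichotomy for the system's minimizing sequences while the two components want to separate, and simultaneously control the critical terms using only the small subcritical perturbation $|\vec\alpha|<\alpha_0$. Getting the precise thresholds $\alpha_0,\beta_0,\beta_1$ and verifying the strict energy inequality $c_\ve<c_\infty$ uniformly for small $\ve$ in each case is the technical heart of the proof.
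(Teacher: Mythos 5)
Your outline captures the broad strategy (Nehari-manifold minimization, strict energy inequalities below compactness thresholds, Brezis--Lieb splitting), but it has two genuine gaps, both concentrated exactly where you locate the "technical heart."

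First, boundedness of the Palais--Smale sequences in the regime $\beta\le-\sqrt{\mu_1\mu_2}$ is not addressed and is the real obstruction there, not the size of the limiting energies. When $\beta<-\sqrt{\mu_1\mu_2}$ the quartic form $\frac{\mu_1}{4}\int|u_1|^4+\frac{\mu_2}{4}\int|u_2|^4+\frac{\beta}{2}\int|u_1|^2|u_2|^2$ is indefinite, and since $2<p<4$ neither $\mathcal{J}-\frac1p\mathcal{J}'$ nor $\mathcal{J}-\frac14\mathcal{J}'$ controls the norm: the first leaves an indefinite quartic remainder, the second leaves a subcritical term with the wrong sign. Your proposal gives no mechanism to bound minimizing sequences here. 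The paper resolves this by truncating the subcritical terms with a cut-off $\chi_\beta(\|\overrightarrow{\mathbf{u}}\|_{\ve,\Omega}^2/(T^2\ve^4))$, proving a priori that minimizers of the truncated functional satisfy $\|\overrightarrow{\mathbf{u}}_n\|_{\ve,\Omega}^2<\ve^4T^2$ (which is where the smallness $|\overrightarrow{\mathbf{\alpha}}|<\alpha_0$ is actually used), and only then removing the truncation. Relatedly, because the fibering maps are no longer homogeneous, showing that the Ekeland sequence on the Nehari set is a genuine $(PS)$ sequence requires the implicit function theorem together with a Miranda-type degree argument on the $2\times2$ system $\overrightarrow{\mathbf{\Psi}}_n(\overrightarrow{\mathbf{t}},l)=\overrightarrow{\mathbf{0}}$; this is not "standard" and your sketch assumes it.

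Second, your comparison level $c_\ve<c_\infty$, with $c_\infty$ the ground-state energy of the coupled limit system, is the wrong threshold whenever $\beta<0$: the limit system on $\bbr^4$ has no coupled ground state there (its infimum $\sum_i d_{i,\bbr^4}$ is not attained), so the inequality $c_\ve<c_\infty$ cannot hold and would not restore compactness even if it did. The correct thresholds, which the paper establishes, are $c_{\ve,\Omega,T}<\min\{d_{1,\ve,\Omega}+\frac{\ve^4}{4\mu_2}\mathcal{S}^2,\ d_{2,\ve,\Omega}+\frac{\ve^4}{4\mu_1}\mathcal{S}^2,\ A_\ve\}-\ve^4C_\beta$, where the first two levels rule out semi-trivial weak limits (one component converging, the other bubbling at the pure critical level $\frac{1}{4\mu_j}\mathcal{S}^2$) and $A_\ve$ rules out the zero weak limit; these are proved by testing with $(\widetilde U_{1,\ve},u_\sigma)$ where $u_\sigma$ is an Aubin--Talenti bubble, using $2<p<4$ so that the subcritical gain $\sigma^{4-p}$ beats the $\sigma^2|\log\sigma|$ errors. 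Without identifying these specific splitting levels, the concentration-compactness step in your Step 3 cannot be closed.
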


\begin{remark}\quad
\begin{enumerate}
\item[$(a)$]  The existence of $\alpha_0$ seems to be technique. Indeed, in the case $\beta<-\sqrt{\mu_1\mu_2}$, the functional
\begin{eqnarray*}
\frac{\mu_1}{4}\int_{\Omega}|u_1|^4dx+\frac{\mu_2}{4}\int_{\Omega}|u_2|^4dx+\frac{\beta}{2}\int_{\Omega}|u_1|^{2}|u_2|^{2}dx
\end{eqnarray*}
is indefinite in $H^1_0(\Omega)\times H^1_0(\Omega)$.  Since $2<p<4$, it is hard for us to obtain the boundedness of the $(PS)$ sequence of the corresponding functional to $(\mathcal{S}_{\ve})$.  Due to this reason, we treat the subcritical terms as a perturbation and use a truncation argument to deal with them.
\item[$(b)$]  We mainly use the Nehari's manifold approach to prove Theorem~\ref{thm0001}.  However, due to the subcritical terms, the fibering map related to $(\mathcal{S}_{\ve})$ is very complex.  In this situation, we will apply the implicit function theorem as in \cite{W17} and the Miranda's Theorem as in \cite{CLZ141} to recover the properties of the Nehari manifold which are  crucial in proving Theorem~\ref{thm0001}.
\end{enumerate}
\end{remark}

Next, we establish a result on the concentration behavior of $\overrightarrow{\mathbf{u}}_\ve$ as $\ve\to0^+$.  Since the concentration behavior depends on $\beta$, we re-denote $\overrightarrow{\mathbf{u}}_\ve$ by $\overrightarrow{\mathbf{u}}_{\ve, \beta}$.
\begin{theorem}\label{thm0002}
Let $p_i^{\ve,\beta}$ be the maximum point of $u_i^{\ve,\beta}$ respectively for $i=1,2$, where $\overrightarrow{\mathbf{u}}_{\ve,\beta}=(u_1^{\ve,\beta}, u_2^{\ve,\beta})$ is the solution of $(\mathcal{S}_{\ve})$.  Then we have
\begin{enumerate}
\item[$(a)$]  If $\beta<0$, then $(u_1^{\ve,\beta}(\ve y+p_1^{\ve,\beta}), u_2^{\ve,\beta}(\ve y+p_2^{\ve,\beta}))\to(v_1^0, v_2^0)$ strongly in $H^1(\bbr^4)\times H^1(\bbr^4)$ as $\ve\to0^+$, where $v_i^0$ is respectively a ground state solution of the following equation
\begin{equation}\label{eqnew0001}
\left\{\aligned&-\Delta u+\lambda_iu=\mu_iu^3+\alpha_iu^{p-1}\quad&\text{in }\bbr^4,\\
&u>0\text{ in }\bbr^4,\quad u\to0\text{ as }|x|\to+\infty.\endaligned\right.
\end{equation}
Moreover, $\frac{|p_1^{\ve,\beta}-p_2^{\ve,\beta}|}{\ve}\to+\infty$ as $\ve\to0^+$.
\item[$(b)$]  If $\beta>0$, then $(u_1^{\ve,\beta}(\ve y+p_1^{\ve,\beta}), u_2^{\ve,\beta}(\ve y+p_2^{\ve,\beta}))\to(v_1^*, v_2^*)$ strongly in $H^1(\bbr^4)\times H^1(\bbr^4)$ as $\ve\to0^+$, where $(v_1^*, v_2^*)$ is the ground state solution of the following elliptic system
\begin{equation}\label{eqnew0002}
\left\{\aligned&-\Delta u_1+\lambda_1u_1=\mu_1u_1^3+\alpha_1u_1^{p-1}+\beta u_2^2u_1\quad&\text{in }\bbr^4,\\
&-\Delta u_2+\lambda_2u_2=\mu_2u_2^3+\alpha_2u_2^{p-1}+\beta u_1^2u_2\quad&\text{in }\bbr^4,\\
&u_1,u_2>0\text{ in }\bbr^4,\quad u_1,u_2\to0\text{ as }|x|\to+\infty.\endaligned\right.
\end{equation}
Moreover, $\frac{|p_1^{\ve,\beta}-p_2^{\ve,\beta}|}{\ve}\to0$ as $\ve\to0^+$.
\end{enumerate}
\end{theorem}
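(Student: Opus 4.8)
The plan is to combine a sharp asymptotic expansion of the ground state energy $c_\ve$ of $(\mathcal{S}_\ve)$ (the energy of the solution produced by Theorem~\ref{thm0001}) with a blow-up analysis around the peaks, and then to read off the mutual distance of the peaks from an energy comparison between the coupled limit system \eqref{eqnew0002} and the two decoupled scalar problems \eqref{eqnew0001}. Throughout, $m_i$ denotes the ground state energy of \eqref{eqnew0001} and, for $\beta>0$, $m_\beta$ the ground state energy of \eqref{eqnew0002}; both are positive because the Pohozaev-type obstruction recalled for \eqref{zwm=1}--\eqref{zwm=2} is removed by the subcritical terms $\alpha_iu_i^{p-1}$. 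First I would prove
\[
c_\ve=\ve^4\big(\mathcal{A}+o(1)\big),\qquad \mathcal{A}=m_1+m_2 \ \text{if}\ \beta<0,\qquad \mathcal{A}=m_\beta \ \text{if}\ \beta>0 .
\]
The upper bound is obtained by testing the Nehari level of $(\mathcal{S}_\ve)$ with suitably cut-off and $\ve$-rescaled ground states --- two scalar ground states placed at two fixed distinct interior points of $\Omega$ when $\beta<0$ (so that the coupling $\beta\into u_1^2u_2^2$ is exponentially small in $1/\ve$), and the coupled ground state of \eqref{eqnew0002} placed at a fixed interior point when $\beta>0$ --- with the Nehari projection coefficients tending to $1$; the matching lower bound comes out of the blow-up analysis below. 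Note that the constant $\beta_1$ of Theorem~\ref{thm0001} must be chosen so that in addition $m_\beta<m_1+m_2$ for $\beta>\beta_1$, as otherwise the separated scalar test configuration would give a strictly smaller energy.

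\emph{Blow-up at the peaks.} Let $p_i^{\ve,\beta}$ be a maximum point of $u_i^{\ve,\beta}$ (an interior point, since $u_i^{\ve,\beta}=0$ on $\partial\Omega$), and set $v_i^\ve(y)=u_i^{\ve,\beta}(\ve y+p_i^{\ve,\beta})$. From the Nehari identity one reads off $\into(\ve^2|\nabla u_i^{\ve,\beta}|^2+(u_i^{\ve,\beta})^2)=O(\ve^4)$, so $\{v_i^\ve\}$ is bounded in $H^1(\bbr^4)$; a maximum-principle argument applied to the $u_i$-equation, together with $c_\ve\geq c\,\ve^4>0$, gives the uniform lower bound $\|u_i^{\ve,\beta}\|_{L^\infty}\geq c_0>0$, i.e. $v_i^\ve(0)\geq c_0$. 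One then shows $\mathrm{dist}(p_i^{\ve,\beta},\partial\Omega)/\ve\to+\infty$ --- otherwise, after rescaling, a nontrivial nonnegative solution on a half-space with zero boundary data would appear, which is impossible --- so the rescaled domains exhaust $\bbr^4$. Passing to the limit in
\[
-\Delta v_i^\ve+\lambda_iv_i^\ve=\mu_i(v_i^\ve)^3+\alpha_i(v_i^\ve)^{p-1}+\beta\big(u_j^{\ve,\beta}(\ve y+p_i^{\ve,\beta})\big)^2v_i^\ve
\]
(up to a subsequence), where the critical nonlinearities are controlled by the compactness analysis for the Sobolev-critical problem as in \cite{CZ121} together with the energy bound above, produces a solution of the corresponding limit problem on $\bbr^4$ and, once this limit is identified as a ground state, strong convergence in $H^1(\bbr^4)$; the $C^2_{loc}$-convergence to the nondegenerate, radially decreasing ground state also gives uniqueness of the maximum point for small $\ve$, so $p_i^{\ve,\beta}$ is well defined.

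\emph{Location of the peaks.} Put $q^\ve=(p_2^{\ve,\beta}-p_1^{\ve,\beta})/\ve$. If $\beta<0$, I claim $|q^\ve|\to+\infty$: if $|q^\ve|$ stayed bounded along a subsequence, say $q^\ve\to q$, then rescaling \emph{both} components about $p_1^{\ve,\beta}$ would give a fully nontrivial solution $(V_1,W_2)$ of \eqref{eqnew0002} with $\beta<0$ ($V_1(0)\geq c_0$ and $W_2(q)\geq c_0$ by the peak condition), and the local energy estimates would force its energy to equal $\lim\ve^{-4}c_\ve\leq m_1+m_2$; on the other hand, projecting $V_1$ and $W_2$ separately onto the scalar Nehari manifolds and using $2<p<4$ together with the parameter restriction ((2) or (3) of Theorem~\ref{thm0001}) yields that the energy of any such coupled solution is $>m_1+m_2$, a contradiction. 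Hence the cross tails $u_j^{\ve,\beta}(\ve\cdot+p_i^{\ve,\beta})\to0$ for $j\neq i$, the limit equations decouple, $v_i^\ve\to v_i^0$ (a ground state of \eqref{eqnew0001}) strongly in $H^1(\bbr^4)$, and $c_\ve=\ve^4(m_1+m_2+o(1))$. If $\beta>0$, I claim $|q^\ve|\to0$: if $|q^\ve|\to+\infty$ along a subsequence, the coupling becomes negligible near each peak, so each $v_i^\ve$ converges to a scalar ground state and $c_\ve=\ve^4(m_1+m_2+o(1))$, contradicting $c_\ve\leq\ve^4(m_\beta+o(1))$ with $m_\beta<m_1+m_2$; while if $|q^\ve|$ stayed bounded away from $0$ and $\infty$, rescaling about $p_1^{\ve,\beta}$ would give a solution $(V_1,W_2)$ of \eqref{eqnew0002} whose peaks sit at the two distinct points $0$ and $\lim q^\ve\neq0$, yet the energy estimates force it to be a ground state of the $\beta>0$ system --- which, by Schwarz symmetrization (all terms being cooperative when $\beta>0$), must have both components radially decreasing about one common point, a contradiction. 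Therefore $|q^\ve|\to0$ and the common rescaled limit is the ground state $(v_1^*,v_2^*)$ of \eqref{eqnew0002}, with strong $H^1(\bbr^4)$ convergence as above.

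The main obstacle is precisely the energy comparison underlying the last step: one must show that, in the admissible ranges of the parameters, a fully coupled solution of the $\beta<0$ limit system on $\bbr^4$ is strictly more expensive than $m_1+m_2$ (equivalently, that the associated coupled level is never attained), and dually that for $\beta>\beta_1$ the coupled level $m_\beta$ lies strictly below $m_1+m_2$ --- this is exactly where the hypotheses $-\sqrt{\mu_1\mu_2}\le\beta<\beta_0$, or $\beta<-\sqrt{\mu_1\mu_2}$ together with $|\overrightarrow{\mathbf{\alpha}}|<\alpha_0$, and $\beta>\beta_1$, enter. It has to be coupled with careful local energy estimates in the spirit of \cite{LW05}, ensuring that no energy escapes into the spike tails or into a Sobolev bubble, the latter being ruled out by the critical compactness threshold of \cite{CZ121}.
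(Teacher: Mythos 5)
Your proposal follows essentially the same route as the paper: the asymptotics $\ve^{-4}c_{\ve}\to B$ (Lemma~\ref{lem0005}), exclusion of half-space limits via the nonexistence result (Proposition~\ref{prop0003}, used in Lemma~\ref{lem0004}), blow-up about the two maximum points, and the energy comparison between the coupled level and $d_{1,\bbr^4}+d_{2,\bbr^4}$ (Proposition~\ref{prop0002}) to decide whether the spikes repel ($\beta<0$) or coalesce ($\beta>0$), with radial symmetry of the limit profile settling $|q^\ve|\to0$ in the attractive case. Two cautions, both of which the paper itself singles out. First, you invoke the \emph{nondegeneracy} of the radial ground state of \eqref{eqnew0001} to get uniqueness of the maximum point; in dimension four this nondegeneracy is not known (see the remark after Theorem~\ref{thm0003}), and the paper deliberately avoids it --- fortunately your use of it is not load-bearing, since the theorem only requires choosing \emph{a} maximum point. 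Second, your blow-up step tacitly uses local uniform bounds (``$C^2_{loc}$-convergence'') before the limit is identified; because the cubic and coupling terms are Sobolev-critical in $\bbr^4$, uniform $L^\infty$ bounds cannot be extracted by Moser iteration at that stage (a bubble could concentrate at the peak). The paper's order of operations is the opposite and is essential: first establish strong $H^1(\bbr^4)$ convergence of the rescaled profiles purely from the energy identities, the Brezis--Lieb decomposition and the strict level inequalities of Lemma~\ref{lem0003} (which rule out bubbling and vanishing), and only then run the Moser iteration to get $L^\infty$ and uniform decay, which is what ultimately contradicts a runaway translation $|y^\ve|\to\infty$ in the $\beta<0$ case. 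If you reorganize your argument accordingly, the proof goes through as you outline it.
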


\vskip0.3in

\begin{remark}\quad
\begin{enumerate}
\item[$(a)$]  As the results in \cite{LW05}, by Theorem~\ref{thm0002}, we can see that the concentration behavior of $\overrightarrow{\mathbf{u}}_{\ve,\beta}$ as $\ve\to0^+$ is quite different for $\beta<0$ and $\beta>0$, which is caused by different limits of the coupled term $\int_{\Omega}(u_1^{\ve,\beta})^2(u_2^{\ve,\beta})^2dx$ as $\ve\to0^+$.  Indeed, based on the observations on the limits of the energy values as $\ve\to0^+$ (see Lemma~\ref{lem0005}), we can deduce that $\int_{\Omega}(u_1^{\ve,\beta})^2(u_2^{\ve,\beta})^2dx\to0$ for $\beta<0$ as $\ve\to0^+$.  This, together with some suitable scaling centered at the maximum points $p_1^{\ve,\beta}, p_2^{\ve,\beta}$ and the uniformly elliptic estimates, implies that the spikes will repel each other and behave like
    two separate spikes for $\beta<0$, that is $\frac{|p_1^{\ve,\beta}-p_2^{\ve,\beta}|}{\ve}\to+\infty$ as $\ve\to0^+$.  In the case $\beta>0$, by the observations on the limits of the energy values as $\ve\to0^+$ (see also Lemma~\ref{lem0005}), we can deduce that $\int_{\Omega}(u_1^{\ve,\beta})^2(u_2^{\ve,\beta})^2dx\geq C+o(1)$.  It also together with some suitable scaling centered at the maximum points $p_1^{\ve,\beta}, p_2^{\ve,\beta}$, implies that the spikes will attract each other and behave like a single spike, that is $\frac{|p_1^{\ve,\beta}-p_2^{\ve,\beta}|}{\ve}$ is bounded as $\ve\to0^+$.  Combining with the radial
    symmetric of the solutions of the limit system for $\beta>0$, we actually obtain that $\frac{|p_1^{\ve,\beta}-p_2^{\ve,\beta}|}{\ve}\to0$ as $\ve\to0^+$ since $p_1^{\ve,\beta}, p_2^{\ve,\beta}$ are respectively the maximum points.  However, compare with the arguments in \cite{LW05} for such results, the further difficulty in proving Theorem~\ref{thm0002} is that the uniform boundedness of $u_i^{\ve,\beta}(\ve y+p_i^{\ve,\beta})$ in $L^\infty(\bbr^4)$, which is crucial in proving such result, can not be obtained by applying the Moser iteration directly, which is caused by the critical growth of the cubic nonlinearities and the coupled term in dimension four.  In order to overcome the difficulty, we first prove the strong convergence of $(u_1^{\ve,\beta}(\ve y+p_1^{\ve,\beta}), u_2^{\ve,\beta}(\ve y+p_2^{\ve,\beta}))$ in $H^1(\bbr^4)\times H^1(\bbr^4)$ as $\ve\to0^+$ and then apply the Moser iteration as in \cite{BZZ13,CZ121}.
\item[$(b)$]  An interesting  phenomenon about the semi-classical solution is that it will concentrate around the maximum points and convergence strongly to the nontrivial solution of the limit equation or system under some suitable scaling centered at the maximum points.  Thus, the existence of nontrivial solutions of the limit equation or system is very important for  observing such a phenomenon.  Now, by using the Pohozaev identity as in \cite{CZ15} and \cite{ZZ12}, we can see that
    \eqref{zwm=1}- \eqref{zwm=2} have no solution for $\lambda_1,\lambda_2>0$.
   It means that  the equations   \eqref{eqnew0001} and \eqref{eqnew0002}, which are the limit equation or system of $(\mathcal{S}_{\ve})$ respectively for $\beta<0$ and $\beta>0$, have  no solution if $\alpha_1=\alpha_2=0$.  Hence, $\alpha_1,\alpha_2>0$ seems to be necessary in proving Theorem~\ref{thm0002}.
\end{enumerate}
\end{remark}

Finally, we  study the locations of the spikes $p_i^\ve$ as $\ve\to0^+$.

\vskip0.2in

\begin{theorem}\label{thm0003}
Let $p_i^{\ve,\beta}$ be the maximum point of $u_i^{\ve,\beta}$ respectively for $i=1,2$.  Then we have
\begin{enumerate}
\item[$(a)$]  If $\beta>0$, then $\lim_{\ve\to0^+}\mathcal{D}_i^{\ve,\beta}=\mathcal{D}$, where
$$
\mathcal{D}=\max_{p\in\Omega}\text{dist}(p, \partial\Omega), \quad \mathcal{D}_i^{\ve,\beta}=\text{dist}(p_i^{\ve,\beta}, \partial\Omega),\quad i=1,2.$$
\item[$(b)$]  If $\beta<0$, then $\lim_{\ve\to0^+}\varphi(p_1^{\ve,\beta},p_2^{\ve,\beta})=\varphi(P_1^*, P_2^*)$, where
\begin{eqnarray*}
\varphi(P_1,P_2)=\min_{i=1,2}\{\min\{\sqrt{\lambda_i}|P_1-P_2|, \sqrt{\lambda_i}dist(P_i, \partial\Omega)\}\}
\end{eqnarray*}
and $(P_1^*, P_2^*)$ satisfies $\varphi(P_1^*,P_2^*)=\max_{(P_1,P_2)\in\Omega^2}\varphi(P_1,P_2)$.
\end{enumerate}
\end{theorem}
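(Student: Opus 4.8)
\noindent The plan is to locate the spikes through a two–sided estimate of the ground state energy $c_\ve:=J_\ve(\overrightarrow{\mathbf{u}}_{\ve,\beta})$ in which the exponentially small corrections encode the geometry. Let $c_*$ be the limiting normalized energy supplied by Lemma~\ref{lem0005}: $c_*=c_{v_1^0}+c_{v_2^0}$ when $\beta<0$ and $c_*$ is the ground state level of \eqref{eqnew0002} when $\beta>0$. In both regimes I would establish an expansion $c_\ve=\ve^4c_*+\ve^4R_\ve$ with $R_\ve>0$ whose logarithm is controlled by the distances of the spikes to $\partial\Omega$ and to each other. More precisely, for every $\delta>0$ one should get $c_\ve\le\ve^4c_*+C\ve^4e^{-(2-\delta)Q^*/\ve}$ and $c_\ve\ge\ve^4c_*+c\ve^4e^{-(2+\delta)Q_\ve/\ve}$, where $Q^*$ is the optimal value — namely $\min\{\sqrt{\lambda_1},\sqrt{\lambda_2}\}\,\mathcal{D}$ if $\beta>0$ and $\max_{(P_1,P_2)\in\Omega^2}\varphi(P_1,P_2)$ if $\beta<0$ — and $Q_\ve$ is the corresponding quantity evaluated at $(p_1^{\ve,\beta},p_2^{\ve,\beta})$. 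Taking logarithms gives $\liminf_{\ve\to0^+}Q_\ve\ge\frac{2-\delta}{2+\delta}Q^*$ for all $\delta>0$, hence $\liminf_{\ve\to0^+}Q_\ve\ge Q^*$; since $Q_\ve\le Q^*$ trivially ($\mathcal{D}_i^{\ve,\beta}\le\mathcal{D}$ resp. $\varphi(p_1^{\ve,\beta},p_2^{\ve,\beta})\le\max_{\Omega^2}\varphi$), we conclude $Q_\ve\to Q^*$, which is the content of Theorem~\ref{thm0003}.

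\medskip\noindent\emph{Step 1: pointwise bounds.} Starting from the strong $H^1(\bbr^4)\times H^1(\bbr^4)$–convergence of $y\mapsto(u_1^{\ve,\beta}(\ve y+p_1^{\ve,\beta}),u_2^{\ve,\beta}(\ve y+p_2^{\ve,\beta}))$ in Theorem~\ref{thm0002}, I would first obtain uniform $L^\infty$–bounds on the rescaled solutions via the Moser iteration as in \cite{BZZ13,CZ121} (the strong convergence being exactly what lets the iteration start, since the critical growth blocks a direct application). With these, comparison with the fundamental solutions of $-\ve^2\Delta+\lambda_i\mp c_0$ yields, for every $\delta>0$, two–sided estimates $c\,e^{-(1+\delta)\sqrt{\lambda_i}|x-p_i^{\ve,\beta}|/\ve}\le u_i^{\ve,\beta}(x)\le C\,e^{-(1-\delta)\sqrt{\lambda_i}|x-p_i^{\ve,\beta}|/\ve}$ on the relevant ranges, together with the companion gradient bounds; the lower bound needs the maximum principle applied separately on the region near $p_j^{\ve,\beta}$, where the coupling is not a small perturbation but the equation for $u_i^{\ve,\beta}$ is still linear with a bounded positive potential, so Harnack transports the bound into a fixed rescaled ball about $p_j^{\ve,\beta}$. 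The rates $\sqrt{\lambda_i}$ are those of the limit profiles $v_i^0$ (resp. of the $i$-th component of the radial $(v_1^*,v_2^*)$).

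\medskip\noindent\emph{Step 2: upper bound.} For $\beta>0$, let $P^*\in\Omega$ realize $\mathrm{dist}(P^*,\partial\Omega)=\mathcal{D}$, take the radial ground state $(v_1^*,v_2^*)$ of \eqref{eqnew0002}, set $\overrightarrow{\Phi}_\ve(x)=\eta(x)(v_1^*,v_2^*)\big(\tfrac{x-P^*}{\ve}\big)$ with $\eta$ a cut–off supported near $P^*$, and project onto the (possibly truncated) Nehari manifold used in the proof of Theorem~\ref{thm0001}; since $\overrightarrow{\mathbf{u}}_{\ve,\beta}$ minimizes the energy there and all error terms come from the cut–off zone at distance $\approx\mathcal{D}$ from the spike, $c_\ve\le J_\ve(t_\ve\overrightarrow{\Phi}_\ve)=\ve^4c_*+O\big(\ve^4e^{-(2-\delta)\min\{\sqrt{\lambda_1},\sqrt{\lambda_2}\}\mathcal{D}/\ve}\big)$. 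For $\beta<0$ I would take two well–separated bumps $\eta_1v_1^0\big(\tfrac{x-P_1^*}{\ve}\big)$ and $\eta_2v_2^0\big(\tfrac{x-P_2^*}{\ve}\big)$ centred at a maximizer $(P_1^*,P_2^*)$ of $\varphi$; a Laplace–type estimate of the spike–overlap, boundary–overlap and cut–off integrals shows each is $O(\ve^4e^{-(2-\delta)\varphi(P_1^*,P_2^*)/\ve})$ — the spike–overlap exponent being $2\min\{\sqrt{\lambda_1},\sqrt{\lambda_2}\}|P_1^*-P_2^*|/\ve$, which together with the boundary exponents $2\sqrt{\lambda_i}\,\mathrm{dist}(P_i^*,\partial\Omega)/\ve$ reconstitutes $\varphi(P_1^*,P_2^*)$ — and since $\beta<0$ the coupling integral is positive and of the same order, giving $c_\ve\le\ve^4c_*+O(\ve^4e^{-(2-\delta)\varphi(P_1^*,P_2^*)/\ve})$.

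\medskip\noindent\emph{Step 3: lower bound, and the main obstacle.} This is the delicate part. Using the pointwise bounds of Step~1 and Pohozaev–type identities on the balls $B_\rho(p_i^{\ve,\beta})$ (the ``local energy estimates'' of the abstract), I would prove conversely $c_\ve\ge\ve^4c_*+c\ve^4e^{-(2+\delta)Q_\ve/\ve}$: the Dirichlet condition forces an extra energy at least $c\,e^{-(2+\delta)\sqrt{\lambda_i}\,\mathrm{dist}(p_i^{\ve,\beta},\partial\Omega)/\ve}$ (near $\partial\Omega$ both components are small, so the scalar analysis applies), and for $\beta<0$ the positive term $-\tfrac{\beta}{2}\int_\Omega(u_1^{\ve,\beta})^2(u_2^{\ve,\beta})^2\,dx$ is bounded below by $c\,e^{-(2+\delta)\min_i\sqrt{\lambda_i}\,|p_1^{\ve,\beta}-p_2^{\ve,\beta}|/\ve}$ thanks to the lower pointwise bound transported into a fixed rescaled ball about the other spike. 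Combining with Step~2 as in the first paragraph gives $Q_\ve\to Q^*$; for $\beta>0$ Theorem~\ref{thm0002}(b) then yields $|p_1^{\ve,\beta}-p_2^{\ve,\beta}|/\ve\to0$, hence $|\mathcal{D}_1^{\ve,\beta}-\mathcal{D}_2^{\ve,\beta}|\le|p_1^{\ve,\beta}-p_2^{\ve,\beta}|\to0$, so both converge to $\mathcal{D}$; for $\beta<0$, $Q_\ve\to Q^*$ reads exactly $\varphi(p_1^{\ve,\beta},p_2^{\ve,\beta})\to\varphi(P_1^*,P_2^*)$. The principal difficulties are that the critical exponent in $\bbr^4$ forbids the elementary $L^\infty$/decay machinery, so Step~1 must be run after — not before — the strong convergence of Theorem~\ref{thm0002}, and that the local–energy/Pohozaev bookkeeping in Step~3 must be carried out sharply enough to detect the true exponential rate rather than merely $c_\ve\ge\ve^4c_*+o(\ve^4)$.
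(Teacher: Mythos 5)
Your overall architecture (two-sided expansions $c_{\ve}= \ve^4c_*+\ve^4R_\ve$ with $\log R_\ve$ controlled by the spike geometry, then comparison of exponents) is exactly the paper's, and your Steps 1--2 match it: the decay estimates are obtained only after the strong convergence of Theorem~\ref{thm0002} plus Moser iteration (Lemmas~\ref{lem0006}, \ref{lem0008}), and the upper bounds are built from cut-off limit profiles projected onto the Nehari manifold (Lemma~\ref{lem0007} for $\beta>0$; for $\beta<0$ the paper actually uses the Ni--Wei projections $u_i^{\ve,P}$ and the expansion of $\delta_\ve[P_1,P_2]$ in Lemmas~\ref{lem0011}--\ref{lem0013}, which is sharper than, but morally equivalent to, your cut-off bumps).

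The genuine gap is Step 3. The lower bound is the crux of the theorem and you essentially assert it; the tool you name (``Pohozaev-type identities on the balls $B_\rho(p_i^{\ve,\beta})$'') is not what produces the sharp exponentially small excess, and it is not clear how it would. The paper's mechanism is a comparison argument: for $\beta>0$ one cuts off $\overrightarrow{\mathbf{u}}_\ve$ near the spike, Schwartz-symmetrizes into a ball $\mathbb{B}_{R_\ve}$ (this is where $\beta>0$ is used), extends the radial ground state of the system in that ball to all of $\bbr^4$ \`a la Lin--Wei, and projects onto $\mathcal{N}_{\bbr^4,T}$; the excess $Ce^{-2(1+\sigma)\sqrt{\lambda_i}(\mathcal{D}_1^\ve+\delta/2)/\ve}$ comes from the boundary value of the extended function, see \eqref{eqn0062}--\eqref{eqn0063}. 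For $\beta<0$ the paper cannot use the Lin--Wei nondegeneracy route and instead introduces the weighted quantity $\varphi^*_{b_1,b_2}$ and the mediatrix $\Lambda_{b_1,b_2}(p_1^\ve,p_2^\ve)$ of \eqref{eqn5453}--\eqref{eqn5454}: the interaction integral is bounded below on the lens $\Gamma_\ve=\mathbb{B}_{D_1^{\ve,*}+\ve\delta}(p_1^\ve)\cap\mathbb{B}_{D_2^{\ve,*}+\ve\delta}(p_2^\ve)$, where the barrier for $u_i^\ve$ only has to be run out to distance $D_i^{\ve,*}=\tfrac{b_i}{b_1+b_2}|p_1^\ve-p_2^\ve|$, a radius that the Case~2 inequality guarantees stays inside $\Omega$; the statement for $\varphi$ is then recovered by letting $b_1\to0^+$ or $b_2\to0^+$. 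Your alternative -- transporting the lower bound for $u_1^\ve$ all the way into a fixed rescaled ball about $p_2^\ve$ -- requires $\mathbb{B}_{|p_1^\ve-p_2^\ve|}(p_1^\ve)\subset\Omega$, which is only guaranteed after a case distinction on which term achieves the minimum in $\varphi$, and the passage of the barrier through the region where $u_2^\ve$ is of order one (where the effective potential exceeds $\lambda_1$) must be handled; you flag Harnack for the latter, but none of this is actually carried out, and without it the claimed rate $e^{-(2+\delta)\min_i\sqrt{\lambda_i}|p_1^\ve-p_2^\ve|/\ve}$ is unproved.
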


\vskip0.2in

\begin{remark}
The argument in \cite{LW05} for proving a similar result to $(b)$ of Theorem~\ref{thm0003} is based on the nondegenerate of $v_i^0$ ($i=1,2$) in the case $1\leq N\leq3$, where $v_i^0$ is given in Theorem~\ref{thm0002}.  To our best knowledge, it is still   unknow whether $v_i^0$ ($i=1,2$) is nondegenerate or not  in the case $N=4$, thus the method of  \cite{LW05} is invalid in our case.  Note that by Theorem~\ref{thm0002}, the spikes satisfy the phenomenon of phase separation for $\beta<0$ as $\ve\to0^+$ and such phenomenon is always related to the sign-changing solutions of the single equation.  Thus, inspired by \cite{DSW11}, where a similar situation was faced in the study of the locations of the spikes to a sign-changing solution of a single equation with jumping nonlinearities, we will introduce the function $\varphi_{b_1,b_2}^*(p_1^\ve,p_2^\ve)$ given by \eqref{eqn5453} and the set $\Lambda_{b_1,b_2}(p_1^\ve,p_2^\ve)$ given by \eqref{eqn5454} for our case to prove $(b)$ of Theorem~\ref{thm0003}.
\end{remark}

\begin{remark}
By some necessary modifications, Theorems~\ref{thm0001}--\ref{thm0003} can be generalized slightly to the following system:
\begin{equation*}
\left\{\aligned&-\ve^2\Delta u_1+\lambda_1u_1=\mu_1u_1^3+\alpha_1u_1^{p_1-1}+\beta u_2^2u_1\quad&\text{in }\Omega,\\
&-\ve^2\Delta u_2+\lambda_2u_2=\mu_2u_2^3+\alpha_2u_2^{p_2-1}+\beta u_1^2u_2\quad&\text{in }\Omega,\\
&u_1,u_2>0\quad\text{in }\Omega,\quad u_1=u_2=0\quad\text{on }\partial\Omega,\endaligned\right.
\end{equation*}
where $2<p_1,p_2<4$.  However, we do not want to trap into some unnecessary complex calculations which is coursed by $p_1\not=p_2$.  Due to this reason, we only give the proofs of Theorems~\ref{thm0001}--\ref{thm0003} for $p_1=p_2=p$.
\end{remark}

\vskip0.1in

\noindent{\bf\large Notations.} Throughout this paper, $C$ and $C'$ are indiscriminately used to denote various absolute positive constants.  We also list some notations used frequently below.
\begin{eqnarray*}
&\mathcal{B}_{u,p,\Omega}^{p}=\int_{\Omega}|u|^pdx,\quad&\overrightarrow{\mathbf{u}}=(u_1,u_2,\cdots,u_k),\\
&\overrightarrow{\mathbf{t}}\circ\overrightarrow{\mathbf{u}}=(t_1u_1, t_2u_2,\cdots, t_ku_k),\quad&\mathbb{B}_{r}(x)=\{y\in\bbr^4\mid|y-x|<r\},\\
&t\overrightarrow{\mathbf{u}}=(tu_1, tu_2,\cdots,tu_k),\quad&\overrightarrow{\mathbf{u}}_n=(u_1^n, u_2^n,\cdots,u_k^n),\\
&\bbr^4_{+}=\{x=(x_1,x_2,x_3,x_4)\in\bbr^4\mid x_4>0\},\quad&\bbr^{+}=(0, +\infty).
\end{eqnarray*}
$O(|\overrightarrow{\mathbf{b}}|)$ is used to denote the quantities who tend towards zero as $|\overrightarrow{\mathbf{b}}|\to0$, where $|\overrightarrow{\mathbf{b}}|$ is the usual norm in $\bbr^k$ of the vector $\overrightarrow{\mathbf{b}}$.


\section{The existence result}
let $\mathcal{H}_{i,\ve,\Omega}$ be the Hilbert space of $\h$ equipped with the inner product
$$
\langle u,v\rangle_{i,\ve,\Omega}=\int_{\Omega}\ve^2\nabla u\nabla v+\lambda_i uv dx.
$$
For   $i=1,2$, $\lambda_i>0$ and $\ve>0$, $\mathcal{H}_{i,\ve,\Omega}$ is a   Hilbert space  and the corresponding norm is  given by $\|u\|_{i,\ve,\Omega}=\langle u,u\rangle_{i,\ve,\Omega}^{\frac12}$.  Set $\mathcal{H}_{\ve,\Omega}=\mathcal{H}_{1,\ve,\Omega}\times\mathcal{H}_{2,\ve,\Omega}$.  Then $\mathcal{H}_{\ve,\Omega}$ is a Hilbert space with the inner product
$$
\langle \overrightarrow{\mathbf{u}},\overrightarrow{\mathbf{v}}\rangle_{\ve,\Omega}=\sum_{i=1}^2\langle u_i,v_i\rangle_{i,\ve,\Omega}.
$$
The corresponding norm is given by $\|\overrightarrow{\mathbf{u}}\|_{\ve,\Omega}=\langle \overrightarrow{\mathbf{u}},\overrightarrow{\mathbf{u}}\rangle_{\ve,\Omega}^{\frac12}$.  Here, $u_i,v_i$ are the $i$th component of $\overrightarrow{\mathbf{u}}, \overrightarrow{\mathbf{v}}$ respectively.  For the simplicity, we re-denote the footnote $_{i,1,\Omega}$ by $_{i,\Omega}$.

Let $\chi_\beta(s)$ be a smooth function in $[0, +\infty)$ such that $\chi_\beta(s)\equiv1$ for $\beta>-\sqrt{\mu_1\mu_2}$ and
\begin{equation}\label{eqnew8000}
\chi_\beta(s)=\left\{\aligned&1,\quad&0\leq s\leq1,\\
&0,\quad &s\geq2\\\endaligned\right.
\end{equation}
for $\beta\leq-\sqrt{\mu_1\mu_2}$.  Moreover, we also request $-2\leq\chi_\beta'(s)\leq0$ in $[0, +\infty)$.  Let
\begin{eqnarray}\label{eqn0008}
T^2\geq(16+\sum_{i=1}^2\frac{4p}{\mu_i(p-2)})\mathcal{S}^2
\end{eqnarray}
be a constant and define
\begin{eqnarray*}
\mathcal{J}_{\ve,\Omega,T}(\overrightarrow{\mathbf{u}})&=&\sum_{i=1}^2(\frac12\|u_i\|_{i,\ve,\Omega}^2-\frac{\alpha_i}{p}\mathcal{B}_{ u_i,p,\Omega}^{p}\chi_\beta\bigg(\frac{\|\overrightarrow{\mathbf{u}}\|_{\ve,\Omega}^2}{T^2\ve^4}\bigg)-\frac{\mu_i}{4}\mathcal{B}_{ u_i,4,\Omega}^{4})\\
&&-\frac{\beta}{2}\mathcal{B}_{|u_1|^{2}|u_2|^{2},1,\Omega},
\end{eqnarray*}
where $\mathcal{B}_{ u,p,\Omega}^{p}=\int_{\Omega}|u|^pdx$.
Then it is easy to see that $\mathcal{J}_{\ve,\Omega,T}(\overrightarrow{\mathbf{u}})$ is of $C^2$ and
\begin{eqnarray*}
\mathcal{J}_{\ve,\Omega,T}'(\overrightarrow{\mathbf{u}})\overrightarrow{\mathbf{v}}&=&\sum_{i=1}^2\bigg((1-\frac{2\alpha_i}{pT^2\ve^4}\mathcal{B}_{ u_i,p,\Omega}^{p}\chi_\beta'\bigg(\frac{\|\overrightarrow{\mathbf{u}}\|_{\ve,\Omega}^2}{T^2\ve^4}\bigg))\langle u_i,v_i\rangle_{i,\ve,\Omega}\\
&&-\alpha_i\chi_\beta\bigg(\frac{\|\overrightarrow{\mathbf{u}}\|_{\ve,\Omega}^2}{T^2\ve^4}\bigg)\int_{\Omega}|u_i|^{p-2}u_iv_idx-\mu_i\int_{\Omega}u_i^3v_idx\bigg)\\
&&-\beta\int_{\Omega}(u_2^2u_1v_1+u_1^2u_2v_2)dx
\end{eqnarray*}
for all $\overrightarrow{\mathbf{u}},\overrightarrow{\mathbf{v}}\in\mathcal{H}_{\ve,\Omega}$.

\begin{definition}\label{def0001}
 The vector $(u_1, u_2)=\overrightarrow{\mathbf{u}}\in\mathcal{H}_{\ve,\Omega}$ is called a nontrivial critical point of $\mathcal{J}_{\ve,\Omega,T}(\overrightarrow{\mathbf{u}})$ if $\mathcal{J}_{\ve,\Omega,T}'(\overrightarrow{\mathbf{u}})=0$ in $\mathcal{H}_{\ve,\Omega}^{-1}$ with $u_1\not\equiv0$ and $u_2\not\equiv0$.  $\overrightarrow{\mathbf{u}}\in\mathcal{H}_{\ve,\Omega}$ is called a semi-trivial critical point of $\mathcal{J}_{\ve,\Omega,T}(\overrightarrow{\mathbf{u}})$ if $\mathcal{J}_{\ve,\Omega,T}'(\overrightarrow{\mathbf{u}})=0$ in $\mathcal{H}_{\ve,\Omega}^{-1}$ with $u_1\not\equiv0$ or $u_2\not\equiv0$.  Here, $\mathcal{J}_{\ve,\Omega,T}'(\overrightarrow{\mathbf{u}})$ is the Fr\'echet derivative of $\mathcal{J}_{\ve,\Omega,T}(\overrightarrow{\mathbf{u}})$ and $\mathcal{H}_{\ve,\Omega}^{-1}$ is the dual space of $\mathcal{H}_{\ve,\Omega}$.  $\overrightarrow{\mathbf{u}}\in\mathcal{H}_{\ve,\Omega}$ is called a positive critical point of $\mathcal{J}_{\ve,\Omega,T}(\overrightarrow{\mathbf{u}})$ if $\overrightarrow{\mathbf{u}}$ is a nontrivial critical point and $u_i>0$ for both $i=1,2$.
\end{definition}

By Definition~\ref{def0001}, positive critical points of $\mathcal{J}_{\ve,\Omega,T}(\overrightarrow{\mathbf{u}})$ with $\|\overrightarrow{\mathbf{u}}\|_{\ve,\Omega}^2\leq\ve^4T^2$ are equivalent to the solutions of the system~$(\mathcal{S}_{\ve})$.

Without loss of generality, we assume $0\in\Omega$ and set $\Omega_{\ve}=\{x\in\bbr^4\mid \ve x\in\Omega\}$.  Now, let
\begin{eqnarray*}
\mathcal{N}_{\ve,\Omega,T}=\{\overrightarrow{\mathbf{u}}=(u_1,u_2)\in\widetilde{\mathcal{H}}_{\ve,\Omega}\mid \mathcal{J}_{\ve,\Omega,T}'(\overrightarrow{\mathbf{u}})\overrightarrow{\mathbf{u}}^1=\mathcal{J}_{\ve,\Omega,T}'(\overrightarrow{\mathbf{u}})\overrightarrow{\mathbf{u}}^2=0\},
\end{eqnarray*}
with $\widetilde{\mathcal{H}}_{\ve,\Omega}=(\mathcal{H}_{1,\ve,\Omega}\backslash\{0\})\times(\mathcal{H}_{2,\ve,\Omega}\backslash\{0\})$, $\overrightarrow{\mathbf{u}}^1=(u_1,0)$ and $\overrightarrow{\mathbf{u}}^2=(0, u_2)$.  Clearly, $\mathcal{N}_{\ve,\Omega,T}\not=\emptyset$.  We also define
\begin{eqnarray*}
\mathcal{N}'_{\ve,\Omega,T}=\{\overrightarrow{\mathbf{u}}=(u_1,u_2)\in\mathcal{H}_{\ve,\Omega}\backslash\{\overrightarrow{\mathbf{0}}\}\mid \mathcal{J}_{\ve,\Omega,T}'(\overrightarrow{\mathbf{u}})\overrightarrow{\mathbf{u}}=0\}.
\end{eqnarray*}

\vskip0.13in

\begin{lemma}\label{lemn0001}
We have $\mathcal{N}_{\ve,\Omega,T}\subset \mathcal{N}'_{\ve,\Omega,T}$ and $\mathcal{J}_{\ve,\Omega,T}(\overrightarrow{\mathbf{u}})$ is bounded from below on $\mathcal{N}'_{\ve,\Omega,T}$.
\end{lemma}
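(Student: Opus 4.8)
The plan is to prove the two assertions separately. For the inclusion $\mathcal{N}_{\ve,\Omega,T}\subset\mathcal{N}'_{\ve,\Omega,T}$, observe that by definition every $\overrightarrow{\mathbf{u}}\in\mathcal{N}_{\ve,\Omega,T}$ lies in $\widetilde{\mathcal{H}}_{\ve,\Omega}\subset\mathcal{H}_{\ve,\Omega}\backslash\{\overrightarrow{\mathbf{0}}\}$ and satisfies $\mathcal{J}_{\ve,\Omega,T}'(\overrightarrow{\mathbf{u}})\overrightarrow{\mathbf{u}}^1=\mathcal{J}_{\ve,\Omega,T}'(\overrightarrow{\mathbf{u}})\overrightarrow{\mathbf{u}}^2=0$, where $\overrightarrow{\mathbf{u}}^1=(u_1,0)$ and $\overrightarrow{\mathbf{u}}^2=(0,u_2)$. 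Since $\overrightarrow{\mathbf{u}}=\overrightarrow{\mathbf{u}}^1+\overrightarrow{\mathbf{u}}^2$ and $\mathcal{J}_{\ve,\Omega,T}'(\overrightarrow{\mathbf{u}})$ is linear in its argument, we get $\mathcal{J}_{\ve,\Omega,T}'(\overrightarrow{\mathbf{u}})\overrightarrow{\mathbf{u}}=\mathcal{J}_{\ve,\Omega,T}'(\overrightarrow{\mathbf{u}})\overrightarrow{\mathbf{u}}^1+\mathcal{J}_{\ve,\Omega,T}'(\overrightarrow{\mathbf{u}})\overrightarrow{\mathbf{u}}^2=0$, hence $\overrightarrow{\mathbf{u}}\in\mathcal{N}'_{\ve,\Omega,T}$. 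This part is immediate from the definitions and the bilinearity visible in the formula for $\mathcal{J}_{\ve,\Omega,T}'$.

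For the lower bound, I would first use the Nehari constraint $\mathcal{J}_{\ve,\Omega,T}'(\overrightarrow{\mathbf{u}})\overrightarrow{\mathbf{u}}=0$ to eliminate the quartic and coupling terms from $\mathcal{J}_{\ve,\Omega,T}(\overrightarrow{\mathbf{u}})$. Writing $A=\sum_{i=1}^2\|u_i\|_{i,\ve,\Omega}^2$, $B=\sum_{i=1}^2\alpha_i\mathcal{B}_{u_i,p,\Omega}^p$, $Q=\sum_{i=1}^2\mu_i\mathcal{B}_{u_i,4,\Omega}^4+2\beta\mathcal{B}_{|u_1|^2|u_2|^2,1,\Omega}$, and $\theta=\chi_\beta(\|\overrightarrow{\mathbf{u}}\|_{\ve,\Omega}^2/(T^2\ve^4))\in[0,1]$, $\theta'=\chi_\beta'(\cdots)\in[-2,0]$, the functional is $\mathcal{J}_{\ve,\Omega,T}(\overrightarrow{\mathbf{u}})=\tfrac12 A-\tfrac{\theta}{p}B-\tfrac14 Q$ while the constraint reads $A-\tfrac{2\theta'}{pT^2\ve^4}(\text{weighted }B)\,A-\theta B-Q=0$; combining these to cancel $Q$ gives an identity of the schematic form $\mathcal{J}_{\ve,\Omega,T}(\overrightarrow{\mathbf{u}})=\tfrac14 A+(\tfrac14-\tfrac1p)\theta B+(\text{a term involving }\theta')$. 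Since $p>2$ the coefficient $\tfrac14-\tfrac1p$ could be negative, so the subcritical term $B$ must be controlled: here one uses $0\le\theta\le1$ together with the cutoff — on the support of $\theta'$ one has $\|\overrightarrow{\mathbf{u}}\|_{\ve,\Omega}^2\le 2T^2\ve^4$, and Sobolev embedding bounds $\mathcal{B}_{u_i,p,\Omega}^p$ and $\mathcal{B}_{u_i,4,\Omega}^4$ in terms of $A$ (picking up factors of $\mathcal{S}$); the precise choice of $T$ in \eqref{eqn0008} is exactly what makes the resulting quadratic-in-$A$ lower bound nonnegative after absorbing these terms. Tracking constants carefully, one arrives at $\mathcal{J}_{\ve,\Omega,T}(\overrightarrow{\mathbf{u}})\ge cA\ge 0$ on $\mathcal{N}'_{\ve,\Omega,T}$ for some $c>0$, or at worst $\mathcal{J}_{\ve,\Omega,T}(\overrightarrow{\mathbf{u}})\ge -C$ uniformly, which is what is claimed.

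The main obstacle is the bookkeeping around the cutoff function $\chi_\beta$: the term with $\chi_\beta'$ in $\mathcal{J}_{\ve,\Omega,T}'(\overrightarrow{\mathbf{u}})\overrightarrow{\mathbf{u}}$ is only active on the annular region $T^2\ve^4\le\|\overrightarrow{\mathbf{u}}\|_{\ve,\Omega}^2\le 2T^2\ve^4$, and there one must show the extra term $\tfrac{2\theta'}{pT^2\ve^4}(\sum_i\alpha_i\mathcal{B}_{u_i,p,\Omega}^p\|u_i\|_{i,\ve,\Omega}^2)$ is dominated by the good quadratic term; on the complementary region $\theta$ is constant ($0$ or $1$) and the estimate reduces to the standard subcritical/critical Nehari computation. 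I expect the delicate point to be verifying that the explicit threshold $T^2\ge(16+\sum_i\tfrac{4p}{\mu_i(p-2)})\mathcal{S}^2$ suffices; this is a direct but somewhat lengthy estimate using the Sobolev inequality $\mathcal{B}_{u_i,4,\Omega}^4\le\mathcal{S}^{-2}\ve^{-4}\|u_i\|_{i,\ve,\Omega}^4$ and the interpolation/embedding bound for the $L^p$ norm, and I would present it as a short sequence of inequalities rather than grinding through every constant.
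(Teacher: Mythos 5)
The inclusion $\mathcal{N}_{\ve,\Omega,T}\subset\mathcal{N}'_{\ve,\Omega,T}$ is fine and is exactly the paper's (one-line) argument. The lower bound, however, has a genuine gap. You propose a single identity, obtained by subtracting $\tfrac14\mathcal{J}_{\ve,\Omega,T}'(\overrightarrow{\mathbf{u}})\overrightarrow{\mathbf{u}}$ to cancel the quartic form $Q$, and then to control the surviving subcritical term $(\tfrac14-\tfrac1p)\theta B$ using the cutoff and Sobolev embedding. But recall that by construction $\chi_\beta\equiv1$ whenever $\beta>-\sqrt{\mu_1\mu_2}$: in that entire regime the cutoff gives you no bound on $\|\overrightarrow{\mathbf{u}}\|_{\ve,\Omega}$, so ``on the support of $\theta'$ one has $\|\overrightarrow{\mathbf{u}}\|_{\ve,\Omega}^2\le2T^2\ve^4$'' is vacuous ($\theta'\equiv0$ there) and does not help with $\theta B$. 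Sobolev only gives $B\le CA^{p/2}$ with $p/2>1$, and since $\tfrac14-\tfrac1p<0$ for $p<4$, the resulting bound $\tfrac14A-CA^{p/2}$ is \emph{not} bounded from below as $A\to\infty$. So your argument, as written, fails precisely in the case where the truncation is inactive.

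The paper avoids this by using two different multipliers according to the sign regime of $\beta$. For $\beta>-\sqrt{\mu_1\mu_2}$ it computes $\mathcal{J}_{\ve,\Omega,T}(\overrightarrow{\mathbf{u}})-\tfrac1p\mathcal{J}_{\ve,\Omega,T}'(\overrightarrow{\mathbf{u}})\overrightarrow{\mathbf{u}}=\tfrac{p-2}{2p}A+\tfrac{4-p}{4p}Q$, which kills the troublesome subcritical term entirely and leaves $Q=\sum_i\mu_i\mathcal{B}_{u_i,4,\Omega}^4+2\beta\mathcal{B}_{|u_1|^2|u_2|^2,1,\Omega}\ge0$ by H\"older and Young (this is exactly where $\beta>-\sqrt{\mu_1\mu_2}$ enters), giving $\mathcal{J}\ge0$. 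Only for $\beta\le-\sqrt{\mu_1\mu_2}$, where $Q$ is indefinite, does it switch to your multiplier $\tfrac14$, and there the cutoff genuinely forces $\|\overrightarrow{\mathbf{u}}\|_{\ve,\Omega}^2<2\ve^4T^2$ on the set where anything negative survives, whence \eqref{eqn0006} yields $\mathcal{J}\ge-C\ve^4$. Your approach can be repaired in the first regime without changing multiplier if you replace the Sobolev bound by the observation that the constraint itself gives $B\le A$ when $Q\ge0$ (so $\tfrac14A+(\tfrac14-\tfrac1p)B\ge\tfrac{p-2}{2p}A$), but some such case distinction on $\beta$ is unavoidable and is missing from your write-up.
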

\begin{proof}
By the definitions of $\mathcal{N}_{\ve,\Omega,T}$ and $\mathcal{N}'_{\ve,\Omega,T}$, it is easy to see that $\mathcal{N}_{\ve,\Omega,T}\subset \mathcal{N}'_{\ve,\Omega,T}$.  Now let $\overrightarrow{\mathbf{u}}\in\mathcal{N}'_{\ve,\Omega,T}$, then by the construction of $\chi_\beta$ and the H\"older inequality, we have that
\begin{eqnarray}
\mathcal{J}_{\ve,\Omega,T}(\overrightarrow{\mathbf{u}})
&=&\mathcal{J}_{\ve,\Omega,T}(\overrightarrow{\mathbf{u}})-\frac1p\mathcal{J}_{\ve,\Omega,T}'(\overrightarrow{\mathbf{u}})\overrightarrow{\mathbf{u}}\notag\\
&=&\frac{p-2}{2p}\|\overrightarrow{\mathbf{u}}\|_{\ve,\Omega}^2+\frac{4-p}{4p}(\sum_{i=1}^2\mu_i\mathcal{B}_{ u_i,4,\Omega}^{4}+2\beta\mathcal{B}_{|u_1|^{2}|u_2|^{2},1,\Omega})\notag\\
&\geq&\frac{p-2}{2p}\|\overrightarrow{\mathbf{u}}\|_{\ve,\Omega}^2\label{eqn0001}\\
&\geq&0\notag
\end{eqnarray}
for $\beta>-\sqrt{\mu_1\mu_2}$ since $2<p<4$.  In the case $\beta\leq-\sqrt{\mu_1\mu_2}$, we have
\begin{eqnarray}
\mathcal{J}_{\ve,\Omega,T}(\overrightarrow{\mathbf{u}})
&=&\mathcal{J}_{\ve,\Omega,T}(\overrightarrow{\mathbf{u}})-\frac14\mathcal{J}_{\ve,\Omega,T}'(\overrightarrow{\mathbf{u}})\overrightarrow{\mathbf{u}}\notag\\
&=&(\frac{1}{4}+\sum_{i=1}^2\frac{\alpha_i}{2pT^2\ve^4}\mathcal{B}_{ u_i,p,\Omega}^{p}\chi_\beta'\bigg(\frac{\|\overrightarrow{\mathbf{u}}\|_{\ve,\Omega}^2}{T^2\ve^4}\bigg))\|\overrightarrow{\mathbf{u}}\|_{\ve,\Omega}^2\notag\\
&&-\frac{4-p}{4p}\sum_{i=1}^2\alpha_i\mathcal{B}_{ u_i,p,\Omega}^{p}\chi_\beta\bigg(\frac{\|\overrightarrow{\mathbf{u}}\|_{\ve,\Omega}^2}{T^2\ve^4}\bigg).\label{eqn0004}
\end{eqnarray}
It follows from the construction of $\chi_\beta$ that $\mathcal{J}_{\ve,\Omega,T}(\overrightarrow{\mathbf{u}})\geq0$ for $\|\overrightarrow{\mathbf{u}}\|_{\ve,\Omega}^2\geq2\ve^4T^2$.  For $\|\overrightarrow{\mathbf{u}}\|_{\ve,\Omega}^2<2\ve^4T^2$, we have that $\sum_{i=1}^2\|\overline{u}_i\|_{i,\Omega_\ve}^2<2T^2$ and
\begin{eqnarray}\label{eqn0006}
\mathcal{B}_{ u_i,p,\Omega}^{p}=\ve^4\mathcal{B}_{ \overline{u}_i,p,\bbr^4}^{p}\leq\ve^4\mathcal{S}_{p,i}^{-\frac{p}{2}}\|\overline{u}_i\|_{i,\bbr^4}^p
\leq\mathcal{S}_{p,i}^{-\frac{p}{2}}(2T^2)^{\frac{p-2}{2}}\|u_i\|_{i,\ve,\Omega}^2, \quad i=1,2.
\end{eqnarray}
Here, $\overline{u}_i(x)=u_i(\ve x)$ and we regard $\overline{u}_i$ as a function in $H^1(\bbr^4)$ by defining $\overline{u}_i\equiv0$ in $\bbr^4\backslash\Omega_\ve$ and $\mathcal{S}_{p,i}$ is  the best embedding constant from $\mathcal{H}_{i,\bbr^4}\to L^p(\bbr^4)$ defined by
\begin{equation*}
\mathcal{S}_{p,i}=\inf\{\|u_i\|_{i,\bbr^4}^2\mid u\in \mathcal{H}_{i,\bbr^4}, \mathcal{B}_{u_,p, \bbr^4}^2=1\}.
\end{equation*}
Thus, by \eqref{eqn0004}, we can see that
$\mathcal{J}_{\ve,\Omega}(\overrightarrow{\mathbf{u}})\geq-\ve^4C$ for all $\overrightarrow{\mathbf{u}}\in\mathcal{N}'_{\ve,\Omega,T}$ in the case $\beta\leq-\sqrt{\mu_1\mu_2}$.
\end{proof}

Set
\begin{eqnarray*}
c_{\ve,\Omega,T}=\inf_{\overrightarrow{\mathbf{u}}\in\mathcal{N}_{\ve,\Omega,T}}\mathcal{J}_{\ve,\Omega,T}(\overrightarrow{\mathbf{u}})\quad\text{and}\quad
c'_{\ve,\Omega,T}=\inf_{\overrightarrow{\mathbf{u}}\in\mathcal{N}'_{\ve,\Omega,T}}\mathcal{J}_{\ve,\Omega,T}(\overrightarrow{\mathbf{u}})
\end{eqnarray*}
Then by Lemma~\ref{lemn0001}, $c_{\ve,\Omega,T}\geq c'_{\ve,\Omega,T}>-\infty$.
\begin{lemma}\label{lem0002}
Assume  $\beta\not=0$ and let $\ve>0$ be small enough.  Then there exists $\alpha_T>0$ depending  on $T$ only  such that $\ve^4 C\leq c'_{\ve,\Omega,T}\leq c_{\ve,\Omega,T}\leq\ve^4 \sum_{i=1}^2\frac{1}{4\mu_i}\mathcal{S}^2$ in the following two cases:
\begin{enumerate}
\item $\beta>-\sqrt{\mu_1\mu_2}$ and $\alpha_1,\alpha_2>0$,
\item $\beta\leq-\sqrt{\mu_1\mu_2}$ and $\alpha_1,\alpha_2>0$ with $|\overrightarrow{\mathbf{\alpha}}|<\alpha_T$.
\end{enumerate}
\end{lemma}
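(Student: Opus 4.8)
The plan is to establish the two-sided bound on $c'_{\ve,\Omega,T}$ and $c_{\ve,\Omega,T}$ by separating the argument into the lower bound (which is essentially contained in Lemma~\ref{lemn0001}) and the upper bound (which requires a careful construction of test functions adapted to the scaling of the problem). The chain $c'_{\ve,\Omega,T}\leq c_{\ve,\Omega,T}$ is immediate from $\mathcal{N}_{\ve,\Omega,T}\subset\mathcal{N}'_{\ve,\Omega,T}$ proved in Lemma~\ref{lemn0001}. For the lower bound $\ve^4C\leq c'_{\ve,\Omega,T}$: in case (1), inequality \eqref{eqn0001} gives $\mathcal{J}_{\ve,\Omega,T}(\overrightarrow{\mathbf{u}})\geq\frac{p-2}{2p}\|\overrightarrow{\mathbf{u}}\|_{\ve,\Omega}^2\geq0$, so one needs to rule out $\|\overrightarrow{\mathbf{u}}\|_{\ve,\Omega}\to0$ along $\mathcal{N}'_{\ve,\Omega,T}$; this is a standard Sobolev-inequality argument showing that elements of the Nehari-type set are uniformly bounded away from zero after rescaling, which forces $\|\overrightarrow{\mathbf{u}}\|_{\ve,\Omega}^2\geq\ve^4 C$. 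In case (2) the estimate $\mathcal{J}_{\ve,\Omega,T}(\overrightarrow{\mathbf{u}})\geq-\ve^4 C$ was already derived in \eqref{eqn0004}--\eqref{eqn0006}, so one only needs to combine it with the same away-from-zero bound; both require $|\overrightarrow{\mathbf{\alpha}}|$ small so that the truncated subcritical term does not destroy the quadratic-dominance near zero, which is where $\alpha_T$ enters.

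For the upper bound, the key idea is to use, as a test pair, a suitably rescaled and cut-off extremal function for the Sobolev inequality $\mathcal{S}$, placed so that its support lies inside $\Omega$. First I would take the Aubin--Talenti instanton $U_\delta(x)$ realizing $\mathcal{S}$ in $\bbr^4$, truncate it by a cutoff $\eta$ supported near $0\in\Omega$, and then rescale by $\ve$ to obtain $w_{i,\ve}(x)=\eta(x)U_\delta(x/\ve)$ (up to normalization); this places the concentration at scale $\ve$, which matches the $-\ve^2\Delta$ operator. Because $2<p<4$, the lower-order terms $\|w_{i,\ve}\|_{L^2}^2$ and $\mathcal{B}_{w_{i,\ve},p,\Omega}^p$ contribute strictly lower-order corrections in $\ve$ compared to the critical balance between $\ve^2\int|\nabla w_{i,\ve}|^2$ and $\mu_i\int w_{i,\ve}^4$, so the standard Brezis--Nirenberg-type expansion gives $\max_{t>0}\mathcal{J}_{\ve,\Omega,T}(t\overrightarrow{\mathbf{w}}^{\,i})\leq\ve^4\big(\frac{1}{4\mu_i}\mathcal{S}^2+o(1)\big)$ for each single-component competitor. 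I would then verify that along the curve $t\mapsto t\overrightarrow{\mathbf{w}}^{\,i}$ (with $\overrightarrow{\mathbf{w}}^{\,i}$ the $i$-th coordinate embedding) the truncation $\chi_\beta$ is inactive, i.e. $\|t\overrightarrow{\mathbf{w}}^{\,i}\|_{\ve,\Omega}^2/(T^2\ve^4)\leq1$ at the relevant $t$ — this is exactly why \eqref{eqn0008} was imposed on $T$, so that the maximizing $t$ stays in the region where $\chi_\beta\equiv1$. Taking the minimum over $i=1,2$ of these two single-component test configurations, and noting that the relevant point of $\mathcal{N}_{\ve,\Omega,T}$ can be obtained from them via the fibering/projection onto $\mathcal{N}_{\ve,\Omega,T}$ (here one uses that a semi-trivial configuration can be perturbed into $\widetilde{\mathcal{H}}_{\ve,\Omega}$ with arbitrarily small energy cost, or more simply that $c_{\ve,\Omega,T}$ is bounded above by the energy of an approximate critical configuration with one dominant component), yields $c_{\ve,\Omega,T}\leq\ve^4\sum_{i=1}^2\frac{1}{4\mu_i}\mathcal{S}^2$.

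The main obstacle I anticipate is the bookkeeping needed to land a genuine element of $\mathcal{N}_{\ve,\Omega,T}$ (where \emph{both} components are nonzero and both partial fibering equations vanish) with energy below the stated threshold, rather than merely a semi-trivial one. The clean way around this is: construct the two rescaled instantons $w_{1,\ve}, w_{2,\ve}$ with disjoint supports both inside $\Omega$ (possible since they concentrate at scale $\ve$), so that their coupling term $\beta\mathcal{B}_{|w_{1,\ve}|^2|w_{2,\ve}|^2,1,\Omega}=0$; then the fibering system decouples into two independent scalar fibering problems, each solvable by the implicit function theorem / elementary calculus as in the remark after Theorem~\ref{thm0001}, giving $(t_1 w_{1,\ve}, t_2 w_{2,\ve})\in\mathcal{N}_{\ve,\Omega,T}$ with $\mathcal{J}_{\ve,\Omega,T}$ equal to the sum of the two scalar maxima, hence $\leq\ve^4\sum_{i=1}^2\frac{1}{4\mu_i}\mathcal{S}^2+o(\ve^4)$. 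One then absorbs the $o(\ve^4)$ by choosing $\ve$ small (and, in case (2), $\alpha_T$ small) — so the constant in the statement should be read with the understanding that the inequality holds for $\ve$ sufficiently small, consistent with the hypotheses. A secondary technical point is checking that $\chi_\beta$ stays equal to $1$ along both scalar fibering curves up to their maximizers; this follows from \eqref{eqn0008} by a direct estimate of $\|t_i w_{i,\ve}\|_{i,\ve,\Omega}^2$ at the maximizing $t_i$, which is comparable to $\ve^4\mathcal{S}^2/\mu_i$ plus lower-order terms, safely below $T^2\ve^4$.
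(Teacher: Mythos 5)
Your overall route is sound and, for the lower bound, identical to the paper's: the chain $c'_{\ve,\Omega,T}\le c_{\ve,\Omega,T}$ from Lemma~\ref{lemn0001}, the estimate \eqref{eqn0001} for $\beta>-\sqrt{\mu_1\mu_2}$, the estimates \eqref{eqn0004}--\eqref{eqn0006} for $\beta\le-\sqrt{\mu_1\mu_2}$ (which is exactly where $\alpha_T$ enters), and a H\"older/Sobolev argument on the constraint $\mathcal{J}_{\ve,\Omega,T}'(\overrightarrow{\mathbf{u}})\overrightarrow{\mathbf{u}}=0$ forcing $\|\overrightarrow{\mathbf{u}}\|_{\ve,\Omega}^2\ge\ve^4C$ on $\mathcal{N}'_{\ve,\Omega,T}$. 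For the upper bound, your disjoint-supports device is also precisely the paper's: two competitors supported in disjoint balls annihilate the coupling term, the Nehari system decouples, and the pair lands in $\mathcal{N}_{\ve,\Omega,T}$. The only real difference is the choice of single-component competitor: you build it from a truncated, $\ve$-rescaled Aubin--Talenti instanton and redo the Brezis--Nirenberg expansion, whereas the paper simply invokes Proposition~\ref{propA0001} and uses the scalar ground states $U_{i,\ve}$ of $(\mathcal{P}_{i,\ve})$ on the balls $\mathbb{B}_{r_i}(x_i)$, whose energies are already known to satisfy $\mathcal{E}_{i,\ve,\mathbb{B}_{r_i}(x_i)}(U_{i,\ve})<\frac{\ve^4}{4\mu_i}\mathcal{S}^2$. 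The two are equivalent in substance (Proposition~\ref{propA0001} itself rests on the instanton computation of \cite{BZZ13}), but the paper's version is shorter and hands you a point already sitting on the scalar Nehari manifold, so no projection step is needed.

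One step of your upper bound is misstated and, as written, does not yield the conclusion. You assert $\max_{t>0}\mathcal{J}_{\ve,\Omega,T}(t\overrightarrow{\mathbf{w}}^{\,i})\le\ve^4\big(\frac{1}{4\mu_i}\mathcal{S}^2+o(1)\big)$ and propose to ``absorb the $o(\ve^4)$ by choosing $\ve$ small''; an unsigned error of this form cannot be absorbed into the bound $\ve^4\sum_{i=1}^2\frac{1}{4\mu_i}\mathcal{S}^2$. What actually closes the argument is that the correction is strictly negative, and its source is the concentration parameter of the bubble, not $\ve$: after the $\ve$-rescaling every term carries the same factor $\ve^4$, and in the notation of \eqref{eq0072}--\eqref{eq0073} the harmful term $\lambda_i\mathcal{B}_{v_\sigma,2,\Omega_\ve}^{2}=O(\sigma^2|\mathrm{ln}\,\sigma|)$ is dominated by the helpful subcritical term $-\alpha_i\mathcal{B}_{v_\sigma,p,\Omega_\ve}^{p}\le-C\sigma^{4-p}$ precisely because $2<p<4$ gives $4-p<2$. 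Hence for a fixed small $\sigma$ the single-bubble maximum is $\le\frac{1}{4\mu_i}\mathcal{S}^2-C_\sigma$ uniformly in $\ve$, and it is this strict gap (not a limit in $\ve$) that produces the stated inequality. Your verification that $\chi_\beta\equiv1$ along the fibering curves via the choice \eqref{eqn0008} of $T$ is correct and matches the paper.
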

\begin{proof}
Let $\mathbb{B}_{r_i}(x_i)\subset\Omega$ respectively for $i=1,2$ and $\mathbb{B}_{r_1}(x_1)\cap \mathbb{B}_{r_2}(x_2)=\emptyset$.  Then by Proposition~\ref{propA0001}, there exists $\ve_0>0$ such that for $\ve<\ve_0$, there exists a solution $U_{i,\ve}$ of equation~$(\mathcal{P}_{i,\ve})$ in $\mathbb{B}_{r_i}(x_i)$ such that $\mathcal{E}_{i,\ve,\mathbb{B}_{r_i}(x_i)}(U_{i,\ve})<\frac{\ve^4}{4\mu_i}\mathcal{S}^2$.
It follows from a standard argument that $\|U_{i,\ve}\|_{i,\ve,\mathbb{B}_{r_i}(x_i)}^2\leq\frac{\ve^4p}{2(p-2)\mu_i}\mathcal{S}^2$, $i=1,2$.  Now, we regard $U_{i,\ve}$ as a function in $\h$ by defining $U_{i,\ve}\equiv0$ in $\Omega\backslash\mathbb{B}_{r_i}(x_i)$ and set $\overrightarrow{\mathbf{U}}_\ve=(U_{1,\ve}, U_{2,\ve})$.  By $\mathbb{B}_{r_1}(x_1)\cap \mathbb{B}_{r_2}(x_2)=\emptyset$ and the construction of $\chi_\beta$, we can see from \eqref{eqn0008} that $\overrightarrow{\mathbf{U}}_\ve\in\mathcal{N}_{\ve,\Omega,T}$.  It follows that
\begin{eqnarray}\label{eqn0002}
c_{\ve,\Omega,T}\leq\mathcal{J}_{\ve,\Omega,T}(\overrightarrow{\mathbf{U}}_\ve)=\sum_{i=1}^2\mathcal{E}_{i,\ve}(U_{i,\ve})<\ve^4\sum_{i=1}^2\frac{1}{4\mu_i}\mathcal{S}^2
\end{eqnarray}
for all $\beta$.  Now let $\overrightarrow{\mathbf{u}}\in\mathcal{N}'_{\ve,\Omega,T}$, then by H\"older inequality, we can see that
\begin{eqnarray*}
\|\overline{u}_1\|_{1,\Omega_\ve}^2+\|\overline{u}_2\|_{2,\Omega_\ve}^2\leq C\mathcal{B}_{ \overline{u}_1,4,\Omega_\ve}^{4}+C\mathcal{B}_{ \overline{u}_2,4,\Omega_\ve}^{4}+2|\beta|\mathcal{B}_{ \overline{u}_1,4,\Omega_\ve}^{2}\mathcal{B}_{ \overline{u}_2,4,\Omega_\ve}^{2},
\end{eqnarray*}
where $\overline{u}_i(x)=u_i(\ve x)$.
It follows from the Sobolev inequality that
\begin{eqnarray}\label{eqn0022}
\mathcal{B}_{ \overline{u}_2,4,\Omega_\ve}^{2}+\mathcal{B}_{ \overline{u}_1,4,\Omega_\ve}^{2}\geq C_\beta>0,
\end{eqnarray}
Here, $C_\beta$ depends on  $\beta$ only.  Thanks to \eqref{eqn0001}, we have $c'_{\ve,\Omega,T}\geq \ve^4 C_\beta$ for all $\beta>-\sqrt{\mu_1\mu_2}$.  For $\beta\leq-\sqrt{\mu_1\mu_2}$, by \eqref{eqn0004} and \eqref{eqn0006}, we can see that there exists $\alpha_T>0$ such that $c'_{\ve,\Omega,T}\geq \ve^4 C_\beta$ with $|\overrightarrow{\mathbf{\alpha}}|<\alpha_T$.  Now, the proof is completed by \eqref{eqn0002} and the fact that $c_{\ve,\Omega,T}\geq c'_{\ve,\Omega,T}$.
\end{proof}

By the Ekeland's variational principle, there exists $\{\overrightarrow{\mathbf{u}}_n\}\subset\mathcal{N}_{\ve,\Omega,T}$ such that
\begin{enumerate}
\item[$(1)$]  $\mathcal{J}_{\ve,\Omega,T}(\overrightarrow{\mathbf{u}}_n)=c_{\ve,\Omega,T}+o_n(1)$,
\item[$(2)$]  $\mathcal{J}_{\ve,\Omega,T}(\overrightarrow{\mathbf{v}})\geq \mathcal{J}_{\ve,\Omega,T}(\overrightarrow{\mathbf{u}}_n)-\frac1n\|\overrightarrow{\mathbf{v}}-\overrightarrow{\mathbf{u}}_n\|_{\ve,\Omega}$ for all $\overrightarrow{\mathbf{v}}\in\mathcal{N}_{\ve,\Omega,T}$.
\end{enumerate}
There also exists $\{\overrightarrow{\mathbf{u}}'_n\}\subset\mathcal{N}'_{\ve,\Omega}$ such that
\begin{enumerate}
\item[$(1')$]  $\mathcal{J}_{\ve,\Omega,T}(\overrightarrow{\mathbf{u}}'_n)=c'_{\ve,\Omega,T}+o_n(1)$,
\item[$(2')$]  $\mathcal{J}_{\ve,\Omega,T}(\overrightarrow{\mathbf{v}})\geq \mathcal{J}_{\ve,\Omega,T}(\overrightarrow{\mathbf{u}}'_n)-\frac1n\|\overrightarrow{\mathbf{v}}-\overrightarrow{\mathbf{u}}'_n\|_{\ve,\Omega}$ for all $\overrightarrow{\mathbf{v}}\in\mathcal{N}'_{\ve,\Omega,T}$.
\end{enumerate}
Define
\begin{eqnarray*}
\Phi_n(\overrightarrow{\mathbf{t}})=\mathcal{J}_{\ve,\Omega,T}(\overrightarrow{\mathbf{t}}\circ\overrightarrow{\mathbf{u}}_n)\quad\text{and}\quad\widetilde{\Phi}_n(t)=\mathcal{J}_{\ve,\Omega,T}(t\overrightarrow{\mathbf{u}}'_n),
\end{eqnarray*}
where $\overrightarrow{\mathbf{t}}\circ\overrightarrow{\mathbf{u}}_n=(t_1u_{1}^n, t_2u_2^n)$ and $t\overrightarrow{\mathbf{u}}'_n=(t(u_1^n)', t(u_2^n)')$.
\begin{lemma}\label{lem0001}
Let $\ve>0$ be small enough, $\alpha_1,\alpha_2>0$ and $\alpha_T$ is given in Lemma~\ref{lem0002}.  Then
\begin{enumerate}
\item[$(i)$] $\|\overrightarrow{\mathbf{u}}_n\|_{\ve,\Omega}^2<\ve^4T^2$ and $\|\overrightarrow{\mathbf{u}}_n'\|_{\ve,\Omega}^2<\ve^4T^2$ in one of the following two cases:
    \begin{enumerate}
    \item $\beta>-\sqrt{\mu_1\mu_2}$,
    \item $\beta\leq-\sqrt{\mu_1\mu_2}$ and $|\overrightarrow{\mathbf{\alpha}}|<\alpha_T$.
    \end{enumerate}
\item[$(ii)$] There exists $\beta_0\in(0, \min\{\mu_1, \mu_2\})$ independent of $\ve$ such that $\{\overrightarrow{\mathbf{u}}_n\}$ is a $(PS)_{c_{\ve,\Omega,T}}$ sequence of $\mathcal{J}_{\ve,\Omega,T}(\overrightarrow{\mathbf{u}})$
     in one of the following two cases:
    \begin{enumerate}
    \item $-\sqrt{\mu_1\mu_2}<\beta<\beta_0$,
    \item $\beta\leq-\sqrt{\mu_1\mu_2}$ and $|\overrightarrow{\mathbf{\alpha}}|<\alpha_T$.
    \end{enumerate}
     That is, $$\mathcal{J}_{\ve,\Omega,T}(\overrightarrow{\mathbf{u}}_n)=c_{\ve,\Omega,T}+o_n(1);\quad \mathcal{J}_{\ve,\Omega,T}'(\overrightarrow{\mathbf{u}}_n)=o_n(1)   {\; strongly\; in\;  }  \mathcal{H}_{\ve,\Omega}^{-1}.$$  Moreover, $\Phi_n(\overrightarrow{\mathbf{1}})\geq\Phi_n(\overrightarrow{\mathbf{t}})$ for all $n\in\bbn$ and $\overrightarrow{\mathbf{t}}\in(\bbr^+)^2$.
\item[$(iii)$] $\{\overrightarrow{\mathbf{u}}'_n\}$ is a $(PS)_{c'_{\ve,\Omega,T}}$ sequence of $\mathcal{J}_{\ve,\Omega,T}(\overrightarrow{\mathbf{u}})$ for all $\beta>0$.  Moreover, $\widetilde{\Phi}_n(1)\geq\widetilde{\Phi}_n(t)$ for all $n\in\bbn$ and $t>0$.
\end{enumerate}
\end{lemma}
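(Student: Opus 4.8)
The plan is to establish the three assertions of Lemma~\ref{lem0001} by exploiting the truncation structure of $\mathcal{J}_{\ve,\Omega,T}$, the fibering maps $\Phi_n$ and $\widetilde{\Phi}_n$, and the Ekeland sequences produced above. For part $(i)$, I would argue by contradiction using the energy bound $c_{\ve,\Omega,T}\leq\ve^4\sum_i\frac{1}{4\mu_i}\mathcal{S}^2$ from Lemma~\ref{lem0002}. When $\beta>-\sqrt{\mu_1\mu_2}$ one has $\chi_\beta\equiv1$, so $\overrightarrow{\mathbf{u}}_n\in\mathcal{N}'_{\ve,\Omega,T}$ gives via \eqref{eqn0001} that $\|\overrightarrow{\mathbf{u}}_n\|_{\ve,\Omega}^2\leq\frac{2p}{p-2}\mathcal{J}_{\ve,\Omega,T}(\overrightarrow{\mathbf{u}}_n)=\frac{2p}{p-2}c_{\ve,\Omega,T}+o_n(1)\leq\ve^4\frac{2p}{p-2}\sum_i\frac{1}{4\mu_i}\mathcal{S}^2+o_n(1)<\ve^4T^2$ by the choice \eqref{eqn0008} of $T$; the same computation handles $\overrightarrow{\mathbf{u}}_n'$. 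In the case $\beta\leq-\sqrt{\mu_1\mu_2}$ one uses instead \eqref{eqn0004}: if $\|\overrightarrow{\mathbf{u}}_n\|_{\ve,\Omega}^2\geq 2\ve^4T^2$ the truncation makes the subcritical terms vanish and $\mathcal{J}_{\ve,\Omega,T}(\overrightarrow{\mathbf{u}}_n)\geq\frac14\|\overrightarrow{\mathbf{u}}_n\|_{\ve,\Omega}^2\geq\frac12\ve^4T^2$, contradicting the upper bound for $T$ large; for intermediate values $\ve^4T^2\leq\|\overrightarrow{\mathbf{u}}_n\|^2_{\ve,\Omega}<2\ve^4T^2$ one feeds the estimate \eqref{eqn0006} on $\mathcal{B}_{u_i,p,\Omega}^p$ into \eqref{eqn0004}, using $-2\leq\chi_\beta'\leq0$ and $\chi_\beta\leq1$ and $|\overrightarrow{\mathbf{\alpha}}|<\alpha_T$ small, to again force $\mathcal{J}_{\ve,\Omega,T}(\overrightarrow{\mathbf{u}}_n)$ above $\ve^4\sum_i\frac{1}{4\mu_i}\mathcal{S}^2$, a contradiction. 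Hence the strict inequality $\|\overrightarrow{\mathbf{u}}_n\|^2_{\ve,\Omega}<\ve^4T^2$ holds.

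For part $(ii)$, once $(i)$ is in hand the truncation is inactive near $\overrightarrow{\mathbf{u}}_n$: there $\chi_\beta\bigl(\|\cdot\|_{\ve,\Omega}^2/(T^2\ve^4)\bigr)\equiv1$ and $\chi_\beta'\equiv0$, so $\mathcal{J}_{\ve,\Omega,T}$ coincides locally with the genuine (untruncated) energy of $(\mathcal{S}_\ve)$ and $\mathcal{N}_{\ve,\Omega,T}$ is locally the usual Nehari-type set cut out by $\overrightarrow{\mathbf{u}}\mapsto(\mathcal{J}'(\overrightarrow{\mathbf{u}})\overrightarrow{\mathbf{u}}^1,\mathcal{J}'(\overrightarrow{\mathbf{u}})\overrightarrow{\mathbf{u}}^2)$. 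The standard route is then: first show that for each $\overrightarrow{\mathbf{u}}_n\in\widetilde{\mathcal{H}}_{\ve,\Omega}$ the fibering map $\overrightarrow{\mathbf{t}}\mapsto\Phi_n(\overrightarrow{\mathbf{t}})$ has a unique critical point in $(\bbr^+)^2$, at which it attains a strict maximum, and that this critical point is $\overrightarrow{\mathbf{1}}$ precisely because $\overrightarrow{\mathbf{u}}_n\in\mathcal{N}_{\ve,\Omega,T}$; uniqueness and the maximum property come from analyzing the $2\times2$ system $\partial_{t_i}\Phi_n=0$ using the implicit function theorem and Miranda's theorem as the remark advertises, and this is where the smallness threshold $\beta_0\in(0,\min\{\mu_1,\mu_2\})$ enters, guaranteeing the relevant Jacobian is nondegenerate / the map is a diffeomorphism onto its range. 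Given that, the Ekeland minimizing property $(2)$ lets one conclude $\mathcal{J}_{\ve,\Omega,T}'(\overrightarrow{\mathbf{u}}_n)=o_n(1)$ in $\mathcal{H}_{\ve,\Omega}^{-1}$ by the usual Lagrange-multiplier argument on the constraint manifold: testing against the two constraint functionals and using that their gradients are ``almost orthogonal'' to $\overrightarrow{\mathbf{u}}_n$ (the $2\times2$ Gram-type matrix is uniformly invertible, again thanks to $\beta<\beta_0$ or $\beta\leq-\sqrt{\mu_1\mu_2}$ with small $\overrightarrow{\mathbf{\alpha}}$), one shows the multipliers vanish in the limit. The statement $\Phi_n(\overrightarrow{\mathbf{1}})\geq\Phi_n(\overrightarrow{\mathbf{t}})$ is exactly the ``strict maximum at $\overrightarrow{\mathbf{1}}$'' already proved.

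Part $(iii)$ is the analogous but simpler one-parameter statement. For $\beta>0$ the coupling term $-\frac{\beta}{2}\mathcal{B}_{|u_1|^2|u_2|^2,1,\Omega}$ has a favorable sign and the single fibering map $t\mapsto\widetilde\Phi_n(t)=\mathcal{J}_{\ve,\Omega,T}(t\overrightarrow{\mathbf{u}}_n')$ is, on the range where the truncation is inactive (guaranteed by part $(i)$ applied to $\overrightarrow{\mathbf{u}}_n'$), of the form $\frac{t^2}{2}A_n-\frac{t^p}{p}B_n-\frac{t^4}{4}C_n$ with $A_n,B_n,C_n>0$; such a function has a unique positive critical point which is a strict global maximum on $\bbr^+$, and membership of $\overrightarrow{\mathbf{u}}_n'$ in $\mathcal{N}'_{\ve,\Omega,T}$ forces that point to be $t=1$, giving $\widetilde\Phi_n(1)\geq\widetilde\Phi_n(t)$. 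Then Ekeland's property $(2')$ yields the $(PS)_{c'_{\ve,\Omega,T}}$ property: the constraint is a single equation $\mathcal{J}'(\overrightarrow{\mathbf{u}})\overrightarrow{\mathbf{u}}=0$ whose gradient pairs with $\overrightarrow{\mathbf{u}}_n'$ to $(2-p)B_n+(2-4)C_n<0$ bounded away from $0$, so the single Lagrange multiplier is $o_n(1)$ and $\mathcal{J}_{\ve,\Omega,T}'(\overrightarrow{\mathbf{u}}_n')=o_n(1)$.

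I expect the main obstacle to be the uniqueness-of-fibering-critical-point and nondegeneracy step underlying $(ii)$: because of the subcritical perturbations $u_i^{p-1}$ with $2<p<4$, the system $\partial_{t_i}\Phi_n(t_1,t_2)=0$ is genuinely nonlinear and not homogeneous, so one cannot just solve it explicitly as in the pure-power case; establishing a unique solution that is a strict maximum, with the threshold $\beta_0$ and with estimates uniform in $n$ (and controlled as $\ve\to0^+$), requires the implicit-function-theorem / Miranda-theorem machinery cited in the remark together with the a priori bound from $(i)$ that keeps everything inside the region where $\chi_\beta\equiv1$. Once that topological/analytic input is secured, the passage from the Ekeland sequences to genuine $(PS)$ sequences is routine.
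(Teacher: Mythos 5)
Your overall route is the same as the paper's: part $(i)$ via the upper bound of Lemma~\ref{lem0002} combined with \eqref{eqn0001} (resp.\ \eqref{eqn0004} and \eqref{eqn0006}) and the choice \eqref{eqn0008} of $T$; the $(PS)$ property via the implicit function theorem applied to the perturbed constraint system, with the uniform nondegeneracy of the $2\times2$ Jacobian $\Theta_n$ from \eqref{eqn0016} (this is exactly where $\beta_0$ and the lower bounds \eqref{eqn0013}--\eqref{eqn0014} on $\mathcal{B}_{u_i^n,4,\Omega}^4$ enter), followed by a Taylor expansion along the curve $\overrightarrow{\mathbf{t}}_n(l)\circ\overrightarrow{\mathbf{u}}_n+l\overrightarrow{\mathbf{w}}$; and part $(iii)$ by the one--dimensional fibering map, which for $\beta>0$ is of the form $\frac{t^2}{2}A_n-\frac{t^p}{p}B_n-\frac{t^4}{4}C_n$ with $C_n>0$.

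The one genuine gap is the claim $\Phi_n(\overrightarrow{\mathbf{1}})\geq\Phi_n(\overrightarrow{\mathbf{t}})$ for \emph{all} $\overrightarrow{\mathbf{t}}\in(\bbr^+)^2$ in part $(ii)$. You attribute this to ``analyzing the $2\times2$ system $\partial_{t_i}\Phi_n=0$ using the implicit function theorem and Miranda's theorem,'' but nondegeneracy of $\Theta_n$ only shows that $\overrightarrow{\mathbf{1}}$ is an isolated critical point (a local maximum); it does not by itself exclude other critical points of the fibering map, and Miranda's theorem produces existence of solutions, not uniqueness. The paper's actual device is a continuation in the coupling parameter: one studies $\overrightarrow{\mathbf{\Gamma}}_n(\overrightarrow{\mathbf{t}},\tau)=\overrightarrow{\mathbf{0}}$ for $\tau$ between $0$ and $\beta$. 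At $\tau=0$ the system decouples into two scalar equations, each with a unique positive root $s_i^n$; the Jacobian along the path stays invertible (strict diagonal dominance when $\beta<0$, and smallness of $\beta_0$ when $0<\beta<\beta_0$), and a priori upper and lower bounds on $\overrightarrow{\mathbf{t}}_n(\tau)$ (proved by ruling out $t_i^n(\tau)\to+\infty$ using \eqref{eqn0013} and the spectral bound on $\Theta_n$) allow the uniqueness at $\tau=0$ to be transported to $\tau=\beta$, as in \cite[Lemma~2.2]{W17}; combined with $\Phi_n(\overrightarrow{\mathbf{t}})\to-\infty$ as $|\overrightarrow{\mathbf{t}}|\to+\infty$ and the sign of $\partial\Phi_n/\partial t_i$ near $\partial\overline{(\bbr^+)^2}$, this yields that $\overrightarrow{\mathbf{1}}$ is the unique global maximum. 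Without this step (or an equivalent global uniqueness argument) the inequality $\Phi_n(\overrightarrow{\mathbf{1}})\geq\Phi_n(\overrightarrow{\mathbf{t}})$ --- which is used repeatedly later, e.g.\ in Lemma~\ref{lem0005} and Proposition~\ref{prop0004} --- is not established. The rest of your plan is sound.
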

\begin{proof}
$(i)$\quad
We only give the proof for $\overrightarrow{\mathbf{u}}_n$ since that of $\overrightarrow{\mathbf{u}}_n'$ is similar.  By Lemma~\ref{lem0002}, we can see from \eqref{eqn0001} and $(1)$ that
\begin{eqnarray}\label{eqn0009}
\|\overrightarrow{\mathbf{u}}_n\|_{\ve,\Omega}^2<\ve^4\frac{2p}{p-2}\sum_{i=1}^2\frac{1}{4\mu_i}\mathcal{S}^2+o_n(1)
\end{eqnarray}
in the case $\beta>-\sqrt{\mu_1\mu_2}$.  For $\beta\leq-\sqrt{\mu_1\mu_2}$, by Lemma~\ref{lem0002} and $(1)$, we can see from \eqref{eqn0004} and \eqref{eqn0006} that
\begin{eqnarray*}
&&\ve^4\sum_{i=1}^2\frac{1}{4\mu_i}\mathcal{S}^2+o_n(1)\\
&>&(\frac{1}{4}+\frac{\alpha_i}{2pT^2\ve^4}\mathcal{S}_{p,i}^{-\frac{p}{2}}(2T^2)^{\frac{p-2}{2}}\|\overrightarrow{\mathbf{u}}_n\|_{\ve,\Omega}^2
\chi_\beta'\bigg(\frac{\|\overrightarrow{\mathbf{u}}_n\|_{\ve,\Omega}^2}{T^2\ve^4}\bigg))\|\overrightarrow{\mathbf{u}}_n\|_{\ve,\Omega}^2\\
&&-\frac{4-p}{4p}\sum_{i=1}^2\alpha_i\mathcal{S}_{p,i}^{-\frac{p}{2}}(2T^2)^{\frac{p-2}{2}}\|\overrightarrow{\mathbf{u}}_n\|_{\ve,\Omega}^2.
\end{eqnarray*}
By choosing $\alpha_T$ small enough if necessary, we have from the construction of $\chi_\beta$ that
\begin{eqnarray}\label{eqn0010}
\ve^4\sum_{i=1}^2\frac{1}{4\mu_i}\mathcal{S}^2+o_n(1)>\frac1{8}\|\overrightarrow{\mathbf{u}}_n\|_{\ve,\Omega}^2
\end{eqnarray}
for $|\overrightarrow{\mathbf{\alpha}}|<\alpha_T$ in the case $\beta\leq-\sqrt{\mu_1\mu_2}$.  Combining  \eqref{eqn0009}--\eqref{eqn0010} and \eqref{eqn0008}, we have $\|\overrightarrow{\mathbf{u}}_n\|_{\ve,\Omega}^2<\ve^4T^2$ in the following two cases:
\begin{enumerate}
\item $\beta>-\sqrt{\mu_1\mu_2}$ and $\alpha_1,\alpha_2>0$,
\item $\beta\leq-\sqrt{\mu_1\mu_2}$ and $\alpha_1,\alpha_2>0$ with $|\overrightarrow{\mathbf{\alpha}}|<\alpha_T$.
\end{enumerate}

$(ii)$\quad
Let $\overrightarrow{\mathbf{w}}\in\mathcal{H}_{\ve,\Omega}$.  For every $n\in\bbn$, we consider the system $\overrightarrow{\mathbf{\Psi}}_n(\overrightarrow{\mathbf{t}},l)=\overrightarrow{\mathbf{0}}$, where $\overrightarrow{\mathbf{\Psi}}_n(\overrightarrow{\mathbf{t}},l)=(\Psi_1^n(\overrightarrow{\mathbf{t}},l), \Psi_2^n(\overrightarrow{\mathbf{t}},l))$ with
\begin{eqnarray*}
\Psi_i^n(\overrightarrow{\mathbf{t}},l)&=&\|t_iu_i^n+lw_i\|_{i,\ve,\Omega}^2-\alpha_i\mathcal{B}_{t_iu_i^n+lw_i,p,\Omega}^{p}
-\mu_i\mathcal{B}_{t_iu_i^n+lw_i,4,\Omega}^{4}\\
&&-\beta\mathcal{B}_{|t_1u_1^n+lw_1|^{2}|t_2u_2^n+lw_2|^{2},1,\Omega}.
\end{eqnarray*}
Clearly, $\overrightarrow{\mathbf{\Psi}}_n(\overrightarrow{\mathbf{t}},l)$ is of $C^1$.  Moreover, since $\{\overrightarrow{\mathbf{u}}_n\}\subset\mathcal{N}_{\ve,\Omega,T}$, we also have that $\overrightarrow{\mathbf{\Psi}}_n(\overrightarrow{\mathbf{1}},0)=\overrightarrow{\mathbf{0}}$.  By a direct calculation, we can see from $(i)$ and the construction of $\chi_\beta$ that
\begin{eqnarray}
\frac{\partial\Psi_i^n(\overrightarrow{\mathbf{1}},0)}{\partial t_i}&=&2\|u_i^n\|_{i,\ve,\Omega}^2-p\alpha_i\mathcal{B}_{u_i^n,p,\Omega}^{p}
-4\mu_i\mathcal{B}_{u_i^n,4,\Omega}^{4}-2\beta\mathcal{B}_{|u_1^n|^{2}|u_2^n|^{2},1,\Omega}\notag\\
&=&-(p-2)\alpha_i\mathcal{B}_{u_i^n,p,\Omega}^{p}-2\mu_i\mathcal{B}_{u_i^n,4,\Omega}^{4}\label{eqn0011}
\end{eqnarray}
respectively for $i=1,2$ and
\begin{eqnarray}
\frac{\partial\Psi_1^n(\overrightarrow{\mathbf{1}},0)}{\partial t_2}=\frac{\partial\Psi_2^n(\overrightarrow{\mathbf{1}},0)}{\partial t_1}=-2\beta\mathcal{B}_{|u_1^n|^{2}|u_2^n|^{2},1,\Omega}.\label{eqn0012}
\end{eqnarray}
Set
\begin{eqnarray}\label{eqn0016}
\Theta_n=(\theta_{ij}^n)_{i,j=1,2}
\end{eqnarray}
with $\theta_{ij}^n=\frac{\partial\Psi_i^n(\overrightarrow{\mathbf{1}},0)}{\partial t_j}$.  Then by \eqref{eqn0011} and \eqref{eqn0012}, we can see from $p>2$ and the H\"older inequality that
\begin{eqnarray}\label{eqn0020}
\text{det}(\Theta_n)>(\mu_1\mu_2-\beta^2)\mathcal{B}_{u_1^n,4,\Omega}^{4}\mathcal{B}_{u_2^n,4,\Omega}^{4}.
\end{eqnarray}
If $-\sqrt{\mu_1\mu_2}<\beta<0$, then by the Sobolev and H\"older inequalities, we can see from $\{\overrightarrow{\mathbf{u}}_n\}\subset\mathcal{N}_{\ve,\Omega,T}$ and the construction of $\chi_\beta$ that
\begin{eqnarray}\label{eqn0007}
\ve^4\mathcal{B}_{\nabla \overline{u}_i^n,2,\Omega_\ve}^{2}\leq\|u_i^n\|_{i,\ve,\Omega}^2\leq C\mathcal{B}_{ u_i^n,4,\Omega}^{4}\leq\ve^4 C\mathcal{B}_{\nabla \overline{u}_i^n,2,\Omega_\ve}^{4}, \quad i=1,2,
\end{eqnarray}
 which implies
\begin{eqnarray}\label{eqn0013}
\mathcal{B}_{\overline{u}_i^n,4,\Omega_\ve}^{4}\geq C
\end{eqnarray}
in the case $-\sqrt{\mu_1\mu_2}<\beta<0$.  Here $\overline{u}_i(x)=u_i(\ve x)$.  On the other hand, when  $\beta>0$, by similar arguments as used in \eqref{eqn0007}, we can see that
\begin{eqnarray*}
&\|\overline{u}_1^n\|_{1,\Omega_\ve}^2\leq C\mathcal{B}_{ \overline{u}_1^n,4,\Omega_\ve}^{4}+\beta\mathcal{B}_{ \overline{u}_1^n,4,\Omega_\ve}^{2}\mathcal{B}_{ \overline{u}_2^n,4,\Omega_\ve}^{2},\\
&\|\overline{u}_2^n\|_{2,\Omega_\ve}^2\leq C\mathcal{B}_{ \overline{u}_2^n,4,\Omega_\ve}^{4}+\beta\mathcal{B}_{ \overline{u}_1^n,4,\Omega_\ve}^{2}\mathcal{B}_{ \overline{u}_2^n,4,\Omega_\ve}^{2}.
\end{eqnarray*}
It follows from the Sobolev inequality that
\begin{eqnarray}\label{eqn0003}
&C\leq \mathcal{B}_{ \overline{u}_1^n,4,\Omega_\ve}^{2}+\beta\mathcal{B}_{ \overline{u}_2^n,4,\Omega_\ve}^{2},\\
&C\leq \mathcal{B}_{ \overline{u}_2^n,4,\Omega_\ve}^{2}+\beta\mathcal{B}_{ \overline{u}_1^n,4,\Omega_\ve}^{2}.
\end{eqnarray}
By $(1)$ and \eqref{eqn0001}, we can see from Lemma~\ref{lem0002} that
\begin{eqnarray}
C'\geq\mathcal{B}_{ \overline{u}_1^n,4,\Omega_\ve}^{2}+\mathcal{B}_{ \overline{u}_2^n,4,\Omega_\ve}^{2}\label{eqn0005}
\end{eqnarray}
for $\beta>0$.
Combining \eqref{eqn0003}--\eqref{eqn0005}, we can see that there exists $\beta_0\in(0, \sqrt{\mu_1\mu_2})$ independent of $\ve$ such that
\begin{eqnarray}\label{eqn0014}
\mathcal{B}_{\overline{u}_i^n,4,\Omega_\ve}^{4}\geq C, \quad i=1,2,
\end{eqnarray}
in the case $0<\beta<\beta_0$.  By \eqref{eqn0020}, \eqref{eqn0013} and \eqref{eqn0014}, we have that $\text{det}(\Theta_n)\geq \ve^4C>0$ in the case $-\sqrt{\mu_1\mu_2}<\beta<\beta_0$.  For $\beta\leq-\sqrt{\mu_1\mu_2}$, by $\{\overrightarrow{\mathbf{u}}_n\}\subset\mathcal{N}_{\ve,\Omega,T}$ and \eqref{eqn0006}, we can see from the construction of $\chi_\beta$ that
\begin{eqnarray*}
(1-\alpha_i\mathcal{S}_{p,i}^{-\frac{p}{2}}(2T^2)^{\frac{p-2}{2}})\|u_i^n\|_{i,\ve,\Omega}^2\leq
\mu_i\mathcal{B}_{u_i^n,4,\Omega}^{4}+\beta\mathcal{B}_{|u_1^n|^{2}|u_2^n|^{2},1,\Omega}, \quad i=1,2.
\end{eqnarray*}
By taking $\alpha_T$ small enough if necessary, we can see from \eqref{eqn0013} and the Sobolev inequality that
\begin{eqnarray}\label{eqnew9999}
\mu_i\mathcal{B}_{u_i^n,4,\Omega}^{4}+\beta\mathcal{B}_{|u_1^n|^{2}|u_2^n|^{2},1,\Omega}\geq \ve^4C, \quad i=1,2,
\end{eqnarray}
hence,
\begin{eqnarray*}
\mu_1\mu_2\mathcal{B}_{u_1^n,4,\Omega}^{4}\mathcal{B}_{u_2^n,4,\Omega}^{4}-\beta^2\mathcal{B}^2_{|u_1^n|^{2}|u_2^n|^{2},1,\Omega}\geq \ve^8C
\end{eqnarray*}
for $|\overrightarrow{\mathbf{\alpha}}|<\alpha_T$ in the case $\beta\leq-\sqrt{\mu_1\mu_2}$.  Thus, by $p>2$, we also have that $\text{det}(\Theta_n)\geq \ve^8C>0$ for $|\overrightarrow{\mathbf{\alpha}}|<\alpha_T$ in the case $\beta\leq-\sqrt{\mu_1\mu_2}$.  Since we always have $\text{det}(\Theta_n)\geq \ve^8C>0$, by the implicit function theorem, there exist $\sigma>0$ and $\overrightarrow{\mathbf{t}}_n(l)\in C^1([-\sigma, \sigma], [\frac12, \frac32]^2)$ such that $\{\overrightarrow{\mathbf{t}}_n(l)\circ\overrightarrow{\mathbf{u}}_n+l\overrightarrow{\mathbf{w}}\}\subset\mathcal{N}_{\ve,\Omega,T}$ in the following two cases:
\begin{enumerate}
\item $-\sqrt{\mu_1\mu_2}<\beta<\beta_0$ and $\alpha_1,\alpha_2>0$,
\item $\beta\leq-\sqrt{\mu_1\mu_2}$ and $\alpha_1,\alpha_2>0$ with $|\overrightarrow{\mathbf{\alpha}}|<\alpha_T$.
\end{enumerate}
Since $(2)$ holds and $\text{det}(\Theta_n)\geq \ve^8C>0$ for the above two cases, by using the Taylor expansion to $\mathcal{J}_{\ve,\Omega,T}(\overrightarrow{\mathbf{t}}_n(l)\circ\overrightarrow{\mathbf{u}}_n+l\overrightarrow{\mathbf{w}})$ in a standard way (cf. \cite{CZ121}), we can obtain that $\mathcal{J}_{\ve,\Omega,T}'(\overrightarrow{\mathbf{u}}_n)=o_n(1)$ strongly in $\mathcal{H}_{\ve,\Omega}^{-1}$.  In what follows, we will show that $\Phi_n(\overrightarrow{\mathbf{1}})\geq\Phi_n(\overrightarrow{\mathbf{t}})$ for all $n\in\bbn$ and $\overrightarrow{\mathbf{t}}\in(\bbr^+)^2$.  Consider the system $\overrightarrow{\mathbf{\Gamma}}_n(\overrightarrow{\mathbf{t}}, \tau)=\overrightarrow{\mathbf{0}}$, where $\overrightarrow{\mathbf{\Gamma}}_n(\overrightarrow{\mathbf{t}}, \tau)=(\Gamma_1^n(\overrightarrow{\mathbf{t}},\tau), \Gamma_2^n(\overrightarrow{\mathbf{t}},\tau))$ with
\begin{eqnarray*}
\Gamma_i^n(\overrightarrow{\mathbf{t}},\tau)&=&\|t_iu_i^n\|_{i,\ve,\Omega}^2-\alpha_i\mathcal{B}_{t_iu_i^n,p,\Omega}^{p}
-\mu_i\mathcal{B}_{t_iu_i^n,4,\Omega}^{4}-\tau\mathcal{B}_{|t_1u_1^n|^{2}|t_2u_2^n|^{2},1,\Omega}.
\end{eqnarray*}
Clearly, $\overrightarrow{\mathbf{\Gamma}}_n$ is of $C^1$ with $\overrightarrow{\mathbf{\Gamma}}_n(\overrightarrow{\mathbf{s}}_n, 0)=\overrightarrow{\mathbf{0}}$ for some $\overrightarrow{\mathbf{s}}_n\in(\bbr^+)^2$ and $\overrightarrow{\mathbf{\Gamma}}_n(\overrightarrow{\mathbf{1}}, \beta)=\overrightarrow{\mathbf{0}}$.  Moreover,
\begin{eqnarray}
s_i^n\frac{\partial\Gamma_i^n(\overrightarrow{\mathbf{s}}_n, 0)}{\partial t_i}=-(p-2)\alpha_i\mathcal{B}_{s_i^nu_i^n,p,\Omega}^{p}-2\mu_i\mathcal{B}_{s_i^nu_i^n,4,\Omega}^{4}\label{eqnew1011}, \quad i=1,2,
\end{eqnarray}
and
\begin{eqnarray}
s_1^n\frac{\partial\Gamma_1^n(\overrightarrow{\mathbf{s}}_n, 0)}{\partial t_2}=s_2^n\frac{\partial\Gamma_2^n(\overrightarrow{\mathbf{s}}_n, 0)}{\partial t_1}=0.\label{eqnew1012}
\end{eqnarray}
Clearly, det$(\widetilde{\Theta}_n)>0$, where $\widetilde{\Theta}_n=(\widetilde{\theta}_{ij}^n)_{i,j=1,2}$ with $\widetilde{\theta}_{ij}^n=\frac{\partial\Gamma_i^n(\overrightarrow{\mathbf{s}}_n, 0)}{\partial t_j}$.  By the implicit function theorem, there exists a $C^1$ vector valued function $\overrightarrow{\mathbf{t}}_n(\tau)$ with $\overrightarrow{\mathbf{t}}_n(0)=\overrightarrow{\mathbf{s}}_n$ and $t_i^n(\tau)>0$ such that $\overrightarrow{\mathbf{\Gamma}}_n(\overrightarrow{\mathbf{t}}(\tau), \tau)=\overrightarrow{\mathbf{0}}$ for $\tau$ small enough.  Moreover, by \eqref{eqn0014}, $\tau$ is also uniformly for $n$.  Thus, by taking $\beta_0$ small enough if necessary, we can see that $\overrightarrow{\mathbf{1}}$ is the unique solution of $\overrightarrow{\mathbf{\Gamma}}_n(\overrightarrow{\mathbf{t}}, \beta)=\overrightarrow{\mathbf{0}}$ for $0<\beta<\beta_0$.  For $\beta<0$, we follow the idea in the proof of \cite[Lemma~2.2]{W17} by considering the following set
\begin{eqnarray*}
\mathcal{Z}_n=\{\tau\in[0, \beta]\mid \overrightarrow{\mathbf{\Gamma}}_n(\overrightarrow{\mathbf{t}}, \tau)=\overrightarrow{\mathbf{0}} \text{ is uniquely solvable in }(\bbr^+)^2\}.
\end{eqnarray*}
Clearly, $0\in\mathcal{Z}_n$.  Moreover, we claim that $\{\overrightarrow{\mathbf{t}}_n(\tau)\}_{\tau\in\mathcal{Z}_n}$ is both bounded from above and below away from $0$, where $\overrightarrow{\mathbf{t}}_n(\tau)$ is the unique solution of $\overrightarrow{\mathbf{\Gamma}}_n(\overrightarrow{\mathbf{t}}, \tau)=\overrightarrow{\mathbf{0}}$.  Indeed, by $(i)$ and \eqref{eqn0013}, it is easy to see from $\beta<0$ and $\overrightarrow{\mathbf{\Gamma}}_n(\overrightarrow{\mathbf{t}}_n(\tau), \tau)=\overrightarrow{\mathbf{0}}$ that $t_i^n(\tau)\geq C$ with some $C>0$ uniformly for $n$ and $\tau$.  On the other hand, if $t_1^n(\tau)\to+\infty$ and $t_2^n(\tau)\leq C$, then by $(i)$ and \eqref{eqn0013} once more, we can see that $\Gamma_1^n(\overrightarrow{\mathbf{t}}_n(\tau),\tau)\to-\infty$, which contradicts to $\Gamma_1^n(\overrightarrow{\mathbf{t}}_n(\tau),\tau)=0$.  Similarly, it is also impossible if $t_1^n(\tau)\leq C$ and $t_2^n(\tau)\to+\infty$.  It remains to exclude the case of $t_i^n(\tau)\to+\infty$.  Since $\text{det}(\Theta_n)\geq \ve^8C>0$, we can see that $\Theta_n$ which is given by \eqref{eqn0016} is strictly diagonally dominant for $\beta<0$ and the first eigenvalue of $\Theta_n$ is bounded below away from $0$ uniformly for $n$.  Thus, we must have $\sum_{i=1}^2\Gamma_i^n(\overrightarrow{\mathbf{t}}_n(\tau),\tau)\to-\infty$ in this case, which is also impossible due to the fact that $\overrightarrow{\mathbf{\Gamma}}_n(\overrightarrow{\mathbf{t}}_n(\tau), \tau)=\overrightarrow{\mathbf{0}}$.  Now, for every $\tau\in\mathcal{Z}_n$, by similar calculations for $\Theta_n$, we have
\begin{eqnarray}
t_i^n(\tau)\frac{\partial\Gamma_i^n(\overrightarrow{\mathbf{t}}_n(\tau),\tau)}{\partial t_i}=-(p-2)\alpha_i\mathcal{B}_{t_i^n(\tau)u_i^n,p,\Omega}^{p}-2\mu_i\mathcal{B}_{t_i^n(\tau)u_i^n,4,\Omega}^{4}\label{eqn1011}
\end{eqnarray}
and
\begin{eqnarray}
t_2^n(\tau)\frac{\partial\Gamma_1^n(\overrightarrow{\mathbf{t}}_n(\tau),\tau)}{\partial t_2}=t_1^n(\tau)\frac{\partial\Gamma_2^n(\overrightarrow{\mathbf{t}}_n(\tau),\tau)}{\partial t_1}=-2\beta\mathcal{B}_{|t_1^n(\tau)u_1^n|^{2}|t_2^n(\tau)u_2^n|^{2},1,\Omega}.\label{eqn1012}
\end{eqnarray}
By taking $\alpha_T$ small enough if necessary for $\beta\leq-\sqrt{\mu_1\mu_2}$, we can see from the similar calculations for $\Theta_n$ that
det$(\widehat{\Theta}_n(\tau))>0$ for all $n$ and $\tau$, where $\widehat{\Theta}_n(\tau)=(\widehat{\theta}_{ij}^n(\tau))_{i,j=1,2}$ with $\widehat{\theta}_{ij}^n(\tau)=\frac{\partial\Gamma_i^n(\overrightarrow{\mathbf{t}}_n(\tau),\tau)}{\partial t_j}$.  Now, applying the implicit function theorem, we can extend $\overrightarrow{\mathbf{t}}_n(\tau)$ to $\tau=0$ for all $n$, since $\{\overrightarrow{\mathbf{t}}_n(\tau)\}_{\tau\in\mathcal{Z}_n}$ is both bounded from above and below away from $0$.  Since $\overrightarrow{\mathbf{t}}_n(0)=\overrightarrow{\mathbf{s}}_n$ is unique, by similar arguments as used in the proof of \cite[Lemma~2.2]{W17}, we can see that $\overrightarrow{\mathbf{1}}$ is the unique solution of $\overrightarrow{\mathbf{\Gamma}}_n(\overrightarrow{\mathbf{t}}, \beta)=\overrightarrow{\mathbf{0}}$, which implies that $\overrightarrow{\mathbf{1}}$ is the unique critical point of $\Phi_n(\overrightarrow{\mathbf{t}})$ for all $n\in\bbn$.  Recall that
$$\mu_1\mu_2\mathcal{B}_{u_1^n,4,\Omega}^{4}\mathcal{B}_{u_2^n,4,\Omega}^{4}-\beta^2\mathcal{B}^2_{|u_1^n|^{2}|u_2^n|^{2},1,\Omega}\geq \ve^8C, \quad  \mathcal{B}_{u_i^n,4,\Omega}^{4}\geq \ve^4C, \quad i=1,2,$$
 we can see that $\Phi_n(\overrightarrow{\mathbf{t}})\to-\infty$ as $|\overrightarrow{\mathbf{t}}|\to+\infty$.  It follows that $\Phi_n(\overrightarrow{\mathbf{t}})$ has a maximum point in $(\overline{\bbr^+})^2$ for all $n\in\bbn$.  Note that $p>2$, thus by a standard argument, either $\frac{\partial\Phi_n(\overrightarrow{\mathbf{t}})}{\partial t_1}>0$ or $\frac{\partial\Phi_n(\overrightarrow{\mathbf{t}})}{\partial t_2}>0$ near $\partial\overline{(\bbr^+)^2}$.  Hence, $\overrightarrow{\mathbf{1}}$ is the unique maximum point of $\Phi_n(\overrightarrow{\mathbf{t}})$ in $\overline{(\bbr^+)^2}$ and $\Phi_n(\overrightarrow{\mathbf{1}})\geq\Phi_n(\overrightarrow{\mathbf{t}})$ for all $n\in\bbn$ and $\overrightarrow{\mathbf{t}}\in(\bbr^+)^2$.

\vskip0.3in

$(iii)$\quad Let $\overrightarrow{\mathbf{w}}\in\mathcal{H}_{\ve,\Omega}$.  For every $n\in\bbn$, we consider the equation
\begin{eqnarray*}
\Upsilon_n(t,l)&=&\sum_{i=1}^2(\|tu_i^n+lw_i\|_{i,\ve,\Omega}^2-\alpha_i\mathcal{B}_{tu_i^n+lw_i,p,\Omega}^{p}
-\mu_i\mathcal{B}_{tu_i^n+lw_i,4,\Omega}^{4})\\
&&-2\beta\mathcal{B}_{|tu_1^n+lw_1|^{2}|tu_2^n+lw_2|^{2},1,\Omega}.
\end{eqnarray*}
Clearly, $\Upsilon_n(t,l)$ is of $C^1$.  Moreover, since $\{\overrightarrow{\mathbf{u}}_n'\}\subset\mathcal{N}'_{\ve,\Omega,T}$, we also have from $(i)$ and the construction of $\chi_\beta$ given by \eqref{eqnew8000} that $\Upsilon_n(1,0)=0$.  By a direct calculation, we can see that
\begin{eqnarray*}
\frac{\partial\Upsilon_n(1,0)}{\partial t}=-\sum_{i=1}^2((p-2)\alpha_i\mathcal{B}_{u_i^n,p,\Omega}^{p}+2\mu_i\mathcal{B}_{u_i^n,4,\Omega}^{4})-4\beta\mathcal{B}_{|u_1^n|^{2}|u_2^n|^{2},1,\Omega}.
\end{eqnarray*}
Since $p>2$, by a similar argument as used for \eqref{eqn0022}, we can show that $\frac{\partial\Upsilon_n(1,0)}{\partial t}\leq-C<0$ for all $\beta>0$.  Now, by the implicit function theorem, there exist $\sigma_n>0$ and $t_n(l)\in C^1([-\sigma_n, \sigma_n], [\frac12, \frac32])$ such that $\{t_n(l)\overrightarrow{\mathbf{u}}'_n+l\overrightarrow{\mathbf{w}}\}\subset\mathcal{N}'_{\ve,\Omega,T}$ for all $\beta>0$.  By applying the Taylor expansion to $\mathcal{J}_{\ve,\Omega,T}(t_n(l)\overrightarrow{\mathbf{u}}'_n+l\overrightarrow{\mathbf{w}})$, we can show that $\mathcal{J}'_{\ve,\Omega,T}(\overrightarrow{\mathbf{u}}'_n)=o_n(1)$ strongly in $\mathcal{H}_{\ve,\Omega}^{-1}$. Since $p>2$ and $\beta>0$, we can prove  $\widetilde{\Phi}_n(1)\geq\widetilde{\Phi}_n(t)$ for all $n\in\bbn$ and $t>0$ in a standard way.
\end{proof}

We also need the following important energy estimates.
\begin{lemma}\label{lem0003}
Let $\alpha_T$ and $\beta_0$ are respectively given by Lemmas~\ref{lem0002} and \ref{lem0001}.  Then for $\ve>0$ small enough and $\alpha_1,\alpha_2>0$,
\begin{enumerate}
\item[$(i)$] we have
$$
c_{\ve,\Omega,T}<\min\{d_{1,\ve,\Omega}+\frac{\ve^4}{4\mu_2}\mathcal{S}^2, d_{2,\ve,\Omega}+\frac{\ve^4}{4\mu_1}\mathcal{S}^2, A_\ve\}-\ve^4C_\beta
$$
under one of  the following two cases:
\begin{enumerate}
\item $-\sqrt{\mu_1\mu_2}<\beta<\beta_0$,
\item $\beta\leq-\sqrt{\mu_1\mu_2}$ and $|\overrightarrow{\mathbf{\alpha}}|<\alpha_T$.
\end{enumerate}
\item[$(ii)$] there exists $\beta_1>\max\{\mu_1, \mu_2\}$ independent of $\ve$ such that
$$
c'_{\ve,\Omega,T}<\min\{d_{1,\ve,\Omega},d_{2,\ve,\Omega}, A_\ve\}-\ve^4C_\beta
$$
for $\beta>\beta_1$.
\end{enumerate}
Here, $C_\beta$ depends only on $\beta$.
\end{lemma}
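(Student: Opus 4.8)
In each case the plan is to exhibit an explicit competitor --- in $\mathcal{N}_{\ve,\Omega,T}$ for part $(i)$, in $\mathcal{N}'_{\ve,\Omega,T}$ for part $(ii)$ --- whose energy lies below the asserted threshold by a definite amount of order $\ve^4$, and then to read off $C_\beta$ from that amount. Everywhere I use the rescaling $x\mapsto\ve x$, which turns $(\mathcal{S}_\ve)$ on $\Omega$ into the $\ve=1$ problem on $\Omega_\ve\to\bbr^4$, sends $\|\cdot\|_{\ve,\Omega}^2$ to $\ve^4\|\cdot\|_{1,\Omega_\ve}^2$, and is the source of the common factor $\ve^4$ in all of $c_{\ve,\Omega,T}$, $c'_{\ve,\Omega,T}$, $d_{i,\ve,\Omega}$, $\frac{\ve^4}{4\mu_i}\mathcal{S}^2$ and $A_\ve$. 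The one analytic ingredient I would establish first is the \emph{strict subcritical gain} for the single equation $(\mathcal{P}_{i,\ve})$: for $i=1,2$ and $\ve>0$ small there is $v_{i,\ve}$, supported in a prescribed interior ball and lying on the Nehari manifold of $(\mathcal{P}_{i,\ve})$, with $\mathcal{E}_{i,\ve}(v_{i,\ve})\leq\frac{\ve^4}{4\mu_i}\mathcal{S}^2-c_0\ve^4$ for a fixed $c_0>0$; in particular $d_{i,\ve,\Omega}\leq\frac{\ve^4}{4\mu_i}\mathcal{S}^2-c_0\ve^4$. This is the dimension-four Br\'ezis--Nirenberg computation already behind Proposition~\ref{propA0001}: testing the scalar energy with a truncated Aubin--Talenti bubble $U_\delta$ concentrated at an interior point, the subcritical term $\frac{\alpha_i}{p}\int_\Omega U_\delta^p\,dx\sim\delta^{4-p}$ dominates both the positive lower-order term $\frac{\lambda_i}{2}\int_\Omega U_\delta^2\,dx\sim\delta^2|\log\delta|$ and the boundary cut-off error because $2<p<4$; after projecting onto the scalar Nehari manifold and optimizing in $\delta$ one retains a fixed strict decrease.

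\medskip

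\noindent\emph{Part $(i)$.} Fix two disjoint balls $\mathbb{B}_{r_1}(x_1),\mathbb{B}_{r_2}(x_2)\subset\Omega$ and let $U_{i,\ve}$ be a ground state of $(\mathcal{P}_{i,\ve})$ in $\mathbb{B}_{r_i}(x_i)$ (Proposition~\ref{propA0001}), so that, by the scaling, $\mathcal{E}_{i,\ve}(U_{i,\ve})=d_{i,\ve,\mathbb{B}_{r_i}(x_i)}=d_{i,\ve,\Omega}+o(\ve^4)$. Because the supports are disjoint the coupling term $\mathcal{B}_{|U_{1,\ve}|^2|U_{2,\ve}|^2,1,\Omega}$ vanishes, each component automatically lies on its own scalar Nehari manifold, and since $\|(U_{1,\ve},U_{2,\ve})\|_{\ve,\Omega}^2\leq C\ve^4\ll T^2\ve^4$ we have $\chi_\beta\equiv1$ by \eqref{eqn0008}; hence $(U_{1,\ve},U_{2,\ve})\in\mathcal{N}_{\ve,\Omega,T}$, the functional $\mathcal{J}_{\ve,\Omega,T}$ equals the sum of the two scalar energies there, and $c_{\ve,\Omega,T}\leq d_{1,\ve,\Omega}+d_{2,\ve,\Omega}+o(\ve^4)$. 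Combined with the strict gain $d_{j,\ve,\Omega}\leq\frac{\ve^4}{4\mu_j}\mathcal{S}^2-c_0\ve^4$, this gives, for $\ve$ small, $c_{\ve,\Omega,T}<d_{1,\ve,\Omega}+\frac{\ve^4}{4\mu_2}\mathcal{S}^2-\frac{c_0}{2}\ve^4$, the symmetric bound against $d_{2,\ve,\Omega}+\frac{\ve^4}{4\mu_1}\mathcal{S}^2$, and --- since $A_\ve$ is a loss-of-compactness (bubbling) level whose known lower bound again combines with the $c_0\ve^4$ gain --- also $c_{\ve,\Omega,T}<A_\ve-\frac{c_0}{2}\ve^4$. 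In all subcases the sign identities \eqref{eqn0001} and \eqref{eqn0004} ensure that the truncation $\chi_\beta$ causes no harm, so one may take $C_\beta=c_0/2$; the extra hypothesis $|\overrightarrow{\mathbf{\alpha}}|<\alpha_T$ when $\beta\leq-\sqrt{\mu_1\mu_2}$ is used only, via \eqref{eqn0006}, to keep the relevant norms $\leq T^2\ve^4$, not in the construction itself.

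\medskip

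\noindent\emph{Part $(ii)$.} Here the coupling is an ally, since it enters $\mathcal{J}_{\ve,\Omega,T}$ as $-\frac{\beta}{2}\mathcal{B}_{|u_1|^2|u_2|^2,1,\Omega}$ and lowers the energy when $\beta>0$ is large. Fix a positive $\psi\in C_c^\infty(\bbr^4)$ supported in a small ball, set $\phi_\ve(x)=\psi(x/\ve)$ (supported in $\Omega$ for $\ve$ small, since $0\in\Omega$) and $\overrightarrow{\mathbf{\phi}}_\ve=(\phi_\ve,\phi_\ve)$, so that $\|\overrightarrow{\mathbf{\phi}}_\ve\|_{\ve,\Omega}^2$, $\mathcal{B}_{\phi_\ve,4,\Omega}^4$ and $\mathcal{B}_{\phi_\ve,p,\Omega}^p$ are all comparable to $\ve^4$ with $\ve$-independent constants. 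Since $p>2$ and, for $\beta>-\sqrt{\mu_1\mu_2}$, $\chi_\beta\equiv1$, the fibering map $t\mapsto\mathcal{J}_{\ve,\Omega,T}(t\overrightarrow{\mathbf{\phi}}_\ve)$ has a unique positive maximizer $t_\ve(\beta)$, so $t_\ve(\beta)\overrightarrow{\mathbf{\phi}}_\ve\in\mathcal{N}'_{\ve,\Omega,T}$; the Nehari relation forces $t_\ve(\beta)^2(\mu_1+\mu_2+2\beta)\mathcal{B}_{\phi_\ve,4,\Omega}^4\leq\|\overrightarrow{\mathbf{\phi}}_\ve\|_{\ve,\Omega}^2$, whence, by \eqref{eqn0001},
$$
c'_{\ve,\Omega,T}\leq\mathcal{J}_{\ve,\Omega,T}\big(t_\ve(\beta)\overrightarrow{\mathbf{\phi}}_\ve\big)\leq\tfrac14 t_\ve(\beta)^2\|\overrightarrow{\mathbf{\phi}}_\ve\|_{\ve,\Omega}^2\leq\frac{\|\overrightarrow{\mathbf{\phi}}_\ve\|_{\ve,\Omega}^4}{4(\mu_1+\mu_2+2\beta)\,\mathcal{B}_{\phi_\ve,4,\Omega}^{4}}\leq\frac{C\ve^4}{\beta}.
$$
On the other hand $d_{i,\ve,\Omega}\geq c_1\ve^4$ and $A_\ve\geq c_1\ve^4$ for a fixed $c_1>0$ and all small $\ve$; choosing $\beta_1>\max\{\mu_1,\mu_2,2C/c_1\}$ (independent of $\ve$) and $C_\beta=c_1/2$ then yields $c'_{\ve,\Omega,T}<\min\{d_{1,\ve,\Omega},d_{2,\ve,\Omega},A_\ve\}-\ve^4C_\beta$ for all $\beta>\beta_1$.

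\medskip

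\noindent\emph{The main obstacle.} The geometry --- two disjoint spikes in $(i)$, a single coupled spike in $(ii)$ --- is transparent; the real work is to make every estimate \emph{uniform in $\ve$ at the precise scale $\ve^4$}. After the $\ve$-rescaling the Aubin--Talenti bubble of part $(i)$ lives in $\Omega_\ve$, which is large but still $\ve$-dependent, so one must control the truncation error against $\partial\Omega_\ve$ together with the logarithmic term $\delta^2|\log\delta|$ and show that the subcritical gain $\delta^{4-p}$ wins with a constant $c_0$ independent of $\ve$ --- this is precisely where $p<4$ is essential and where $N=4$ makes the estimate delicate, since in dimension four the lower-order term carries the extra factor $|\log\delta|$. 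Likewise in $(ii)$ the constant $C$ in $t_\ve(\beta)^2(\mu_1+\mu_2+2\beta)\mathcal{B}_{\phi_\ve,4,\Omega}^4\leq\|\overrightarrow{\mathbf{\phi}}_\ve\|_{\ve,\Omega}^2$, and hence in $c'_{\ve,\Omega,T}\leq C\ve^4/\beta$, must not deteriorate as $\ve\to0^+$. A secondary but necessary point is that in $(i)$ the competitor must sit on $\mathcal{N}_{\ve,\Omega,T}$, not merely on $\mathcal{N}'_{\ve,\Omega,T}$: this forces each component onto its own scalar Nehari manifold, which is consistent exactly because the supports are disjoint, while the bound $\|\cdot\|_{\ve,\Omega}^2\leq T^2\ve^4$ is what keeps $\chi_\beta\equiv1$, so that the truncated functional agrees with the genuine one.
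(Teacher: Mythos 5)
Your part $(i)$ takes a genuinely different and in fact simpler route than the paper's. The paper builds the competitor $(\widetilde U_{1,\ve},u_\sigma)$ (scalar ground state on $\Omega$ paired with a truncated Aubin--Talenti bubble), projected onto $\mathcal{N}_{\ve,\Omega,T}$ by Miranda's theorem, so that the second slot contributes $\tfrac{\ve^4}{4\mu_2}\mathcal{S}^2-C'\ve^4\sigma^{4-p}$ after absorbing the coupling error $C|\beta|\ve^4\sigma^2|\ln\sigma|$; the $A_\ve$ bound for $0<\beta<\beta_0$ then comes from a separate coupled competitor $(\sqrt{k_1}u_\sigma,\sqrt{k_2}u_\sigma)$. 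You instead use two disjointly supported scalar ground states, exactly the competitor of Lemma~\ref{lem0002}, obtaining $c_{\ve,\Omega,T}\le d_{1,\ve,\mathbb{B}_{r_1}}+d_{2,\ve,\mathbb{B}_{r_2}}=d_{1,\ve,\Omega}+d_{2,\ve,\Omega}+o(\ve^4)$, and the strict gain $d_{i,\ve,\Omega}\le\tfrac{\ve^4}{4\mu_i}\mathcal{S}^2-c_0\ve^4$ of Proposition~\ref{propA0001} then delivers the first two thresholds and, for $\beta<0$, the $A_\ve$ threshold (since there $A_\ve=\sum_i\tfrac{\ve^4}{4\mu_i}\mathcal{S}^2$). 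This is a legitimate shortcut. The one soft spot is the $A_\ve$ comparison for $0<\beta<\beta_0$: by Proposition~\ref{propAnew0001}, $A_\ve=\tfrac{k_1+k_2}{4}\ve^4\mathcal{S}^2$ lies \emph{strictly below} $\sum_i\tfrac{\ve^4}{4\mu_i}\mathcal{S}^2$ by an amount of order $\beta\ve^4$, so your decoupled bound beats $A_\ve$ only after checking that this deficit is $O(\beta)$ and shrinking $\beta_0$ so that $O(\beta_0)<2c_0$. Your justification (``known lower bound'' of $A_\ve$) does not engage with this; either add that computation or fall back on the coupled bubble as the paper does.

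Part $(ii)$ has a genuine gap. Your choice of $\beta_1$ rests on the premise that $A_\ve\ge c_1\ve^4$ for a fixed $c_1>0$ independent of $\beta$. That is false: $A_\ve=\tfrac{k_1+k_2}{4}\ve^4\mathcal{S}^2$ with $k_1+k_2=\tfrac{2\beta-\mu_1-\mu_2}{\beta^2-\mu_1\mu_2}$, so $A_\ve\sim\tfrac{\ve^4\mathcal{S}^2}{2\beta}$ decays at exactly the same rate as your upper bound $C\ve^4/\beta$, and the comparison becomes a fight between two constants at scale $\ve^4/\beta$. Your test function loses that fight: the Sobolev inequality gives $\|\overrightarrow{\mathbf{\phi}}_\ve\|_{\ve,\Omega}^2\ge 2\ve^2\mathcal{B}_{\nabla\phi_\ve,2,\Omega}^2\ge 2\ve^2\mathcal{S}\,\mathcal{B}_{\phi_\ve,4,\Omega}^{2}$, hence
$$
\frac{\|\overrightarrow{\mathbf{\phi}}_\ve\|_{\ve,\Omega}^4}{4(\mu_1+\mu_2+2\beta)\,\mathcal{B}_{\phi_\ve,4,\Omega}^{4}}\;\ge\;\frac{\ve^4\mathcal{S}^2}{\mu_1+\mu_2+2\beta}\;\ge\;\frac{(2\beta-\mu_1-\mu_2)\ve^4\mathcal{S}^2}{4(\beta^2-\mu_1\mu_2)}\;=\;A_\ve,
$$
the last inequality because $(2\beta-\mu_1-\mu_2)(2\beta+\mu_1+\mu_2)=4\beta^2-(\mu_1+\mu_2)^2\le 4(\beta^2-\mu_1\mu_2)$. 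So your bound never drops strictly below $A_\ve$, and the discarded subcritical term is only $O(t_\ve^p\ve^4)=O(\beta^{-p/2}\ve^4)=o(\beta^{-1}\ve^4)$, too small to recover a deficit of order $\beta^{-1}\ve^4$. (The $d_{i,\ve,\Omega}$ part of your argument is fine, since those levels genuinely are bounded below by $c_1\ve^4$ uniformly in $\beta$.) To beat $A_\ve$ at large $\beta$ the competitor must have $L^4$-quotient within $o(1)$ of the Sobolev constant while still carrying the subcritical gain --- e.g.\ the coupled truncated bubble $(\sqrt{k_1}u_\sigma,\sqrt{k_2}u_\sigma)$ of part $(i)$, whose quotient is $\mathcal{S}^2+O(\sigma^2)$ while the gain $C\sigma^{4-p}$ dominates $\sigma^2|\ln\sigma|$ since $p<4$. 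The paper's own proof of $(ii)$ tests with $t_\beta(\widetilde U_{1,\ve},\widetilde U_{1,\ve})$ and the same $\tfrac14 t_\beta^2\|\cdot\|^2$ estimate, so the $A_\ve$ comparison is the delicate point there too; in any case your version, built on the false lower bound for $A_\ve$, does not close.
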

\begin{proof}
$(i)$ By Proposition~\ref{propA0001}, $\widetilde{U}_{i,\ve}$ is the ground state solution of the following equation
$$
\left\{\aligned&-\ve^2\Delta u+\lambda_iu=\mu_iu^3+\alpha_iu^{p-1}\quad&\text{in }\Omega,\\
&u>0\quad\text{in }\Omega,\quad&u=0\quad\text{on }\partial\Omega. \endaligned\right.
$$
Then $\mathcal{E}_{i,\ve,\Omega}(\widetilde{U}_{i,\ve})=d_{i,\ve,\Omega}, i=1,2$.  Since $d_{i,\ve,\Omega}<\frac{\ve^4}{4\mu_i}\mathcal{S}^2$ and $2<p<4$, by a standard argument, we can see that
\begin{eqnarray}\label{eqn0025}
\mathcal{B}_{\widetilde{U}_{i,\ve},4,\Omega}^{4}<\frac{\ve^4p}{(4-p)\mu_i}\mathcal{S}^2\quad\text{and}\quad
\|\widetilde{U}_{i,\ve}\|_{i,\ve,\Omega}^2<\frac{\ve^4p}{2(p-2)\mu_i}\mathcal{S}^2, \quad i=1,2.
\end{eqnarray}
On the other hand, by a similar argument as used for \eqref{eqn0007}, we also have that
\begin{eqnarray}\label{eqn0021}
\|\widetilde{U}_{i,\ve}\|_{i,\ve,\Omega}^2\geq\ve^4 C\quad\text{and}\quad\mathcal{B}_{\widetilde{U}_{i,\ve},4,\Omega}^{4}\geq\ve^4 C.
\end{eqnarray}
Let $x_R\in\Omega_\ve$ and $R>0$ satisfy $\mathbb{B}_{3R}(x_R)\subset\Omega_\ve$. Take $\Psi\in C_0^2(\mathbb{B}_2(0))$ satisfying  $0\leq\Psi(x)\leq1$ and $\Psi(x)\equiv1$ in $\mathbb{B}_1(0)$.  Set $\varphi^*_R(x)=\Psi\bigg(\frac{x-x_R}{R}\bigg)$ for $x\in \mathbb{B}_{2R}(x_R)$ and $\varphi^*_R(x)=0$ for $x\in\Omega_\ve\backslash \mathbb{B}_{2R}(x_R)$.
Then $\varphi^*_R(x)\in C_0^2(\Omega_\ve)$ and
\begin{eqnarray*}
\varphi_R^*(x)=\left\{\aligned &1,\quad&x\in \mathbb{B}_{R}(x_R);\\
&0,\quad&x\in\Omega_\ve\backslash \mathbb{B}_{2R}(x_R),\endaligned\right.
\end{eqnarray*}
and $|\nabla \varphi_R^*(x)|\leq\frac{C}{R}$.  Let
\begin{eqnarray*}
V_{\sigma}(x)=\frac{2\sqrt{2}\sigma}{(\sigma^2+|x|^2)},\quad  v_{\sigma}(x)=\varphi_R^*(x)V_{\sigma}(\frac{x-x_R}{R}).
\end{eqnarray*}
Then it is well known that $v_{\sigma}\rightharpoonup0$ weakly in $H^1_0(\Omega_\ve)\cap L^4(\Omega_\ve)$ and $v_{\sigma}\to0$ strongly in $L^r(\Omega_\ve)$ for all $1\leq r<4$ as $\sigma\to0$. Moreover, by a well known calculation, we also have that
\begin{eqnarray}\label{eq0072}
\mathcal{B}_{\nabla v_\sigma,2,\Omega_\ve}^2=\mathcal{S}^{2}+O(\sigma^{2}),\quad\mathcal{B}_{v_\sigma,4,\Omega_\ve}^{4}=\mathcal{S}^{2}+O(\sigma^{4})
\end{eqnarray}
and
\begin{eqnarray}\label{eq0073}
\mathcal{B}_{v_\sigma,p,\Omega_\ve}^{p}\geq C(\sigma^{4-p}+\sigma^p),\quad\mathcal{B}_{v_\sigma,2,\Omega_\ve}^{2}\leq C'\sigma^2|\text{ln}\sigma|+O(\sigma^2).
\end{eqnarray}
Since $0\in\Omega$, we must have $\Omega_{\ve_1}\subset\Omega_{\ve_2}$ with $\ve_1>\ve_2$, which implies that $O(\sigma^2)$, $O(\sigma^4)$ and the constants $C,C'$ in \eqref{eq0072} and \eqref{eq0073} can be chosen such that they are independent of $\ve<1$.
Let us consider the following system
\begin{eqnarray*}
\left\{\aligned\eta_1(t_1,t_2)&=\|t_1\widetilde{U}_{1,\ve}\|_{1,\ve,\Omega}^2-\alpha_1\mathcal{B}_{t_1\widetilde{U}_{1,\ve},p,\Omega}^{p}
-\mu_1\mathcal{B}_{t_1\widetilde{U}_{1,\ve},4,\Omega}^{4}-\beta\mathcal{B}_{|t_1\widetilde{U}_{1,\ve}|^2|t_2u_\sigma|^2,1,\Omega},\\
\eta_2(t_1,t_2)&=\|t_2u_\sigma\|_{2,\ve,\Omega}^2-\alpha_2\mathcal{B}_{t_2u_\sigma,p,\Omega}^{p}
-\mu_2\mathcal{B}_{t_2u_\sigma,4,\Omega}^{4}-\beta\mathcal{B}_{|t_1\widetilde{U}_{1,\ve}|^2|t_2u_\sigma|^2,1,\Omega},
\endaligned\right.
\end{eqnarray*}
where $u_{\sigma}(x)=v_\sigma(\ve^{-1}x)$.  Set
\begin{eqnarray*}
t_1^*=\bigg(\frac{\|\widetilde{U}_{1,\ve}\|_{1,\ve,\Omega}^2}{\mu_1\mathcal{B}_{\widetilde{U}_{1,\ve},4,\Omega}^{4}}\bigg)^{\frac12}\quad\text{and}\quad
t_2^*=\bigg(\frac{\|u_\sigma\|_{2,\ve,\Omega}^2}{\mu_2\mathcal{B}_{u_\sigma,4,\Omega}^{4}}\bigg)^{\frac12}.
\end{eqnarray*}
By \eqref{eqn0025} and \eqref{eqn0021}, we have $t_1^*\leq C$.  Moreover, by \eqref{eq0072}, we also have $t_2^*\leq C$ for $\sigma$ small enough.  Note that by classical elliptic regularity theorem, we have $\widetilde{U}_{1,\ve}\in C^2$.  Thus, by \eqref{eq0073}, $2<p<4$ and Proposition~\ref{propA0001}, we can see that
\begin{eqnarray*}
&&|\beta|\mathcal{B}_{|t_1^*\widetilde{U}_{1,\ve}|^2|t_2u_\sigma|^2,1,\Omega}-\alpha_1\mathcal{B}_{t_1^*\widetilde{U}_{1,\ve},p,\Omega}^{p}\\
& & \leq \ve^{4}\|\widetilde{U}_{1,\ve}\|_{L^\infty(\Omega)}^2C|\beta|(\sigma^2|\text{ln}\sigma|+O(\sigma^2))-\alpha_1\mathcal{B}_{t_1\widetilde{U}_{1,\ve},p,\Omega}^{p}\\
& &<0
\end{eqnarray*}
for all $t_2\in[0, t_2^*]$ and
\begin{eqnarray*}
&&|\beta|\mathcal{B}_{|t_1\widetilde{U}_{1,\ve}|^2|t_2^*u_\sigma|^2,1,\Omega}-\alpha_2\mathcal{B}_{t_2^*u_\sigma,p,\Omega}^{p}\\
&&\leq \ve^{4}(\|\widetilde{U}_{1,\ve}\|_{L^\infty(\Omega)}^2C|\beta|(\sigma^2|\text{ln}\sigma|+O(\sigma^2))-C'(\sigma^{4-p}+\sigma^p))\\
&&<0
\end{eqnarray*}
for all $t_1\in[0, t_1^*]$ with $\sigma$ small enough.  It follows that $\eta_1(t_1^*,t_2)<0$ for all $t_2\in[0, t_2^*]$ and $\eta_2(t_1,t_2^*)<0$ for all $t_1\in[0, t_1^*]$ with $\sigma$ small enough.  On the other hand, by similar arguments as used in \eqref{eqn0007}, we have
\begin{eqnarray*}
\eta_1(t_1,t_2)\geq t_1^2(\|\widetilde{U}_{1,\ve}\|_{1,\ve,\Omega}^2-Ct_1^2\mathcal{B}_{\widetilde{U}_{1,\ve},4,\Omega}^{4}
-\beta\mathcal{B}_{|\widetilde{U}_{1,\ve}|^2|t_2^*u_\sigma|^2,1,\Omega})
\end{eqnarray*}
for all $t_2\in[0, t_2^*]$ and
\begin{eqnarray*}
\eta_2(t_1,t_2)\geq t_2^2(\|u_\sigma\|_{2,\ve,\Omega}^2-Ct_2^2\mathcal{B}_{u_\sigma,4,\Omega}^{4}
-\beta\mathcal{B}_{|t_1^*\widetilde{U}_{1,\ve}|^2|u_\sigma|^2,1,\Omega})
\end{eqnarray*}
for all $t_1\in[0, t_1^*]$ with $\beta>0$ while
\begin{eqnarray*}
\eta_1(t_1,t_2)\geq t_1^2(\|\widetilde{U}_{1,\ve}\|_{1,\ve,\Omega}^2-Ct_1^2\mathcal{B}_{\widetilde{U}_{1,\ve},4,\Omega}^{4})
\end{eqnarray*}
for all $t_2\in[0, t_2^*]$ and
\begin{eqnarray*}
\eta_2(t_1,t_2)\geq t_2^2(\|u_\sigma\|_{2,\ve,\Omega}^2-Ct_2^2\mathcal{B}_{u_\sigma,4,\Omega}^{4})
\end{eqnarray*}
for all $t_1\in[0, t_1^*]$ with $\beta<0$.  Since \eqref{eqn0025}--\eqref{eq0072} hold, by taking $\beta_0$ small enough if necessary and using a standard argument, we can see that there exist $t_1^{**}, t_2^{**}\geq C'$ such that $\eta_1(t_1^{**},t_2)>0$ for all $t_2\in[t_2^{**}, t_2^*]$ and $\eta_2(t_1,t_2^{**})>0$ for all $t_1\in[t_1^{**}, t_1^*]$ with $\beta<\beta_0$ and $\sigma$ small enough.  Now, applying Miranda's theorem (cf. \cite[Lemma~3.1]{CLZ141}), we can see that there exists $(\widetilde{t}_{1,\ve}, \widetilde{t}_{2,\ve})\in[t_1^{**}, t_1^*]\times[t_2^{**}, t_2^*]$ such that $\eta_1(\widetilde{t}_{1,\ve},\widetilde{t}_{2,\ve})=\eta_2(\widetilde{t}_{1,\ve},\widetilde{t}_{2,\ve})=0$.  It follows from \eqref{eqn0008}, \eqref{eqn0025} and \eqref{eqn0021} that $\overrightarrow{\mathbf{t}}_\ve\circ\overrightarrow{\widetilde{\mathbf{U}}}_\ve\in\mathcal{N}_{\ve,\Omega,T}$ for $\sigma$ small enough, where $\overrightarrow{\mathbf{t}}_\ve=(\widetilde{t}_{1,\ve}, \widetilde{t}_{2,\ve})$ and $\overrightarrow{\widetilde{\mathbf{U}}}_\ve=(\widetilde{U}_{1,\ve}, u_\sigma)$.   {By the choice of $u_\sigma$ and Proposition~\ref{propA0001} in the Appendix, } we have
\begin{eqnarray*}
|\beta|\mathcal{B}_{|\widetilde{t}_{1,\ve}\widetilde{U}_{1,\ve}|^2|\widetilde{t}_{1,\ve}u_\sigma|^2,1,\Omega}
\leq|\beta|C\mathcal{B}_{u_\sigma,2,\Omega}^2.
\end{eqnarray*}
Since $2<p<4$, by a standard argument we can see that
\begin{eqnarray*}
c_{\ve,\Omega,T}&\leq&\mathcal{J}_{\ve,\Omega,T}(\overrightarrow{\mathbf{t}}_\ve\circ\overrightarrow{\widetilde{\mathbf{U}}}_\ve)\notag\\
&\leq&\mathcal{E}_{1,\ve}(\widetilde{U}_{1,\ve})+\frac{\ve^4}{4\mu_2}\mathcal{S}^2+\ve^4(C|\beta|\sigma^2|\text{ln}\sigma|+|\beta|O(\sigma^2)-C'\sigma^{4-p})\notag\\
&<&d_{1,\ve,\Omega}+\frac{\ve^4}{4\mu_2}\mathcal{S}^2-\ve^4C_\beta\label{eqn0099}
\end{eqnarray*}
with $\sigma$ small enough, where $C_\beta$ is only dependent on $\beta$.  Similarly, we can also show that
$$
c_{\ve,\Omega,T}<d_{2,\ve,\Omega}+\frac{\ve^4}{4\mu_1}\mathcal{S}^2-\ve^4C_\beta.
$$
It remains to show $c_{\ve,\Omega,T}<A_\ve-\ve^4C$.  For $\beta<0$, by Proposition~\ref{propAnew0001}, we have $A_\ve=\frac{\ve^4}{4\mu_1}\mathcal{S}^2+\frac{\ve^4}{4\mu_2}\mathcal{S}^2$.  Note that $d_{i,\ve,\Omega}<\frac{\ve^4}{4\mu_i}\mathcal{S}^2-\ve^4C$, thus, we must have  $c_{\ve,\Omega,T}<A_\ve-\ve^4C$ for $\beta<0$.  For $\beta>0$, let
$$
\overrightarrow{\overline{\mathbf{U}}}_{\sigma}=(\sqrt{k}_1u_{\sigma}, \sqrt{k}_2u_{\sigma})
$$
and consider the following system
\begin{eqnarray*}
\eta_i(t_1,t_2)&=&\|t_i\sqrt{k}_iu_{\sigma}\|_{i,\ve,\Omega}^2-\alpha_i\mathcal{B}_{t_i\sqrt{k}_iu_{\sigma},p,\Omega}^{p}-\mu_i\mathcal{B}_{t_i\sqrt{k}_iu_{\sigma},4,\Omega}^{4}\\
&&-\beta\mathcal{B}_{|t_1\sqrt{k}_1u_{\sigma}|^2|t_2\sqrt{k}_2u_{\sigma}|^2,1,\Omega},
\end{eqnarray*}
where $k_1,k_2$ satisfy \eqref{eqnew0003}.
Since \eqref{eqn0008}, \eqref{eq0072}--\eqref{eq0073} hold, taking into account $k_i\to\frac{1}{\mu_i}$ as $\beta\to0$, we can apply Miranda's theorem similar to that for $\eta_i(t_1,t_2)$ to show that there exists $(\overline{t}_{1,\ve}, \overline{t}_{2,\ve})\in[C', C]\times[C', C]$ such that $\overrightarrow{\overline{\mathbf{t}}}_\ve\circ\overrightarrow{\overline{\mathbf{U}}}_\sigma\in\mathcal{N}_{\ve,\Omega,T}$ for $\beta>0$ small enough.  By taking $\beta_0$ small enough if necessary, we can see that $\overrightarrow{\overline{\mathbf{t}}}_\ve\circ\overrightarrow{\overline{\mathbf{U}}}_\sigma\in\mathcal{N}_{\ve,\Omega,T}$ for $0<\beta<\beta_0$.  Thus, we have from \eqref{eq0072} and \eqref{eq0073} once more that
\begin{eqnarray}
c_{\ve,\Omega,T}&\leq&\mathcal{J}_{\ve,\Omega,T}(\overrightarrow{\overline{\mathbf{t}}}_\ve\circ\overrightarrow{\overline{\mathbf{U}}}_\sigma)\notag\\
&=&\ve^4\mathcal{S}^2\bigg(\sum_{i=1}^2(\frac{(\overline{t}_{i,\ve}\sqrt{k_i})^2}{2}-\frac{\mu_i(\overline{t}_{i,\ve}\sqrt{k_i})^4}{4})-\frac{\beta(\overline{t}_{1,\ve}\sqrt{k_1})^2(\overline{t}_{2,\ve}\sqrt{k_2})^2}{2}\bigg)\notag\\
&&+\ve^4(C\sigma^2|\text{ln}\sigma|+O(\sigma^2)-C(\sigma^{4-p}+\sigma^p)).\label{eqn0023}
\end{eqnarray}
Since $\beta_0<\sqrt{\mu_1\mu_2}$, we can see that $k_1,k_2$ is the unique one satisfying \eqref{eqnew0003} for $0<\beta<\beta_0$.  Now by $2<p<4$ and a similar argument as used in the proof of \cite[Lemma~3.1]{WWZ17}, we have from \eqref{eqn0023} and \eqref{eqnew0003} once more that
\begin{eqnarray*}
c_{\ve,\Omega,T}&\leq&\ve^4\mathcal{S}^2\bigg(\sum_{i=1}^2(\frac{k_i}{2}-\frac{\mu_ik_i^2}{4})-\frac{\beta k_1k_2}{2}\bigg)\notag\\
&&+\ve^4(C\sigma^2|\text{ln}\sigma|+O(\sigma^2)-C(\sigma^{4-p}+\sigma^p))\notag\\
&<&\ve^4\frac{k_1+k_2}{4}\mathcal{S}^2-\ve^4C\label{eqn0100}
\end{eqnarray*}
for $\sigma$ small enough.  It follows from Proposition~\ref{propAnew0001} that $c_{\ve,\Omega}<A_\ve-\ve^4C$ for $0<\beta<\beta_0$.  In a word, we finally have that
$$
c_{\ve,\Omega,T}<\min\{d_{1,\ve,\Omega}+\frac{\ve^4}{4\mu_2}\mathcal{S}^2, d_{2,\ve,\Omega}+\frac{\ve^4}{4\mu_1}\mathcal{S}^2, A_\ve\}-\ve^4C_\beta
$$
in the following two cases:
\begin{enumerate}
\item $-\sqrt{\mu_1\mu_2}<\beta<\beta_0$ and $\alpha_1,\alpha_2>0$,
\item $\beta\leq-\sqrt{\mu_1\mu_2}$ and $\alpha_1,\alpha_2>0$ with $|\overrightarrow{\mathbf{\alpha}}|<\alpha_T$,
\end{enumerate}
where $C_\beta$ is dependent on $\beta$ only.

\vskip0.13in

$(ii)$\quad Let $\overrightarrow{\mathbf{U}^*}_\ve=(\widetilde{U}_{1,\ve}, \widetilde{U}_{1,\ve})$.  Since $p>2$, by a standard argument, we can see from \eqref{eqn0008} and \eqref{eqn0025} that there exists $t_\beta>0$ such that $t_\beta\overrightarrow{\mathbf{U}^*}_\ve\in\mathcal{N}'_{\ve,\Omega,T}$.  We claim that $t_\beta\to0$ as $\beta\to+\infty$.  Indeed, suppose the contrary, then without loss of generality, we may assume that $t_\beta\geq C$ for $\beta$ large enough.  It follows from \eqref{eqn0025}--\eqref{eqn0021} that
\begin{eqnarray*}
c'_{\ve,\Omega,T}\leq\mathcal{J}_{\ve,\Omega,T}(t_\beta\overrightarrow{\mathbf{U}^*}_\ve)\leq\ve^4t_\beta^2(C-\beta C')\to-\infty
\end{eqnarray*}
as $\beta\to+\infty$, which contradicts to Lemma~\ref{lem0002}.  Now, since $t_\beta\to0$ as $\beta\to+\infty$ and $p>2$, we can see from $t_\beta\overrightarrow{\mathbf{U}^*}_\ve\in\mathcal{N}'_{\ve,\Omega,T}$ and \eqref{eqn0025} and the H\"older inequality that
\begin{eqnarray*}
\sum_{i=1}^2\|\widetilde{U}_{1,\ve}\|_{i,\ve,\Omega}^2=2t_\beta^2\beta\mathcal{B}_{\widetilde{U}_{1,\ve},4,\Omega}^{4}+o(1)\ve^4,
\end{eqnarray*}
where $o(1)\to0$ uniformly for $\ve<1$ as $\beta\to+\infty$.  This implies
\begin{eqnarray*}
c'_{\ve,\Omega,T}\leq\mathcal{J}_{\ve,\Omega,T}(t_\beta\overrightarrow{\mathbf{U}^*}_\ve)\leq \ve^4t_\beta^2(\frac14\sum_{i=1}^2\|\overline{U}_{1,\ve}\|_{i,\Omega_\ve}^2+o(1)),
\end{eqnarray*}
where $\overline{U}_{1,\ve}(x)=\widetilde{U}_{1,\ve}(\ve x)$.
Thus, by \eqref{eqn0025} and Propositions~\ref{propA0001}--\ref{propAnew0001}, there exists $\beta_1>\beta_0$ independent of $\ve$ such that
$$
c'_{\ve,\Omega,T}<\min\{d_{1,\ve,\Omega},d_{2,\ve,\Omega}, A_\ve\}-\ve^4C_\beta
$$
for all $\beta>\beta_1$.
\end{proof}

By Lemma~\ref{lem0001} and the construction of $\chi_\beta$, we have $\overrightarrow{\mathbf{u}}_n\rightharpoonup\overrightarrow{\mathbf{u}}_0$ and $\overrightarrow{\mathbf{u}}'_n\rightharpoonup\overrightarrow{\mathbf{u}}'_0$ weakly in $\mathcal{H}_{\ve,\Omega}$ as $n\to\infty$ and $\mathcal{J}_{\ve,\Omega,T}'(\overrightarrow{\mathbf{u}}_0)=\mathcal{J}_{\ve,\Omega,T}'(\overrightarrow{\mathbf{u}}'_0)=0$ in $\mathcal{H}_{\ve,\Omega}^{-1}$.
\begin{proposition}\label{prop0001}
Assume $\alpha_1,\alpha_2>0$. Let $\alpha_T$, $\beta_0$ and $\beta_1$ be respectively given by Lemmas~\ref{lem0002}-\ref{lem0003}.  Then $(\mathcal{S}_{\ve})$ has a nontrivial solution $\overrightarrow{\mathbf{u}}_\ve$ with $\ve>0$ small enough in the following  two cases:
\begin{enumerate}
\item either \; $-\sqrt{\mu_1\mu_2}<\beta<\beta_0$ or $\beta>\beta_1$,
\item $\beta\leq-\sqrt{\mu_1\mu_2}$   with $|\overrightarrow{\mathbf{\alpha}}|<\alpha_T$.
\end{enumerate}
Moreover, $\mathcal{J}_{\ve,\Omega,T}(\overrightarrow{\mathbf{u}}_\ve)=c_{\ve,\Omega,T}$ for $\beta<\beta_0$ and $\mathcal{J}_{\ve,\Omega,T}(\overrightarrow{\mathbf{u}}_\ve)=c'_{\ve,\Omega,T}$ for $\beta>\beta_1$.
\end{proposition}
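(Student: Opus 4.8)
The strategy is to run a Nehari--Ekeland scheme as in Lemma~\ref{lem0001}, to prove that the resulting Palais--Smale sequences converge strongly by combining a Brezis--Lieb splitting with the strict energy gaps of Lemma~\ref{lem0003}, and then to upgrade the limiting critical point to a positive solution of $(\mathcal{S}_\ve)$ via the maximum principle together with the bound $\|\cdot\|_{\ve,\Omega}^2\leq\ve^4T^2$ (recall the equivalence stated after Definition~\ref{def0001}).

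Assume first that either $-\sqrt{\mu_1\mu_2}<\beta<\beta_0$, or $\beta\leq-\sqrt{\mu_1\mu_2}$ with $|\overrightarrow{\mathbf{\alpha}}|<\alpha_T$, and take the $(PS)_{c_{\ve,\Omega,T}}$ sequence $\{\overrightarrow{\mathbf{u}}_n\}\subset\mathcal{N}_{\ve,\Omega,T}$ of Lemma~\ref{lem0001}$(ii)$; by Lemma~\ref{lem0001}$(i)$ it satisfies $\|\overrightarrow{\mathbf{u}}_n\|_{\ve,\Omega}^2<\ve^4T^2$, so $\chi_\beta\equiv1$, $\chi_\beta'\equiv0$ along it and $\mathcal{J}_{\ve,\Omega,T}$ coincides there with the untruncated energy. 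Up to a subsequence, $\overrightarrow{\mathbf{u}}_n\rightharpoonup\overrightarrow{\mathbf{u}}_0=(u_1^0,u_2^0)$ in $\mathcal{H}_{\ve,\Omega}$, strongly in $L^p(\Omega)\times L^p(\Omega)$ and a.e., with $\mathcal{J}_{\ve,\Omega,T}'(\overrightarrow{\mathbf{u}}_0)=0$ and $\|\overrightarrow{\mathbf{u}}_0\|_{\ve,\Omega}^2\leq\ve^4T^2$. Writing $\overrightarrow{\mathbf{v}}_n=\overrightarrow{\mathbf{u}}_n-\overrightarrow{\mathbf{u}}_0$, the Brezis--Lieb lemma applied to the quartic and coupling integrals together with compactness of the subcritical integrals yields
\[
c_{\ve,\Omega,T}=\mathcal{J}_{\ve,\Omega,T}(\overrightarrow{\mathbf{u}}_0)+\lim_{n\to\infty}\mathcal{J}^{\infty}_{\ve}(\overrightarrow{\mathbf{v}}_n),\qquad (\mathcal{J}^{\infty}_{\ve})'(\overrightarrow{\mathbf{v}}_n)=o_n(1),
\]
where $\mathcal{J}^{\infty}_{\ve}(\overrightarrow{\mathbf{v}})=\sum_{i=1}^2\big(\tfrac12\|v_i\|_{i,\ve,\bbr^4}^2-\tfrac{\mu_i}{4}\mathcal{B}_{v_i,4,\bbr^4}^4\big)-\tfrac{\beta}{2}\mathcal{B}_{|v_1|^2|v_2|^2,1,\bbr^4}$ is the purely critical limit functional on $\bbr^4$. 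Then there are three possibilities. If $u_1^0\not\equiv0$ and $u_2^0\not\equiv0$, then $\overrightarrow{\mathbf{u}}_0\in\mathcal{N}_{\ve,\Omega,T}$, hence $\mathcal{J}_{\ve,\Omega,T}(\overrightarrow{\mathbf{u}}_0)\geq c_{\ve,\Omega,T}$, which forces $\lim_n\mathcal{J}^{\infty}_{\ve}(\overrightarrow{\mathbf{v}}_n)\leq0$ and therefore $\overrightarrow{\mathbf{v}}_n\to0$, since a nonvanishing residual has strictly positive energy. If $\overrightarrow{\mathbf{u}}_0$ is semi-trivial, say $u_2^0\equiv0$ and $u_1^0\not\equiv0$, then $u_1^0$ is a nonzero critical point of the single-equation functional $\mathcal{E}_{1,\ve,\Omega}$, so $\mathcal{J}_{\ve,\Omega,T}(\overrightarrow{\mathbf{u}}_0)=\mathcal{E}_{1,\ve,\Omega}(u_1^0)\geq d_{1,\ve,\Omega}$; moreover the bound $\mathcal{B}_{\overline{u}_2^n,4,\Omega_\ve}^4\geq C$ (see \eqref{eqn0013}, \eqref{eqn0014}, \eqref{eqnew9999}) together with $u_2^0\equiv0$ forces a Sobolev bubble in the second slot of the residual, so $\lim_n\mathcal{J}^{\infty}_{\ve}(\overrightarrow{\mathbf{v}}_n)\geq\tfrac{\ve^4}{4\mu_2}\mathcal{S}^2$ and $c_{\ve,\Omega,T}\geq d_{1,\ve,\Omega}+\tfrac{\ve^4}{4\mu_2}\mathcal{S}^2$, contradicting Lemma~\ref{lem0003}$(i)$ (the case $u_1^0\equiv0$ is symmetric). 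If $\overrightarrow{\mathbf{u}}_0\equiv\overrightarrow{\mathbf{0}}$, then $\mathcal{B}_{\overline{u}_i^n,4,\Omega_\ve}^4\geq C$ for \emph{both} $i$ forces a bubble in each component, so $\lim_n\mathcal{J}^{\infty}_{\ve}(\overrightarrow{\mathbf{v}}_n)\geq\min\{\tfrac{\ve^4}{4\mu_1}\mathcal{S}^2+\tfrac{\ve^4}{4\mu_2}\mathcal{S}^2,\,A_\ve\}=A_\ve$ and $c_{\ve,\Omega,T}\geq A_\ve$, again contradicting Lemma~\ref{lem0003}$(i)$. Hence $\overrightarrow{\mathbf{u}}_n\to\overrightarrow{\mathbf{u}}_0$ strongly, $\overrightarrow{\mathbf{u}}_0\in\mathcal{N}_{\ve,\Omega,T}$ with both components nonzero (pass $\mathcal{B}_{\overline{u}_i^n,4,\Omega_\ve}^4\geq C$ to the limit) and $\mathcal{J}_{\ve,\Omega,T}(\overrightarrow{\mathbf{u}}_0)=c_{\ve,\Omega,T}$. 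Since replacing $u_i^0$ by $|u_i^0|$ leaves every term of $\mathcal{J}_{\ve,\Omega,T}$ unchanged, we may assume $u_i^0\geq0$, and the strong maximum principle gives $u_i^0>0$ in $\Omega$; thus $\overrightarrow{\mathbf{u}}_\ve:=\overrightarrow{\mathbf{u}}_0$ solves $(\mathcal{S}_\ve)$ and $\mathcal{J}_{\ve,\Omega,T}(\overrightarrow{\mathbf{u}}_\ve)=c_{\ve,\Omega,T}$.

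For $\beta>\beta_1$ one argues the same way starting from the $(PS)_{c'_{\ve,\Omega,T}}$ sequence $\{\overrightarrow{\mathbf{u}}'_n\}\subset\mathcal{N}'_{\ve,\Omega,T}$ of Lemma~\ref{lem0001}$(iii)$, with weak limit $\overrightarrow{\mathbf{u}}'_0$ and the same splitting around it. If $\overrightarrow{\mathbf{u}}'_0\neq\overrightarrow{\mathbf{0}}$ then $\overrightarrow{\mathbf{u}}'_0\in\mathcal{N}'_{\ve,\Omega,T}$, so $\mathcal{J}_{\ve,\Omega,T}(\overrightarrow{\mathbf{u}}'_0)\geq c'_{\ve,\Omega,T}$ and $\overrightarrow{\mathbf{v}}_n\to0$; if $\overrightarrow{\mathbf{u}}'_0\equiv\overrightarrow{\mathbf{0}}$ then $c'_{\ve,\Omega,T}=\lim_n\mathcal{J}^{\infty}_{\ve}(\overrightarrow{\mathbf{v}}_n)$ with a nonvanishing residual (as $c'_{\ve,\Omega,T}>0$ by Lemma~\ref{lem0002}), hence $c'_{\ve,\Omega,T}\geq\min\{\tfrac{\ve^4}{4\mu_1}\mathcal{S}^2,\tfrac{\ve^4}{4\mu_2}\mathcal{S}^2,A_\ve\}$, which, using $d_{i,\ve,\Omega}<\tfrac{\ve^4}{4\mu_i}\mathcal{S}^2$, contradicts $c'_{\ve,\Omega,T}<\min\{d_{1,\ve,\Omega},d_{2,\ve,\Omega},A_\ve\}-\ve^4C_\beta$ from Lemma~\ref{lem0003}$(ii)$. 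Thus $\overrightarrow{\mathbf{u}}'_n\to\overrightarrow{\mathbf{u}}'_0\neq\overrightarrow{\mathbf{0}}$ strongly and $\mathcal{J}_{\ve,\Omega,T}(\overrightarrow{\mathbf{u}}'_0)=c'_{\ve,\Omega,T}$. Because $\mathcal{N}'_{\ve,\Omega,T}$ does not force both components to be nonzero, one extra step is required: if the second component of $\overrightarrow{\mathbf{u}}'_0$ vanishes, then its first component is a nonzero critical point of $\mathcal{E}_{1,\ve,\Omega}$, so $c'_{\ve,\Omega,T}=\mathcal{E}_{1,\ve,\Omega}(\cdot)\geq d_{1,\ve,\Omega}$, contradicting Lemma~\ref{lem0003}$(ii)$; likewise the first component cannot vanish. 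Positivity follows from the maximum principle as above, so $\overrightarrow{\mathbf{u}}_\ve:=\overrightarrow{\mathbf{u}}'_0$ is a solution of $(\mathcal{S}_\ve)$ with $\mathcal{J}_{\ve,\Omega,T}(\overrightarrow{\mathbf{u}}_\ve)=c'_{\ve,\Omega,T}$, and the two ranges of $\beta$ exhaust the statement.

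The delicate point is the energy classification of the residual $\overrightarrow{\mathbf{v}}_n$: one must show that a nonvanishing residual of a $(PS)$ sequence of $\mathcal{J}^{\infty}_{\ve}$ carries at least $\tfrac{\ve^4}{4\mu_i}\mathcal{S}^2$ (a Sobolev bubble in a single component), or $\tfrac{\ve^4}{4\mu_1}\mathcal{S}^2+\tfrac{\ve^4}{4\mu_2}\mathcal{S}^2$ (bubbles in the two components at distinct points), or $A_\ve$ (a coupled bubble) --- precisely the quantities appearing in Lemma~\ref{lem0003}. Carrying this out for the \emph{system} requires a rescaling analysis around the concentration point(s) to decide whether the two components bubble together or apart, controlling the sign of $\beta$ in the coupling term and ensuring that no subcritical mass survives at the bubble scale (the subcritical terms drop out because $2<p<4$ and $\chi_\beta$ is inactive on $\{\|\overrightarrow{\mathbf{u}}\|_{\ve,\Omega}^2<\ve^4T^2\}$). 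This, rather than the bookkeeping above, is the main obstacle.
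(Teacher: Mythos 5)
Your architecture --- Ekeland/Nehari sequence from Lemma~\ref{lem0001}, Brezis--Lieb splitting around the weak limit, exclusion of the trivial and semi-trivial limits via the strict inequalities of Lemma~\ref{lem0003}, then positivity --- is the same as the paper's. But the step you yourself flag as ``the main obstacle,'' namely that a nonvanishing residual carries energy at least $\frac{\ve^4}{4\mu_i}\mathcal{S}^2$, $\sum_i\frac{\ve^4}{4\mu_i}\mathcal{S}^2$ or $A_\ve$, is precisely the analytic content of the proof, and you do not supply it. The paper never performs a bubbling/rescaling classification of the residual: when the weak limit vanishes it uses the approximate Nehari identities \eqref{eqn0026} and \eqref{eqn0027} to project the vanishing sequence onto the set $\mathcal{V}_\ve$ of \eqref{eqn0034} via parameters $\overrightarrow{\mathbf{t}}_n\to\overrightarrow{\mathbf{1}}$, and then invokes the variational characterization $A_\ve=\inf_{\mathcal{V}_\ve}\mathcal{I}_\ve$ together with \cite[(5.46)]{CZ121}; in the semi-trivial case the same projection applied to the residual $(v_n,u_2^n)$ yields the stronger bound $c_{\ve,\Omega,T}\geq d_{1,\ve,\Omega}+A_\ve$. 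Your intermediate bounds are also not uniformly correct as stated: for $0<\beta<\beta_0$ the coupling term has the favorable sign only for the opposite inequality, so $\mathcal{B}_{\overline{u}_2^n,4,\Omega_\ve}^4\geq C$ does not by itself force a \emph{single-component} bubble worth $\frac{\ve^4}{4\mu_2}\mathcal{S}^2$ without first ruling out coupled concentration --- which is again the missing classification. So the exclusion of the trivial and semi-trivial limits, as written, rests on an unproved lemma.

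A second, smaller gap: ``we may assume $u_i^0\geq0$'' is not free. The pair $(|u_1^0|,|u_2^0|)$ has the same energy and satisfies the same Nehari constraints, but it is not a priori a critical point of $\mathcal{J}_{\ve,\Omega,T}$, since taking absolute values destroys the Euler--Lagrange property. The paper closes this by noting that $(|u_1^0|,|u_2^0|)$ is a constrained minimizer on $\mathcal{N}_{\ve,\Omega,T}$ (resp.\ $\mathcal{N}'_{\ve,\Omega,T}$) and running the Lagrange-multiplier argument \eqref{eqn0030}--\eqref{eqn9998}, where $\det(\Theta_n)\geq\ve^8C$ (resp.\ $p>2$) forces the multipliers to vanish; only then do elliptic regularity and the strong maximum principle give a positive solution of $(\mathcal{S}_\ve)$. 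Without this step, or an equivalent one, the final assertion that $\overrightarrow{\mathbf{u}}_\ve$ solves $(\mathcal{S}_\ve)$ is not justified.
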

\begin{proof}
Let $\overrightarrow{\mathbf{v}}_n=\overrightarrow{\mathbf{u}}_n-\overrightarrow{\mathbf{u}}_0$ and $\overrightarrow{\mathbf{v}}'_n=\overrightarrow{\mathbf{u}}'_n-\overrightarrow{\mathbf{u}}'_0$.  Since $\overrightarrow{\mathbf{u}}_n\rightharpoonup\overrightarrow{\mathbf{u}}_0$ and $\overrightarrow{\mathbf{u}}'_n\rightharpoonup\overrightarrow{\mathbf{u}}'_0$ weakly in $\mathcal{H}_{\ve,\Omega}$ as $n\to\infty$, we have $\overrightarrow{\mathbf{v}}_n,\overrightarrow{\mathbf{v}}'_n\rightharpoonup0$ weakly in $\mathcal{H}_{\ve,\Omega}$ as $n\to\infty$.

{\bf Claim.~1}\quad $\overrightarrow{\mathbf{u}}'_0$ is nontrivial for $\beta>\beta_1$.

Indeed, suppose $\overrightarrow{\mathbf{u}}'_0=\overrightarrow{\mathbf{0}}$, then  $\overrightarrow{\mathbf{u}}'_n\rightharpoonup\overrightarrow{\mathbf{0}}$ weakly in $\mathcal{H}_{\ve,\Omega}$ as $n\to\infty$.  It follows from $\overrightarrow{\mathbf{u}}'_n\in\mathcal{N}'_{\ve,\Omega,T}$ and the Sobolev embedding theorem that
\begin{eqnarray}\label{eqn0026}
\sum_{i=1}^2\|(u_{i}^n)'\|_{i,\ve,\Omega}^2=\sum_{i=1}^2\mathcal{B}_{(u_i^n)',4,\Omega}^4+2\beta\mathcal{B}_{|(u_1^n)'|^2|(u_2^n)'|^2,1,\Omega}+o_n(1).
\end{eqnarray}
Let   $(u_i^n)'$ be a function in $D^{1,2}(\bbr^4)$ by setting $(u_i^n)'\equiv0$ outside $\Omega$.  Then by a standard argument and \eqref{eqn0026}, we have  that
\begin{eqnarray*}
c'_{\ve,\Omega,T}&=&\frac14\sum_{i=1}^2\|(u_i^n)'\|_{i,\ve,\Omega}^2+o_n(1)\\
&\geq&\min_{\overrightarrow{\mathbf{u}}\in\widetilde{\mathcal{D}}}
\frac14\bigg(\frac{\sum_{i=1}^2\|u_{i}\|_{i,\ve,\bbr^4}^2}
{(\sum_{i=1}^2\mathcal{B}_{u_i,4,\bbr^4}^4+2\beta\mathcal{B}_{|u_1|^2|u_2|^2,1,\bbr^4})^{\frac12}}\bigg)^2+o_n(1),
\end{eqnarray*}
which together with \cite[(5.46)]{CZ121} and a standard scaling technique, implies $c'_{\ve,\Omega,T}\geq A_\ve$ in the case $\beta>\beta_1$.  This contradicts to Lemma~\ref{lem0003}.  Thus, $\overrightarrow{\mathbf{u}}'_0\not=\overrightarrow{\mathbf{0}}$ for $\beta>\beta_1$.  Thanks to Lemma~\ref{lem0003} once more, $\overrightarrow{\mathbf{u}}'_0$ is  also not   semi-trivial.  Hence, $\overrightarrow{\mathbf{u}}'_0$ must be nontrivial.

{\bf Claim.~2}\quad $\overrightarrow{\mathbf{u}}_0$ is nontrivial for $\beta<\beta_0$.

We first prove that $\overrightarrow{\mathbf{u}}_0\not=\overrightarrow{\mathbf{0}}$ for $\beta<\beta_0$.  Indeed, suppose the contrary, then $\overrightarrow{\mathbf{u}}_n\rightharpoonup\overrightarrow{\mathbf{0}}$ weakly in $\mathcal{H}_{\ve,\Omega}$ as $n\to\infty$.  It follows from $\overrightarrow{\mathbf{u}}_n\in\mathcal{N}_{\ve,\Omega,T}$ and the Sobolev embedding theorem that
\begin{eqnarray}\label{eqn0027}
\|u_{i}^n\|_{i,\ve,\Omega}^2=\mu_i\mathcal{B}_{u_i^n,4,\Omega}^4+\beta\mathcal{B}_{|u_1^n|^2|u_2^n|^2,1,\Omega}+o_n(1),\;\; i=1,2.
\end{eqnarray}
By setting $u_i^n\equiv0$ outside $\Omega$, we also regard $u_i^n$ as  a function in $D^{1,2}(\bbr^4)$.  Due to \eqref{eqn0027}, it is easy to show that there exists $\overrightarrow{\mathbf{t}}_n\to\overrightarrow{\mathbf{1}}$ as $n\to\infty$ such that $\overrightarrow{\mathbf{t}}_n\circ\overrightarrow{\mathbf{u}}_n\in\mathcal{V}_{\ve}$ (cf. \cite{WWZ17}), where $\mathcal{V}_{\ve}$ is given by \eqref{eqn0034}.  It follows from Lemma~\ref{lem0002} and the Sobolev embedding theorem that
\begin{eqnarray*}
c_{\ve,\Omega,T}&=&\mathcal{J}_{\ve,\Omega,T}(\overrightarrow{\mathbf{u}}_n)+o_n(1)\\
&\geq&\mathcal{J}_{\ve,\Omega,T}(\overrightarrow{\mathbf{t}}_n\circ\overrightarrow{\mathbf{u}}_n)+o_n(1)\\
&=&\mathcal{I}_{\ve}(\overrightarrow{\mathbf{t}}_n\circ\overrightarrow{\mathbf{u}}_n)+o_n(1)\\
&\geq&A_\ve+o_n(1)
\end{eqnarray*}
for $\beta<\beta_0$, which contradicts Lemma~\ref{lem0003}.  Here, $\mathcal{I}_{\ve}(\overrightarrow{\mathbf{u}})$ is given by \eqref{eqnew8001}.  We next prove that $\overrightarrow{\mathbf{u}}_0$ is also not semi-trivial for $\beta<\beta_0$.  If not, then without loss of generality, we may assume $\overrightarrow{\mathbf{u}}_0=(u_1^0, 0)$ with $u_1^0\not=0$.  Let $v_n=u_1^n-u_1^0$.  Then $\overrightarrow{\mathbf{v}}_n=(v_n, u_2^n)\rightharpoonup\overrightarrow{\mathbf{0}}$ weakly in $\mathcal{H}_{\ve,\Omega}$ as $n\to\infty$.  By the Brez\'is-Lieb lemma and \cite[Lemma~2.3]{CZ15}, we  obtain that
\begin{eqnarray}\label{eqn0029}
\mathcal{J}_{\ve,\Omega,T}(\overrightarrow{\mathbf{u}}_n)=\mathcal{E}_{1,\ve,\Omega}(u_1^0)+\mathcal{J}_{\ve,\Omega,T}(\overrightarrow{\mathbf{v}}_n)+o_n(1),
\end{eqnarray}
where $\mathcal{E}_{1,\ve,\Omega}(u)$ is given by \eqref{eqn0015}.
Since $\overrightarrow{\mathbf{u}}_n\rightharpoonup\overrightarrow{\mathbf{u}}_0$ weakly in $\mathcal{H}_{\ve,\Omega}$ as $n\to\infty$ and $u_2^0\equiv0$, it is easy to show that $\mathcal{E}'_{1,\ve,\Omega}(u_1^0)=0$ in $\mathcal{H}^{-1}_{1,\ve,\Omega}$, where $\mathcal{H}^{-1}_{1,\ve,\Omega}$ is the dual space of $\mathcal{H}_{1,\ve,\Omega}$.  Now, by similar arguments as used above, we can see from \eqref{eqn0029} that
\begin{eqnarray*}
c_{\ve,\Omega,T}\geq d_{1,\ve,\Omega}+A_\ve,
\end{eqnarray*}
which also contradicts to Lemma~\ref{lem0003}.

\vskip0.16in

Now, since $\mathcal{J}_{\ve,\Omega,T}'(\overrightarrow{\mathbf{u}}_0)=\mathcal{J}_{\ve,\Omega,T}'(\overrightarrow{\mathbf{u}}'_0)=0$ in $\mathcal{H}_{\ve,\Omega}^{-1}$, by the fact that $\overrightarrow{\mathbf{u}}_0$ and $\overrightarrow{\mathbf{u}}'_0$ are both nontrivial, we can see that $c_{\ve,\Omega,T}$ and $c'_{\ve,\Omega,T}$ are both attained by $\overrightarrow{\mathbf{u}}_\ve$ and $\overrightarrow{\mathbf{u}}'_\ve$ respectively for $\beta<\beta_0$ and $\beta>\beta_1$.  Here, $\overrightarrow{\mathbf{u}}_\ve=(|u_1^0|, |u_2^0|)$ and $\overrightarrow{\mathbf{u}}'_\ve=(|(u_1^0)'|, |(u_2^0)'|)$.  Moreover, we also have that $\overrightarrow{\mathbf{u}}_n\to\overrightarrow{\mathbf{u}}_0$ and $\overrightarrow{\mathbf{u}}_n'\to\overrightarrow{\mathbf{u}}_0'$ strongly in $\mathcal{H}_{\ve,\Omega}$ as $n\to\infty$.  Now, by the method of Lagrange multipliers, there exists $\rho_1$, $\rho_2$ and $\rho_3$ such that
\begin{eqnarray}\label{eqn0030}
\mathcal{J}_{\ve,\Omega,T}'(\overrightarrow{\mathbf{u}}_\ve)
-\rho_1(\mathcal{J}_{\ve,\Omega,T}'(\overrightarrow{\mathbf{u}}_\ve)\overrightarrow{\mathbf{u}}_\ve^1)'
-\rho_2(\mathcal{J}_{\ve,\Omega,T}'(\overrightarrow{\mathbf{u}}_\ve)\overrightarrow{\mathbf{u}}_\ve^2)'=0
\end{eqnarray}
and
\begin{eqnarray}\label{eqn0031}
\mathcal{J}_{\ve,\Omega,T}'(\overrightarrow{\mathbf{u}}'_\ve)
-\rho_3(\mathcal{J}_{\ve,\Omega,T}'(\overrightarrow{\mathbf{u}}'_\ve)\overrightarrow{\mathbf{u}}'_\ve)'=0
\end{eqnarray}
in $\mathcal{H}^{-1}_{\ve,\Omega}$.  Multiplying \eqref{eqn0030} and \eqref{eqn0031} with $\overrightarrow{\mathbf{u}}_\ve^i$ and $\overrightarrow{\mathbf{u}}'_\ve$ respectively, we can see that
\begin{eqnarray}\label{eqn9999}
\rho_1(\mathcal{J}_{\ve,\Omega,T}'(\overrightarrow{\mathbf{u}}_\ve)\overrightarrow{\mathbf{u}}_\ve^1)'\overrightarrow{\mathbf{u}}_\ve^i
+\rho_2(\mathcal{J}_{\ve,\Omega,T}'(\overrightarrow{\mathbf{u}}_\ve)\overrightarrow{\mathbf{u}}_\ve^2)'\overrightarrow{\mathbf{u}}_\ve^i=0, i=1,2
\end{eqnarray}
 and
\begin{eqnarray}\label{eqn9998}
\rho_3(\mathcal{J}_{\ve,\Omega,T}'(\overrightarrow{\mathbf{u}}'_\ve)\overrightarrow{\mathbf{u}}'_\ve)'\overrightarrow{\mathbf{u}}'_\ve=0.
\end{eqnarray}
Since $\text{det}(\Theta_n)\geq C$ which is given by \eqref{eqn0016}, by $\overrightarrow{\mathbf{u}}_n\to\overrightarrow{\mathbf{u}}_0$ strongly in $\mathcal{H}_{\ve,\Omega}$ as $n\to\infty$, we can see that $\overrightarrow{\mathbf{0}}$ is the unique solution of the System~\eqref{eqn9999}.  On the other hand, due to $p>2$, it is also easy to see that $0$ is only solution of the Equation~\eqref{eqn9998}.
Thus, by \eqref{eqn0030} and \eqref{eqn0031}, we have $\mathcal{J}_{\ve,\Omega,T}'(\overrightarrow{\mathbf{u}}_\ve)=0$ in the following two cases:
\begin{enumerate}
\item $-\sqrt{\mu_1\mu_2}<\beta<\beta_0$ and $\alpha_1,\alpha_2>0$,
\item $\beta\leq-\sqrt{\mu_1\mu_2}$ and $\alpha_1,\alpha_2>0$ with $|\overrightarrow{\mathbf{\alpha}}|<\alpha_T$,
\end{enumerate}
and $\mathcal{J}_{\ve,\Omega,T}'(\overrightarrow{\mathbf{u}}'_\ve)=0$ for $\beta>\beta_1$ and $\alpha_1,\alpha_2>0$ in $\mathcal{H}^{-1}_{\ve,\Omega}$.  Since Lemma~\ref{lem0001} holds, by standard elliptic estimates and the maximum principle, $\overrightarrow{\mathbf{u}}_\ve$ and $\overrightarrow{\mathbf{u}}_\ve'$ are also solutions of $(\mathcal{S}_\ve)$.
\end{proof}

\vskip0.1in

\noindent\textbf{Proof of Theorem~\ref{thm0001}:}\quad  It follows immediately from \eqref{eqn0008}, Lemma~\ref{lem0001} and Proposition~\ref{prop0001}.
\hfill$\Box$


\vskip0.3in
\section{The limiting problem in $\bbr^4$}
In this section, we mainly consider the following system
\begin{equation*}
\left\{\aligned&-\Delta u_1+\lambda_1u_1=\mu_1u_1^3+\alpha_1u_1^{p-1}+\beta u_2^2u_1\quad&\text{in }\bbr^4,\\
&-\Delta u_2+\lambda_2u_2=\mu_2u_2^3+\alpha_2u_2^{p-1}+\beta u_1^2u_2\quad&\text{in }\bbr^4,\\
&u_1,u_2>0\text{ in }\bbr^4,\quad u_1,u_2\to0\text{ as }|x|\to+\infty.\endaligned\right.\eqno{(\mathcal{S}_{*})}
\end{equation*}

Let $\mathcal{H}_{\bbr^4}=\mathcal{H}_{1,\bbr^4}\times\mathcal{H}_{2,\bbr^4}$, where $\mathcal{H}_{i,\bbr^4}$ are given in the Appendix.  Then $\mathcal{H}_{\bbr^4}$ is a Hilbert space equipped with the inner product
$$
\langle \overrightarrow{\mathbf{u}},\overrightarrow{\mathbf{v}}\rangle_{\bbr^4}=\sum_{i=1}^2\langle u_i,v_i\rangle_{i,\bbr^4}.
$$
The corresponding norm is given by $\|\overrightarrow{\mathbf{u}}\|_{\bbr^4}=\langle \overrightarrow{\mathbf{u}},\overrightarrow{\mathbf{u}}\rangle_{\bbr^4}^{\frac12}$.  Here, $u_i,v_i$ are the $i$th component of $\overrightarrow{\mathbf{u}}, \overrightarrow{\mathbf{v}}$ respectively.  Set
\begin{eqnarray}
\mathcal{J}_{\bbr^4,T}(\overrightarrow{\mathbf{u}})&=&\sum_{i=1}^2(\frac12\|u_i\|_{i,\bbr^4}^2-\frac{\alpha_i}{p}\mathcal{B}_{ u_i,p,\bbr^4}^{p}\chi_\beta\bigg(\frac{\|\overrightarrow{\mathbf{u}}\|_{\bbr^4}}{T}\bigg)-\frac{\mu_i}{4}\mathcal{B}_{ u_i,4,\bbr^4}^{4})\notag\\
&&-\frac{\beta}{2}\mathcal{B}_{|u_1|^{2}|u_2|^{2},1,\bbr^4},\label{eqnew8002}
\end{eqnarray}
where $\chi_\beta(s)$ is given by \eqref{eqnew8000}.
Clearly, $\mathcal{J}_{\bbr^4,T}(\overrightarrow{\mathbf{u}})$ is of $C^2$.  Define
\begin{eqnarray*}
B=\inf_{\mathcal{N}_{\bbr^4,T}}\mathcal{J}_{\bbr^4,T}(\overrightarrow{\mathbf{u}})\quad\text{and}\quad B'=\inf_{\mathcal{N}'_{\bbr^4,T}}\mathcal{J}_{\bbr^4,T}(\overrightarrow{\mathbf{u}}),
\end{eqnarray*}
where
\begin{eqnarray}
\mathcal{N}_{\bbr^4,T}=\{\overrightarrow{\mathbf{u}}=(u_1,u_2)\in\widetilde{\mathcal{H}}_{\bbr^4}\mid \mathcal{J}_{\bbr^4,T}'(\overrightarrow{\mathbf{u}})\overrightarrow{\mathbf{u}}^1=\mathcal{J}_{\bbr^4,T}'(\overrightarrow{\mathbf{u}})\overrightarrow{\mathbf{u}}^2=0\},\label{eqnew8009}
\end{eqnarray}
with $\widetilde{\mathcal{H}}_{\bbr^4}=(\mathcal{H}_{1,\bbr^4}\backslash\{0\})\times(\mathcal{H}_{2,\bbr^4}\backslash\{0\})$, $\overrightarrow{\mathbf{u}}^1=(u_1,0)$ and $\overrightarrow{\mathbf{u}}^2=(0, u_2)$ and
\begin{eqnarray}\label{eqnew8010}
\mathcal{N}'_{\bbr^4,T}=\{\overrightarrow{\mathbf{u}}=(u_1,u_2)\in\mathcal{H}_{\bbr^4}\backslash\{\overrightarrow{\mathbf{0}}\}\mid \mathcal{J}_{\bbr^4,T}'(\overrightarrow{\mathbf{u}})\overrightarrow{\mathbf{u}}=0\}.
\end{eqnarray}
Now, our results for $(\mathcal{S}_*)$ can be stated as follows.

\vskip0.13in

\begin{proposition}\label{prop0002}
Let $\alpha_T$, $\beta_0$ and $\beta_1$ be respectively given by Lemmas~\ref{lem0002}--\ref{lem0003}. Then
\begin{enumerate}
\item[$(i)$]  $B=\sum_{i=1}^2d_{i,\bbr^4}$ and $B$ can not be attained in one of the following two cases
\begin{enumerate}
\item $-\sqrt{\mu_1\mu_2}<\beta<0$;
\item $\beta\leq-\sqrt{\mu_1\mu_2}$ and $|\overrightarrow{\alpha}|<\alpha_T$,
\end{enumerate}
where $d_{i,\bbr^4}$ are given by \eqref{eqn0066}.
\item[$(ii)$]  there exists $\overrightarrow{\mathbf{U}}_*$ with $U_i^*$ radial symmetric such that $\overrightarrow{\mathbf{U}}_*$ is a solution of $(\mathcal{S}_*)$ and $\overrightarrow{\mathbf{U}}_*$ attains $B$ for $0<\beta<\beta_0$ or $\beta>\beta_1$.  Moreover, $B<\sum_{i=1}^2d_{i,\bbr^4}$ for $0<\beta<\beta_0$ and $B=B'$ for $\beta>\beta_1$.
\end{enumerate}
\end{proposition}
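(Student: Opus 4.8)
The plan is to treat $(i)$ and $(ii)$ separately, comparing $B$ (and $B'$) with the scalar ground state levels $d_{i,\bbr^4}$ and with the least energy $A_{\bbr^4}$ of the purely critical limit system (the $\bbr^4$, $\ve=1$ analogue of $A_\ve$, furnished by \cite{CZ121} and Proposition~\ref{propAnew0001}); throughout one uses that the $\bbr^4$, $\ve=1$ versions of Lemmas~\ref{lemn0001}, \ref{lem0001} and \ref{lem0003} hold by the same proofs.

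For $(i)$ I would first get $B\le\sum_{i=1}^2 d_{i,\bbr^4}$ by a translation argument: let $W_i$ be a scalar ground state, so $W_i\in\mathcal{N}_{i,\bbr^4}$, $\mathcal{E}_{i,\bbr^4}(W_i)=d_{i,\bbr^4}$ and $\|W_1\|_{1,\bbr^4}^2+\|W_2\|_{2,\bbr^4}^2<T^2$ (by \eqref{eqn0008} and $\frac{p-2}{2p}\|W_i\|_{i,\bbr^4}^2\le d_{i,\bbr^4}<\frac1{4\mu_i}\mathcal S^2$); translating $W_2^R:=W_2(\cdot-Re)$ and using $\mathcal{B}_{|W_1|^{2}|W_2^R|^{2},1,\bbr^4}\to0$ as $R\to\infty$, the implicit function theorem yields $t_i(R)\to1$ with $(t_1(R)W_1,t_2(R)W_2^R)\in\mathcal{N}_{\bbr^4,T}$ (the norms stay below $T$, so $\chi_\beta\equiv1$ along the path), whence $B\le\mathcal{E}_{1,\bbr^4}(t_1(R)W_1)+\mathcal{E}_{2,\bbr^4}(t_2(R)W_2^R)-\tfrac{\beta}{2}t_1^2t_2^2\mathcal{B}_{|W_1|^{2}|W_2^R|^{2},1,\bbr^4}\to d_{1,\bbr^4}+d_{2,\bbr^4}$. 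For the reverse inequality, for any $\overrightarrow{\mathbf{u}}=(u_1,u_2)\in\mathcal{N}_{\bbr^4,T}$ the argument of Lemma~\ref{lem0001}$(ii)$ — whose hypotheses ($-\sqrt{\mu_1\mu_2}<\beta<\beta_0$, or $\beta\le-\sqrt{\mu_1\mu_2}$ with $|\overrightarrow{\mathbf{\alpha}}|<\alpha_T$) guarantee $\mu_1\mu_2\mathcal{B}_{u_1,4,\bbr^4}^{4}\mathcal{B}_{u_2,4,\bbr^4}^{4}-\beta^2\mathcal{B}^2_{|u_1|^{2}|u_2|^{2},1,\bbr^4}>0$, so $\det\Theta>0$ and the fibering map is coercive at infinity — gives $\mathcal{J}_{\bbr^4,T}(\overrightarrow{\mathbf{u}})=\max_{(t_1,t_2)\in(\bbr^+)^2}\mathcal{J}_{\bbr^4,T}(t_1u_1,t_2u_2)$. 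Since $\beta<0$ and $0\le\chi_\beta\le1$, one has $\mathcal{J}_{\bbr^4,T}(t_1u_1,t_2u_2)\ge\mathcal{E}_{1,\bbr^4}(t_1u_1)+\mathcal{E}_{2,\bbr^4}(t_2u_2)$ for all $(t_1,t_2)$, and the right side separates, so $\mathcal{J}_{\bbr^4,T}(\overrightarrow{\mathbf{u}})\ge\max_{t_1>0}\mathcal{E}_{1,\bbr^4}(t_1u_1)+\max_{t_2>0}\mathcal{E}_{2,\bbr^4}(t_2u_2)\ge d_{1,\bbr^4}+d_{2,\bbr^4}$. Thus $B=\sum_{i=1}^2 d_{i,\bbr^4}$. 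Non-attainment follows from the same chain: if $\overrightarrow{\mathbf{U}}=(U_1,U_2)\in\mathcal{N}_{\bbr^4,T}$ attained $B$, choosing $s_i\in(0,1]$ with $s_iU_i\in\mathcal{N}_{i,\bbr^4}$ forces $-\tfrac{\beta}{2}s_1^2s_2^2\mathcal{B}_{|U_1|^{2}|U_2|^{2},1,\bbr^4}=0$ and $\mathcal{E}_{i,\bbr^4}(s_iU_i)=d_{i,\bbr^4}$, so $s_iU_i$ minimizes $\mathcal{E}_{i,\bbr^4}$ on $\mathcal{N}_{i,\bbr^4}$, hence $|U_i|>0$ a.e.\ (scalar ground states do not vanish on a set of positive measure), whence $\mathcal{B}_{|U_1|^{2}|U_2|^{2},1,\bbr^4}>0$, a contradiction.

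For $(ii)$ I would first record the energy estimates, which are the $\bbr^4$, $\ve=1$ forms of Lemma~\ref{lem0003} with the same test functions: for $0<\beta<\beta_0$, $B<\min\{\sum_{i=1}^2 d_{i,\bbr^4},A_{\bbr^4}\}$ (use $(\tau_1W_1,\tau_2W_2)$ with a common centre for $B<\sum_i d_{i,\bbr^4}$ — a Taylor expansion in $\beta$, using $\mathcal{E}_{i,\bbr^4}(\tau_iW_i)=d_{i,\bbr^4}+O((\tau_i-1)^2)$ and $\tau_i-1=O(\beta)$, gives $B\le d_{1,\bbr^4}+d_{2,\bbr^4}-\tfrac{\beta}{2}\mathcal{B}_{|W_1|^{2}|W_2|^{2},1,\bbr^4}+O(\beta^2)$ — and scaled Aubin--Talenti bubbles $(\overline t_1\sqrt{k_1}\,u_\sigma,\overline t_2\sqrt{k_2}\,u_\sigma)$ as in Lemma~\ref{lem0003} for $B<A_{\bbr^4}$); for $\beta>\beta_1$, $B'<\min\{d_{1,\bbr^4},d_{2,\bbr^4},A_{\bbr^4}\}$ via $t_\beta(W_1,W_1)$ with $t_\beta\to0$ as $\beta\to+\infty$. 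Then the compactness scheme: since $\beta>0$, Schwarz symmetrization of each component does not increase $\mathcal{J}_{\bbr^4,T}$ (the Dirichlet term does not increase, the $L^p$ and $L^4$ norms are unchanged, $\chi_\beta$ does not decrease so the subcritical term only becomes more negative, and $\mathcal{B}_{|u_1^*|^{2}|u_2^*|^{2},1,\bbr^4}\ge\mathcal{B}_{|u_1|^{2}|u_2|^{2},1,\bbr^4}$), and combining this with the fibering structure shows $B$ (resp.\ $B'$) equals its restriction to radial functions. A radial minimizing sequence is bounded in $\mathcal{H}_{\bbr^4}$, hence converges weakly to some $\overrightarrow{\mathbf{u}}_*$ with $\mathcal{J}_{\bbr^4,T}'(\overrightarrow{\mathbf{u}}_*)=0$ and strongly in $L^q(\bbr^4)$ for $2<q<4$, so the subcritical terms pass to the limit; by the Br\'ezis--Lieb lemma and \cite[Lemma~2.3]{CZ15}, with $\overrightarrow{\mathbf{v}}_n=\overrightarrow{\mathbf{u}}_n-\overrightarrow{\mathbf{u}}_*$ one has $\mathcal{J}_{\bbr^4,T}(\overrightarrow{\mathbf{u}}_n)=\mathcal{J}_{\bbr^4,T}(\overrightarrow{\mathbf{u}}_*)+\mathcal{J}_{\infty}(\overrightarrow{\mathbf{v}}_n)+o_n(1)$ ($\mathcal{J}_\infty$ being $\mathcal{J}_{\bbr^4,T}$ with the $\alpha_i$-terms deleted), together with the analogous splitting of the Nehari identities. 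If $\overrightarrow{\mathbf{v}}_n\not\to0$, then as in \cite{CZ121} and Claim~1 of Proposition~\ref{prop0001} one gets $\mathcal{J}_\infty(\overrightarrow{\mathbf{v}}_n)\ge A_{\bbr^4}+o_n(1)$, and since $\mathcal{J}_{\bbr^4,T}(\overrightarrow{\mathbf{u}}_*)\ge0$ this forces $B\ge A_{\bbr^4}$, contradicting the estimate; hence $\overrightarrow{\mathbf{u}}_n\to\overrightarrow{\mathbf{u}}_*$ strongly, $\mathcal{J}_{\bbr^4,T}(\overrightarrow{\mathbf{u}}_*)=B$ (resp.\ $B'$), and the strict inequalities $B<\sum_i d_{i,\bbr^4}$, $B<A_{\bbr^4}$ (resp.\ $B'<\min\{d_{1,\bbr^4},d_{2,\bbr^4},A_{\bbr^4}\}$) exclude $\overrightarrow{\mathbf{u}}_*$ being trivial or semi-trivial, exactly as in Claims~1--2 of Proposition~\ref{prop0001}. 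The Lagrange-multiplier argument of Proposition~\ref{prop0001} then turns $\overrightarrow{\mathbf{U}}_*:=(|u_1^*|,|u_2^*|)$ into a solution of $(\mathcal{S}_*)$, positive by the maximum principle and radial because the radial minimizing sequence converges strongly; and for $\beta>\beta_1$, being nontrivial, $\overrightarrow{\mathbf{U}}_*\in\mathcal{N}_{\bbr^4,T}\subset\mathcal{N}'_{\bbr^4,T}$, so $B\le\mathcal{J}_{\bbr^4,T}(\overrightarrow{\mathbf{U}}_*)=B'$, which together with $B\ge B'$ (from $\mathcal{N}_{\bbr^4,T}\subset\mathcal{N}'_{\bbr^4,T}$) gives $B=B'$ and shows $B$ is attained; $B<\sum_i d_{i,\bbr^4}$ for $0<\beta<\beta_0$ is the recorded estimate.

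The main obstacle is the compactness step in $(ii)$: one must rule out that the symmetrized minimizing sequence splits off a nontrivial bubble of the purely critical limit system, which demands the sharp two-sided control — $B$ (resp.\ $B'$) strictly below $A_{\bbr^4}$ and simultaneously strictly below $\sum_i d_{i,\bbr^4}$ (resp.\ $\min\{d_{1,\bbr^4},d_{2,\bbr^4}\}$) — and it is here that the smallness of $\beta$ (resp.\ the largeness of $\beta$) and the delicate test-function/Miranda computations in the style of Lemma~\ref{lem0003} are genuinely used. A secondary difficulty, already present in $(i)$, is the indefiniteness of the quartic form when $\beta\le-\sqrt{\mu_1\mu_2}$, which is precisely why the restriction $|\overrightarrow{\mathbf{\alpha}}|<\alpha_T$ and the truncation $\chi_\beta$ are needed in order to keep $\det\Theta>0$ and the fibering map coercive on $\mathcal{N}_{\bbr^4,T}$.
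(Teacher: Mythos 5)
Your proposal follows essentially the same route as the paper's proof: for $(i)$, a translated scalar ground state projected onto $\mathcal{N}_{\bbr^4,T}$ by Miranda/implicit function theorem for the upper bound, and the fibering-map maximality combined with the separation $\mathcal{J}_{\bbr^4,T}(\overrightarrow{\mathbf{t}}\circ\overrightarrow{\mathbf{u}})\geq\sum_{i=1}^2\mathcal{E}_{i,\bbr^4}(t_iu_i)$ for the lower bound and non-attainment; for $(ii)$, the test-function estimates of Lemma~\ref{lem0003} transplanted to $\bbr^4$, Schwarz symmetrization, the compact radial embedding for $2<r<4$, and the Br\'ezis--Lieb splitting exactly as in Proposition~\ref{prop0001}.

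One intermediate step is stated too strongly: if $\overrightarrow{\mathbf{v}}_n\not\to0$ the residual need not carry both components, so in general you only get $\mathcal{J}_{\infty}(\overrightarrow{\mathbf{v}}_n)\geq\min_i\tfrac{1}{4\mu_i}\mathcal{S}^2+o_n(1)$, which is strictly below $A_1$ (your $A_{\bbr^4}$) for $0<\beta<\beta_0$. The correct mechanism is the case analysis of Claims~1--2 of Proposition~\ref{prop0001}: a trivial weak limit forces both components to concentrate (via the analogues of \eqref{eqn0013}--\eqref{eqn0014}) and gives $B\geq A_1$; a semi-trivial weak limit gives $B\geq d_{i,\bbr^4}+\tfrac{1}{4\mu_j}\mathcal{S}^2$; a nontrivial weak limit satisfies $\mathcal{J}_{\bbr^4,T}(\overrightarrow{\mathbf{u}}_*)\geq B$, which forces the residual to vanish. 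Your recorded estimates $B<A_1$ and $B<\sum_{i=1}^2d_{i,\bbr^4}$ (resp. $B'<\min\{d_{1,\bbr^4},d_{2,\bbr^4},A_1\}$), together with $d_{j,\bbr^4}<\tfrac{1}{4\mu_j}\mathcal{S}^2$, do cover all three cases, so this is a presentational slip rather than a substantive gap.
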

\begin{proof}
$(i)$\quad Let $U_{2,\bbr^4}^R(x)=U_{2,\bbr^4}^R(x-Re_1)$, where $e_1=(1,0,0,0)$.  Then it is easy to see that $\mathcal{B}_{|U_{1,\bbr^4}|^{2}|U_{2,\bbr^4}^R|^{2},1,\bbr^4}\to0$ as $R\to+\infty$.  It follows that
$$
\mu_1\mu_2\mathcal{B}_{U_{1,\bbr^4},4,\bbr^4}^{4}\mathcal{B}_{U_{2,\bbr^4}^R,4,\bbr^4}^{4}-\beta^2\mathcal{B}_{|U_{1,\bbr^4}|^{2}|U_{2,\bbr^4}^R|^{2},1,\bbr^4}\geq C
$$
and
$$
\mathcal{B}_{U_{1,\bbr^4},4,\bbr^4}^{4}\geq C,\quad \mathcal{B}_{U_{2,\bbr^4}^R,4,\bbr^4}^{4}\geq C
$$
for $R$ large enough.  Since \eqref{eqn0008} holds and $d_{i,\bbr^4}\in(0, \frac{1}{4\mu_i}\mathcal{S}^2)$, we can apply Miranda's theorem similar to that for $\eta_i(t_1,t_2)$ in the proof of Lemma~\ref{lem0003} to show that there exists $\overrightarrow{\mathbf{t}}_R\in(\bbr^+)^2$ with $\overrightarrow{\mathbf{t}}_R\to\overrightarrow{\mathbf{1}}$ as $R\to+\infty$ such that $\overrightarrow{\mathbf{t}}_R\circ\overrightarrow{\mathbf{U}}_R\in\mathcal{N}_{\bbr^4,T}$ and $\mathcal{J}_{\bbr^4,T}(\overrightarrow{\mathbf{t}}_R\circ\overrightarrow{\mathbf{U}}_R)=\max_{\overrightarrow{\mathbf{t}}\in\bbr_+^2}\mathcal{J}_{\bbr^4,T}(\overrightarrow{\mathbf{t}}\circ\overrightarrow{\mathbf{U}}_R)$ for $R$ large enough, where $\overrightarrow{\mathbf{U}}_R=(U_{1,\bbr^4}, U_{2,\bbr^4}^R)$.  Moreover, taking into account \eqref{eqn0008}, we can use similar arguments as used for $\Phi_n(\overrightarrow{\mathbf{t}})$ in the proof of Lemma~\ref{lem0001} to show that
$$
\mathcal{J}_{\bbr^4,T}(\overrightarrow{\mathbf{u}})=\max_{\overrightarrow{\mathbf{s}}\in\bbr_+^2}\mathcal{J}_{\bbr^4,T}(\overrightarrow{\mathbf{s}}\circ\overrightarrow{\mathbf{u}})
$$
for all $\overrightarrow{\mathbf{u}}\in\mathcal{N}_{\bbr^4,T}$ with $\mathcal{J}_{\bbr^4,T}(\overrightarrow{\mathbf{u}})<\frac{1}{4\mu_1}\mathcal{S}^2+\frac{1}{4\mu_2}\mathcal{S}^2$.  Now, since $d_{i,\bbr^4}$ is attained by $U_{i,\bbr^4}$,  $i=1,2$, by $\beta<0$ and a similar argument as adopted for \cite[Theorem~1.5]{CZ121}, we can show that $B=\sum_{i=1}^2d_{i,\bbr^4}$ and it can not be attained for $\beta<0$.

\vskip0.2in
$(ii)$\quad By setting $u\equiv0$ outside $\Omega$, we can regard any $u\in\h$ as a function in $H^1(\bbr^4)$.  Now, applying similar argument for $\overrightarrow{\widetilde{\mathbf{U}}}_\ve=(\widetilde{U}_{1,\ve}, u_\sigma)$ in the proof of Lemma~\ref{lem0003} to $(U_{1,\bbr^4}, v_\sigma)$, we have  that
$B<d_{1,\bbr^4}+\frac{1}{4\mu_2}\mathcal{S}^2$ for all $0<\beta<\beta_0$.  Similarly, we can also show that $B<d_{2,\bbr^4}+\frac{1}{4\mu_1}\mathcal{S}^2$ for all $0<\beta<\beta_0$.  By following the similar argument as used for $\overrightarrow{\overline{\mathbf{U}}}_{\sigma}=(\sqrt{k}_1u_{\sigma}, \sqrt{k}_2u_{\sigma})$ in the proof of  Lemma~\ref{lem0003}, we can prove  that $B<A_1$  for all $0<\beta<\beta_0$.  Thus, we actually have
\begin{eqnarray}\label{eqn0040}
B<\min\{d_{1,\bbr^4}+\frac{1}{4\mu_2}\mathcal{S}^2, d_{2,\bbr^4}+\frac{1}{4\mu_1}\mathcal{S}^2, A_1\}
\end{eqnarray}
 for all $0<\beta<\beta_0$.  Moreover, applying  the argument as used for $\overrightarrow{\mathbf{U}^*}_\ve=(\widetilde{U}_{1,\ve}, \widetilde{U}_{1,\ve})$ in the proof of Lemma~\ref{lem0003}  to $(U_{1,\bbr^4}, U_{1,\bbr^4})$, we can show that
\begin{eqnarray}\label{eqn0041}
B'<\min\{d_{1,\bbr^4},d_{2,\bbr^4}, A_1\}
\end{eqnarray}
for all $\beta>\beta_1$.  On the other hand, let $\{\overrightarrow{\mathbf{u}}_n\}\subset\mathcal{N}_{\bbr^4,T}$ and $\{\overrightarrow{\mathbf{u}}'_n\}\subset\mathcal{N}'_{\bbr^4,T}$ be the sequences obtained by the Ekeland's principle, then by $\beta>0$ and the Schwartz's symmetrization (cf. \cite{LW05}), we can see that $\overrightarrow{\mathbf{u}}_n$ and $\overrightarrow{\mathbf{u}}'_n$ can be chosen to be radial symmetric.  Now, due to the compactness of the embedding map $\mathcal{H}_{i,\bbr^4}\to L^r_{Rad}(\bbr^4)$ for all $2<r<4$, by \eqref{eqn0040} and \eqref{eqn0041}, we can follow the similar arguments as used in the proof of Proposition~\ref{prop0001} to prove that there exists $\overrightarrow{\mathbf{U}}_*$ with $U_i^*$ radial symmetric such that $\overrightarrow{\mathbf{U}}_*$ is a solution of $(\mathcal{S}_*)$ and $\overrightarrow{\mathbf{U}}_*$ attains $B$ for $0<\beta<\beta_0$ or $\beta>\beta_1$ with $B=B'$ for $\beta>\beta_1$, where $L^r_{Rad}(\bbr^4)=\{u\in L^r(\bbr^4)\mid u\text{ is radial symmetric}\}$.  It remains to show that $B<\sum_{i=1}^2d_{i,\bbr^4}$ for $0<\beta<\beta_0$.  Indeed, by considering $\overrightarrow{\mathbf{U}}_{\bbr^4}=(U_{1,\bbr^4}, U_{2,\bbr^4})$ and taking into account \eqref{eqn0008}, we can apply Miranda's theorem similar to that for $\eta_i(t_1,t_2)$ in the proof of Lemma~\ref{lem0003} to show that $\overrightarrow{\mathbf{t}}\circ\overrightarrow{\mathbf{U}}_{\bbr^4}\in\mathcal{N}_{\bbr^4,T}$ for $\beta>0$ small enough with some $\overrightarrow{\mathbf{t}}\in(\bbr^+)^2$.  By taking $\beta_0$ small enough if necessary, we can see that $\overrightarrow{\mathbf{t}}\circ\overrightarrow{\mathbf{U}}_{\bbr^4}\in\mathcal{N}_{\bbr^4,T}$ for $0<\beta<\beta_0$ with some $\overrightarrow{\mathbf{t}}\in(\bbr^+)^2$.  Since $p>2$, by a standard argument, we can see from $\beta>0$ that
\begin{eqnarray*}
\sum_{i=1}^2d_{i,\bbr^4}=\sum_{i=1}^2\mathcal{E}_{i,\bbr^4}(U_{i,\bbr^4})\geq\sum_{i=1}^2\mathcal{E}_{i,\bbr^4}(t_iU_{i,\bbr^4})>\mathcal{J}_{\bbr^4,T}(\overrightarrow{\mathbf{t}}\circ\overrightarrow{\mathbf{U}}_{\bbr^4})\geq B,
\end{eqnarray*}
where $\mathcal{E}_{i,\bbr^4}(u)$ are given by \eqref{eqn0095}.
\end{proof}

\vskip0.3in
Next, we also establish a  result for the following system
\begin{equation*}
\left\{\aligned&-\Delta u_1+\lambda_1u_1=\mu_1u_1^3+\alpha_1u_1^{p-1}+\beta u_2^2u_1\quad&\text{in }\bbr^4_{+},\\
&-\Delta u_2+\lambda_2u_2=\mu_2u_2^3+\alpha_2u_2^{p-1}+\beta u_1^2u_2\quad&\text{in }\bbr^4_{+},\\
&u_1,u_2\geq0\text{ in }\bbr^4_{+},\quad u_1,u_2\to0\text{ as }|x|\to+\infty,\\
&u_1=u_2=0\text{ on }\partial\bbr^4_{+},\endaligned\right.\eqno{(\mathcal{S}_{**})}
\end{equation*}
where $\bbr^4_{+}=\{x=(x_1,x_2,x_3,x_4)\in\bbr^4\mid x_4>0\}$.  Our result can be read as follows.
\begin{proposition}\label{prop0003}
$(\mathcal{S}_{**})$ has no solution unless $u_1\equiv0$ and $u_2\equiv0$.
\end{proposition}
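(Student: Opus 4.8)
The plan is to prove the nonexistence by a Rellich--Pohozaev type identity tailored to the half-space: one tests each equation against the derivative of $u_i$ in the direction orthogonal to $\partial\bbr^4_+$, so that the only surviving boundary term records the normal derivative of $u_i$ on the hyperplane; this forces $\partial_\nu u_i\equiv0$ on $\partial\bbr^4_+$, which together with the Hopf boundary lemma yields a contradiction for any nontrivial component. As a preliminary I would note that, by a Brez\'is--Kato/Moser-type iteration as used elsewhere in this paper, any solution $\overrightarrow{\mathbf{u}}=(u_1,u_2)$ of $(\mathcal{S}_{**})$ lies in $L^\infty(\bbr^4_+)$, hence in $C^2(\overline{\bbr^4_+})$ by Schauder estimates; since $\lambda_i>0$ and $u_i\to0$ at infinity, $u_i$ and $|\nabla u_i|$ decay exponentially, so all the manipulations below are legitimate (on the unbounded domain one inserts a cutoff tending to $1$, the contributions near infinity vanishing by the decay).

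Next I would carry out the identity. Write $\Sigma=\partial\bbr^4_+=\{x_4=0\}$, with outward unit normal $\nu=-e_4$. Since $u_i=0$ on $\Sigma$, the tangential gradient of $u_i$ vanishes there, so $\nabla u_i=(\partial_\nu u_i)\nu$, giving $\partial_{x_4}u_i=-\partial_\nu u_i$ and $|\nabla u_i|^2=(\partial_\nu u_i)^2$ on $\Sigma$. Multiplying the $i$-th equation of $(\mathcal{S}_{**})$ by $\partial_{x_4}u_i$, integrating over $\bbr^4_+$ and summing over $i=1,2$, one finds by integration by parts that
$$\int_{\bbr^4_+}(-\Delta u_i)\,\partial_{x_4}u_i\,dx=\int_{\bbr^4_+}\partial_{x_4}\Big(\tfrac12|\nabla u_i|^2\Big)\,dx-\int_\Sigma(\partial_\nu u_i)(\partial_{x_4}u_i)\,dS=\tfrac12\int_\Sigma(\partial_\nu u_i)^2\,dS,$$
while every other bulk term is an exact $x_4$-derivative whose primitive vanishes on $\Sigma$: indeed $\lambda_iu_i\,\partial_{x_4}u_i=\tfrac{\lambda_i}{2}\partial_{x_4}(u_i^2)$, $(\mu_iu_i^3+\alpha_iu_i^{p-1})\partial_{x_4}u_i=\partial_{x_4}\big(\tfrac{\mu_i}{4}u_i^4+\tfrac{\alpha_i}{p}u_i^p\big)$, and after summing over $i$ the coupling contributions combine as $u_2^2u_1\,\partial_{x_4}u_1+u_1^2u_2\,\partial_{x_4}u_2=\tfrac12\partial_{x_4}(u_1^2u_2^2)$; since $u_1=u_2=0$ on $\Sigma$, all of these integrate to zero. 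Hence $\sum_{i=1}^2\int_\Sigma(\partial_\nu u_i)^2\,dS=0$, so $\partial_\nu u_1\equiv\partial_\nu u_2\equiv0$ on $\Sigma$.

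Finally I would extract the contradiction. Suppose $\overrightarrow{\mathbf{u}}\not=\overrightarrow{\mathbf{0}}$, say $u_1\not\equiv0$. Then $u_1\ge0$ solves the linear equation $-\Delta u_1+c_1(x)u_1=0$ in $\bbr^4_+$ with $c_1:=\lambda_1-\mu_1u_1^2-\alpha_1u_1^{p-2}-\beta u_2^2\in L^\infty(\bbr^4_+)$ (here $p>2$ keeps $u_1^{p-2}$ bounded), so by the strong maximum principle $u_1>0$ in $\bbr^4_+$; as the half-space satisfies an interior ball condition at every boundary point and $u_1$ vanishes on $\Sigma$, the Hopf lemma then gives $\partial_\nu u_1<0$ on $\Sigma$, contradicting the previous step. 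Therefore $u_1\equiv0$, and applying the same reasoning to the remaining (now uncoupled) equation for $u_2$ gives $u_2\equiv0$ as well. The one place that needs genuine care is the rigorous justification of the Pohozaev computation on the unbounded domain --- the decay of $u_i$ and $\nabla u_i$, the $C^1$ regularity up to $\Sigma$ required to make sense of $\partial_\nu u_i$ pointwise, and the cutoff controlling the integrals near infinity; the algebra of the identity and the maximum-principle step are then straightforward.
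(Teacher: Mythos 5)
Your proposal is correct and is essentially the paper's own argument: the paper likewise multiplies the system by $(\partial u_1/\partial x_4,\partial u_2/\partial x_4)$, integrates by parts so that all bulk terms vanish as exact $x_4$-derivatives, and obtains $\frac12\int_{\partial\bbr^4_+}\sum_i|\partial u_i/\partial x_4|^2\,ds=0$, which contradicts the Hopf/strong maximum principle for any nontrivial component. Your write-up merely supplies more of the regularity, decay, and cutoff justifications that the paper leaves implicit.
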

\begin{proof}
Suppose the contrary and let $(u_1, u_2)$ be a solution of $(\mathcal{S}_{**})$ such that $\sum_{i=1}^2|u_i|>0$.  Then by the classical elliptic regularity theorem, we can see that $u_i$ is of $C^2$.  Thus, by the strong maximum principle, we have $\frac{\partial u_i}{\partial x_4}>0$ on $\partial\bbr^4_{+}$ for all $i=1,2$.  Now, multiplying $(\mathcal{S}_{**})$ with $(\frac{\partial u_1}{\partial x_4}, \frac{\partial u_2}{\partial x_4})$ and integrating by parts, we have
\begin{eqnarray*}
0=\int_{\bbr^4_{+}}\sum_{i=1}^2\Delta u_i\frac{\partial u_i}{\partial x_4}dx=\frac12\int_{\partial\bbr^4_{+}}\sum_{i=1}^2|\frac{\partial u_1}{\partial x_4}|^2ds>0,
\end{eqnarray*}
which is a contradiction.
\end{proof}

\section{The concentration behavior as $\ve\to0^+$}
By Proposition~\ref{prop0001}, $(\mathcal{S}_{\ve})$ has a nontrivial solution $\overrightarrow{\mathbf{u}}_\ve$ with $\ve$ small enough and $\alpha_1,\alpha_2>0$ in the following two cases:
\begin{enumerate}
\item either \; $-\sqrt{\mu_1\mu_2}<\beta<\beta_0$ or $\beta>\beta_1$,
\item $\beta\leq-\sqrt{\mu_1\mu_2}$ with $|\overrightarrow{\mathbf{\alpha}}|<\alpha_T$.
\end{enumerate}
In this section, we will obtain some results to $\overrightarrow{\mathbf{u}}_\ve$ as $\ve\to0^+$.  We first give an estimate for $\ve^{-4}c_{\ve,\Omega,T}$ as $\ve\to0^+$.
\begin{lemma}\label{lem0005}
Let $\alpha_T$, $\beta_0$ and $\beta_1$ be respectively given by Lemmas~\ref{lem0002}--\ref{lem0003}.  Then we have the following.
\begin{enumerate}
\item[$(i)$]  $\ve^{-4}c_{\ve,\Omega,T}=B+o(1)$  in the following two cases:
\begin{enumerate}
\item $-\sqrt{\mu_1\mu_2}<\beta<\beta_0$ and $\alpha_1,\alpha_2>0$,
\item $\beta\leq-\sqrt{\mu_1\mu_2}$ and $\alpha_1,\alpha_2>0$ with $|\overrightarrow{\mathbf{\alpha}}|<\alpha_T$.
\end{enumerate}
\item[$(ii)$]  $\ve^{-4}c'_{\ve,\Omega,T}=B'+o(1)$ for $\beta>\beta_1$ and $\alpha_1,\alpha_2>0$.
\end{enumerate}
\end{lemma}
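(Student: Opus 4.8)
The plan is to prove each of the two estimates by establishing a matching lower and upper bound; the lower bound is essentially immediate and all the substance is in the upper bound. Write $\overline{u}(x)=u(\ve x)$; since $0\in\Omega$ we have $\Omega_\ve\nearrow\bbr^4$. The change of variables gives $\|\overrightarrow{\mathbf{u}}\|_{\ve,\Omega}^2=\ve^4\|\overrightarrow{\overline{\mathbf{u}}}\|_{\bbr^4}^2$, $\mathcal{B}_{u_i,q,\Omega}^q=\ve^4\mathcal{B}_{\overline{u}_i,q,\bbr^4}^q$ and $\mathcal{B}_{|u_1|^2|u_2|^2,1,\Omega}=\ve^4\mathcal{B}_{|\overline{u}_1|^2|\overline{u}_2|^2,1,\bbr^4}$ (extending $\overline{u}_i$ by $0$ outside $\Omega_\ve$); moreover the arguments of $\chi_\beta$ in $\mathcal{J}_{\ve,\Omega,T}$ and in $\mathcal{J}_{\bbr^4,T}$ are both $\le1$ exactly when $\|\overrightarrow{\mathbf{u}}\|_{\ve,\Omega}^2\le\ve^4T^2$. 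Hence, on any configuration with $\|\overrightarrow{\mathbf{u}}\|_{\ve,\Omega}^2\le\ve^4T^2$ one has $\mathcal{J}_{\ve,\Omega,T}(\overrightarrow{\mathbf{u}})=\ve^4\mathcal{J}_{\bbr^4,T}(\overrightarrow{\overline{\mathbf{u}}})$, and $\overrightarrow{\mathbf{u}}\in\mathcal{N}_{\ve,\Omega,T}$ (resp.\ $\mathcal{N}'_{\ve,\Omega,T}$) iff $\overrightarrow{\overline{\mathbf{u}}}\in\mathcal{N}_{\bbr^4,T}$ (resp.\ $\mathcal{N}'_{\bbr^4,T}$); this is the bridge between the two scales. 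For the lower bound, Proposition~\ref{prop0001} provides minimizers $\overrightarrow{\mathbf{u}}_\ve\in\mathcal{N}_{\ve,\Omega,T}$ (for $\beta<\beta_0$) and $\overrightarrow{\mathbf{u}}'_\ve\in\mathcal{N}'_{\ve,\Omega,T}$ (for $\beta>\beta_1$), and by Lemma~\ref{lem0001}$(i)$ together with the strong convergence established in the proof of Proposition~\ref{prop0001} we have $\|\overrightarrow{\mathbf{u}}_\ve\|_{\ve,\Omega}^2\le\ve^4T^2$ and $\|\overrightarrow{\mathbf{u}}'_\ve\|_{\ve,\Omega}^2\le\ve^4T^2$. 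Applying the bridge, $\overrightarrow{\overline{\mathbf{u}}}_\ve\in\mathcal{N}_{\bbr^4,T}$ with $\mathcal{J}_{\bbr^4,T}(\overrightarrow{\overline{\mathbf{u}}}_\ve)=\ve^{-4}c_{\ve,\Omega,T}$, so $\ve^{-4}c_{\ve,\Omega,T}\ge B$; likewise $\ve^{-4}c'_{\ve,\Omega,T}\ge B'$ (these hold for every $\ve$, with no limit needed).

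For the upper bound when $\beta<0$ we have $B=\sum_{i=1}^2 d_{i,\bbr^4}$, which is not attained (Proposition~\ref{prop0002}$(i)$), so I use the test configuration of the proof of Lemma~\ref{lem0002}: fix disjoint balls $\mathbb{B}_{r_1}(x_1),\mathbb{B}_{r_2}(x_2)\subset\Omega$ and let $U_{i,\ve}$ be a ground state of $(\mathcal{P}_{i,\ve})$ on $\mathbb{B}_{r_i}(x_i)$, extended by $0$, so $\mathcal{E}_{i,\ve,\mathbb{B}_{r_i}(x_i)}(U_{i,\ve})=d_{i,\ve,\mathbb{B}_{r_i}(x_i)}$ (the ground-state level on that ball). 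Since the supports are disjoint and $\|U_{1,\ve}\|_{1,\ve,\Omega}^2+\|U_{2,\ve}\|_{2,\ve,\Omega}^2\le\ve^4\sum_i\frac{p}{2(p-2)\mu_i}\mathcal{S}^2\le\ve^4T^2$ by \eqref{eqn0008} (so $\chi_\beta\equiv1$ on $\overrightarrow{\mathbf{U}}_\ve:=(U_{1,\ve},U_{2,\ve})$ also when $\beta\le-\sqrt{\mu_1\mu_2}$), one checks $\overrightarrow{\mathbf{U}}_\ve\in\mathcal{N}_{\ve,\Omega,T}$ and $\mathcal{J}_{\ve,\Omega,T}(\overrightarrow{\mathbf{U}}_\ve)=\sum_i d_{i,\ve,\mathbb{B}_{r_i}(x_i)}$, whence $c_{\ve,\Omega,T}\le\sum_i d_{i,\ve,\mathbb{B}_{r_i}(x_i)}$. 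Rescaling, $\ve^{-4}d_{i,\ve,\mathbb{B}_{r_i}(x_i)}$ is the ground-state level of \eqref{eqnew0001} on the ball $\mathbb{B}_{r_i/\ve}(0)$ (autonomy on $\bbr^4$), which decreases to $d_{i,\bbr^4}$ as $\ve\to0^+$ by the standard monotonicity of ground-state levels on expanding balls. Hence $\limsup_{\ve\to0^+}\ve^{-4}c_{\ve,\Omega,T}\le\sum_i d_{i,\bbr^4}=B$, and with the lower bound this gives $(i)$ for $\beta<0$.

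For the upper bound when $\beta>0$, $B$ is attained by a radial, $C^2$, exponentially decaying solution $\overrightarrow{\mathbf{U}}_*=(U_1^*,U_2^*)$ of $(\mathcal{S}_*)$ by Proposition~\ref{prop0002}$(ii)$, with $B=B'$ when $\beta>\beta_1$. Fix a cutoff $\eta_R$ equal to $1$ on $\mathbb{B}_R(0)$ and $0$ off $\mathbb{B}_{2R}(0)$; then $\eta_R\overrightarrow{\mathbf{U}}_*\to\overrightarrow{\mathbf{U}}_*$ in $\mathcal{H}_{\bbr^4}$, so $\mathcal{J}_{\bbr^4,T}(\eta_R\overrightarrow{\mathbf{U}}_*)\to B$ and $\mathcal{J}'_{\bbr^4,T}(\eta_R\overrightarrow{\mathbf{U}}_*)\to0$ as $R\to\infty$. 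I then project $\eta_R\overrightarrow{\mathbf{U}}_*$ back onto the Nehari set: onto $\mathcal{N}_{\bbr^4,T}$ by a vector $\overrightarrow{\mathbf{t}}_R$ when $0<\beta<\beta_0$ — the relevant Jacobian has determinant $\ge(\mu_1\mu_2-\beta^2)\mathcal{B}_{U_1^*,4,\bbr^4}^4\mathcal{B}_{U_2^*,4,\bbr^4}^4>0$, exactly as for $\Theta_n$ in the proof of Lemma~\ref{lem0001} — and onto $\mathcal{N}'_{\bbr^4,T}$ by a scalar $t_R$ when $\beta>\beta_1$ (the relevant derivative being $\le-C<0$, as in Lemma~\ref{lem0001}$(iii)$); by the implicit function theorem $\overrightarrow{\mathbf{t}}_R\to\overrightarrow{\mathbf{1}}$ (resp.\ $t_R\to1$), so the projected function $\overrightarrow{\mathbf{Z}}_R$ still has support in $\mathbb{B}_{2R}(0)$ and $\mathcal{J}_{\bbr^4,T}(\overrightarrow{\mathbf{Z}}_R)\to B$ (resp.\ $\to B'$). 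Finally $\overrightarrow{\mathbf{w}}_\ve(x):=\overrightarrow{\mathbf{Z}}_R(x/\ve)$ is supported in $\mathbb{B}_{2R\ve}(0)\subset\Omega$ once $\ve<\text{dist}(0,\partial\Omega)/(2R)$, lies in $\mathcal{N}_{\ve,\Omega,T}$ (resp.\ $\mathcal{N}'_{\ve,\Omega,T}$) by the bridge, and $\mathcal{J}_{\ve,\Omega,T}(\overrightarrow{\mathbf{w}}_\ve)=\ve^4\mathcal{J}_{\bbr^4,T}(\overrightarrow{\mathbf{Z}}_R)$; hence $\limsup_{\ve\to0^+}\ve^{-4}c_{\ve,\Omega,T}\le\mathcal{J}_{\bbr^4,T}(\overrightarrow{\mathbf{Z}}_R)$ for each $R$, and letting $R\to\infty$ gives $\le B$. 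The analogous argument in the $\mathcal{N}'$ version gives $\limsup_{\ve\to0^+}\ve^{-4}c'_{\ve,\Omega,T}\le B'$ for $\beta>\beta_1$, completing $(i)$ for $0<\beta<\beta_0$ and $(ii)$.

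The step I expect to be the main obstacle is the last one: producing the near-optimal test configurations in the attractive case, i.e.\ truncating the limit-problem ground state $\overrightarrow{\mathbf{U}}_*$ and re-projecting it onto the Nehari manifold so that the Lagrange-type multipliers tend to $\overrightarrow{\mathbf{1}}$ (which is exactly where the non-degeneracy $\det\Theta_n\ne0$ of Lemmas~\ref{lem0001}--\ref{lem0003}, and hence the restrictions $-\sqrt{\mu_1\mu_2}<\beta<\beta_0$, $\beta>\beta_1$, or $|\overrightarrow{\mathbf{\alpha}}|<\alpha_T$, come in), and then transplanting it into $\Omega_\ve$ while keeping the support inside $\Omega$; the $\beta\le-\sqrt{\mu_1\mu_2}$ part of $(i)$ additionally needs the routine check via \eqref{eqn0006}--\eqref{eqn0008} that all test configurations stay below the threshold $\ve^4T^2$, so that $\chi_\beta$ is inactive on them.
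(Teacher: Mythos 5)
Your proposal is correct, and its skeleton (two-sided bounds, the exact $\ve^4$-scaling ``bridge'' between $\mathcal{J}_{\ve,\Omega,T}$ and $\mathcal{J}_{\bbr^4,T}$ below the truncation threshold, and re-projection of transplanted test functions onto the Nehari sets) is the same as the paper's. The genuine difference is in how the upper bound is produced. The paper runs a single uniform argument for all $\beta$: it takes an abstract minimizing sequence $\overrightarrow{\mathbf{U}}_n\in\mathcal{N}_{\bbr^4,T}$ with $\mathcal{J}_{\bbr^4,T}(\overrightarrow{\mathbf{U}}_n)<B+\frac1n$, approximates each by $\overrightarrow{\mathbf{U}}_{n,\ve}\in\mathcal{H}_{\Omega_\ve}$, projects back onto $\mathcal{N}_{\ve,\Omega,T}$ (Miranda for $\beta>0$, the implicit function theorem for $\beta<0$), and invokes the maximality property $\mathcal{J}_{\bbr^4,T}(\overrightarrow{\mathbf{U}}_n)\geq\mathcal{J}_{\bbr^4,T}(\overrightarrow{\mathbf{t}}\circ\overrightarrow{\mathbf{U}}_n)$ before sending $n\to\infty$. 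You instead split by sign of $\beta$: for $\beta<0$ you exploit the identification $B=\sum_{i}d_{i,\bbr^4}$ from Proposition~\ref{prop0002}$(i)$ and use the explicit disjoint-support competitor $(U_{1,\ve},U_{2,\ve})$ of Lemma~\ref{lem0002}, which kills the coupling term and reduces everything to $\ve^{-4}d_{i,\ve,\mathbb{B}_{r_i}(x_i)}\to d_{i,\bbr^4}$ (which is exactly the content of Proposition~\ref{propA0001} applied on the balls, so you could cite it rather than re-derive the monotonicity); for $\beta>0$ you truncate the attained minimizer $\overrightarrow{\mathbf{U}}_*$ at radius $R$ and do a two-parameter limit $\ve\to0$ then $R\to\infty$. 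What your route buys is that the delicate Nehari re-projection with multipliers tending to $\overrightarrow{\mathbf{1}}$ is only needed in the attractive case, where the nondegeneracy $\det\Theta>0$ is cleanest; the cost is that the repulsive case becomes contingent on Proposition~\ref{prop0002}$(i)$, whereas the paper's argument needs only the definition of $B$. Both lower-bound arguments are the same inclusion/scaling observation (you phrase it through the attained minimizer, the paper through $\mathcal{N}_{\Omega_\ve,T}\subset\mathcal{N}_{\bbr^4,T}$), and your closing remarks correctly locate where the truncation $\chi_\beta$ and the parameter restrictions enter.
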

\begin{proof}
$(1)$\quad Let $\{\overrightarrow{\mathbf{U}}_n\}\in\mathcal{N}_{\bbr^4,T}$ and $\mathcal{J}_{\bbr^4,T}(\overrightarrow{\mathbf{U}}_n)<B+\frac1n$, where $\mathcal{J}_{\bbr^4,T}(\overrightarrow{\mathbf{U}})$ is given by \eqref{eqnew8002}.
Since $\Omega_{\ve}\to\bbr^4$ as $\ve\to0^+$, where $\Omega_{\ve}=\{y\in\bbr^4\mid\ve y\in\Omega\}$, there exists $\{\overrightarrow{\mathbf{U}}_{n,\ve}\}\subset\mathcal{H}_{\Omega_\ve}$ such that $\overrightarrow{\mathbf{U}}_{n,\ve}\to\overrightarrow{\mathbf{U}}_{n}$ strongly in $\mathcal{H}_{\bbr^4}$ as $\ve\to0^+$.  It follows from \eqref{eqn0008}, the construction of $\chi_\beta$ given by \eqref{eqnew8000} and Propositions~\ref{prop0002} and \ref{propA0001} that
\begin{eqnarray*}
\|U_{1}^{n,\ve}\|_{1,\ve,\Omega}^2-\alpha_1\mathcal{B}_{U_{1}^{n,\ve},p,\Omega}^{p}
-\mu_1\mathcal{B}_{U_{1}^{n,\ve},4,\Omega}^{4}-\beta\mathcal{B}_{|U_{1}^{n,\ve}|^2|U_{2}^{n,\ve}|^2,1,\Omega}&=o(1),\\
\|U_{2}^{n,\ve}\|_{1,\ve,\Omega}^2-\alpha_2\mathcal{B}_{U_{2}^{n,\ve},p,\Omega}^{p}
-\mu_2\mathcal{B}_{U_{2}^{n,\ve},4,\Omega}^{4}-\beta\mathcal{B}_{|U_{1}^{n,\ve}|^2|U_{2}^{n,\ve}|^2,1,\Omega}&=o(1).
\end{eqnarray*}
Since $\{\overrightarrow{\mathbf{U}}_{n}\}$ is bounded in $\mathcal{H}$, we can apply Miranda's theorem similar to that for $\eta_i(t_1,t_2)$ in the proof of Lemma~\ref{lem0003} to show that there exists $\overrightarrow{\mathbf{t}}_{n,\ve}$ with $\overrightarrow{\mathbf{t}}_{n,\ve}\to\overrightarrow{\mathbf{1}}$ as $\ve\to0^+$ such that $\overrightarrow{\mathbf{t}}_{n,\ve}\circ\overrightarrow{\mathbf{U}}_{n,\ve}\in\mathcal{N}_{\ve,\Omega,T}$ for $\beta>0$ small enough.  By taking $\beta_0$ small enough if necessary, we can see that $\overrightarrow{\mathbf{t}}_{n,\ve}\circ\overrightarrow{\mathbf{U}}_{n,\ve}\in\mathcal{N}_{\ve,\Omega,T}$ for $0<\beta<\beta_0$.  Taking into account \eqref{eqn0008}, we can use the implicit function theorem similar to that for $\Gamma_i^n(\overrightarrow{\mathbf{t}},\tau)$ in the proof of Lemma~\ref{lem0001} to show that there exists $\overrightarrow{\mathbf{t}}_{n,\ve}$ with $\overrightarrow{\mathbf{t}}_{n,\ve}\to\overrightarrow{\mathbf{1}}$ as $\ve\to0^+$ such that $\overrightarrow{\mathbf{t}}_{n,\ve}\circ\overrightarrow{\mathbf{U}}_{n,\ve}\in\mathcal{N}_{\ve,\Omega,T}$ for $\beta<0$.
Since $\overrightarrow{\mathbf{U}}_n\in\mathcal{N}_{\bbr^4,T}$ with $\mathcal{J}_{\bbr^4,T}(\overrightarrow{\mathbf{U}}_n)<B+\frac1n$, we can use similar arguments as used for $\Phi_n(\overrightarrow{\mathbf{t}})$ in the proof of Lemma~\ref{lem0001} further to show that
\begin{eqnarray*}
\mathcal{J}_{\bbr^4,T}(\overrightarrow{\mathbf{U}}_n)\geq\mathcal{J}_{\bbr^4,T}(\overrightarrow{\mathbf{t}}\circ\overrightarrow{\mathbf{U}}_n)\quad\text{for all }\overrightarrow{\mathbf{t}}\in(\bbr_+)^2.
\end{eqnarray*}
Thus, by a standard argument, we can see that $\lim_{\ve\to0^+}\ve^{-4}c_{\ve,\Omega,T}\leq B+\frac1n$ under the conditions $(a1)$ or $(a2)$.
Let $n\to\infty$, we have $\lim_{\ve\to0^+}\ve^{-4}c_{\ve,\Omega,T}\leq B$.
 On the other hand, by setting $u\equiv0$ outside $\Omega_\ve$, we can regard $\overrightarrow{\mathbf{u}}\in\mathcal{H}_{\Omega_\ve}$ as in $\mathcal{H}_{\bbr^4}$.  It follows that $\mathcal{N}_{\Omega_\ve,T}\subset\mathcal{N}_{\bbr^4,T}$, which together with the standard scaling technique, implies $\lim_{\ve\to0^+}\ve^{-4}c_{\ve,\Omega,T}\geq B$.

\vskip0.12in
$(ii)$ The proof is similar but more simple  than that of  $(i)$.
\end{proof}

\vskip0.32in

Let $p_i^\ve$ be the maximum point of $u_i^\ve $($i=1,2$) and define
\begin{eqnarray*}
\Omega_{i,\ve}=\{x\in\bbr^4\mid\ve x+p_i^\ve\in\Omega\}.
\end{eqnarray*}
\begin{lemma}\label{lem0004}
Assume $\alpha_1,\alpha_2>0$.  Let $\alpha_T$, $\beta_0$ and $\beta_1$ be respectively given by Lemmas~\ref{lem0002}-\ref{lem0003}.  Then we have $\Omega_{i,\ve}\to\bbr^4$ as $\ve\to0^+$($i=1,2$) under one of the  following two cases:
\begin{enumerate}
\item either \; $-\sqrt{\mu_1\mu_2}<\beta<\beta_0$ or $\beta>\beta_1$,
\item $\beta\leq-\sqrt{\mu_1\mu_2}$ and $|\overrightarrow{\mathbf{\alpha}}|<\alpha_T$.
\end{enumerate}
\end{lemma}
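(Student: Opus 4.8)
The plan is to argue by contradiction, using that (for the bounded domain $\Omega$) $\Omega_{i,\ve}\to\bbr^4$ is equivalent to $\mathrm{dist}(p_i^\ve,\partial\Omega)/\ve\to+\infty$ and that the only failure mode is a spike approaching $\partial\Omega$ at rate $O(\ve)$. I will show such a spike would force the rescaled solution to converge to a solution of the limit system $(\mathcal{S}_{**})$ on a half-space — excluded by Proposition~\ref{prop0003} — and then turn this into a quantitative clash with the sharp energy bounds of Lemmas~\ref{lem0003} and \ref{lem0005}. So suppose the conclusion fails for some index, say $i=1$: there are $\ve_n\to0^+$ and $d\in[0,+\infty)$ with $\mathrm{dist}(p_1^{\ve_n},\partial\Omega)/\ve_n\to d$. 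Along a subsequence $p_1^{\ve_n}\to\overline p\in\partial\Omega$, and, after a rigid motion carrying $\overline p$ to the origin and the inner normal to $e_4$, the regularity of $\partial\Omega$ gives $\Omega_{1,\ve_n}\to H:=\{x_4>-d\}$ in the sense that $\Omega_{1,\ve_n}\cap\mathbb{B}_R(0)\to H\cap\mathbb{B}_R(0)$ for every $R>0$.

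Rescale at $p_1^{\ve_n}$: set $v_i^{\ve_n}(x):=u_i^{\ve_n}(\ve_n x+p_1^{\ve_n})$ on $\Omega_{1,\ve_n}$, extended by $0$ to $\bbr^4$. By Lemma~\ref{lem0001}$(i)$, $\|\overrightarrow{\mathbf{u}}_{\ve_n}\|_{\ve_n,\Omega}^2<\ve_n^4T^2$, so $\chi_\beta\equiv1$ at $\overrightarrow{\mathbf{u}}_{\ve_n}$ (hence $\overrightarrow{\mathbf{u}}_{\ve_n}$ solves $(\mathcal{S}_{\ve_n})$) and $\{v_i^{\ve_n}\}$ is bounded in $H^1(\bbr^4)$, hence in $L^4(\bbr^4)$. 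Since $u_1^{\ve_n}=u_2^{\ve_n}=0$ on $\partial\Omega$, the pair $(v_1^{\ve_n},v_2^{\ve_n})$ satisfies
\[
-\Delta v_i^{\ve_n}+\lambda_iv_i^{\ve_n}=\mu_i(v_i^{\ve_n})^3+\alpha_i(v_i^{\ve_n})^{p-1}+\beta(v_j^{\ve_n})^2v_i^{\ve_n}\ \text{ in }\Omega_{1,\ve_n},\qquad v_i^{\ve_n}=0\ \text{ on }\partial\Omega_{1,\ve_n}
\]
($i\ne j$). Moreover \eqref{eqn0013}--\eqref{eqn0014}, together with their small-$|\overrightarrow{\mathbf{\alpha}}|$ and $\mathcal{N}'$-analogues, give $\int_{\bbr^4}|v_i^{\ve_n}|^4\ge c_0>0$; with $\int_{\bbr^4}|v_i^{\ve_n}|^2\le C$ this yields $v_1^{\ve_n}(0)=\|u_1^{\ve_n}\|_{L^\infty}\ge c_0'>0$.

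Up to a further subsequence, $v_i^{\ve_n}\rightharpoonup v_i$ in $H^1(\bbr^4)$, $v_i^{\ve_n}\to v_i$ in $L^r_{loc}$ ($1\le r<4$) and a.e.; since $v_i^{\ve_n}\equiv0$ off $\Omega_{1,\ve_n}\to H$ we get $v_i\in H^1_0(H)$, $v_i\ge0$. Passing to the limit in the weak formulation against $\varphi\in C^\infty_c(H)$ — the critical powers $(v_i^{\ve_n})^3\rightharpoonup v_i^3$ and the coupling terms $(v_j^{\ve_n})^2v_i^{\ve_n}\rightharpoonup v_j^2v_i$ in $L^{4/3}$ (a.e. convergence plus boundedness), the other terms converging strongly on compacta — shows $(v_1,v_2)$ is a nonnegative weak, hence $C^2$, solution of $(\mathcal{S}_{**})$ on $H$, so $v_1\equiv v_2\equiv0$ by Proposition~\ref{prop0003}. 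Thus $v_1^{\ve_n}\rightharpoonup0$ while $\int|v_1^{\ve_n}|^4\ge c_0$, so by concentration--compactness for $D^{1,2}(\bbr^4)\hookrightarrow L^4(\bbr^4)$ the $L^4$-mass accumulates at points of $\overline H$; a boundary accumulation point is impossible (rescaling there at the concentration scale produces a positive Dirichlet solution of $-\Delta U=\mu_1U^3$ on a half-space, excluded as in Proposition~\ref{prop0003}), so it is interior and $v_1^{\ve_n}$ carries at least $\tfrac1{4\mu_1}\mathcal{S}^2-o(1)$ of rescaled energy there, the subcritical and coupling terms being negligible at the vanishing scale. Running the same analysis on $v_2^{\ve_n}$ in a frame centred at $p_2^{\ve_n}$ (invoking the boundary exclusion again should $p_2^{\ve_n}$ also touch $\partial\Omega$ at rate $O(\ve_n)$), using $\ve_n^{-4}\int_\Omega(u_1^{\ve_n})^2(u_2^{\ve_n})^2\to0$ for $\beta<0$ and $\ve_n^{-4}d_{i,\ve_n,\Omega}\to d_{i,\bbr^4}$ (Proposition~\ref{propA0001}), one finds that the full configuration carries rescaled energy at least $\tfrac1{4\mu_1}\mathcal{S}^2+d_{2,\bbr^4}-o(1)$; since $\ve_n^{-4}c_{\ve_n,\Omega,T}\to B$ with $B<\tfrac1{4\mu_1}\mathcal{S}^2+d_{2,\bbr^4}$ (Lemma~\ref{lem0005} and Proposition~\ref{prop0002}; equivalently $c_{\ve_n,\Omega,T}<d_{2,\ve_n,\Omega}+\tfrac{\ve_n^4}{4\mu_1}\mathcal{S}^2-\ve_n^4C_\beta$ by Lemma~\ref{lem0003}$(i)$), this is a contradiction. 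The cases $\beta\le-\sqrt{\mu_1\mu_2}$ with $|\overrightarrow{\mathbf{\alpha}}|<\alpha_T$ and $\beta>\beta_1$ are identical, with the small-$|\overrightarrow{\mathbf{\alpha}}|$ part of Lemma~\ref{lem0003}$(i)$, resp. Lemma~\ref{lem0003}$(ii)$ and $c'_{\ve,\Omega,T},B',\mathcal{N}'_{\ve,\Omega,T}$, in place of the above, using also Proposition~\ref{propAnew0001}.

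The step I expect to be the main obstacle is precisely why the cruder finish is unavailable: the critical growth of the cubic and coupling nonlinearities in dimension four blocks a uniform $L^\infty(\bbr^4)$ bound on $v_i^{\ve_n}$ via Moser iteration, so the weak limit cannot be upgraded to $C^0_{loc}$ convergence and one cannot simply contradict $v_1(0)=\lim_n v_1^{\ve_n}(0)\ge c_0'>0$ with $v_1\equiv0$. One is therefore forced into the energy/concentration route, whose delicate point is the bookkeeping of how the two components share the limit energy $B$ and the coupling integral — requiring the recentring at $p_2^{\ve_n}$, boundary exclusion for both spikes, the smallness $\ve^{-4}\int_\Omega(u_1^\ve)^2(u_2^\ve)^2\to0$ in the repulsive case, and the sharp thresholds of Lemma~\ref{lem0003}.
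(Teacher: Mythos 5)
Your setup coincides with the paper's: argue by contradiction, rescale at $p_1^\ve$, identify the weak limit as a solution of $(\mathcal{S}_{**})$ on a half-space, and kill it with Proposition~\ref{prop0003}. The gap is in the step where you conclude from ``$v_1^{\ve_n}\rightharpoonup 0$ but $\int|v_1^{\ve_n}|^4\geq c_0$'' that the $L^4$-mass must accumulate at \emph{points} of $\overline H$ and hence that the first component carries at least $\frac{1}{4\mu_1}\mathcal{S}^2-o(1)$ of energy. The concentration--compactness lemma on an unbounded domain does not force the defect to be a sum of Dirac masses at finite points: the $L^4$-mass can also escape to spatial infinity by translation at \emph{unit} scale, i.e.\ there may be $y^{\ve_n}$ with $|y^{\ve_n}|\to+\infty$ such that $v_i^{\ve_n}(\cdot+y^{\ve_n})$ converges weakly to a nontrivial profile solving the full limit problem (including the subcritical term $\alpha_1 u^{p-1}$) on $\bbr^4$. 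In that scenario the first component contributes only $d_{1,\bbr^4}$, not $\frac{1}{4\mu_1}\mathcal{S}^2$, so your total lower bound degenerates to $d_{1,\bbr^4}+d_{2,\bbr^4}=B$ (for $\beta<0$), which exactly matches the upper bound from Lemma~\ref{lem0005} and yields no contradiction. Your energy-only finish therefore cannot close the argument; and this is not a remote possibility but precisely the alternative that the Lions dichotomy leaves open once vanishing is excluded.

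The paper's proof handles exactly this branch with a non-energetic ingredient that your proposal lacks. After excluding vanishing (which, as you note, would force the level up to $A_\ve$, contradicting Lemma~\ref{lem0003}), it produces $y^\ve$ with $|y^\ve|\to+\infty$ and a nontrivial translated limit $\overrightarrow{\widetilde{\mathbf{v}}}_0$ on $\bbr^4$; the energy bookkeeping then gives only \emph{equality} in the splitting, which is used to upgrade to strong $\mathcal{H}_{1,\bbr^4}$-convergence of $v_{1,1}^\ve=v_1^\ve(\cdot+y^\ve)$ to $v_{1,1}^0$. Only after strong convergence can Moser iteration be run on the difference $w_1^\ve=v_{1,1}^\ve-v_{1,1}^0$ to get $v_{1,1}^\ve(x)\to0$ as $|x|\to+\infty$ uniformly in $\ve$, and this is what contradicts $v_{1,1}^\ve(-y^\ve)=v_1^\ve(0)\geq c>0$ with $|y^\ve|\to\infty$. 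So the lower bound $v_1^\ve(0)\geq c$ that you established early on is not a dead end, as you suggest, but the decisive fact; what is missing from your argument is the translation step, the strong-convergence upgrade that makes Moser iteration applicable, and the uniform decay estimate that turns the location of the maximum into a contradiction.
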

\begin{proof}
We only give the proof of $\Omega_{1,\ve}$ since that of $\Omega_{2,\ve}$ is similar.  Suppose the contrary, then as in \cite{LW05}, we may assume $\Omega_{1,\ve}\to\bbr^4_+$ as $\ve\to0^+$ without loss of generality, where $\bbr^4_{+}=\{x=(x_1,x_2,x_3,x_4)\in\bbr^4\mid x_4>0\}$.  On the other hand, since $\overrightarrow{\mathbf{u}}_\ve$ is a solution of $(\mathcal{S}_{\ve})$, by a standard scaling technique, we can see that $\overrightarrow{\mathbf{v}}_\ve$ satisfies
\begin{equation*}
\left\{\aligned&-\Delta v_1^\ve+\lambda_1v_1^\ve=\mu_1(v_1^\ve)^3+\alpha_1(v_1^\ve)^{p-1}+\beta (v_2^\ve)^2v_1^\ve\quad&\text{in }\Omega_{1,\ve},\\
&-\Delta v_2^\ve+\lambda_2v_2^\ve=\mu_2(v_2^\ve)^3+\alpha_2(v_2^\ve)^{p-1}+\beta (v_1^\ve)^2v_2^\ve\quad&\text{in }\Omega_{1,\ve},\\
&v_1^\ve,v_2^\ve>0\quad\text{in }\Omega_{1,\ve},\quad v_1^\ve=v_2^\ve=0\quad\text{on }\partial\Omega_{1,\ve},\endaligned\right.\eqno{(\mathcal{S}_{\ve}^*)}
\end{equation*}
where
$$
v_i^\ve(x)=u_i^\ve(p_1^\ve+\ve x),\;\; i=1,2.
$$
Moreover, since $|\overrightarrow{\mathbf{\alpha}}|<\alpha_T$ for $\beta<-\sqrt{\mu_1\mu_2}$, by Lemma~\ref{lem0003}, Proposition~\ref{prop0001} and Proposition~\ref{propA0001}, we can see from a standard scaling technique and a similar argument as used in the proof of Lemma~\ref{lemn0001} that $\{v_i^\ve\}$ are bounded in $\mathcal{H}_{i,\bbr^4}$.  Here, we regard $v_i^\ve$ as  a function in $\mathcal{H}_{i,\bbr^4}$ by setting $v_i^\ve\equiv0$ outside $\Omega_{1,\ve}$.  Without loss of generality, we may assume that $\overrightarrow{\mathbf{v}}_\ve\rightharpoonup\overrightarrow{\mathbf{v}}_0$ weakly in $\mathcal{H}_{\bbr^4}$ as $\ve\to0^+$.  Since $\overrightarrow{\mathbf{v}}_\ve$ satisfies $(\mathcal{S}_{\ve}^*)$, it is easy to see that $\overrightarrow{\mathbf{v}}_0$ is a solution of $(\mathcal{S}_{**})$ due to $\Omega_{1,\ve}\to\bbr^4_+$ as $\ve\to0^+$.  By Proposition~\ref{prop0003}, we must have $\overrightarrow{\mathbf{v}}_0=\overrightarrow{\mathbf{0}}$.  Thus, $\overrightarrow{\mathbf{v}}_\ve\rightharpoonup\overrightarrow{\mathbf{0}}$ weakly in $\mathcal{H}_{\bbr^4}$ as $\ve\to0^+$.  If
\begin{eqnarray*}
\lim_{\ve\to0^+}\sup_{y\in\bbr^4}\int_{\mathbb{B}_\rho(y)}\sum_{i=1}^2(v_i^\ve)^2dx=0
\end{eqnarray*}
for all $\rho>0$, then by the Lions' lemma, we have $\overrightarrow{\mathbf{v}}_\ve\to\overrightarrow{\mathbf{0}}$ strongly in $\mathcal{L}^r(\bbr^4)$ for all $2< r<4$ as $\ve\to0^+$, where $\mathcal{L}^r(\bbr^4)=(L^r(\bbr^4))^2$.  It follows from the fact that $\overrightarrow{\mathbf{v}}_\ve$ satisfies $(\mathcal{S}_{\ve}^*)$ that
\begin{eqnarray}
&\mathcal{B}_{\nabla v_1^\ve,2,\bbr^4}^{2}+\lambda_1\mathcal{B}_{v_1^\ve,2,\bbr^4}^{2}-\mu_1\mathcal{B}_{v_1^\ve,4,\bbr^4}^{4}-\beta\mathcal{B}_{|v_1^\ve|^{2}|v_2^\ve|^{2},1,\bbr^4}=o(1),\label{eqn0032}\\
&\mathcal{B}_{\nabla v_2^\ve,2,\bbr^4}^{2}+\lambda_2\mathcal{B}_{v_2^\ve,2,\bbr^4}^{2}-\mu_2\mathcal{B}_{v_2^\ve,4,\bbr^4}^{4}-\beta\mathcal{B}_{|v_1^\ve|^{2}|v_2^\ve|^{2},1,\bbr^4}=o(1).\label{eqn0033}
\end{eqnarray}
Since $\lambda_1,\lambda_2>0$, by similar arguments as used in the proofs of \cite[Lemma~3.1]{WWZ17} and \cite[Lemma~5.1]{HWW15}, we can see from \eqref{eqn0032} and \eqref{eqn0033} that $\lim_{\ve\to0^+}(c_{\ve,\Omega,T}-\ve^{-4}A_\ve)\geq0$ in the following two cases:
\begin{enumerate}
\item $-\sqrt{\mu_1\mu_2}<\beta<\beta_0$ and $\alpha_1,\alpha_2>0$,
\item $\beta\leq-\sqrt{\mu_1\mu_2}$ and $\alpha_1,\alpha_2>0$ with $|\overrightarrow{\mathbf{\alpha}}|<\alpha_T$.
\end{enumerate}
In the case $\beta>\beta_1$, since $p>2$,   we also have from a standard argument that $\lim_{\ve\to0^+}(c'_{\ve,\Omega,T}-\ve^{-4}A_\ve)\geq0$.  But it is impossible since Lemma~\ref{lem0003} holds.  Therefore, there exist $\rho>0$ and $y^\ve\in\bbr^4$ such that
\begin{eqnarray}\label{eqnew8003}
\int_{\mathbb{B}_\rho(y^\ve)}\sum_{i=1}^2(v_i^\ve)^2dx\geq C>0.
\end{eqnarray}
Without loss of generality, we may assume that $\int_{\mathbb{B}_\rho(y^\ve)}(v_1^\ve)^2dx\geq C>0$.  Since $\overrightarrow{\mathbf{v}}_\ve\rightharpoonup\overrightarrow{\mathbf{0}}$ weakly in $\mathcal{H}_{\bbr^4}$ as $\ve\to0^+$, by the Sobolev embedding theorem, we must have $|y^\ve|\to+\infty$.  Let $v_{i,1}^\ve(x)=v_i^\ve(x+y^\ve)$.  Then  $\overrightarrow{\widetilde{\mathbf{v}}}_\ve=(v_{1,1}^\ve, v_{2,1}^\ve)\rightharpoonup \overrightarrow{\widetilde{\mathbf{v}}}_0=(v_{1,1}^0, v_{2,1}^0)$ weakly in $\mathcal{H}_{\bbr^4}$ as $\ve\to0^+$ with $v_{i,1}^0\geq0$ and $v_{1,1}^0\not\equiv0$.  Let $\Omega_{1,\ve}^*=\Omega_{1,\ve}-y^\ve$ and assume $\Omega_{1,\ve}^*\to \Omega^*$ as $\ve\to0^+$, where $\Omega^*$ is either the whole space $\bbr^4$ or the half space $\bbr^4_+$ under rotations and translations.  Since $\overrightarrow{\mathbf{v}}_\ve$ satisfies $(\mathcal{S}_{\ve}^*)$, it is easy to see that $\overrightarrow{\widetilde{\mathbf{v}}}_0$ is a solution of the following system
\begin{equation*}
\left\{\aligned&-\Delta u_1+\lambda_1u_1=\mu_1u_1^3+\alpha_1u_1^{p-1}+\beta u_2^2u_1\quad&\text{in }\Omega^*,\\
&-\Delta u_2+\lambda_2u_2=\mu_2u_2^3+\alpha_2u_2^{p-1}+\beta u_1^2u_2\quad&\text{in }\Omega^*,\\
&u_1,u_2\geq0\text{ in }\Omega^*,\quad u_1,u_2\to0\text{ as }|x|\to+\infty,\\
&u_1=u_2=0\text{ on }\partial\Omega^*,\endaligned\right.\eqno{(\overline{\mathcal{S}}_{**})}
\end{equation*}
Since $v_{1,1}^0\not\equiv0$, by Proposition~\ref{prop0003}, we must have that $\Omega^*=\bbr^4$.

{\bf Case.~1} $\beta<0$

If $v_{2,1}^0\not=0$, then by Proposition~\ref{prop0002}, we have
\begin{eqnarray*}
\mathcal{J}_{\bbr^4,T}(\overrightarrow{\widetilde{\mathbf{v}}}_0)>B=\sum_{i=1}^2d_{i,\bbr^4}.
\end{eqnarray*}
Let $\overrightarrow{\mathbf{w}}_\ve=\overrightarrow{\widetilde{\mathbf{v}}}_\ve-\overrightarrow{\widetilde{\mathbf{v}}}_0$. Then $\overrightarrow{\mathbf{w}}_\ve\rightharpoonup\overrightarrow{\mathbf{0}}$ weakly in $\mathcal{H}_{\bbr^4}$ as $\ve\to0^+$.  By applying the Brez\'is-Lieb lemma and \cite[Lemma~2.3]{CZ15}, we also have that
\begin{eqnarray}
\ve^{-4}\mathcal{J}_{\ve,\Omega,T}(\overrightarrow{\mathbf{u}}_\ve)=\mathcal{J}_{\Omega_{1,\ve}^*,T}(\overrightarrow{\widetilde{\mathbf{v}}}_\ve)=\mathcal{J}_{\bbr^4,T}(\overrightarrow{\widetilde{\mathbf{v}}}_0)
+\mathcal{J}_{\bbr^4,T}(\overrightarrow{\mathbf{w}}_\ve)+o(1)\label{eqnew8004}
\end{eqnarray}
and
\begin{eqnarray}
\mathcal{J}_{\bbr^4,T}'(\overrightarrow{\mathbf{w}}_\ve)\overrightarrow{\mathbf{w}}_{\ve}^1=\mathcal{J}_{\bbr^4,T}'(\overrightarrow{\mathbf{w}}_\ve)\overrightarrow{\mathbf{w}}_{\ve}^1=o(1),\label{eqnew8005}
\end{eqnarray}
where $\overrightarrow{\mathbf{w}}_{\ve}^1=(w_{1}^\ve, 0)$ and $\overrightarrow{\mathbf{w}}_{\ve}^2=(0, w_{2}^\ve)$.
Since $|\overrightarrow{\mathbf{\alpha}}|<\alpha_T$ for $\beta<-\sqrt{\mu_1\mu_2}$, by a similar argument as used for \eqref{eqn0001} and \eqref{eqn0004} in the proof of Lemma~\ref{lem0002} respectively for $-\sqrt{\mu_1\mu_2}<\beta<0$ and $\beta\leq-\sqrt{\mu_1\mu_2}$, we can see that $\mathcal{J}_{\bbr^4,T}(\overrightarrow{\mathbf{w}}_\ve)\geq0$, which contradicts to Lemma~\ref{lem0005}.  Thus, we must have $v_{2,1}^0=0$.  Then similar to \eqref{eqnew8004} and \eqref{eqnew8005}, we have
\begin{eqnarray}
\ve^{-4}\mathcal{J}_{\ve,\Omega,T}(\overrightarrow{\mathbf{u}}_\ve)=\mathcal{J}_{\Omega_{1,\ve}^*,T}(\overrightarrow{\widetilde{\mathbf{v}}}_\ve)=\mathcal{E}_{1,\bbr^4}(v_{1,1}^0)
+\mathcal{J}_{\bbr^4,T}(\overrightarrow{\mathbf{w}}_\ve)+o(1)\label{eqnew8007}
\end{eqnarray}
and
\begin{eqnarray}
\mathcal{J}_{\bbr^4,T}'(\overrightarrow{\mathbf{w}}_\ve)\overrightarrow{\mathbf{w}}_{\ve}^1=\mathcal{J}_{\bbr^4,T}'(\overrightarrow{\mathbf{w}}_\ve)\overrightarrow{\mathbf{w}}_{\ve}^1=o(1),\label{eqnew8008}
\end{eqnarray}
where $\mathcal{E}_{1,\bbr^4}(u)$ is given by \eqref{eqn0095}.  Clearly, $\mathcal{E}_{1,\bbr^4}'(v_{1,1}^0)=0$.  Moreover, in this situation, we also have $w_{2}^\ve=v_{2,1}^\ve$.  Thus, by a similar argument as use for \eqref{eqn0007}, we can see that $\mathcal{B}_{\nabla w_{2}^\ve,2,\bbr^4}^{2}+\lambda_2\mathcal{B}_{w_{2}^\ve,2,\bbr^4}^{2}\geq C+o(1)$.  It follows from $\beta<0$ and \eqref{eqnew8008} that there exists $0<t_\ve\leq1+o(1)$ such that $t_\ve w_{2}^\ve\in\mathcal{M}_{2,\bbr^4}$, which is given by \eqref{eqnew8006}.  Recall that $|\overrightarrow{\mathbf{\alpha}}|<\alpha_T$ for $\beta<-\sqrt{\mu_1\mu_2}$, thus by a similar argument as used for \eqref{eqn0001} and \eqref{eqn0004} in the proof of Lemma~\ref{lem0002} respectively for $-\sqrt{\mu_1\mu_2}<\beta<0$ and $\beta\leq-\sqrt{\mu_1\mu_2}$, we can see from \eqref{eqn0008} and \eqref{eqnew8007} and Lemma~\ref{lem0005} and Proposition~\ref{prop0002} that $w_1^\ve\to0$ strongly in $\mathcal{H}_{1,\bbr^4}$ as $\ve\to0^+$, which implies $v_{1,1}^\ve\to v_{1,1}^0$ strongly in $\mathcal{H}_{1,\bbr^4}$ as $\ve\to0^+$.  Note that $\overrightarrow{\mathbf{v}}_\ve$ satisfies the system~$(\mathcal{S}_*)$, by $\beta<0$, we can apply the Moser's iteration as in\cite{BZZ13} to show that $v_{1,1}^\ve$ is uniformly bounded in $L^q(\bbr^4)$ for all $q\geq2$.  Since $\overrightarrow{\widetilde{\mathbf{v}}}_0$ is a nontrivial solution of $(\mathcal{S}_{*})$, by the classical elliptic regularity, we can see that $v_{1,1}^0\in L^\infty(\bbr^4)$.  It follows from the Taylor expansion that $w^\ve_1=v_{1,1}^\ve-v_{1,1}^0$ satisfies the following equation
\begin{eqnarray*}
-\Delta w^\ve_1+\lambda_1 w^\ve_1\leq\alpha_1(p-1)(v_{1,1}^\ve+v_{1,1}^0)^{p-2}w^\ve_1+3\mu_1(v_{1,1}^\ve+v_{1,1}^0)^{2}w^\ve_1
\end{eqnarray*}
in $\bbr^4$.  Since $w^\ve_1\to0$ strongly in $\mathcal{H}_{1,\bbr^4}$, by applying the Moser's iteration as in \cite{BZZ13} once more, we can obtain that $w^\ve_1\to0$ strongly in $L^q(\bbr^4)$ for all $q\geq2$.  It follows from \cite[Theorem~8.17]{GT98} (see also \cite[Lemma~4.3]{BZZ13}) that $v_{1,1}^\ve\to0$ as $|x|\to+\infty$ uniformly for $\ve$, which contradicts to the fact that $v_{1,1}^\ve(-y_\ve)=v_1^\ve(0)$ is the maximum value and $|y^\ve|\to+\infty$.  Thus, we must have that $\Omega_{1,\ve}\to\bbr^4$ as $\ve\to0^+$ for $\beta<0$.

{\bf Case.~2} $\beta>0$

If $v_{2,1}^0=0$, then similar to the case $\beta<0$, we also have that \eqref{eqnew8007}--\eqref{eqnew8008} hold and $\mathcal{B}_{\nabla w_{2}^\ve,2,\bbr^4}^{2}+\lambda_2\mathcal{B}_{w_{2}^\ve,2,\bbr^4}^{2}\geq C+o(1)$.  If we also have $\mathcal{B}_{\nabla w_{1}^\ve,2,\bbr^4}^{2}+\lambda_1\mathcal{B}_{w_{1}^\ve,2,\bbr^4}^{2}\geq C+o(1)$ in this situation, then by \eqref{eqnew8008}, we can apply Miranda's theorem similar to that for $\eta_i(t_1,t_2)$ in the proof of Lemma~\ref{lem0003} to show that there exists $\overrightarrow{\mathbf{t}}_{\ve}\in(\bbr^+)^2$ with $\overrightarrow{\mathbf{t}}_{\ve}\to\overrightarrow{\mathbf{1}}$ as $\ve\to0$ such that $\overrightarrow{\mathbf{t}}_{\ve}\circ\overrightarrow{\mathbf{w}}_{\ve}\in\mathcal{N}_{\bbr^4,T}$ for $\beta>0$ small enough.  Here, $\mathcal{N}_{\bbr^4,T}$ is given by \eqref{eqnew8009}.  By taking $\beta_0$ small enough if necessary, we have that $\overrightarrow{\mathbf{t}}_{\ve}\circ\overrightarrow{\mathbf{w}}_{\ve}\in\mathcal{N}_{\bbr^4,T}$ for $0<\beta<\beta_0$.  It leads that \eqref{eqnew8007} contradicts to Lemma~\ref{lem0005} since $\{\overrightarrow{\mathbf{w}}_{\ve}\}$ is bounded in $\mathcal{H}_{\bbr^4}$ and $v_{1,1}^0\not=0$ satisfies $\mathcal{E}_{1,\bbr^4}'(v_{1,1}^0)=0$.  Thus, we must have $w_{1}^\ve\to0$ strongly in $\mathcal{H}_{1,\bbr^4}$ as $\ve\to0^+$.  It follows from \eqref{eqnew8008}, the H\"older inequality and $\mathcal{B}_{\nabla w_{2}^\ve,2,\bbr^4}^{2}+\lambda_2\mathcal{B}_{w_{2}^\ve,2,\bbr^4}^{2}\geq C+o(1)$ that there exists $t_\ve\to1$ as $\ve\to0$ such that $t_\ve w_2^\ve\in\mathcal{M}_{2,\bbr^4}$.  Thus, by \eqref{eqnew8007}, we have
$$
\ve^{-4}\mathcal{J}_{\ve,\Omega,T}(\overrightarrow{\mathbf{u}}_\ve)\geq\sum_{i=1}^2d_{i,\bbr^4}+o(1),
$$
which contradicts to Proposition~\ref{prop0002} and Lemma~\ref{lem0005} for $0<\beta<\beta_0$.  For $\beta>\beta_1$, we have from $\mathcal{B}_{\nabla w_{2}^\ve,2,\bbr^4}^{2}+\lambda_2\mathcal{B}_{w_{2}^\ve,2,\bbr^4}^{2}\geq C+o(1)$ that there exists $s_\ve>0$ such that $s_\ve\overrightarrow{\mathbf{w}}_{\ve}\in\mathcal{N}_{\bbr^4,T}'$, which is given by \eqref{eqnew8010}.  It leads that \eqref{eqnew8007} contradicts to Lemma~\ref{lem0005} since $v_{1,1}^0\not=0$ satisfies $\mathcal{E}_{1,\bbr^4}'(v_{1,1}^0)=0$.  Thus, we must have that $v_{2,1}^0\not=0$.  It follows from Proposition~\ref{prop0002} and Lemma~\ref{lem0005} that $\overrightarrow{\widetilde{\mathbf{v}}}_0$ is a ground state solution of $(\mathcal{S}_{*})$ and $\overrightarrow{\widetilde{\mathbf{v}}}_\ve\to\overrightarrow{\widetilde{\mathbf{v}}}_0$ strongly in $\mathcal{H}_{\bbr^4}$ as $\ve\to0^+$.  Note that for $\beta>0$, $w^\ve_1=v_{1,1}^\ve-v_{1,1}^0$ satisfies the following equation by the Taylor expansion
\begin{eqnarray*}
-\Delta w^\ve_1+\lambda_1 w^\ve_1\leq\alpha_1(v_{1,1}^\ve+v_{1,1}^0)^{p-2}w^\ve_1+\mu_1(v_{1,1}^\ve+v_{1,1}^0)^{2}w^\ve_1+\beta(v_{1,2}^\ve)w_1^\ve+o(1)
\end{eqnarray*}
in $\bbr^4$.  Thus, we can obtain a contradiction similar to the case $\beta<0$.  Hence, we also have that $\Omega_{1,\ve}\to\bbr^4$ as $\ve\to0^+$ for $0<\beta<\beta_0$ or $\beta>\beta_1$.
\end{proof}

Let $v_i^\ve(x)=u_i^\ve(p_1^\ve+\ve x)$ and $\widetilde{v}_i^\ve(x)=u_i^\ve(p_2^\ve+\ve x)$ respectively for $i=1,2$.  Then we have the following.
\begin{proposition}\label{prop0004}
Let $\alpha_T$, $\beta_0$ and $\beta_1$ be respectively given by Lemmas~\ref{lem0002}--\ref{lem0003}.
\begin{enumerate}
\item[$(i)$]  $\overrightarrow{\mathbf{V}}_\ve^*=(v_1^\ve, \widetilde{v}_2^\ve)\to\overrightarrow{\mathbf{v}}_0$ strongly in $\mathcal{H}_{\bbr^4}$ as $\ve\to0^+$ in the following two cases:
\begin{enumerate}
\item $-\sqrt{\mu_1\mu_2}<\beta<0$ and $\alpha_1,\alpha_2>0$,
\item $\beta\leq-\sqrt{\mu_1\mu_2}$ and $\alpha_1,\alpha_2>0$ with $|\overrightarrow{\mathbf{\alpha}}|<\alpha_T$,
\end{enumerate}
where $v_i^0$ is a ground state solution of $(\mathcal{P}_i)$.  Moreover, $\frac{|p_1^\ve-p_2^\ve|}{\ve}\to+\infty$ as $\ve\to0^+$.
\item[$(ii)$]  For $\beta\in(0, \beta_0)\cup(\beta_1, +\infty)$, $\overrightarrow{\mathbf{V}}_\ve^*\to\overrightarrow{\mathbf{v}}_*$ strongly in $\mathcal{H}_{\bbr^4}$ as $\ve\to0^+$, where $\overrightarrow{\mathbf{v}}_*$ is the ground state solution of $(\mathcal{S}_*)$.  Moreover, $\frac{|p_1^\ve-p_2^\ve|}{\ve}\to0$ as $\ve\to0^+$.
\end{enumerate}
\end{proposition}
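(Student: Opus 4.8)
The plan is to analyze the rescaled functions $v_i^\ve(x)=u_i^\ve(p_1^\ve+\ve x)$ and $\widetilde v_i^\ve(x)=u_i^\ve(p_2^\ve+\ve x)$ ($i=1,2$): first extract weak limits in $\mathcal H_{\bbr^4}$, then promote these to strong $H^1$-limits by an exact energy identity, and only afterwards bootstrap to the $L^\infty$ and decay-at-infinity estimates that locate the spikes. Since $\|\overrightarrow{\mathbf u}_\ve\|_{\ve,\Omega}^2<\ve^4T^2$ by Lemma~\ref{lem0001}$(i)$, the truncation $\chi_\beta$ equals $1$ at $\overrightarrow{\mathbf u}_\ve$, so $\overrightarrow{\mathbf u}_\ve$ solves $(\mathcal S_\ve)$ and $\ve^{-4}\mathcal J_{\ve,\Omega,T}(\overrightarrow{\mathbf u}_\ve)$ equals $\ve^{-4}c_{\ve,\Omega,T}$ (resp.\ $\ve^{-4}c'_{\ve,\Omega,T}$ when $\beta>\beta_1$). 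Rescaling and repeating the computation of Lemma~\ref{lemn0001} (invoking $|\overrightarrow{\mathbf\alpha}|<\alpha_T$ when $\beta\le-\sqrt{\mu_1\mu_2}$) bounds $\{v_i^\ve\},\{\widetilde v_i^\ve\}$ in $\mathcal H_{i,\bbr^4}$; passing to a subsequence, $v_i^\ve\rightharpoonup v_i^0$ and $\widetilde v_i^\ve\rightharpoonup\widetilde v_i^0$ weakly. By Lemma~\ref{lem0004}, $\Omega_{i,\ve}\to\bbr^4$, so $(v_1^0,v_2^0)$ and $(\widetilde v_1^0,\widetilde v_2^0)$ solve $(\mathcal S_*)$ in $\bbr^4$ (some components possibly trivial). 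Testing $(\mathcal S_\ve)$ against $u_i^\ve$ gives $\|u_i^\ve\|_{L^\infty}\ge C>0$, and a Lions-type nonvanishing argument as in Lemma~\ref{lem0004}, combined with the fact that $0$ is the maximum point of $v_1^\ve$ and $\widetilde v_2^\ve$, yields $v_1^0\not\equiv0$ and $\widetilde v_2^0\not\equiv0$.

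Next I identify the weak limits. For $\beta<0$: if $v_2^0\not\equiv0$, then $(v_1^0,v_2^0)\in\mathcal N_{\bbr^4,T}$, so $\mathcal J_{\bbr^4,T}(v_1^0,v_2^0)\ge B$, and the inequality is \emph{strict} because $B$ is not attained (Proposition~\ref{prop0002}$(i)$); the Brez\'is-Lieb splitting of $\mathcal J_{\ve,\Omega,T}$ centered at $p_1^\ve$, together with nonnegativity of the residual energy (as in Lemma~\ref{lemn0001}), gives $\ve^{-4}c_{\ve,\Omega,T}\ge\mathcal J_{\bbr^4,T}(v_1^0,v_2^0)-o(1)>B$, contradicting Lemma~\ref{lem0005}. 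Hence $v_2^0\equiv0$ and, symmetrically, $\widetilde v_1^0\equiv0$, so $v_1^0$ solves $(\mathcal P_1)$ and $\widetilde v_2^0$ solves $(\mathcal P_2)$; moreover $|p_1^\ve-p_2^\ve|/\ve\to+\infty$, since if $(p_2^\ve-p_1^\ve)/\ve\to z_0$ along a subsequence then $\widetilde v_2^0=v_2^0(\cdot+z_0)\equiv0$, a contradiction. For $\beta>0$: if $v_2^0\equiv0$, the same translation argument forces $|p_1^\ve-p_2^\ve|/\ve\to+\infty$, so the $u_2^\ve$-bump at $p_2^\ve$ is asymptotically separated, and an iterated Brez\'is-Lieb decomposition yields $\ve^{-4}c_{\ve,\Omega,T}\ge\mathcal E_{1,\bbr^4}(v_1^0)+\mathcal E_{2,\bbr^4}(\widetilde v_2^0)-o(1)\ge d_{1,\bbr^4}+d_{2,\bbr^4}-o(1)$, contradicting Lemma~\ref{lem0005} because $B<d_{1,\bbr^4}+d_{2,\bbr^4}$ for $0<\beta<\beta_0$ and $B'<\min\{d_{1,\bbr^4},d_{2,\bbr^4}\}$ for $\beta>\beta_1$ (Proposition~\ref{prop0002}). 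Thus $v_2^0\not\equiv0$ (and $\widetilde v_1^0\not\equiv0$), and $(v_1^0,v_2^0)$ is a nontrivial solution of $(\mathcal S_*)$.

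Feeding the identified limits back into the Brez\'is-Lieb expansion of $\ve^{-4}\mathcal J_{\ve,\Omega,T}(\overrightarrow{\mathbf u}_\ve)=B+o(1)$ and comparing with the definitions of $B,B',d_{i,\bbr^4}$ forces every residual $\mathcal H_{i,\bbr^4}$-norm to vanish: for $\beta<0$ one gets $v_1^\ve\to v_1^0$, $v_2^\ve\to0$, $\widetilde v_1^\ve\to0$, $\widetilde v_2^\ve\to\widetilde v_2^0$ strongly with $v_1^0,\widetilde v_2^0$ ground states of $(\mathcal P_1),(\mathcal P_2)$, which proves $(i)$; for $\beta>0$ one gets $v_1^\ve\to v_1^0$, $v_2^\ve\to v_2^0$ strongly with $(v_1^0,v_2^0)$ a ground state of $(\mathcal S_*)$. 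Now strong $\mathcal H_{i,\bbr^4}$-convergence permits the Moser iteration of \cite{BZZ13,CZ121}, giving uniform $L^q(\bbr^4)$ and, via \cite[Theorem~8.17]{GT98}, uniform $L^\infty$ and decay-at-infinity bounds for the rescaled functions. For $\beta>0$ this forces $|p_1^\ve-p_2^\ve|/\ve$ to stay bounded (otherwise $\|u_2^\ve\|_{L^\infty}=v_2^\ve((p_2^\ve-p_1^\ve)/\ve)\to0$), so along a subsequence $(p_2^\ve-p_1^\ve)/\ve\to z_0$ and $\widetilde v_2^\ve\to v_2^0(\cdot+z_0)$; finally, the Schwartz symmetrization used in the proof of Proposition~\ref{prop0002} (through its equality case) shows that a ground state of $(\mathcal S_*)$ with $\beta>0$ is, up to a single common translation, radially decreasing in both components, so the maxima of $v_1^0$ and $v_2^0$ coincide, hence $z_0=0$, $\overrightarrow{\mathbf V}_\ve^*\to(v_1^0,v_2^0)=\overrightarrow{\mathbf v}_*$ strongly, and $|p_1^\ve-p_2^\ve|/\ve\to0$.

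The main obstacle is exactly the one flagged after Theorem~\ref{thm0002}: because the cubic and coupling nonlinearities are Sobolev-critical in $\bbr^4$, the Moser iteration cannot be started directly on $u_i^\ve(p_i^\ve+\ve\,\cdot)$, yet without the resulting $L^\infty$/decay control the maximum-point analysis — notably the lower bound $\|u_2^\ve\|_{L^\infty}\ge C$ that rules out $|p_1^\ve-p_2^\ve|/\ve\to+\infty$ for $\beta>0$ — is empty. Everything therefore hinges on first securing \emph{strong} $H^1$-convergence by tight energy bookkeeping (the iterated Brez\'is-Lieb decomposition, the strict energy gaps of Proposition~\ref{prop0002}, and the exact value $\ve^{-4}c_{\ve,\Omega,T}=B+o(1)$ of Lemma~\ref{lem0005}) and only then bootstrapping to $L^q$ and $L^\infty$; for $\beta>0$, the extra subtlety is extracting $z_0=0$ from the symmetry of the limiting ground state.
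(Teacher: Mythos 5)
Your overall architecture is the paper's: rescale at $p_1^\ve,p_2^\ve$, pass to weak limits which solve $(\mathcal S_*)$ by Lemma~\ref{lem0004}, identify which components vanish by comparing $\ve^{-4}c_{\ve,\Omega,T}=B+o(1)$ (Lemma~\ref{lem0005}) with the energy thresholds of Proposition~\ref{prop0002} ($B$ not attained and $B=\sum_id_{i,\bbr^4}$ for $\beta<0$; $B<\sum_id_{i,\bbr^4}$ for $\beta>0$), upgrade to strong convergence via Brez\'is--Lieb, and only then run the Moser iteration to get $L^\infty$ and decay, finishing with the maximum-point analysis and, for $\beta>0$, the radial symmetry of the limiting ground state. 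The last step (getting $z_0=0$ from a common center of symmetry rather than from the paper's iteration $v_2^0(kp_0)=v_2^0(0)$) is a harmless variant.

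There is, however, one step that as written is circular and would fail: the assertion in your first paragraph that ``testing against $u_i^\ve$ gives $\|u_i^\ve\|_{L^\infty}\ge C$, and a Lions-type nonvanishing argument combined with the fact that $0$ is the maximum point yields $v_1^0\not\equiv0$ and $\widetilde v_2^0\not\equiv0$.'' Because the cubic and coupling terms are Sobolev-critical in $\bbr^4$, a bounded $H^1$ sequence with $v_1^\ve(0)\ge C$ can still concentrate at the origin and converge weakly to $0$; a pointwise lower bound at a single point carries no $L^2_{loc}$ mass, so it does not pass to the weak limit without $C^0_{loc}$ convergence --- which needs exactly the Moser iteration that you (correctly) postpone until after strong $H^1$ convergence. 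The Lions lemma only produces nonvanishing after a translation $y^\ve$ which may escape to infinity, and excluding that escape is the heart of the matter: the paper does it by first identifying and strongly converging the \emph{translated} profiles $v_i^\ve(\cdot+y^\ve)$ (via the case analysis on $\mathcal B_{v_i^\ve,p,\bbr^4}^p$ for $\beta<0$ and on $\mathcal B_{|v_1^\ve|^2|v_2^\ve|^2,1,\bbr^4}$ for $\beta>0$, Nehari projections onto $\mathcal M_{i,\bbr^4}$, and iterated Brez\'is--Lieb splittings), then running Moser to get decay uniform in $\ve$, and only at that point invoking $v_{1,1}^\ve(-y^\ve)=v_1^\ve(0)\ge C$ to force $|y^\ve|\le C$. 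In your plan the energy identification of $v_2^0,\widetilde v_1^0$ presupposes $v_1^0\not\equiv0$, strong convergence presupposes the identification, and the $L^\infty$/decay control presupposes strong convergence, so the nontriviality of the untranslated limits cannot be obtained where you place it. You need to insert the translated-profile analysis (essentially the body of the paper's proof of Lemma~\ref{lem0004} and of this proposition) before claiming $v_1^0\not\equiv0$.
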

\begin{proof}
$(i)$\quad By a similar argument as used in the proof of Lemma~\ref{lem0004} for $\beta<0$, we can show that either $v_1^0\equiv0$ or $v_2^0\equiv0$.  By the scaling technique, we can see from Lemma~\ref{lem0003} and a similar argument as used for \eqref{eqn0013} that $\mathcal{B}_{v_i^\ve,4,\bbr^4}^{4}\geq C$ both for $i=1,2$.  If $\mathcal{B}_{v_i^\ve,p,\bbr^4}^{p}=o(1)$, then by the Sobolev inequality, $\beta<0$ and the fact that $\overrightarrow{\mathbf{v}}_\ve$ satisfies the system~$(\mathcal{S}_*)$, we obtain  that
$$
\mathcal{J}_{\Omega_{1,\ve},T}(\overrightarrow{\mathbf{v}}_\ve)\geq\sum_{i=1}^2\frac{1}{4\mu_i}\mathcal{S}^2+o(1),
$$
which contradicts to Lemma~\ref{lem0003} and  Proposition~\ref{propA0001}.  Thus, we must have $$\sum_{i=1}^2\mathcal{B}_{v_i^\ve,p,\bbr^4}^{p}\geq C.$$  If $\mathcal{B}_{v_1^\ve,p,\bbr^4}^{p}=o(1)$ then we must have that $\mathcal{B}_{v_2^\ve,p,\bbr^4}^{p}\geq C$.  Thanks to the H\"older and Sobolev inequalities, we can see that $\mathcal{B}_{v_2^\ve,2,\bbr^4}^{2}\geq C$.  Since $\overrightarrow{\mathbf{v}}_\ve$ satisfies the system~$(\mathcal{S}_*)$ and $\beta<0$, we see  that
\begin{eqnarray*}
\|v_{1}^{\ve}\|_{1,\bbr^4}^2\leq\mu_1\mathcal{B}_{v_{1}^{\ve},4,\bbr^4}^{4}+o(1)
\end{eqnarray*}
and
\begin{eqnarray*}
\|v_{2}^{\ve}\|_{2,\bbr^4}^2\leq\alpha_2\mathcal{B}_{v_{2}^{\ve},p,\bbr^4}^{p}+\mu_2\mathcal{B}_{v_{2}^{\ve},4,\bbr^4}^{4}+o(1).
\end{eqnarray*}
By a standard argument, we deduce that  either $v_1^\ve\to0$ strongly in $\mathcal{H}_{1,\bbr^4}$ as $\ve\to0^+$ or there exists $0<s_i^\ve\leq1+o(1)$ such that $$\frac12\|s_1^\ve v_{1}^{\ve}\|_{1,\bbr^4}^2-\frac{\mu_1}4\mathcal{B}_{s_1^\ve v_{1}^{\ve},4,\bbr^4}^{4}\geq\frac{1}{4\mu_1}\mathcal{S}^2; \quad s_2^\ve v_2^\ve\in\mathcal{M}_{2,\bbr^4}.$$  If  $0<s_i^\ve\leq1+o(1)$ such that $\frac12\|s_1^\ve v_{1}^{\ve}\|_{1,\bbr^4}^2-\frac{\mu_1}4\mathcal{B}_{s_1^\ve v_{1}^{\ve},4,\bbr^4}^{4}\geq\frac{1}{4\mu_1}\mathcal{S}^2$ and $s_2^\ve v_2^\ve\in\mathcal{M}_{2,\bbr^4}$, then by a similar argument as used in  Lemma~\ref{lem0001} and $\beta<0$, we observe    that
\begin{eqnarray*}
\ve^{-4}\mathcal{J}_{\ve,\Omega,T}(\overrightarrow{\mathbf{u}}_\ve)&\geq&\ve^{-4}\mathcal{J}_{\ve,\Omega,T}(\overrightarrow{\mathbf{s}}_\ve\circ\overrightarrow{\mathbf{u}}_\ve)\\
&=&\mathcal{J}_{\Omega_{1,\ve},T}(\overrightarrow{\mathbf{s}}_\ve\circ\overrightarrow{\mathbf{v}}_\ve)\\
&\geq&\frac12\|s_1^\ve v_{1}^{\ve}\|_{1,\bbr^4}^2-\frac{\mu_1}4\mathcal{B}_{s_1^\ve v_{1}^{\ve},4,\bbr^4}^{4}+\mathcal{E}_{2,\bbr^4}(s_2^\ve v_2^\ve)\\
&\geq&\frac{1}{4\mu_1}\mathcal{S}^2+d_{2,\bbr^4},
\end{eqnarray*}
which also contradicts to  Lemma~\ref{lem0003}.  Thus, we must have $v_1^\ve\to0$ strongly in $\mathcal{H}_{1,\bbr^4}$ as $\ve\to0^+$ in this case.  Note that $\beta<0$ and $\overrightarrow{\mathbf{v}}_\ve$ satisfies the system~$(\mathcal{S}_*)$, by applying the Moser's iteration as in \cite{BZZ13}, we can show that $v_{1}^\ve\to0$ strongly in $L^q(\bbr^4)$ as $\ve\to0^+$ for all $q\geq2$.  Thanks to the classical $L^p$ estimate for elliptic equations and the Sobolev embedding theorem, we also have that $v_{1}^\ve\to0$ strongly in $C^1(\bbr^4)$.  Therefore, $v_{1}^\ve(0)\to0$ as $\ve\to0^+$.  On the other hand, since $p_1^\ve$ is the maximum point of $u_1^\ve$, $0$ is the maximum point of $v_1^0$.  By $p>2$, we can see from $\beta<0$ that
\begin{eqnarray}\label{eqn9994}
\lambda_1 v_1^\ve(0)\leq\alpha_1 (v_1^\ve(0))^{p-1}+\mu_1(v_1^\ve(0))^{2},
\end{eqnarray}
which implies $v_1^\ve(0)\geq C$.  It is impossible too.  Thus, we must have $\mathcal{B}_{v_1^\ve,p,\bbr^4}^{p}\geq C$.  If we also have $\mathcal{B}_{v_2^\ve,p,\bbr^4}^{p}\geq C$,
then by the H\"older and Sobolev inequalities, we  see that $\mathcal{B}_{v_i^\ve,2,\bbr^4}^{2}\geq C$, $i=1,2$.  Thus, by $\beta<0$, there exists $0<s_i^\ve\leq1$ such that $s_i^\ve v_i^\ve\in\mathcal{M}_{i,\bbr^4}$ both for $i=1,2$.  By a similar argument as used in Lemma~\ref{lem0001} and note $\beta<0$, we obtain   that
\begin{eqnarray}
\ve^{-4}\mathcal{J}_{\ve,\Omega,T}(\overrightarrow{\mathbf{u}}_\ve)&\geq&\ve^{-4}\mathcal{J}_{\ve,\Omega,T}(\overrightarrow{\mathbf{s}}_\ve\circ\overrightarrow{\mathbf{u}}_\ve)\notag\\
&=&\mathcal{J}_{\Omega_{1,\ve},T}(\overrightarrow{\mathbf{s}}_\ve\circ\overrightarrow{\mathbf{v}}_\ve)\notag\\
&\geq&\sum_{i=1}^2\mathcal{E}_{i,\bbr^4}(s_i^\ve v_i^\ve)\notag\\
&\geq&\sum_{i=1}^2d_{i,\bbr^4},\label{eqn9990}
\end{eqnarray}
combining this  with Lemma~\ref{lem0005}, it implies that $\overrightarrow{\mathbf{s}}_\ve=\overrightarrow{\mathbf{1}}+o(1)$ and $\mathcal{B}_{|v_1^\ve|^{2}|v_2^\ve|^{2},1,\bbr^4}=o(1)$.  On the other hand, since $\mathcal{B}_{v_i^\ve,p,\bbr^4}^{p}\geq C$ in this situation, by the Lions' Lemma, there exist $\{y_j^\ve\}\subset\bbr^4$ such that $\overrightarrow{\overline{\mathbf{v}}}_\ve=(v_{1,j}^\ve, v_{2,j}^\ve)\rightharpoonup \overrightarrow{\overline{\mathbf{v}}}_0=(v_{1,j}^0, v_{2,j}^0)$ weakly in $\mathcal{H}_{\bbr^4}$ as $\ve\to0^+$, where $v_{i,j}^\ve(x)=v_i(x+y_j^\ve)$ for $i,j=1,2$.
Moreover, $v_{i,i}^0\not\equiv0$  for $i=1,2$.  Let $\overrightarrow{\mathbf{w}}_\ve=\overrightarrow{\overline{\mathbf{v}}}_\ve-\overrightarrow{\overline{\mathbf{v}}}_0$. Then $\overrightarrow{\mathbf{w}}_\ve\rightharpoonup\overrightarrow{\mathbf{0}}$ weakly in $\mathcal{H}_{\bbr^4}$ as $\ve\to0^+$.  Since $v_{1,1}^0\not=0$, by Proposition~\ref{prop0003}, we must have that $\overrightarrow{\overline{\mathbf{v}}}_0$ is a solution of $(\mathcal{S}_*)$.  Thus, by the Brez\'is-Lieb lemma and \cite[Lemma~2.3]{CZ15}, we have
\begin{eqnarray}\label{eqn9989}
\ve^{-4}\mathcal{J}_{\ve,\Omega,T}(\overrightarrow{\mathbf{u}}_\ve)=\mathcal{J}_{\Omega_{1,\ve},T}(\overrightarrow{\mathbf{v}}_\ve)=\mathcal{J}_{\bbr^4,T}(\overrightarrow{\overline{\mathbf{v}}}_0)
+\mathcal{J}_{\bbr^4,T}(\overrightarrow{\mathbf{w}}_\ve)+o(1)
\end{eqnarray}
and
\begin{eqnarray*}
\mathcal{J}_{\bbr^4,T}'(\overrightarrow{\mathbf{w}}_\ve)\overrightarrow{\mathbf{w}}_{\ve}^1=\mathcal{J}_{\bbr^4,T}'(\overrightarrow{\mathbf{w}}_\ve)\overrightarrow{\mathbf{w}}_{\ve}^2=o(1),
\end{eqnarray*}
where $\overrightarrow{\mathbf{w}}_{\ve}^1=(w_{1}^\ve, 0)$ and $\overrightarrow{\mathbf{w}}_{\ve}^2=(0, w_{2}^\ve)$.  As above, we also have $\mathcal{J}_{\bbr^4,T}(\overrightarrow{\mathbf{w}}_\ve)\geq0$.  Hence, by Proposition~\ref{prop0002} and Lemma~\ref{lem0004}, we can see from $\beta<0$ and Lemma~\ref{lem0005} that $v_{2,1}^0\equiv0$.  It follows that $v_{1,1}^0$ is a solution of $(\mathcal{P}_1)$.  Since $v_{1,1}^0\geq0$ and $v_{1,1}^0\not\equiv0$, by the maximum principle and a standard argument, we must have  $\mathcal{E}_{1,\bbr^4}(v_{1,1}^0)\geq d_{1,\bbr^4}$.  Similarly, we also have $v_{1,2}^0\equiv0$ and $v_{2,2}^0\not\equiv0$ with $\mathcal{E}_{2,\bbr^4}(v_{2,2}^0)\geq d_{2,\bbr^4}$.  Since $\mathcal{B}_{|v_1^\ve|^{2}|v_2^\ve|^{2},1,\bbr^4}=o(1)$, by translation and a similar calculation in \eqref{eqn9990}, we can see that $v_{i,i}^\ve\to v_{i,i}^0$ strongly in $\mathcal{H}_{1,\bbr^4}$ as $\ve\to0^+$ for $i=1,2$.  Note that $\beta<0$ and $\overrightarrow{\mathbf{v}}_\ve$ satisfies the system~$(\mathcal{S}_*)$, by applying a modified Moser's iteration (cf. \cite{BZZ13}), we can show that $v_{1,1}^\ve$ is uniformly bounded in $L^q(\bbr^4)$ for all $q\geq2$.  Since $v_{2,1}^0\equiv0$, $v_{1,1}^0$ is also a nontrivial solution of $(\mathcal{P}_1)$.  Thanks to the classical elliptic regularity, we can see that $v_{1,1}^0\in L^\infty(\bbr^4)$.  By Taylor expansion and $\beta<0$, we see  that $w^\ve_1=v_{1,1}^\ve-v_{1,1}^0$ satisfies the following equation:
\begin{eqnarray*}
-\Delta w^\ve_1+\lambda_1 w^\ve_1\leq\alpha_1(p-1)(v_{1,1}^\ve+v_{1,1}^0)^{p-2}w^\ve_1+3\mu_1(v_{1,1}^\ve+v_{1,1}^0)^{2}w^\ve_1+o(1)
\end{eqnarray*}
in $\bbr^4$.
Since $w^\ve_1\to0$ strongly in $\mathcal{H}_{1,\bbr^4}$, by applying the Moser's iteration as in \cite{BZZ13}, we can obtain that $w^\ve_1\to0$ strongly in $L^q(\bbr^4)$ for all $q\geq2$.  It follows from \cite[Theorem~8.17]{GT98} (see also \cite[Lemma~4.3]{BZZ13}) that $v_{1,1}^\ve\to0$ as $|x|\to+\infty$ uniformly for $\ve$.  Note that $v_{1,1}^\ve(-y_\ve)=v_1^\ve(0)$, we get a contradiction.  Thus, we must have that $\mathcal{B}_{v_2^\ve,p,\bbr^4}^{p}=o(1)$, which implies $v_1^0\not\equiv0$ and $v_2^0\equiv0$.  Similarly, we can also show that $\widetilde{v}_1^0\equiv0$ and $\widetilde{v}_2^0\not\equiv0$, where $\overrightarrow{\widetilde{\mathbf{v}}}_\ve\rightharpoonup\overrightarrow{\widetilde{\mathbf{v}}}_0$ weakly in $\mathcal{H}_{\bbr^4}$ as $\ve\to0^+$.  Now, by Proposition~\ref{prop0002} and Lemma~\ref{lem0005}, we can easy to show that $\overrightarrow{\mathbf{V}}_\ve^*=(v_1^\ve, \widetilde{v}_2^\ve)\to\overrightarrow{\mathbf{v}}_0$ strongly in $\mathcal{H}_{\bbr^4}$ as $\ve\to0^+$, where $v_i^0$ is a ground state solution of $(\mathcal{P}_i)$, $i=1,2$.  On the other hand, since $\beta<0$ and $\overrightarrow{\mathbf{v}}_\ve$ satisfies the system~$(\mathcal{S}_*)$, by applying the Moser's iteration as in \cite{BZZ13}, we can show that $\widetilde{v}_{2}^\ve\to0$ strongly in $L^q(\bbr^4)$ as $\ve\to0$ for all $q\geq2$.  Thanks to the classical elliptic regularity, we can see that $\widetilde{v}_{2}^\ve\to0$ strongly in $C^1(\bbr^4)$.  It follows from $\widetilde{v}_{2}^\ve(x)=v_{2}^\ve(x+\frac{p_\ve^2-p_1^\ve}{\ve})$ that $\frac{|p_1^\ve-p_2^\ve|}{\ve}\to+\infty$ since $v_2^\ve\rightharpoonup0$ in $\mathcal{H}_{2,\bbr^4}$ as $\ve\to0^+$.


\vskip0.2in
$(ii)$\quad The proof is similar to that of $(i)$, so we only point out the differences.  Suppose that $\mathcal{B}_{|v_1^\ve|^{2}|v_2^\ve|^{2},1,\bbr^4}=o(1)$.  Then by a similar argument as used in $(i)$ for \eqref{eqn9990}, we can see from Lemma~\ref{lem0005} that $B\geq\sum_{i=1}^2d_{i,\bbr^4}$, which is impossible owing to Proposition~\ref{prop0002}.  Thus, we must have $\mathcal{B}_{|v_1^\ve|^{2}|v_2^\ve|^{2},1,\bbr^4}\geq C$ for $\beta\in(0, \beta_0)\cup(\beta_1, +\infty)$.  By applying the Lion's lemma as for \eqref{eqnew8003}, we can show that there exists $\{y^\ve\}\subset\bbr^4$ such that $v_{i,1}^\ve(x)=v_i^\ve(x+y^\ve)\rightharpoonup v_{i,1}^0$ weakly in $\mathcal{H}_{i,\bbr^4}$ as $\ve\to0^+$ and $v_{1,1}^0\not\equiv0$.  Now, by a similar argument as used in the proof of Lemma~\ref{lem0004} for $\beta>0$, we can see that $v_{2,1}^0\not=0$ and $v_{i,1}^\ve\to v_{i,1}^0$ strongly in $\mathcal{H}_{i,\bbr^4}$ as $\ve\to0^+$.  By applying a similar regularity argument as used in $(i)$, we can obtain that
$v_{i,1}^\ve\to v_{i,1}^0$ strongly $\mathcal{L}^\infty(\bbr^4)\cap \mathcal{C}^1(\bbr^4)$ as $\ve\to0^+$, where $\mathcal{L}^\infty(\bbr^4)=L^\infty(\bbr^4)\times L^\infty(\bbr^4)$ and $\mathcal{C}^1(\bbr^4)=(C^1(\bbr^4))^2$.  Since $0$ is the maximum point of $v_1^\ve$, we must have that $v_1^0\not\equiv0$, which implies $|y^\ve|\leq C$ and $v_2^0\not\equiv0$ too.  Still by a similar argument as used for \eqref{eqn9989}, we can see that $v_{i}^\ve\to v_{i}^0$ strongly in $\mathcal{H}_{i,\bbr^4}$ as $\ve\to0^+$.  Also by applying a similar regularity argument as used in $(i)$, we can obtain that $v_{i}^\ve\to v_{i}^0$ strongly in $\mathcal{L}^\infty(\bbr^4)\cap \mathcal{C}^1(\bbr^4)$ as $\ve\to0^+$.  Similarly, we also have that $\overrightarrow{\widetilde{\mathbf{v}}}_\ve\to\overrightarrow{\widetilde{\mathbf{v}}}_0$ strongly in $\mathcal{L}^\infty(\bbr^4)\cap \mathcal{C}^1(\bbr^4)$ as $\ve\to0^+$, where both $\overrightarrow{\mathbf{v}}_0$ and $\overrightarrow{\widetilde{\mathbf{v}}}_0$ are the ground state solution of $(\mathcal{S}_*)$.  By the results in \cite{BJM09}, we can see from $\beta>0$ that $\overrightarrow{\mathbf{v}}_0$ and $\overrightarrow{\widetilde{\mathbf{v}}}_0$ are both radial symmetric and monotonicity.  Recall that $p_i^\ve$ is the maximum point of $u_i^\ve$,$i=1,2$,  Thus, by $\overrightarrow{\mathbf{v}}_\ve\to\overrightarrow{\mathbf{v}}_0$ and $\overrightarrow{\widetilde{\mathbf{v}}}_\ve\to\overrightarrow{\widetilde{\mathbf{v}}}_0$ strongly in $\mathcal{C}^1(\bbr^4)$ as $\ve\to0^+$, we must have that $0$ is the maximum point of $v_i^0$ and $\widetilde{v}_i^0$ for all $i=1,2$.  Suppose that $\frac{|p_1^\ve-p_2^\ve|}{\ve}\to+\infty$, then by the fact that $\widetilde{v}_2^\ve(x)=v_2^\ve(x+\frac{p_1^\ve-p_2^\ve}{\ve})$, we actually have from $\overrightarrow{\mathbf{v}}_\ve\to\overrightarrow{\mathbf{v}}_0$ and $\overrightarrow{\widetilde{\mathbf{v}}}_\ve\to\overrightarrow{\widetilde{\mathbf{v}}}_0$ strongly in $\mathcal{L}^\infty(\bbr^4)$ as $\ve\to0^+$ that $v_2^0(0)=0$, which is impossible since $v_2^0>0$ in $\bbr^4$ by the maximum principle.  Thus, we must have $|\frac{p_1^\ve-p_2^\ve}{\ve}|\leq C$.  Without loss of generality, we assume that $\frac{|p_1^\ve-p_2^\ve|}{\ve}\to p_0$ as $\ve\to0^+$.  We claim that $p_0=0$.  Indeed, by the fact that $\widetilde{v}_2^\ve(x)=v_2^\ve(x+\frac{p_1^\ve-p_2^\ve}{\ve})$, we can see that $v_2^0(x+p_0)=\widetilde{v}_2^0(x)$ for all $x\in\bbr^4$, in particular, $v_2^0(p_0)=\widetilde{v}_2^0(0)$ and $v_2^0(0)=\widetilde{v}_2^0(-p_0)$.  Note that $0$ is the maximum point of $v_2^0$ and $\widetilde{v}_2^0$, thus we must have $v_2^0(0)=\widetilde{v}_2^0(0)$.  It follows that $v_2^0(p_0)=v_2^0(0)$ and $\widetilde{v}_2^0(-p_0)=\widetilde{v}_2^0(0)$.  Since $\widetilde{v}_2^0$ is radial symmetric, we also have that $\widetilde{v}_2^0(p_0)=\widetilde{v}_2^0(0)$.  Now, by iteration, we can see that $v_2^0(0)=v_2^0(kp_0)$ for all $k\in\bbn$.  It is impossible if $p_0\not=0$ since $v_2^0(kp_0)\to0$ as $k\to\infty$ in this situation.  Now, set $\overrightarrow{\mathbf{v}}_*=(v_1^0, \widetilde{v}_2^0)$.  Since $\frac{|p_1^\ve-p_2^\ve|}{\ve}\to0$ as $\ve\to0^+$ and $\overrightarrow{\mathbf{v}}_\ve\to\overrightarrow{\mathbf{v}}_0$ and $\overrightarrow{\widetilde{\mathbf{v}}}_\ve\to\overrightarrow{\widetilde{\mathbf{v}}}_0$ strongly in $\mathcal{H}_{\bbr^4}\cap\mathcal{C}^1(\bbr^4)$ as $\ve\to0^+$, we have that $\overrightarrow{\mathbf{V}}_\ve^*\to\overrightarrow{\mathbf{v}}_*$ strongly in $\mathcal{H}_{\bbr^4}\cap\mathcal{C}^1(\bbr^4)$ as $\ve\to0^+$.  Moreover, by the fact that $\frac{|p_1^\ve-p_2^\ve|}{\ve}\to0$ once more and Proposition~\ref{prop0002}, we also have that $\overrightarrow{\mathbf{v}}_*$ is the ground state solution of $(\mathcal{S}_*)$.
\end{proof}

\noindent\textbf{Proof of Theorem~\ref{thm0002}:}\quad  It follows immediately from Proposition~\ref{prop0004}.
\hfill$\Box$


\section{The locations of the spikes as $\ve\to0^+$}
We first study the location of the spikes for $\beta>0$.  Without loss of generality, we assume that $$\text{dist}(0, \partial\Omega)=\mathcal{D}=\max_{p\in\Omega}\text{dist}(p, \partial\Omega).$$  Then it is easy to see that $\mathcal{D}>0$ and $\mathbb{B}_{\mathcal{D}}\subset\Omega$.  Let us consider the following system
\begin{equation*}
\left\{\aligned&-\ve^2\Delta u_1+\lambda_1u_1=\mu_1u_1^3+\alpha_1u_1^{p-1}+\beta u_2^2u_1\quad&\text{in }\mathbb{B}_{\mathcal{D}},\\
&-\ve^2\Delta u_2+\lambda_2u_2=\mu_2u_2^3+\alpha_2u_2^{p-1}+\beta u_1^2u_2\quad&\text{in }\mathbb{B}_{\mathcal{D}},\\
&u_1,u_2>0\quad\text{in }\mathbb{B}_{\mathcal{D}},\quad u_1=u_2=0\quad\text{on }\partial\mathbb{B}_{\mathcal{D}}.\endaligned\right.\eqno{(\mathcal{S}^0_{\ve})}
\end{equation*}
Then by a similar argument as used for Proposition~\ref{prop0001},  we can show that $(\mathcal{S}^0_{\ve})$ has a ground state solution $\overrightarrow{\widetilde{\mathbf{u}}}_\ve$ for $0<\beta<\beta_0$ or $\beta>\beta_1$ with $\mathcal{J}_{\ve,\mathbb{B}_{\mathcal{D}},T}(\overrightarrow{\widetilde{\mathbf{u}}}_\ve)=c_{\ve,\mathbb{B}_{\mathcal{D}},T}$.  Since $\mathbb{B}_{\mathcal{D}}$ is radial symmetric and $\beta>0$, by the Schwartz's symmetrization, $\overrightarrow{\widetilde{\mathbf{u}}}_\ve$ is also radial symmetric.

\begin{lemma}\label{lem0006}
For every $\sigma>0$ small enough, we have $\widetilde{u}_{i}^\ve+\ve|\nabla\widetilde{u}_{i}^\ve|\leq Ce^{-\frac{(1-\sigma)\sqrt{\lambda_i}|x|}{\ve}}$ with $\ve>0$ small enough.
\end{lemma}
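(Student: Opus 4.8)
The plan is to establish the exponential decay of the ground state solution $\overrightarrow{\widetilde{\mathbf{u}}}_\ve=(\widetilde{u}_1^\ve,\widetilde{u}_2^\ve)$ of $(\mathcal{S}^0_\ve)$ by a comparison-function argument after first obtaining a pointwise smallness estimate on the nonlinearity. First I would rescale: set $\overline{w}_i^\ve(x)=\widetilde{u}_i^\ve(\ve x)$, so that $\overline{w}_i^\ve$ solves the corresponding equation on $\mathbb{B}_{\mathcal{D}/\ve}$ with $\ve=1$, and the claimed estimate becomes $\overline{w}_i^\ve(x)+|\nabla\overline{w}_i^\ve(x)|\le Ce^{-(1-\sigma)\sqrt{\lambda_i}|x|}$. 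Since $\overrightarrow{\widetilde{\mathbf{u}}}_\ve$ is obtained by the same variational scheme as in Proposition~\ref{prop0001}, the rescaled functions $\overline{w}_i^\ve$ are bounded in $\mathcal{H}_{i,\bbr^4}$ (as in Lemma~\ref{lem0004}), and by the same strong-convergence plus Moser iteration argument used in Lemma~\ref{lem0004} and Proposition~\ref{prop0004} (using $\beta>0$ and radial symmetry), one gets $\overline{w}_i^\ve\to v_i^*$ in $L^\infty(\bbr^4)\cap C^1(\bbr^4)$, in particular $\|\overline{w}_i^\ve\|_{L^\infty}\le C$ uniformly in $\ve$, and $\overline{w}_i^\ve(x)\to 0$ as $|x|\to\infty$ uniformly in $\ve$.

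Next, the key point: because $\mu_i(\overline{w}_i^\ve)^2+\alpha_i(\overline{w}_i^\ve)^{p-2}+\beta(\overline{w}_j^\ve)^2\to 0$ uniformly as $|x|\to\infty$ (uniformly in $\ve$, by the previous step, since $p>2$), for each $\sigma>0$ there exists $R_\sigma>0$, independent of $\ve$, such that for $|x|\ge R_\sigma$ one has
\begin{equation*}
-\Delta\overline{w}_i^\ve+\lambda_i\overline{w}_i^\ve=\Big(\mu_i(\overline{w}_i^\ve)^2+\alpha_i(\overline{w}_i^\ve)^{p-2}+\beta(\overline{w}_j^\ve)^2\Big)\overline{w}_i^\ve\le \sigma'\lambda_i\,\overline{w}_i^\ve,
\end{equation*}
hence $-\Delta\overline{w}_i^\ve+(1-\sigma)^2\lambda_i\overline{w}_i^\ve\le 0$ on $\{|x|\ge R_\sigma\}$ after adjusting $\sigma'$ so that $1-\sigma'=(1-\sigma)^2$. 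Then I would use the standard comparison barrier $\Phi(x)=Me^{-(1-\sigma)\sqrt{\lambda_i}|x|}$: a direct computation in $\bbr^4$ gives $-\Delta\Phi+(1-\sigma)^2\lambda_i\Phi=\big((1-\sigma)^2\lambda_i-(1-\sigma)^2\lambda_i+\frac{3(1-\sigma)\sqrt{\lambda_i}}{|x|}\big)\Phi=\frac{3(1-\sigma)\sqrt{\lambda_i}}{|x|}\Phi\ge 0$, so $\Phi$ is a supersolution for $|x|\ge R_\sigma$. Choosing $M$ large enough that $\Phi\ge \overline{w}_i^\ve$ on $\{|x|=R_\sigma\}$ (possible since $\overline{w}_i^\ve\le C$ there, uniformly in $\ve$) and noting $\overline{w}_i^\ve\to 0$ at infinity, the maximum principle on $\{|x|\ge R_\sigma\}$ yields $\overline{w}_i^\ve(x)\le Me^{-(1-\sigma)\sqrt{\lambda_i}|x|}$ there; combined with the uniform $L^\infty$ bound on $\{|x|\le R_\sigma\}$, this gives the decay of $\overline{w}_i^\ve$ on all of $\bbr^4$ (enlarging $C$).

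Finally, for the gradient term I would invoke interior elliptic estimates: on the ball $\mathbb{B}_1(x_0)$ with $|x_0|$ large, the right-hand side $f_i^\ve=(\mu_i(\overline{w}_i^\ve)^2+\alpha_i(\overline{w}_i^\ve)^{p-2}+\beta(\overline{w}_j^\ve)^2)\overline{w}_i^\ve$ is bounded by $Ce^{-(1-\sigma)\sqrt{\lambda_i}|x_0|}$ in $L^\infty(\mathbb{B}_1(x_0))$ (up to a harmless constant from comparing $|x|$ with $|x_0|$ on the unit ball, which can be absorbed by slightly shrinking the exponent, i.e. replacing $\sigma$ by a marginally larger value), so $W^{2,q}$ estimates followed by Sobolev embedding give $\|\overline{w}_i^\ve\|_{C^1(\mathbb{B}_{1/2}(x_0))}\le C(\|\overline{w}_i^\ve\|_{L^\infty(\mathbb{B}_1(x_0))}+\|f_i^\ve\|_{L^\infty(\mathbb{B}_1(x_0))})\le Ce^{-(1-\sigma)\sqrt{\lambda_i}|x_0|}$, which is the bound on $|\nabla\overline{w}_i^\ve|$. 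Undoing the scaling ($\nabla\widetilde{u}_i^\ve(\ve x)=\ve^{-1}\nabla\overline{w}_i^\ve(x)$, so $\ve|\nabla\widetilde{u}_i^\ve(y)|=|\nabla\overline{w}_i^\ve(y/\ve)|$) gives exactly $\widetilde{u}_i^\ve+\ve|\nabla\widetilde{u}_i^\ve|\le Ce^{-(1-\sigma)\sqrt{\lambda_i}|x|/\ve}$. I expect the main obstacle to be the uniform-in-$\ve$ control needed to make the cutoff radius $R_\sigma$ and the barrier constant $M$ independent of $\ve$ — this is precisely where one must lean on the strong $C^1$ convergence of the rescaled solutions (the analogue of Proposition~\ref{prop0004}(ii) on the fixed domain $\mathbb{B}_{\mathcal{D}}$), rather than just weak convergence; once that uniformity is in hand, the barrier and elliptic-estimate steps are routine.
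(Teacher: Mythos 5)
Your proof of the decay of $\widetilde u_i^\ve$ itself follows the paper exactly: rescale to $\overline u_i^\ve(x)=\widetilde u_i^\ve(\ve x)$, use the strong $\mathcal{H}\cap\mathcal{L}^\infty$ convergence to the ground state of $(\mathcal{S}_*)$ together with Moser iteration and \cite[Theorem~8.17]{GT98} to get $\overline u_i^\ve(x)\to0$ as $|x|\to+\infty$ uniformly in $\ve$, and then run the comparison argument with the supersolution $Me^{-(1-\sigma)\sqrt{\lambda_i}|x|}$ on an exterior region of a fixed radius $R_\sigma$. For the gradient bound, however, you take a genuinely different route. The paper exploits the radial symmetry of $\overrightarrow{\widetilde{\mathbf{u}}}_\ve$ (available because the domain is the ball $\mathbb{B}_{\mathcal{D}}$ and $\beta>0$, so Schwartz symmetrization applies): it writes the equation as an ODE in $r$, uses monotonicity from \cite{BJM09}, and controls $(\overline u_i^\ve)'$ in the boundary layer $\frac{\mathcal{D}}{\ve}-1<r<\frac{\mathcal{D}}{\ve}$ by integrating the differential inequality, combined with the Harnack inequality in the interior. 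You instead invoke local $W^{2,q}$ estimates plus Sobolev embedding on unit balls, which is more standard and does not use radial symmetry at all. The one point you must patch is near $\partial\mathbb{B}_{\mathcal{D}/\ve}$: for $x_0$ within distance $1$ of the boundary, $\mathbb{B}_1(x_0)$ is not contained in the domain, so purely interior estimates do not apply there, and this boundary layer is exactly where the paper's ODE argument is doing its work. The fix is routine — use $W^{2,q}$ estimates up to the boundary on $\mathbb{B}_1(x_0)\cap\mathbb{B}_{\mathcal{D}/\ve}$ with the zero Dirichlet condition (the constants are uniform in $\ve$ since the rescaled boundary has curvature $O(\ve)$) — but it should be stated. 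With that addition your argument is complete, and arguably cleaner, since it would also work in situations where the symmetrization step is unavailable.
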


\begin{proof}
Since $\overrightarrow{\widetilde{\mathbf{u}}}_\ve$ is a solution of $(\mathcal{S}^0_{\ve})$, by the classical elliptic regularity theory, we can see that $\overrightarrow{\widetilde{\mathbf{u}}}_\ve\in\mathcal{C}^2(\mathbb{B}_{\mathcal{D}})=(C^2(\mathbb{B}_{\mathcal{D}}))^2$.  It follows that $\overrightarrow{\overline{\mathbf{u}}}_\ve=(\overline{u}_1^\ve, \overline{u}_2^\ve)\in\mathcal{C}^2(\mathbb{B}_{\frac{\mathcal{D}}{\ve}})$ with $\overline{u}_i^\ve(x)=\widetilde{u}_{i}^\ve(\ve x)$.  Moreover, $\overrightarrow{\overline{\mathbf{u}}}_\ve$ also satisfies the following system
\begin{equation*}
\left\{\aligned&-\Delta u_1+\lambda_1u_1=\mu_1u_1^3+\alpha_1u_1^{p-1}+\beta u_2^2u_1\quad&\text{in }\mathbb{B}_{\frac{\mathcal{D}}{\ve}},\\
&-\Delta u_2+\lambda_2u_2=\mu_2u_2^3+\alpha_2u_2^{p-1}+\beta u_1^2u_2\quad&\text{in }\mathbb{B}_{\frac{\mathcal{D}}{\ve}},\\
&u_1,u_2>0\quad\text{in }\mathbb{B}_{\frac{\mathcal{D}}{\ve}},\quad u_1=u_2=0\quad\text{on }\partial\mathbb{B}_{\frac{\mathcal{D}}{\ve}}.\endaligned\right.\eqno{(\mathcal{S}^{00}_{\ve})}
\end{equation*}
By a similar argument as used for Proposition~\ref{prop0004}, we can see that $\overrightarrow{\overline{\mathbf{u}}}_\ve\to\overrightarrow{\mathbf{v}}_*$ strongly in $\mathcal{H}_{\bbr^4}\cap \mathcal{L}^\infty(\bbr^4)$ as $\ve\to0^+$, where $\overrightarrow{\mathbf{v}}_*$ is the ground state solution of $(\mathcal{S}_*)$.  It follows from the Moser's iteration as used in \cite{BZZ13} and \cite[Theorem~8.17]{GT98} (see also \cite[Lemma~4.3]{BZZ13}) that
\begin{eqnarray*}
\lim_{|x|\to+\infty}\overline{u}_i^\ve=0\quad\text{uniformly for }\ve>0\text{ small enough}.
\end{eqnarray*}
Thus, since $\lambda_i>0$, by using the maximum principle in a standard way, we can see that $\overline{u}_i^\ve(x)\leq Ce^{-(1-\sigma)\sqrt{\lambda_i}|x|}$ with $\ve>0$ small enough for every $\sigma>0$ small enough.  Therefore, $\widetilde{u}_{i}^\ve\leq Ce^{-\frac{(1-\sigma)\sqrt{\lambda_i}|x|}{\ve}}$.  On the other hand, since $\overrightarrow{\widetilde{\mathbf{u}}}_\ve$ is radial symmetric, by the fact that $\overrightarrow{\overline{\mathbf{u}}}_\ve$ satisfies $(\mathcal{S}^{00}_{\ve})$, we can see that
\begin{eqnarray*}
\frac{\lambda_i}{2}\overline{u}_{i}^\ve(r)\leq(\overline{u}_{i}^\ve(r))''+\frac{3}{r}(\overline{u}_{i}^\ve(r))'\leq\lambda_i\overline{u}_{i}^\ve(r)
\end{eqnarray*}
for $\frac{\mathcal{D}}{\ve}-1<r<\frac{\mathcal{D}}{\ve}$ with $\ve>0$ small enough.  By the results in \cite{BJM09}, $(\overline{u}_{i}^\ve(r))'\leq0$.  Thus, we must have $(\overline{u}_{i}^\ve(r))''>0$.  If $(\overline{u}_{i}^\ve(\frac{\mathcal{D}}{\ve}))'\leq-C$, then by the Harnack inequality, we have $(\overline{u}_{i}^\ve(r))'\leq C\overline{u}_{i}^\ve(r)$ for $\frac{\mathcal{D}}{\ve}-1<r<\frac{\mathcal{D}}{\ve}$.  Otherwise, by integrating the above inequality on the interval $[r, \frac{\mathcal{D}}{\ve}]$, we have $(\overline{u}_{i}^\ve(r))'\leq C\overline{u}_{i}^\ve(r)+Ce^{-(1-\sigma)\sqrt{\lambda_i}r}$.  It follows that $|\nabla \overline{u}_i^\ve|\leq Ce^{-(1-\sigma)\sqrt{\lambda_i}|x|}$ with $\ve>0$ small enough for every $\sigma>0$ small enough, which implies $\ve|\nabla\widetilde{u}_{i}^\ve|\leq Ce^{-\frac{(1-\sigma)\sqrt{\lambda_i}|x|}{\ve}}$.
\end{proof}

Recall that $\text{dist}(0, \partial\Omega)=\mathcal{D}=\max_{p\in\Omega}\text{dist}(p, \partial\Omega)$, then we have the following upper bound of $c_{\ve,\Omega,T}$ with $\beta>0$.

\vskip0.23in

\begin{lemma}\label{lem0007}
Let $\beta>0$ and $\ve>0$ be small enough.  Then for every $\sigma>0$ small enough, we have
\begin{eqnarray*}
c_{\ve,\Omega,T}\leq\ve^4(B+C\sum_{i=1}^2e^{-\frac{2(1-\sigma)\sqrt{\lambda_i}\mathcal{D}}{\ve}})
\end{eqnarray*}
with $0<\beta<\beta_0$ while
\begin{eqnarray*}
c_{\ve,\Omega,T}\leq\ve^4(B'+C\sum_{i=1}^2e^{-\frac{2(1-\sigma)\sqrt{\lambda_i}\mathcal{D}}{\ve}})
\end{eqnarray*}
with $\beta>\beta_1$.
\end{lemma}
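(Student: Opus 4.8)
The plan is to obtain the upper bound by constructing an explicit competitor on $\Omega$ out of the $\bbr^4$ ground state of the limiting system $(\mathcal{S}_*)$, concentrated at the point $0$ that realizes $\mathcal{D}=\max_{p\in\Omega}\text{dist}(p,\partial\Omega)$, and cut off at the scale $\mathcal{D}$. Since $\beta>0$ we have $\chi_\beta\equiv1$, so all functionals, Nehari sets and fibering maps below are the plain (untruncated) ones. First I would reduce to a ball: because $\mathbb{B}_{\mathcal{D}}\subset\Omega$, extending a function by zero embeds $\mathcal{N}_{\ve,\mathbb{B}_{\mathcal{D}},T}$ into $\mathcal{N}_{\ve,\Omega,T}$ and preserves the value of the functional (every integral and the norm $\|\cdot\|_{\ve,\Omega}$ coincide with their $\mathbb{B}_{\mathcal{D}}$-counterparts), so $c_{\ve,\Omega,T}\le c_{\ve,\mathbb{B}_{\mathcal{D}},T}$. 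After the rescaling $x\mapsto\ve x$ one has $c_{\ve,\mathbb{B}_{\mathcal{D}},T}=\ve^4 c_{\mathbb{B}_{\mathcal{D}/\ve},T}$, the $\ve=1$ ground-state level on the expanding ball of $(\mathcal{S}^{00}_\ve)$, so it suffices to prove $c_{\mathbb{B}_R,T}\le B+C\sum_{i=1}^2 e^{-2(1-\sigma)\sqrt{\lambda_i}R}$ with $R=\mathcal{D}/\ve$, and analogously $c'_{\mathbb{B}_R,T}\le B'+C\sum_{i=1}^2 e^{-2(1-\sigma)\sqrt{\lambda_i}R}$ in the case $\beta>\beta_1$.

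Let $\overrightarrow{\mathbf{U}}_*=(U_1^*,U_2^*)$ be the radially symmetric ground state of $(\mathcal{S}_*)$ given by Proposition~\ref{prop0002} (so $\overrightarrow{\mathbf{U}}_*\in\mathcal{N}_{\bbr^4,T}\subset\mathcal{N}'_{\bbr^4,T}$, $\mathcal{J}_{\bbr^4,T}(\overrightarrow{\mathbf{U}}_*)=B$, and $B=B'$ when $\beta>\beta_1$). Running the regularity and decay argument of Lemma~\ref{lem0006} directly at the level of the limiting system — classical elliptic regularity and the Moser iteration as in \cite{BZZ13} give $\overrightarrow{\mathbf{U}}_*\in\mathcal{C}^2(\bbr^4)\cap\mathcal{L}^\infty(\bbr^4)$ with $U_i^*(x)\to0$ as $|x|\to\infty$ (via \cite[Theorem~8.17]{GT98}), and then, since $\lambda_i>0$, a comparison with $e^{-(1-\sigma)\sqrt{\lambda_i}|x|}$ through the maximum principle together with interior estimates yields
\begin{eqnarray*}
U_i^*(x)+|\nabla U_i^*(x)|\le Ce^{-(1-\sigma)\sqrt{\lambda_i}|x|},\quad i=1,2,
\end{eqnarray*}
for every small $\sigma>0$. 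Fix a cutoff $\psi_R\in C_0^\infty(\mathbb{B}_R)$ with $\psi_R\equiv1$ on $\mathbb{B}_{R-1}$, $0\le\psi_R\le1$, $|\nabla\psi_R|\le C$ uniformly in $R$, and take $\psi_R\overrightarrow{\mathbf{U}}_*$ (supported in $\mathbb{B}_R$) as the candidate. Since $\psi_R\overrightarrow{\mathbf{U}}_*$ and $\overrightarrow{\mathbf{U}}_*$ differ only on the annulus $R-1<|x|<R$, the decay estimate bounds each quadratic quantity ($\|\psi_RU_i^*\|_{i,\bbr^4}^2$, $\mathcal{B}_{\psi_RU_i^*,4,\bbr^4}^4$, $\mathcal{B}_{|\psi_RU_1^*|^2|\psi_RU_2^*|^2,1,\bbr^4}$, and so on) by its $\overrightarrow{\mathbf{U}}_*$-counterpart up to an error $O(\sum_i e^{-2(1-\sigma)\sqrt{\lambda_i}R})$, the cubic, quartic, coupling and $L^p$ corrections being of strictly higher exponential order; hence $\mathcal{J}_{\bbr^4,T}(\overrightarrow{\mathbf{t}}\circ\psi_R\overrightarrow{\mathbf{U}}_*)=\mathcal{J}_{\bbr^4,T}(\overrightarrow{\mathbf{t}}\circ\overrightarrow{\mathbf{U}}_*)+O(\sum_i e^{-2(1-\sigma)\sqrt{\lambda_i}R})$ uniformly for $\overrightarrow{\mathbf{t}}$ in a fixed compact subset of $(\bbr^+)^2$, and likewise $\mathcal{J}'_{\bbr^4,T}(\psi_R\overrightarrow{\mathbf{U}}_*)(\psi_R\overrightarrow{\mathbf{U}}_*)^i=O(\sum_j e^{-2(1-\sigma)\sqrt{\lambda_j}R})$ (using $\mathcal{J}'_{\bbr^4,T}(\overrightarrow{\mathbf{U}}_*)\overrightarrow{\mathbf{U}}_*^i=0$).

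Now I would project $\psi_R\overrightarrow{\mathbf{U}}_*$ back onto the Nehari manifold. Since it is supported in $\mathbb{B}_R$, membership in $\mathcal{N}_{\mathbb{B}_R,T}$ amounts to the two fibre equations $\mathcal{J}'_{\bbr^4,T}(\overrightarrow{\mathbf{t}}\circ\psi_R\overrightarrow{\mathbf{U}}_*)(\overrightarrow{\mathbf{t}}\circ\psi_R\overrightarrow{\mathbf{U}}_*)^i=0$, which vanish at $\overrightarrow{\mathbf{t}}=\overrightarrow{\mathbf{1}}$ up to the exponentially small error above; the $2\times2$ Jacobian at $\overrightarrow{\mathbf{1}}$ is a perturbation of the matrix $\Theta(\overrightarrow{\mathbf{U}}_*)$ of \eqref{eqn0016}, whose determinant is positive for $0<\beta<\beta_0$ (exactly as in the proof of Lemma~\ref{lem0001}$(ii)$, since $\beta_0<\sqrt{\mu_1\mu_2}$ and $\mathcal{B}_{U_i^*,4,\bbr^4}^4\ge C$). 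So the implicit function theorem provides $\overrightarrow{\mathbf{t}}_R$ with $|\overrightarrow{\mathbf{t}}_R-\overrightarrow{\mathbf{1}}|=O(\sum_i e^{-2(1-\sigma)\sqrt{\lambda_i}R})$ and $\overrightarrow{\mathbf{t}}_R\circ\psi_R\overrightarrow{\mathbf{U}}_*\in\mathcal{N}_{\mathbb{B}_R,T}$; as $\overrightarrow{\mathbf{1}}$ is a critical point of $\overrightarrow{\mathbf{t}}\mapsto\mathcal{J}_{\bbr^4,T}(\overrightarrow{\mathbf{t}}\circ\overrightarrow{\mathbf{U}}_*)$, a second-order Taylor expansion gives
\begin{eqnarray*}
c_{\mathbb{B}_R,T}\le\mathcal{J}_{\bbr^4,T}(\overrightarrow{\mathbf{t}}_R\circ\psi_R\overrightarrow{\mathbf{U}}_*)=\mathcal{J}_{\bbr^4,T}(\overrightarrow{\mathbf{t}}_R\circ\overrightarrow{\mathbf{U}}_*)+O\Big(\sum_i e^{-2(1-\sigma)\sqrt{\lambda_i}R}\Big)=B+O\Big(\sum_i e^{-2(1-\sigma)\sqrt{\lambda_i}R}\Big).
\end{eqnarray*}
Undoing the rescaling and combining with $c_{\ve,\Omega,T}\le c_{\ve,\mathbb{B}_{\mathcal{D}},T}$ yields the first assertion. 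For $\beta>\beta_1$ one runs the same scheme on $\mathcal{N}'$, where the fibre is one–dimensional: $t\mapsto\mathcal{J}_{\bbr^4,T}(t\overrightarrow{\mathbf{U}}_*)$ has $t=1$ as a nondegenerate critical point (because $p>2$ and $\beta>0$ force $\tfrac{d^2}{dt^2}\mathcal{J}_{\bbr^4,T}(t\overrightarrow{\mathbf{U}}_*)|_{t=1}<0$, as for $\Upsilon_n$ in Lemma~\ref{lem0001}$(iii)$), so $t_R\psi_R\overrightarrow{\mathbf{U}}_*\in\mathcal{N}'_{\mathbb{B}_R,T}$ with $t_R=1+O(\sum_i e^{-2(1-\sigma)\sqrt{\lambda_i}R})$ and $c'_{\mathbb{B}_R,T}\le B'+O(\sum_i e^{-2(1-\sigma)\sqrt{\lambda_i}R})$; since on the symmetric ball $\mathbb{B}_{\mathcal{D}}$ the two ground–state levels agree for $\beta>0$ by Schwartz symmetrization (as in the construction of $\overrightarrow{\widetilde{\mathbf{u}}}_\ve$), this gives the bound for $c_{\ve,\Omega,T}$ as well (recall $B=B'$).

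The main obstacle is the projection step, i.e. knowing that the fibering map attached to $\overrightarrow{\mathbf{U}}_*$ is nondegenerate at $\overrightarrow{\mathbf{1}}$ so that the implicit function theorem applies and $\overrightarrow{\mathbf{t}}_R$ inherits the exponential smallness: for $0<\beta<\beta_0$ this is the positivity of $\det\Theta(\overrightarrow{\mathbf{U}}_*)$, already available from Lemma~\ref{lem0001}, but for $\beta>\beta_1$ the two–variable Jacobian need not be invertible, which forces the passage to the one–constraint set $\mathcal{N}'$ together with the comparison of the $\mathcal{N}$- and $\mathcal{N}'$-levels on the ball. A subsidiary but essential point is the bookkeeping of exponential orders: each quadratic term over the annulus $R-1<|x|<R$ is $O(e^{-2(1-\sigma)\sqrt{\lambda_i}R})$ while the cubic, quartic, coupling and $L^p$ contributions are of strictly higher order and thus absorbed into the constant, and one must also record that the error in $\overrightarrow{\mathbf{t}}_R$, entering $\mathcal{J}_{\bbr^4,T}$ only quadratically around the critical point, does not degrade the claimed rate.
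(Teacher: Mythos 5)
Your proposal is correct and follows essentially the same route as the paper: cut off the radial ground state $\overrightarrow{\mathbf{U}}_*$ of $(\mathcal{S}_*)$ at the scale $\mathcal{D}/\ve$, project the truncation back onto the Nehari set via the (nondegenerate) fibering map, and absorb the annulus contributions using the exponential decay $U_i^*+|\nabla U_i^*|\leq Ce^{-(1-\sigma)\sqrt{\lambda_i}|x|}$. The only cosmetic differences are that the paper works directly on $\Omega_\ve$ rather than first passing to the inscribed ball, and it closes the estimate with the maximality of $\mathcal{J}_{\bbr^4,T}(\overrightarrow{\mathbf{t}}\circ\overrightarrow{\mathbf{U}}_*)$ at $\overrightarrow{\mathbf{t}}=\overrightarrow{\mathbf{1}}$ instead of your second-order Taylor expansion; your handling of $\beta>\beta_1$ through the one-constraint set $\mathcal{N}'$ matches the paper's (equally terse) treatment.
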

\begin{proof}
Let $\phi_\ve$ be a smooth radial symmetric function such that $0\leq\phi_\ve\leq1$ and
\begin{eqnarray*}
\phi_\ve(x)=\left\{\aligned &1,\quad&x\in \mathbb{B}_{\frac{\mathcal{D}}{\ve}-1};\\
&0,\quad&x\in\bbr^4\backslash\mathbb{B}_{\frac{\mathcal{D}}{\ve}}.\endaligned\right.
\end{eqnarray*}
Set $\overrightarrow{\overline{\mathbf{U}}}_{*,\ve}=(U_1^*\phi_\ve, U_2^*\phi_\ve)$, where $\overrightarrow{\mathbf{U}}_*$ is the ground state solution of $(\mathcal{S}_*)$ given by Proposition~\ref{prop0002}.
Then $\overrightarrow{\overline{\mathbf{U}}}_{*,\ve}\in\mathcal{H}_{\Omega_\ve}$.  Note that $\overrightarrow{\overline{\mathbf{U}}}_{*,\ve}\to\overrightarrow{\mathbf{U}}_*$ strongly in $\mathcal{H}_{\bbr^4}$ as $\ve\to0$, by a similar argument as used in the proof of Lemma~\ref{lem0005}, we can show that there exits $\overrightarrow{\mathbf{t}}_{\ve}$ with $\overrightarrow{\mathbf{t}}_{\ve}\to\overrightarrow{\mathbf{1}}$ as $\ve\to0^+$ such that $\overrightarrow{\mathbf{t}}_{\ve}\circ\overrightarrow{\overline{\mathbf{U}}}_{*,\ve}\in\mathcal{N}_{\Omega_\ve,T}$.  It follows from Lemma~\ref{lem0006} and a similar argument as used for Lemma~\ref{lem0001} that
\begin{eqnarray*}
B&=&\mathcal{J}_{\bbr^4,T}(\overrightarrow{\mathbf{U}}_*)\\
&\geq&\mathcal{J}_{\bbr^4,T}(\overrightarrow{\mathbf{t}}_{\ve}\circ\overrightarrow{\mathbf{U}}_*)\\
&\geq&\mathcal{J}_{\Omega_\ve,T}(\overrightarrow{\mathbf{t}}_{\ve}\circ\overrightarrow{\overline{\mathbf{U}}}_*)-C\sum_{i=1}^2e^{-\frac{2(1-\sigma)\sqrt{\lambda_i}\mathcal{D}}{\ve}}.
\end{eqnarray*}
By the standard scaling technique, we actually have that
$$
c_{\ve,\Omega,T}\leq\ve^4(B+C\sum_{i=1}^2e^{-\frac{2(1-\sigma)\sqrt{\lambda_i}\mathcal{D}}{\ve}})
$$
for $0<\beta<\beta_0$.  Similarly, we also have that
$$
c_{\ve,\Omega,T}\leq\ve^4(B'+C\sum_{i=1}^2e^{-\frac{2(1-\sigma)\sqrt{\lambda_i}\mathcal{D}}{\ve}})
$$
for $\beta>\beta_1$.
\end{proof}

Recall that $\overrightarrow{\mathbf{u}}_\ve$ is the ground state solution of $(\mathcal{S}_\ve)$ for $0<\beta<\beta_0$ or $\beta>\beta_1$.
\begin{lemma}\label{lem0008}
Let $0<\beta<\beta_0$ or $\beta>\beta_1$.  Then for every $\sigma>0$ small enough, we have $u_{i}^\ve+\ve|\nabla u_{i}^\ve|\leq Ce^{-\frac{(1-\sigma)\sqrt{\lambda_i}|x-p_i^\ve|}{\ve}}$
with $\ve>0$ small enough,  $i=1,2$.
\end{lemma}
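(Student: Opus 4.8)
The plan is to mimic the exponential-decay argument of Lemma~\ref{lem0006}, but now for the ground state solution $\overrightarrow{\mathbf{u}}_\ve$ of $(\mathcal{S}_\ve)$ on the full domain $\Omega$ rather than on the ball $\mathbb{B}_{\mathcal{D}}$. First I would pass to the rescaled functions $v_i^\ve(x)=u_i^\ve(p_i^\ve+\ve x)$, which solve $(\mathcal{S}_\ve^*)$ on $\Omega_{i,\ve}$, with $\Omega_{i,\ve}\to\bbr^4$ as $\ve\to0^+$ by Lemma~\ref{lem0004}. Proposition~\ref{prop0004} gives the strong convergence of $\overrightarrow{\mathbf{V}}_\ve^*=(v_1^\ve,\widetilde v_2^\ve)$ in $\mathcal{H}_{\bbr^4}$ (and, by the regularity improvements carried out in its proof, in $\mathcal{L}^\infty(\bbr^4)\cap\mathcal{C}^1(\bbr^4)$) to a ground state of $(\mathcal{S}_*)$ for $\beta\in(0,\beta_0)\cup(\beta_1,+\infty)$. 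From the Moser iteration of \cite{BZZ13} together with \cite[Theorem~8.17]{GT98} (cf.\ \cite[Lemma~4.3]{BZZ13}) one obtains the uniform decay
\begin{eqnarray*}
\lim_{|x|\to+\infty}v_i^\ve(x)=0\quad\text{uniformly for }\ve>0\text{ small enough},\quad i=1,2.
\end{eqnarray*}
This is the key uniform-in-$\ve$ input; since the coupling terms $\beta(v_{3-i}^\ve)^2$ and the subcritical terms $\alpha_i(v_i^\ve)^{p-2}$ are then $o(1)$ in $L^\infty$ outside a large ball, the linear operator $-\Delta+\lambda_i$ dominates.

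Next, fix $\sigma>0$ small. Choose $R_\sigma>0$ (independent of $\ve$ small) so that $\mu_i(v_i^\ve)^2+\alpha_i(v_i^\ve)^{p-2}+\beta(v_{3-i}^\ve)^2\le\sigma\lambda_i$ on $\bbr^4\setminus\mathbb{B}_{R_\sigma}$, using the uniform decay just obtained. Then $v_i^\ve$ satisfies $-\Delta v_i^\ve+(1-\sigma)\lambda_i v_i^\ve\le 0$ outside $\mathbb{B}_{R_\sigma}$. Comparing with the radial supersolution $W(x)=Ce^{-\sqrt{(1-\sigma)\lambda_i}\,|x|}$ (which satisfies $-\Delta W+(1-\sigma)\lambda_i W = -\frac{3}{|x|}W'\ge 0$ for $|x|>0$, hence is a genuine supersolution of the linear inequality) and choosing $C$ large enough that $W\ge v_i^\ve$ on $\partial\mathbb{B}_{R_\sigma}$ and on $\partial\Omega_{i,\ve}$ (where $v_i^\ve=0$), the maximum principle on the unbounded region $\Omega_{i,\ve}\setminus\mathbb{B}_{R_\sigma}$ yields $v_i^\ve(x)\le Ce^{-(1-\sigma')\sqrt{\lambda_i}\,|x|}$ for a slightly worse $\sigma'$; relabelling $\sigma$ this gives $v_i^\ve(x)\le Ce^{-(1-\sigma)\sqrt{\lambda_i}\,|x|}$. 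Undoing the scaling, $u_i^\ve(x)\le Ce^{-\frac{(1-\sigma)\sqrt{\lambda_i}|x-p_i^\ve|}{\ve}}$.

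For the gradient bound I would argue as in Lemma~\ref{lem0006}: on $\bbr^4\setminus\mathbb{B}_{R_\sigma}$ the rescaled equation gives $\tfrac{\lambda_i}{2}v_i^\ve\le\Delta v_i^\ve + (\text{lower order})\le\lambda_i v_i^\ve$ in an $L^p$ sense, so by interior $L^p$-estimates (Calderón–Zygmund) on unit balls $\mathbb{B}_1(y)$, $y$ large, combined with the Sobolev embedding, $|\nabla v_i^\ve(y)|\le C\|v_i^\ve\|_{L^\infty(\mathbb{B}_1(y))}+C\|v_i^\ve\|_{L^q(\mathbb{B}_1(y))}\le Ce^{-(1-\sigma)\sqrt{\lambda_i}|y|}$ using the pointwise decay already established (after a further harmless enlargement of $\sigma$). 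On the bounded region $\mathbb{B}_{R_\sigma}$ the $\mathcal{C}^1$-convergence from Proposition~\ref{prop0004} gives a uniform bound, which is absorbed into the constant $C$. Rescaling back produces $\ve|\nabla u_i^\ve(x)|\le Ce^{-\frac{(1-\sigma)\sqrt{\lambda_i}|x-p_i^\ve|}{\ve}}$.

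The main obstacle is ensuring that \emph{every} constant ($C$, $R_\sigma$, and the smallness threshold for $\ve$) is independent of $\ve$; this is exactly why one must first invoke the strong $\mathcal{H}_{\bbr^4}\cap\mathcal{L}^\infty\cap\mathcal{C}^1$ convergence of $\overrightarrow{\mathbf{V}}_\ve^*$ from Proposition~\ref{prop0004} and the $\ve$-uniform Moser iteration of \cite{BZZ13}, rather than applying elliptic estimates naively on the $\ve$-dependent domain $\Omega$. A secondary subtlety is that $\Omega_{i,\ve}$ is only converging to $\bbr^4$ and may have irregular boundary for fixed $\ve$, but since $v_i^\ve$ vanishes on $\partial\Omega_{i,\ve}$ the comparison principle still applies on $\Omega_{i,\ve}\setminus\mathbb{B}_{R_\sigma}$ without any boundary regularity of $\partial\Omega_{i,\ve}$.
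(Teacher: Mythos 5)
Your proposal is correct and follows essentially the same route as the paper: rescale about $p_i^\ve$, invoke the strong $\mathcal{H}_{\bbr^4}\cap\mathcal{L}^\infty$ convergence of Proposition~\ref{prop0004} together with the Moser iteration of \cite{BZZ13} and \cite[Theorem~8.17]{GT98} to get decay at infinity uniform in $\ve$, then apply the maximum principle with an exponential barrier, and finally obtain the gradient bound by local elliptic estimates (the paper cites the Harnack inequality where you use Calder\'on--Zygmund plus Sobolev, which is an equivalent standard step). You merely make explicit the comparison-function argument that the paper compresses into ``applying the maximum principle in a standard way.''
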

\begin{proof}
Recall that $v_1^\ve(x)=u_1^\ve(p_1^\ve+\ve x)$ and $\widetilde{v}_2^\ve(x)=u_2^\ve(p_2^\ve+\ve x)$.  By Proposition~\ref{prop0004}, we can see that $(v_1^\ve, \widetilde{v}_2^\ve)\to\overrightarrow{\mathbf{v}}_*$ strongly in $\mathcal{H}_{\bbr^4}\cap \mathcal{L}^\infty(\bbr^4)$ as $\ve\to0^+$.  It follows from the Moser's iteration as used in \cite{BZZ13} and \cite[Theorem~8.17]{GT98} (see also \cite[Lemma~4.3]{BZZ13}) that
\begin{eqnarray*}
\lim_{|x|\to+\infty}v_1^\ve=\lim_{|x|\to+\infty}\widetilde{v}_2^\ve=0\quad\text{uniformly for }\ve>0\text{ small enough}.
\end{eqnarray*}
Since $\lambda_i>0$, by applying the maximum principle in a standard way, we can show that
\begin{eqnarray*}
v_1^\ve\leq Ce^{-(1-\sigma)\sqrt{\lambda_1}|x|}\quad\text{and}\quad \widetilde{v}_2^\ve\leq Ce^{-(1-\sigma)\sqrt{\lambda_2}|x|}
\end{eqnarray*}
for all $\sigma>0$ small enough with $\ve>0$ small enough.  Hence, by the standard scaling technique, we also have that $u_{i}^\ve\leq Ce^{-\frac{(1-\sigma)\sqrt{\lambda_i}|x-p_i^\ve|}{\ve}}$ respectively for $i=1,2$.  By the the Harnack inequality, we also have $u_{i}^\ve+\ve|\nabla u_{i}^\ve|\leq Ce^{-\frac{(1-\sigma)\sqrt{\lambda_i}|x-p_i^\ve|}{\ve}}$
with $\ve>0$ small enough respectively for $i=1,2$.
\end{proof}

Set $\mathcal{D}_i^\ve=\text{dist}(p_i^\ve, \partial\Omega)$ respectively for $i=1,2$, then $\mathbb{B}_{\mathcal{D}_i^\ve}(p_i^\ve)\subset\Omega$ both for $i=1,2$.  Without loss of generality, by Proposition~\ref{prop0004}, we may assume that $\mathcal{D}_i^\ve\to\mathcal{D}_0=\text{dist}(p_0, \partial\Omega)$ as $\ve\to0^+$ with $p_i^\ve\to p_0$ as $\ve\to0^+$ for $0<\beta<\beta_0$ or $\beta>\beta_1$.  As in \cite{LW05}, for $\delta>0$, we choose $\mathcal{D}_0'<\mathcal{D}_0+\delta$ such that
\begin{eqnarray*}
\text{meas}(\mathbb{B}_{\mathcal{D}_0'}(p_0))=\text{meas}(\mathbb{B}_{\mathcal{D}_0+\delta}(p_0)\cap\Omega).
\end{eqnarray*}
We also choose $\delta'<\delta$ such that $\mathcal{D}_0'<\mathcal{D}_0+\delta'$.  Now, consider the following smooth cut-off function
\begin{eqnarray*}
\eta_i^\ve(s)=\left\{\aligned&1,\quad\text{for }0\leq s\leq\mathcal{D}_i^\ve+\delta',\\
&0,\quad\text{for }s>\mathcal{D}_i^\ve+\delta\endaligned\right.
\end{eqnarray*}
with $0\leq\eta_i^\ve\leq1$ and $|(\eta_i^\ve)'|\leq C$.  Let $\overrightarrow{\widehat{\mathbf{u}}}_\ve=(\widehat{u}_1^\ve, \widehat{u}_2^\ve)$ with $\widehat{u}_i^\ve=u_i^\ve(x)\eta_i^\ve(|x-p_i^\ve|)$.  Then by Lemma~\ref{lem0001}, Proposition~\ref{prop0004} and Lemma~\ref{lem0008}, we have
\begin{eqnarray}\label{eqn0060}
\mathcal{J}_{\ve,\Omega,T}(\overrightarrow{\mathbf{u}}_\ve)\geq\mathcal{J}_{\ve,\Omega,T}(\overrightarrow{\mathbf{t}}\circ\overrightarrow{\widehat{\mathbf{u}}}_\ve)
-C(\overrightarrow{\mathbf{t}})\sum_{i=1}^2e^{-\frac{2(1-\sigma)\sqrt{\lambda_i}(\mathcal{D}_i^\ve+\delta')}{\ve}}
\end{eqnarray}
for $0<\beta<\beta_0$ or $\beta>\beta_1$.  Here, $C(\overrightarrow{\mathbf{t}})$ is bounded from above if $\overrightarrow{\mathbf{t}}$ is bounded from above.  Let $R_\ve=\frac{\widetilde{\mathcal{D}}_1^\ve}{\ve}$ where $\widetilde{\mathcal{D}}_1^\ve$ is chosen such that
\begin{eqnarray*}
\text{meas}(\mathbb{B}_{\widetilde{\mathcal{D}}_1^\ve}(p_1^\ve))=\text{meas}(\mathbb{B}_{\mathcal{D}_1^\ve+\delta}(p_1^\ve)\cap\Omega)\quad\text{and}\quad
\widetilde{\mathcal{D}}_1^\ve>\mathcal{D}_1^\ve+\frac12\delta.
\end{eqnarray*}
Moreover, by the Schwartz's symmetrization and $\beta>0$, we have
\begin{eqnarray}\label{eqn0061}
\mathcal{J}_{\ve,\Omega,T}(\overrightarrow{\mathbf{t}}\circ\overrightarrow{\widehat{\mathbf{u}}}_\ve)\geq
\ve^4\mathcal{J}_{\mathbb{B}_{R_\ve},T}(\overrightarrow{\mathbf{t}}\circ\overrightarrow{\widehat{\mathbf{u}}}_{\ve,*}).
\end{eqnarray}
Here, $\widehat{u}_i^{\ve,*}$ is the Schwartz's symmetrization of $\widehat{u}_i^\ve$ in $\mathbb{B}_{\widetilde{\mathcal{D}}_1^\ve}$ and $\mathcal{J}_{\mathbb{B}_{R_\ve},T}(\overrightarrow{\mathbf{u}})=\mathcal{J}_{1,\mathbb{B}_{R_\ve},T}(\overrightarrow{\mathbf{u}})$.
\begin{lemma}\label{lem0009}
Let $\beta>0$.  Then for every $\delta>0$ small enough, we have
\begin{eqnarray*}
c_{\ve,\Omega,T}\geq\ve^4(B+C\sum_{i=1}^2e^{-\frac{2(1+\sigma)\sqrt{\lambda_i}(\mathcal{D}_1^\ve+\frac12\delta)}{\ve}})
\end{eqnarray*}
with $0<\beta<\beta_0$ and $\ve>0$ small enough and
\begin{eqnarray*}
c_{\ve,\Omega,T}\geq\ve^4(B'+C\sum_{i=1}^2e^{-\frac{2(1+\sigma)\sqrt{\lambda_i}(\mathcal{D}_1^\ve+\frac12\delta)}{\ve}})
\end{eqnarray*}
with $\beta>\beta_1$ and $\ve>0$ small enough.
\end{lemma}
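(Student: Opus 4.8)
The plan is to feed the competitor $\overrightarrow{\mathbf{t}}\circ\overrightarrow{\widehat{\mathbf{u}}}_\ve$ into \eqref{eqn0060}--\eqref{eqn0061} with a good choice of dilation and then to quantify the energy that the Dirichlet condition on $\partial\mathbb{B}_{R_\ve}$ costs the symmetrized competitor. First I would fix $\overrightarrow{\mathbf{t}}=\overrightarrow{\mathbf{t}}_\ve$: by Proposition~\ref{prop0004}$(ii)$ the rescaled ground state converges strongly to $\overrightarrow{\mathbf{v}}_*$, so $\overrightarrow{\widehat{\mathbf{u}}}_{\ve,*}$ is $\mathcal{H}_{\bbr^4}$-close to a translate of the radial minimizer $\overrightarrow{\mathbf{U}}_*$; repeating the implicit-function/Miranda scheme used for $\Gamma_i^n(\overrightarrow{\mathbf{t}},\tau)$ in Lemma~\ref{lem0001} and for $\eta_i(t_1,t_2)$ in Lemma~\ref{lem0003} yields $\overrightarrow{\mathbf{t}}_\ve\to\overrightarrow{\mathbf{1}}$ with $\overrightarrow{\mathbf{t}}_\ve\circ\overrightarrow{\widehat{\mathbf{u}}}_{\ve,*}\in\mathcal{N}_{\mathbb{B}_{R_\ve},T}$ and $\mathcal{J}_{\mathbb{B}_{R_\ve},T}(\overrightarrow{\mathbf{t}}_\ve\circ\overrightarrow{\widehat{\mathbf{u}}}_{\ve,*})=\max_{\overrightarrow{\mathbf{t}}\in(\bbr^+)^2}\mathcal{J}_{\mathbb{B}_{R_\ve},T}(\overrightarrow{\mathbf{t}}\circ\overrightarrow{\widehat{\mathbf{u}}}_{\ve,*})$ (for $\beta>\beta_1$ one uses the one-parameter fibering and $\mathcal{N}'_{\mathbb{B}_{R_\ve},T}$, recalling $c_{\ve,\Omega,T}\geq c'_{\ve,\Omega,T}=\mathcal{J}_{\ve,\Omega,T}(\overrightarrow{\mathbf{u}}_\ve)$). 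With this choice, \eqref{eqn0060}, \eqref{eqn0061} and $C(\overrightarrow{\mathbf{t}}_\ve)\leq C$ give
\[
c_{\ve,\Omega,T}\ \geq\ \ve^4\,\mathcal{J}_{\mathbb{B}_{R_\ve},T}\big(\overrightarrow{\mathbf{t}}_\ve\circ\overrightarrow{\widehat{\mathbf{u}}}_{\ve,*}\big)-\ve^4C\sum_{i=1}^2 e^{-\frac{2(1-\sigma)\sqrt{\lambda_i}(\mathcal{D}_i^\ve+\delta')}{\ve}} .
\]

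The heart of the proof is then the inequality
\[
\mathcal{J}_{\mathbb{B}_{R_\ve},T}\big(\overrightarrow{\mathbf{t}}_\ve\circ\overrightarrow{\widehat{\mathbf{u}}}_{\ve,*}\big)\ \geq\ B+c_0\sum_{i=1}^2 e^{-\frac{2(1+\sigma)\sqrt{\lambda_i}\mathcal{D}_i^\ve}{\ve}}
\]
for some $c_0>0$ (with $B'$ in place of $B$ when $\beta>\beta_1$). The bound $\geq B$ is immediate, since a Dirichlet function on $\mathbb{B}_{R_\ve}$, extended by zero, lies in $\mathcal{N}_{\bbr^4,T}$ and $\mathcal{J}_{\mathbb{B}_{R_\ve},T}$ coincides with $\mathcal{J}_{\bbr^4,T}$ on it. For the strictly positive gain: by Proposition~\ref{prop0002}$(ii)$, $\overrightarrow{\mathbf{U}}_*$ attains $B$, is radial and (by \cite{BJM09}) radially decreasing; by the maximum-principle argument of Lemma~\ref{lem0008} together with a comparison from below, $c\,e^{-(1+\sigma)\sqrt{\lambda_i}|x|}\leq U_i^*(x)\leq C\,e^{-(1-\sigma)\sqrt{\lambda_i}|x|}$. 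Now $\widehat{u}_i^{\ve,*}$ comes from $u_i^\ve$ via the cut-off $\eta_i^\ve$ and Schwartz symmetrization, and the super-level sets $\{\widehat{u}_i^\ve>t\}$ are clipped by $\partial\Omega$ once $|x-p_i^\ve|\gtrsim\mathcal{D}_i^\ve$; rearranging them into centered balls therefore forces $\widehat{u}_i^{\ve,*}$ strictly below the profile $U_i^*$ for $|x|\gtrsim\mathcal{D}_i^\ve/\ve$, a fixed positive fraction of the tail mass of $U_i^*$ beyond that radius being lost. Hence $\|\overrightarrow{\mathbf{t}}_\ve\circ\overrightarrow{\widehat{\mathbf{u}}}_{\ve,*}-\overrightarrow{\mathbf{U}}_*\|_{\mathcal{H}_{\bbr^4}}^2\gtrsim\sum_i e^{-2(1+\sigma)\sqrt{\lambda_i}\mathcal{D}_i^\ve/\ve}$, and since the competitor is radial and sits on the Nehari manifold near $\overrightarrow{\mathbf{U}}_*$, a second-order expansion of $\mathcal{J}_{\bbr^4,T}$ at $\overrightarrow{\mathbf{U}}_*$ (where $\mathcal{J}'_{\bbr^4,T}=0$, the Nehari constraint having removed the dilation direction and radiality the translation zero modes) converts this $\mathcal{H}$-gap into the stated energy gain.

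It remains to match the exponents. Recall $p_i^\ve\to p_0$ and $\mathcal{D}_i^\ve\to\mathcal{D}_0$, so $|\mathcal{D}_1^\ve-\mathcal{D}_2^\ve|\to 0$; also $\mathcal{D}_i^\ve<\mathcal{D}_1^\ve+\tfrac12\delta$ and $\delta'<\delta$. Choosing $\delta'$ near $\delta$ and then $\delta$ small relative to $\sigma$ (so that $(1+\sigma)\mathcal{D}_i^\ve<(1-\sigma)(\mathcal{D}_i^\ve+\delta')$) makes the gain term of the second display dominate the error term of the first display, and $\mathcal{D}_i^\ve<\mathcal{D}_1^\ve+\tfrac12\delta$ makes it also dominate $e^{-2(1+\sigma)\sqrt{\lambda_i}(\mathcal{D}_1^\ve+\delta/2)/\ve}$; combining the two displays then gives $c_{\ve,\Omega,T}\geq\ve^4\big(B+C\sum_i e^{-2(1+\sigma)\sqrt{\lambda_i}(\mathcal{D}_1^\ve+\delta/2)/\ve}\big)$, and the same computation with $B'$, $c'_{\ve,\Omega,T}$ and the one-parameter fibering handles $\beta>\beta_1$.

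The main obstacle is the second paragraph: passing from the $\mathcal{H}$-gap $\|\cdot-\overrightarrow{\mathbf{U}}_*\|^2$ to an energy gain of the \emph{same} quadratic order. In the low-dimensional setting of \cite{LW05} this is precisely where nondegeneracy of the limiting profile is used, and nondegeneracy of $\overrightarrow{\mathbf{U}}_*$ for $(\mathcal{S}_*)$ in $\bbr^4$ is not available. The reason for Schwartz-symmetrizing in \eqref{eqn0061} is exactly to bring the competitor into the radial class, where the required second-order coercivity of $\mathcal{J}_{\bbr^4,T}$ at $\overrightarrow{\mathbf{U}}_*$ modulo dilations can be obtained from the radial symmetry and monotonicity of $\overrightarrow{\mathbf{U}}_*$ alone; making this coercivity rigorous, and controlling its $o(\|\cdot\|^2)$ remainder uniformly in $\ve$ (which in dimension four also calls on the $L^\infty$ and $C^1$ convergence produced by the Moser iteration of the earlier sections), is the delicate point.
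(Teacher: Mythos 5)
Your reduction to a lower bound for $\mathcal{J}_{\mathbb{B}_{R_\ve},T}(\overrightarrow{\mathbf{t}}\circ\overrightarrow{\widehat{\mathbf{u}}}_{\ve,*})$ via \eqref{eqn0060}--\eqref{eqn0061} matches the paper, but the mechanism you propose for the exponential gain over $B$ has a genuine gap. You want to pass from $\|\overrightarrow{\mathbf{t}}_\ve\circ\overrightarrow{\widehat{\mathbf{u}}}_{\ve,*}-\overrightarrow{\mathbf{U}}_*\|_{\mathcal{H}_{\bbr^4}}^2\gtrsim\sum_i e^{-2(1+\sigma)\sqrt{\lambda_i}\mathcal{D}_i^\ve/\ve}$ to an energy gain of the same order by a second-order expansion of $\mathcal{J}_{\bbr^4,T}$ at $\overrightarrow{\mathbf{U}}_*$. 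That step requires the second variation of $\mathcal{J}_{\bbr^4,T}$ at $\overrightarrow{\mathbf{U}}_*$ to be \emph{coercive} on the tangent space of the (radial) Nehari manifold; minimality of $\overrightarrow{\mathbf{U}}_*$ only gives that it is nonnegative there, and without a spectral gap the $o(\|h\|^2)$ remainder cannot be absorbed. This coercivity is exactly the nondegeneracy of the limiting profile which, as the authors stress in the remark after Theorem~1.3, is unknown in $\bbr^4$ (and is the reason they abandon the Lin--Wei nondegeneracy argument elsewhere). Radial symmetry and monotonicity of $\overrightarrow{\mathbf{U}}_*$ do not by themselves rule out a radial kernel direction of the Hessian on the Nehari manifold, and uniqueness of the minimizer is also not known, so "the" expansion point is not even well defined. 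You flag this as "the delicate point," but it is not a technicality to be smoothed over: it is the missing ingredient, and the proof does not close without it. (A smaller slip: to make the gain $e^{-2(1+\sigma)\sqrt{\lambda_i}\mathcal{D}_i^\ve/\ve}$ dominate the cut-off error $e^{-2(1-\sigma)\sqrt{\lambda_i}(\mathcal{D}_i^\ve+\delta')/\ve}$ you need $(1+\sigma)\mathcal{D}_i^\ve<(1-\sigma)(\mathcal{D}_i^\ve+\delta')$, i.e.\ $\sigma$ small relative to $\delta'$, not $\delta$ small relative to $\sigma$ as you wrote.)

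The paper avoids Hessian information entirely. It first bounds $\mathcal{J}_{\mathbb{B}_{R_\ve},T}(\overrightarrow{\mathbf{t}}'_\ve\circ\overrightarrow{\widehat{\mathbf{u}}}_{\ve,*})$ from below by the Nehari ground state level of the Dirichlet system $(\mathcal{S}^{00}_\ve)$ on $\mathbb{B}_{R_\ve}$, attained by a radial $\overrightarrow{\widehat{\mathbf{v}}}_{\ve,*}$. The gain then comes from the Lin--Wei extension device (\cite[Theorem~4.1]{LW05}): extending $\overrightarrow{\widehat{\mathbf{v}}}_{\ve,*}$ outward to all of $\bbr^4$ and projecting onto $\mathcal{N}_{\bbr^4,T}$ produces a competitor for $B$ whose energy lies \emph{below} the ball energy by a term that is \emph{linear} in the boundary trace, $C(\overrightarrow{\mathbf{t}})\sum_i\widehat{v}_i^{\ve,*}(\widetilde{\mathcal{D}}_1^\ve/\ve-1)$ (inequality \eqref{eqn0063}); the maximum principle gives the two-sided bound \eqref{eqn0062}, so this trace is bounded below by $Ce^{-(1+\sigma)\sqrt{\lambda_i}(\widetilde{\mathcal{D}}_1^\ve/\ve-1)}$. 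Rearranged, this yields $\mathcal{J}_{\mathbb{B}_{\widetilde{\mathcal{D}}_1^\ve},T}(\overrightarrow{\widehat{\mathbf{v}}}_{\ve,*})\geq B+C\sum_i e^{-(1+\sigma)\sqrt{\lambda_i}(\widetilde{\mathcal{D}}_1^\ve/\ve-1)}$, a first-order gain that needs no coercivity and no uniqueness or nondegeneracy of $\overrightarrow{\mathbf{U}}_*$. If you rework your argument, replace the second-order expansion by this comparison with the ball ground state plus the outward-extension inequality.
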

\begin{proof}
The proof is similar to that of \cite[Theorem~4.1]{LW05}, so we only sketch it and point out the differences.  By a similar argument as used for Lemma~\ref{lem0006}, we can see that the system~$(\mathcal{S}^{00}_{\ve})$ has a radial ground state solution $\overrightarrow{\widehat{\mathbf{v}}}_{\ve,*}$ in $\mathbb{B}_{R_\ve}$ for $\beta\in(0, \beta_0)\cup(\beta_1, +\infty)$ with $\ve>0$ small enough.  Thanks to Lemma~\ref{lem0004}, $\mathbb{B}_{R_\ve}(p_1^\ve)\to\bbr^4$ as $\ve\to0^+$.  Thus, by a similar argument as used for Proposition~\ref{prop0004}, we can see that $\overrightarrow{\widehat{\mathbf{v}}}_{\ve,*}\to\overrightarrow{\mathbf{v}}_*$ strongly in $\mathcal{H}_{\bbr^4}\cap\mathcal{L}^\infty(\bbr^4)$ as $\ve\to0^+$, where $\overrightarrow{\mathbf{v}}_*$ is the ground state solution of $(\mathcal{S}_*)$.  It follows from the Moser's iteration as used in \cite{BZZ13} and \cite[Theorem~8.17]{GT98} (see also \cite[Lemma~4.3]{BZZ13}) that
\begin{eqnarray*}
\lim_{|x|\to+\infty}\widehat{v}_i^{\ve,*}=0\quad\text{uniformly for }\ve>0\text{ small enough}.
\end{eqnarray*}
Now, since $\lambda_i>0$, by using the maximum principle in a standard way, we can see that for every $\delta>0$ small enough, we have
\begin{eqnarray}\label{eqn0062}
Ce^{-(1+\sigma)\sqrt{\lambda_i}(\frac{\widetilde{\mathcal{D}}_1^\ve}{\ve}-1)}\leq\widehat{v}_i^{\ve,*}\bigg(\frac{\widetilde{\mathcal{D}}_1^\ve}{\ve}-1\bigg)\leq Ce^{-(1-\sigma)\sqrt{\lambda_i}(\frac{\widetilde{\mathcal{D}}_1^\ve}{\ve}-1)}
\end{eqnarray}
with $\ve>0$ small enough.  Let us extend $\overrightarrow{\widehat{\mathbf{v}}}_{\ve,*}$ to the whole space $\bbr^4$ as in the proof of \cite[Theorem~4.1]{LW05} and denote it by $\overrightarrow{\widehat{\mathbf{v}}}_{\ve,**}$, then $\overrightarrow{\widehat{\mathbf{v}}}_{\ve,**}\in\mathcal{H}_{\bbr^4}$.  Since $p>2$, by a similar argument as used in the proof of \cite[Theorem~4.1]{LW05}, we have
\begin{eqnarray}\label{eqn0063}
\mathcal{J}_{\bbr^4,T}(\overrightarrow{\mathbf{t}}\circ\overrightarrow{\widehat{\mathbf{v}}}_{\ve,**})\leq
\mathcal{J}_{\mathbb{B}_{\widetilde{\mathcal{D}}_1^\ve},T}(\overrightarrow{\mathbf{t}}\circ\overrightarrow{\widehat{\mathbf{v}}}_{\ve,*})
-C(\overrightarrow{\mathbf{t}})\sum_{i=1}^2\widehat{v}_i^{\ve,*}\bigg(\frac{\widetilde{\mathcal{D}}_1^\ve}{\ve}-1\bigg)
\end{eqnarray}
for all $\overrightarrow{\mathbf{t}}\in(\bbr^+)^2$ and $C(\overrightarrow{\mathbf{t}})$ is bounded away from $0$ if $t_i$ bounded away from $0$ for all $i=1,2$.  Note that $\overrightarrow{\widehat{\mathbf{v}}}_{\ve,*}\to\overrightarrow{\mathbf{v}}_*$ strongly in $\mathcal{H}_{\bbr^4}$ as $\ve\to0^+$ and $\overrightarrow{\mathbf{v}}_*$ is the ground state solution of $(\mathcal{S}_*)$, thus, by a similar argument as used in the proof of Lemma~\ref{lem0005}, there exists $\overrightarrow{\mathbf{t}}_\ve\in(\bbr^+)^2$ with $\overrightarrow{\mathbf{t}}_\ve\to\overrightarrow{\mathbf{1}}$ as $\ve\to0^+$ such that $\overrightarrow{\mathbf{t}}_\ve\circ\overrightarrow{\widehat{\mathbf{v}}}_{\ve,**}\in\mathcal{N}_{\bbr^4,T}$ for $0<\beta<\beta_0$.  In the case $\beta>\beta_1$, we also have that $t_1^\ve=t_2^\ve$ and $\overrightarrow{\mathbf{t}}_\ve\circ\overrightarrow{\widehat{\mathbf{v}}}_{\ve,**}\in\mathcal{N}'_{\bbr^4,T}$.  Therefore, by \eqref{eqn0063} and a similar argument as used in the proof of Lemma~\ref{lem0001}, we have
\begin{eqnarray*}
B\leq\mathcal{J}_{\mathbb{B}_{\widetilde{\mathcal{D}}_1^\ve},T}(\overrightarrow{\widehat{\mathbf{v}}}_{\ve,*})
-C\sum_{i=1}^2\widehat{v}_i^{\ve,*}\bigg(\frac{\widetilde{\mathcal{D}}_1^\ve}{\ve}-1\bigg)
\end{eqnarray*}
for $0<\beta<\beta_0$ and
\begin{eqnarray*}
B'\leq\mathcal{J}_{\mathbb{B}_{\widetilde{\mathcal{D}}_1^\ve},T}(\overrightarrow{\widehat{\mathbf{v}}}_{\ve,*})
-C\sum_{i=1}^2\widehat{v}_i^{\ve,*}\bigg(\frac{\widetilde{\mathcal{D}}_1^\ve}{\ve}-1\bigg)
\end{eqnarray*}
for $\beta>\beta_1$.  On the other hand, also by a similar argument as used in the proof of Lemma~\ref{lem0001}, we can see that there exists $\overrightarrow{\mathbf{t}}'_\ve\in(\bbr^+)^2$ such that $\overrightarrow{\mathbf{t}}'_\ve\circ\overrightarrow{\widehat{\mathbf{u}}}_{\ve,*}\in\mathcal{N}_{\mathbb{B}_{R_\ve},T}$ for $0<\beta<\beta_0$.  In the case $\beta>\beta_1$, we also have that $(t_1^\ve)'=(t_2^\ve)'$ and $\overrightarrow{\mathbf{t}}'_\ve\circ\overrightarrow{\widehat{\mathbf{u}}}_{\ve,*}\in\mathcal{N}'_{\mathbb{B}_{R_\ve},T}$.  By Proposition~\ref{prop0004}, we can see from the definition of $\overrightarrow{\widehat{\mathbf{u}}}_{\ve,*}$ that $\overrightarrow{\mathbf{t}}'_\ve\to\overrightarrow{\mathbf{t}}_0'$ as $\ve\to0^+$ with $(t_i^0)'\leq1$ for both $i=1,2$.  Therefore, the conclusion follows from , \eqref{eqn0060}--\eqref{eqn0062}.
\end{proof}

Now, we can obtain the following
\begin{proposition}\label{prop0005}
Let $\beta\in(0, \beta_0)\cup(\beta_1, +\infty)$.  Then we have $\mathcal{D}_i^\ve\to\mathcal{D}$ as $\ve\to0^+$ for all $i=1,2$.
\end{proposition}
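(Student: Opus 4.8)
The plan is to squeeze the ground state energy $c_{\ve,\Omega,T}$ between the upper bound of Lemma~\ref{lem0007} and the lower bound of Lemma~\ref{lem0009}, read off the leading exponential correction, and thereby force $\mathcal{D}_1^\ve\to\mathcal{D}$; the assertion for $\mathcal{D}_2^\ve$ will then follow from Proposition~\ref{prop0004}. Fix $\sigma>0$ and $\delta>0$ small. Since $\mathcal{D}_1^\ve=\text{dist}(p_1^\ve,\partial\Omega)\le\max_{p\in\Omega}\text{dist}(p,\partial\Omega)=\mathcal{D}$ always holds, it suffices to prove $\liminf_{\ve\to0^+}\mathcal{D}_1^\ve\ge\mathcal{D}$; equivalently, passing to any subsequence along which $p_1^\ve\to p_0\in\overline{\Omega}$ and hence $\mathcal{D}_1^\ve\to\mathcal{D}_0:=\text{dist}(p_0,\partial\Omega)$, it suffices to show $\mathcal{D}_0\ge\mathcal{D}$. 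I will carry out the case $0<\beta<\beta_0$; for $\beta>\beta_1$ the argument is verbatim with $B$ replaced by $B'$ throughout. Comparing Lemmas~\ref{lem0007} and \ref{lem0009} and cancelling the common term $\ve^4B$ gives
\begin{eqnarray*}
C_2\sum_{i=1}^2e^{-\frac{2(1+\sigma)\sqrt{\lambda_i}(\mathcal{D}_1^\ve+\frac12\delta)}{\ve}}
\le C_1\sum_{i=1}^2e^{-\frac{2(1-\sigma)\sqrt{\lambda_i}\mathcal{D}}{\ve}},
\end{eqnarray*}
where $C_1,C_2>0$ depend on $\sigma,\delta$ (and the fixed data of the problem) but not on $\ve$.

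Next I isolate the slowest-decaying exponential. Let $\lambda_{\min}=\min\{\lambda_1,\lambda_2\}$. Keeping only the $\lambda_{\min}$-term on the left (the other term being nonnegative) and bounding the right-hand side by twice its $\lambda_{\min}$-term, I obtain
\begin{eqnarray*}
C_2\,e^{-\frac{2(1+\sigma)\sqrt{\lambda_{\min}}(\mathcal{D}_1^\ve+\frac12\delta)}{\ve}}
\le 2C_1\,e^{-\frac{2(1-\sigma)\sqrt{\lambda_{\min}}\mathcal{D}}{\ve}}.
\end{eqnarray*}
Taking logarithms and multiplying through by $\frac{\ve}{2\sqrt{\lambda_{\min}}}>0$ yields
\begin{eqnarray*}
(1-\sigma)\mathcal{D}\le(1+\sigma)\Big(\mathcal{D}_1^\ve+\tfrac12\delta\Big)+\frac{\ve}{2\sqrt{\lambda_{\min}}}\ln\frac{2C_1}{C_2}.
\end{eqnarray*}
Letting $\ve\to0^+$ along the chosen subsequence, the last term disappears and we get $(1-\sigma)\mathcal{D}\le(1+\sigma)(\mathcal{D}_0+\tfrac12\delta)$; letting $\sigma\to0^+$ and then $\delta\to0^+$ gives $\mathcal{D}\le\mathcal{D}_0$, hence $\mathcal{D}_0=\mathcal{D}$. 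Since the subsequence was arbitrary, $\mathcal{D}_1^\ve\to\mathcal{D}$ as $\ve\to0^+$.

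Finally, for $\mathcal{D}_2^\ve$ I use that $\text{dist}(\cdot,\partial\Omega)$ is $1$-Lipschitz, so $|\mathcal{D}_1^\ve-\mathcal{D}_2^\ve|\le|p_1^\ve-p_2^\ve|$; by part~$(ii)$ of Proposition~\ref{prop0004} we have $\frac{|p_1^\ve-p_2^\ve|}{\ve}\to0$, in particular $|p_1^\ve-p_2^\ve|\to0$, and therefore $\mathcal{D}_2^\ve\to\mathcal{D}$ as well. The step requiring the most care is the passage to the limit: $\sigma$ and $\delta$ are parameters hard-wired into Lemmas~\ref{lem0007}--\ref{lem0009}, and the constants $C_1,C_2$ depend on them and may blow up as $\sigma,\delta\to0^+$, so one must send $\ve\to0^+$ \emph{before} relaxing $\sigma$ and $\delta$ --- which is legitimate precisely because the error term in the last displayed inequality carries a factor $\ve$. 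One should also observe that a single pair of constants $C_1,C_2$ serves both values of $i$ (only finitely many $\lambda_i$ occur), which is what makes the reduction to the dominant exponential $\lambda_{\min}$ valid even when $\lambda_1\ne\lambda_2$.
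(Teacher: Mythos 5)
Your proof is correct and follows essentially the same route as the paper: comparing the upper bound of Lemma~\ref{lem0007} with the lower bound of Lemma~\ref{lem0009}, cancelling $\ve^4B$ (resp. $\ve^4B'$), extracting the dominant exponential to conclude $\liminf_{\ve\to0^+}\mathcal{D}_1^\ve\geq\mathcal{D}$ from the arbitrariness of $\sigma$ and $\delta$, and then transferring the conclusion to $\mathcal{D}_2^\ve$ via Proposition~\ref{prop0004}. Your version merely makes explicit the logarithmic bookkeeping and the order of limits that the paper leaves implicit.
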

\begin{proof}
By Lemmas~\ref{lem0007} and \ref{lem0009}, we can see from Proposition~\ref{prop0004} that
\begin{eqnarray*}
C'\sum_{i=1}^2e^{-\frac{2(1+\sigma')\sqrt{\lambda_i}(\mathcal{D}_1^\ve+\frac12\delta)}{\ve}}\leq C\sum_{i=1}^2e^{-\frac{2(1-\sigma)\sqrt{\lambda_i}\mathcal{D}}{\ve}}.
\end{eqnarray*}
Since $\sigma,\sigma'$ and $\delta$ are all arbitrary, we have that $\liminf_{\ve\to0^+}\mathcal{D}_1^\ve\geq\mathcal{D}$, which   implies that $\lim_{\ve\to0^+}\mathcal{D}_1^\ve=\mathcal{D}$.  By Proposition~\ref{prop0004}, we also have that $\lim_{\ve\to0^+}\mathcal{D}_2^\ve=\mathcal{D}$.
\end{proof}

\vskip0.12in

In what follows, let us study the the location of the spikes for $\beta<0$.  We first follow the ideas in \cite{NW95} to establish an upper-bound of $c_{\ve,\Omega,T}$ for $\ve>0$ small enough.  Fix $P\in\Omega$ and let $u_i^{\ve,P}$ be the unique solution of the following equation
\begin{equation*}
\left\{\aligned&-\Delta u+\lambda_iu=\mu_i(v_i^0)^3+\alpha_i(v_i^0)^{p-1}\quad&\text{in }\Omega_{\ve,P},\\
&u>0\quad\text{in }\Omega_{\ve,P},\quad u=0\quad\text{on }\partial\Omega_{\ve,P}.\endaligned\right.\eqno{(\mathcal{P}_{i}^{\ve,P})}
\end{equation*}
where $\Omega_{\ve,P}=\{y\in\bbr^4\mid \ve y+P\in\Omega\}$ and $v_i^0$ is the ground state solution of $(\mathcal{P}_{i})$,  $i=1,2$, which is given by Proposition~\ref{prop0004}.  Since $\Omega_{\ve,P}\to\bbr^4$ as $\ve\to0^+$, it is easy to see that $u_i^{\ve,P}\to v_i^0$ strongly in $\mathcal{H}_{i}$ as $\ve\to0^+$.  Let
\begin{eqnarray}\label{eqn0075}
\psi_i^{\ve,P}(x)=-\ve\log\bigg(\varphi_{i}^{\ve,P}\bigg(\frac{x-P}{\ve}\bigg)\bigg)\quad\text{and}\quad V_i^{\ve,P}=e^{\frac{\psi_i^\ve(P)}{\ve}}\varphi_i^{\ve,P}, \quad i=1,2
\end{eqnarray}
where $\varphi_{i}^{\ve,P}=v_i^0-u_i^{\ve,P}$.  Then as in \cite{NW95}, by the fact that $u_i^{\ve,P}$ and $v_i^0$ respectively satisfy $(\mathcal{P}_{i}^{\ve,P})$ and $(\mathcal{P}_{i})$ for $i=1,2$, we can see that $\psi_i^{\ve,P}$ and $V_i^{\ve,P}$ respectively satisfy
\begin{equation*}
\left\{\aligned&\ve\Delta\psi_i^{\ve,P}-|\nabla\psi_i^{\ve,P}|^2+\lambda_i=0\quad&\text{in }\Omega,\\
&\psi_i^{\ve,P}=-\ve\log\bigg(v_{i}^0\bigg(\frac{x-P}{\ve}\bigg)\bigg)\quad\text{on }\partial\Omega\endaligned\right.\eqno{(\widetilde{\mathcal{P}}_{i}^{\ve,P})}
\end{equation*}
and
\begin{equation*}
\left\{\aligned&\Delta V_i^{\ve,P}-\lambda_iV_i^{\ve,P}=0\quad&\text{in }\Omega_{\ve,P},\\
&V_i^{\ve,P}(0)=1.\endaligned\right.\eqno{(\widehat{\mathcal{P}}_{i}^{\ve,P})}
\end{equation*}
Set $\widetilde{u}_i^{\ve,P}(x)=u_i^{\ve,P}\bigg(\frac{x-P}{\ve}\bigg)$.  Then by $(\mathcal{P}_{i}^{\ve,P})$, we can see that $\widetilde{u}_i^{\ve,P}$ satisfies
\begin{equation*}
\left\{\aligned&-\ve^2\Delta \widetilde{u}_i^{\ve,P}+\lambda_i\widetilde{u}_i^{\ve,P}=\mu_i\bigg(v_{i,P}^{0,\ve}\bigg)^3+\alpha_i\bigg(v_{i,P}^{0,\ve}\bigg)^{p-1}\quad&\text{in }\Omega,\\
&\widetilde{u}_i^{\ve,P}>0\quad\text{in }\Omega,\quad \widetilde{u}_i^{\ve,P}=0\quad\text{on }\partial\Omega,\endaligned\right.\eqno{(\overline{\mathcal{P}}_{i}^{\ve,P})}
\end{equation*}
where $v_{i,P}^{0,\ve}=v_i^0\bigg(\frac{x-P}{\ve}\bigg)$, $i=1,2$.  We also recall that
$$
\text{dist}(0, \partial\Omega)=\mathcal{D}=\max_{p\in\Omega}\text{dist}(p, \partial\Omega).
$$
\begin{lemma}\label{lem0011}
For $\ve$ sufficiently small, we have the following
\begin{enumerate}
\item[$(1)$]  $\|\widetilde{u}_i^{\ve,P}\|_{i,\ve,\Omega}^2=\ve^4\bigg(\|v_i^0\|_{i,\bbr^4}^2-(\gamma_i+o(1))\varphi_{i}^{\ve,P}(0)\bigg)$,
\item[$(2)$]  $\alpha_i\mathcal{B}_{\widetilde{u}_i^{\ve,P},p,\Omega}^{p}=\ve^4\bigg(\alpha_i\mathcal{B}_{v_i^0,p,\Omega}^{p}-(\gamma_{i,p}+o(1))\varphi_{i}^{\ve,P}(0)\bigg)$,
\item[$(3)$]  $\mu_i\mathcal{B}_{\widetilde{u}_i^{\ve,P},4,\Omega}^{4}=\ve^4\bigg(\mu_i\mathcal{B}_{v_i^0,4,\Omega}^{4}-(\gamma_{i,4}+o(1))\varphi_{i}^{\ve,P}(0)\bigg)$,
\end{enumerate}
where $\gamma_i,\gamma_{i,p}$ and $\gamma_{i,4}$ are positive constants  independent of $\ve$.
Moreover, we also have that
\begin{eqnarray*}
d_{i,\ve,\Omega}\leq \ve^4(d_{i,\bbr^4}+(a_0+o(1))\varphi_{i}^{\ve,P}(0)),
\end{eqnarray*}
where $a_0>0$ is a constant independent of $\ve$.
\end{lemma}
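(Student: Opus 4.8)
The plan is to transplant everything to the scaled domain $\Omega_{\ve,P}$ and exploit the splitting $u_i^{\ve,P}=v_i^0-\varphi_i^{\ve,P}$. Since $v_i^0$ solves $(\mathcal{P}_i)$ and $u_i^{\ve,P}$ solves $(\mathcal{P}_i^{\ve,P})$ with the \emph{same} right hand side, the error $\varphi_i^{\ve,P}$ solves the linear equation $-\Delta\varphi_i^{\ve,P}+\lambda_i\varphi_i^{\ve,P}=0$ in $\Omega_{\ve,P}$ with boundary value $v_i^0$ on $\partial\Omega_{\ve,P}$; hence $0<\varphi_i^{\ve,P}(y)<\max_{\partial\Omega_{\ve,P}}v_i^0$ by the maximum principle, and by \eqref{eqn0075} the normalized error $V_i^{\ve,P}=\varphi_i^{\ve,P}/\varphi_i^{\ve,P}(0)$ solves $(\widehat{\mathcal{P}}_i^{\ve,P})$, i.e. $\Delta V_i^{\ve,P}-\lambda_iV_i^{\ve,P}=0$ with $V_i^{\ve,P}(0)=1$, $V_i^{\ve,P}>0$. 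Rescaling gives $\|\widetilde u_i^{\ve,P}\|_{i,\ve,\Omega}^2=\ve^4\|u_i^{\ve,P}\|_{i,\Omega_{\ve,P}}^2$ and $\mathcal{B}_{\widetilde u_i^{\ve,P},q,\Omega}^q=\ve^4\mathcal{B}_{u_i^{\ve,P},q,\Omega_{\ve,P}}^q$ for $q=p,4$, so it suffices to expand the three $\Omega_{\ve,P}$-quantities of $u_i^{\ve,P}$ around the corresponding quantities of $v_i^0$ on $\bbr^4$, with leading correction linear in $\varphi_i^{\ve,P}(0)$.

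For (1) I would test $(\mathcal{P}_i^{\ve,P})$ with $u_i^{\ve,P}$ and subtract the identity $\|v_i^0\|_{i,\bbr^4}^2=\int_{\bbr^4}(\mu_i(v_i^0)^4+\alpha_i(v_i^0)^p)$ (obtained by testing $(\mathcal{P}_i)$ with $v_i^0$), which yields
\[
\|u_i^{\ve,P}\|_{i,\Omega_{\ve,P}}^2=\|v_i^0\|_{i,\bbr^4}^2-\varphi_i^{\ve,P}(0)\int_{\Omega_{\ve,P}}(\mu_i(v_i^0)^3+\alpha_i(v_i^0)^{p-1})V_i^{\ve,P}\,dy-\int_{\bbr^4\setminus\Omega_{\ve,P}}(\mu_i(v_i^0)^4+\alpha_i(v_i^0)^p)\,dy .
\]
For (2) and (3) I would Taylor-expand $(v_i^0-\varphi_i^{\ve,P})^q=(v_i^0)^q-q(v_i^0)^{q-1}\varphi_i^{\ve,P}+O((v_i^0)^{q-2}(\varphi_i^{\ve,P})^2)$ with $q\in\{p,4\}$, integrate over $\Omega_{\ve,P}$, and collect the main correction $-q\,\varphi_i^{\ve,P}(0)\int_{\Omega_{\ve,P}}(v_i^0)^{q-1}V_i^{\ve,P}\,dy$. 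In all three cases the remainders are of two kinds: tail integrals $\int_{\bbr^4\setminus\Omega_{\ve,P}}(v_i^0)^q$ and quadratic terms $\int(v_i^0)^{q-2}(\varphi_i^{\ve,P})^2=O(\varphi_i^{\ve,P}(0)^2)$; since $\varphi_i^{\ve,P}(0)\sim e^{-2\sqrt{\lambda_i}\,\text{dist}(P,\partial\Omega)/\ve}$ and the tails are $\lesssim e^{-q\sqrt{\lambda_i}\,\text{dist}(P,\partial\Omega)/\ve}$ with $q>2$, both are $o(\varphi_i^{\ve,P}(0))$.

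The heart of the proof is to show $\int_{\Omega_{\ve,P}}(v_i^0)^{r}V_i^{\ve,P}\,dy\to\kappa_{i,r}>0$ as $\ve\to0^+$ for $r\in\{3,p-1\}$, so that $\gamma_i=\mu_i\kappa_{i,3}+\alpha_i\kappa_{i,p-1}$, $\gamma_{i,p}=p\alpha_i\kappa_{i,p-1}$, $\gamma_{i,4}=4\mu_i\kappa_{i,3}$ are well defined and positive (and, incidentally, $\tfrac14\gamma_{i,4}+\tfrac1p\gamma_{i,p}=\gamma_i$). Here I would follow the scheme of Ni--Wei \cite{NW95} applied to the linear companion problems $(\widetilde{\mathcal{P}}_i^{\ve,P})$ and $(\widehat{\mathcal{P}}_i^{\ve,P})$: the viscous eikonal equation for $\psi_i^{\ve,P}$ together with the boundary data $\psi_i^{\ve,P}=\sqrt{\lambda_i}|x-P|+O(\ve|\log\ve|)$ on $\partial\Omega$ forces $\psi_i^{\ve,P}\to\sqrt{\lambda_i}H_P$, $H_P(x)=\min_{z\in\partial\Omega}(|x-z|+|z-P|)$, and $\nabla\psi_i^{\ve,P}(P)\to-\sqrt{\lambda_i}\nu_P$ with $\nu_P$ a unit vector from $P$ to a nearest boundary point, so that $V_i^{\ve,P}(y)=\exp((\psi_i^{\ve,P}(P)-\psi_i^{\ve,P}(P+\ve y))/\ve)\to e^{\sqrt{\lambda_i}\,\nu_P\cdot y}$ in $C^1_{\mathrm{loc}}(\bbr^4)$ (along subsequences if the nearest point is not unique). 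Combining this with a scale-invariant Harnack bound for $V_i^{\ve,P}$ near $0$, a barrier comparison giving a uniform bound $V_i^{\ve,P}(y)\le Ce^{\sqrt{\lambda_i}|y|}$ on $\Omega_{\ve,P}$, and the exponential decay of $v_i^0$ (so that $(v_i^0)^rV_i^{\ve,P}\le Ce^{-(r-1)\sqrt{\lambda_i}|y|}\in L^1(\bbr^4)$ since $r>1$), dominated convergence gives $\int_{\Omega_{\ve,P}}(v_i^0)^rV_i^{\ve,P}\,dy\to\int_{\bbr^4}(v_i^0)^re^{\sqrt{\lambda_i}\,\nu_P\cdot y}\,dy=:\kappa_{i,r}$, which is independent of $\nu_P$, hence of $P$, by radial symmetry of $v_i^0$. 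Establishing these uniform exponential estimates and the convergence of $V_i^{\ve,P}$ is the main technical obstacle; it is essentially the Ni--Wei argument, and easier here than the analysis of $\overrightarrow{\mathbf u}_\ve$ because $u_i^{\ve,P}$ already solves a linear problem with smooth, exponentially decaying data.

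For the last assertion, extend $\widetilde u_i^{\ve,P}$ by zero to $\Omega$ (it lies in $\h$ since $u_i^{\ve,P}=0$ on $\partial\Omega_{\ve,P}$); since $p>2$ and, by (3), $\mathcal{B}_{\widetilde u_i^{\ve,P},4,\Omega}^4>0$ for $\ve$ small, the fibering map $t\mapsto\mathcal{E}_{i,\ve,\Omega}(t\widetilde u_i^{\ve,P})$ has a unique positive maximum, whence $d_{i,\ve,\Omega}\le\max_{t>0}\mathcal{E}_{i,\ve,\Omega}(t\widetilde u_i^{\ve,P})$. Writing $\ve^{-4}\mathcal{E}_{i,\ve,\Omega}(t\widetilde u_i^{\ve,P})=F(t;A,B,C)$ with $F(t;A,B,C)=\tfrac{t^2}{2}A-\tfrac{t^p}{p}B-\tfrac{t^4}{4}C$ and, by (1)--(3), $A=\|v_i^0\|_{i,\bbr^4}^2-(\gamma_i+o(1))\varphi_i^{\ve,P}(0)$, $B=\alpha_i\mathcal{B}_{v_i^0,p,\bbr^4}^p-(\gamma_{i,p}+o(1))\varphi_i^{\ve,P}(0)$, $C=\mu_i\mathcal{B}_{v_i^0,4,\bbr^4}^4-(\gamma_{i,4}+o(1))\varphi_i^{\ve,P}(0)$, I note that at the base point $t=1$, $(A,B,C)=(A_0,B_0,C_0)$ one has $A_0=B_0+C_0$ (Nehari identity for $v_i^0$), $t=1$ is a nondegenerate maximizer since $p>2$, and $F(1;A_0,B_0,C_0)=\mathcal{E}_{i,\bbr^4}(v_i^0)=d_{i,\bbr^4}$. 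By the envelope theorem $\max_tF$ is $C^1$ in $(A,B,C)$ near the base point with gradient $(\tfrac12,-\tfrac1p,-\tfrac14)$, so
\[
\max_{t>0}\ve^{-4}\mathcal{E}_{i,\ve,\Omega}(t\widetilde u_i^{\ve,P})=d_{i,\bbr^4}+\Big(-\tfrac12\gamma_i+\tfrac1p\gamma_{i,p}+\tfrac14\gamma_{i,4}+o(1)\Big)\varphi_i^{\ve,P}(0)=d_{i,\bbr^4}+\Big(\tfrac12\gamma_i+o(1)\Big)\varphi_i^{\ve,P}(0),
\]
using $\tfrac1p\gamma_{i,p}+\tfrac14\gamma_{i,4}=\gamma_i$. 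This gives the claimed bound with $a_0=\tfrac12\gamma_i>0$.
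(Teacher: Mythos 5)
Your proposal is correct and follows essentially the same route as the paper, which simply defers to the Ni--Wei expansion scheme (\cite{NW95}, Lemmas 5.2--5.3 and Proposition 5.1) together with the convergence and uniform exponential bounds for $V_i^{\ve,P}$ from \cite[Lemma 5.1]{LW05}; your explicit identification of the limiting integrals $\kappa_{i,r}$, their independence of the direction $\nu_P$ by radial symmetry of $v_i^0$, and the envelope-theorem computation giving $a_0=\tfrac12\gamma_i$ are exactly the ``trivial modifications'' the paper alludes to. The only point stated too quickly is the claim $\int_{\Omega_{\ve,P}}(v_i^0)^{q-2}(\varphi_i^{\ve,P})^2=O(\varphi_i^{\ve,P}(0)^2)$: with the available bounds $\varphi_i^{\ve,P}\le\varphi_i^{\ve,P}(0)V_i^{\ve,P}\le C\varphi_i^{\ve,P}(0)e^{(1+\sigma)\sqrt{\lambda_i}|y|}$ the integrand can grow like $e^{2(\sigma+\delta)\sqrt{\lambda_i}|y|}$ for $q=4$, so one must split $\Omega_{\ve,P}$ into $|y|\le\rho/\ve$ and its complement (as Ni--Wei do) and use the arbitrariness of $\sigma,\delta$ to conclude these remainders are $o(\varphi_i^{\ve,P}(0))$.
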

\begin{proof}
Since the proof is similar to that of \cite[Lemma~5.2 and Proposition~5.1]{NW95}, we only sketch it and point out the differences.  By the well-known Gidas-Ni-Nirenberg's Theorem \cite{GNN81} (see also in \cite[Theorem~A]{LN01}), we can see that $v_i^0$ are radial symmetric for $i=1,2$, it follows from $p>2$ that
\begin{equation}\label{eqn0072}
\aligned
&\int_{\bbr^4}(\alpha_i (v_i^0)^{p-1}+\mu_i(v_i^0)^3)V_i^{*,0}=\gamma_i>0,\\
&\int_{\bbr^4}\alpha_i p(v_i^0)^{p-1}V_i^{*,0}=\gamma_{i,p}>0,\\
&\int_{\bbr^4}4\mu_i (v_i^0)^{3}V_i^{*,0}=\gamma_{i,4}>0,
\endaligned
\end{equation}
where $V_i^{*,0}$ is the unique positive radial solution of the following equation
\begin{equation*}
\left\{\aligned&\Delta V-\lambda_iV=0\quad&\text{in }\bbr^4,\\
&V(0)=1.\endaligned\right.\eqno{(\widehat{\mathcal{P}}_{i}^{*,0})}
\end{equation*}
Note that all the integrals in the proof of \cite[Lemma~4.7]{NW95} also make sense in our case, thus, by the similar argument with some trivial modifications, we can see that
\begin{eqnarray*}
&\int_{\bbr^4}(\alpha_i (v_i^0)^{p-1}+\mu_i(v_i^0)^3)\widetilde{V}_i=\gamma_i,\\
&\int_{\bbr^4}\alpha_i (v_i^0)^{p-1}\widetilde{V}_i=\gamma_{i,p},\\
&\int_{\bbr^4}\mu_i (v_i^0)^{3}\widetilde{V}_i=\gamma_{i,4},
\end{eqnarray*}
where $\widetilde{V}_i$ is arbitrary solution of $(\widehat{\mathcal{P}}_{i}^{*,0})$.  On the other hand, by \cite[Lemma~5.1]{LW05}, we have $V_i^{\ve,0}\to V_i^{0}$ strongly in $\mathcal{H}_i$ as $\ve\to0^+$ respectively for $i=1,2$ and
\begin{eqnarray*}
\sup_{y\in\Omega_\ve}|e^{-\sqrt{\lambda_i}(1+\sigma)|y|}V_i^{\ve,0}(y)|\leq C
\end{eqnarray*}
for any $0<\sigma<1$ and uniformly for $\ve$ sufficiently small. Since all the integrals in the proofs of \cite[Lemmas~5.2, 5.3 and Proposition~5.1]{NW95} also make sense in our case, by the similar arguments with some trivial modifications and the fact that $(2-p)\alpha_i(v_i^0)^{p}-2\mu_i(v_i^0)^3<0$ in $\bbr^4$ (since  $p>2$), we can obtain the conclusion.
\end{proof}

We also need the following observation.
\begin{lemma}\label{lem0010}
For $\ve>0$ sufficiently small, we have
\begin{eqnarray*}
\int_{\Omega}|\widetilde{u}_1^{\ve,P_1}|^2|\widetilde{u}_2^{\ve,P_2}|^2dx
=\ve^4(1+o(1))\int_{\bbr^4}\bigg|v_1^0\bigg(x-\frac{P_1}{\ve}\bigg)\bigg|^2\bigg|v_2^0\bigg(x-\frac{P_2}{\ve}\bigg)\bigg|^2dx.
\end{eqnarray*}
\end{lemma}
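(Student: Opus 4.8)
The plan is to rescale both sides to integrals over $\bbr^4$, get the inequality $\leq$ from the elliptic maximum principle, and match it with $\geq$ by writing $u_i^{\ve,P}=v_i^0-\varphi_i^{\ve,P}$ and estimating the cross terms via the exponential decay of $\varphi_i^{\ve,P}$. Put $q=P_1-P_2$. The substitution $x=\ve y+P_1$ gives $\widetilde u_1^{\ve,P_1}(x)=u_1^{\ve,P_1}(y)$, $\widetilde u_2^{\ve,P_2}(x)=u_2^{\ve,P_2}(y+q/\ve)$, $dx=\ve^4dy$, and, extending $u_i^{\ve,P_i}$ by $0$ outside $\Omega_{\ve,P_i}$,
\begin{eqnarray*}
\int_{\Omega}|\widetilde u_1^{\ve,P_1}|^2|\widetilde u_2^{\ve,P_2}|^2dx=\ve^4\int_{\bbr^4}|u_1^{\ve,P_1}(y)|^2|u_2^{\ve,P_2}(y+q/\ve)|^2dy,
\end{eqnarray*}
while the same substitution turns the right-hand side of the lemma into $\ve^4\int_{\bbr^4}|v_1^0(y)|^2|v_2^0(y+q/\ve)|^2dy$. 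Since $u_i^{\ve,P}$ solves $(\mathcal P_i^{\ve,P})$ and $v_i^0$ solves $(\mathcal P_i)$ in the whole space, the difference $\varphi_i^{\ve,P}=v_i^0-u_i^{\ve,P}$ satisfies $-\Delta\varphi_i^{\ve,P}+\lambda_i\varphi_i^{\ve,P}=0$ in $\Omega_{\ve,P}$ with $\varphi_i^{\ve,P}=v_i^0>0$ on $\partial\Omega_{\ve,P}$; as $\lambda_i>0$ the maximum principle gives $0\le\varphi_i^{\ve,P}\le v_i^0$, hence $0\le u_i^{\ve,P}\le v_i^0$, and the bound $\int_{\Omega}|\widetilde u_1^{\ve,P_1}|^2|\widetilde u_2^{\ve,P_2}|^2dx\le\ve^4\int_{\bbr^4}|v_1^0(y)|^2|v_2^0(y+q/\ve)|^2dy$ follows at once.

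For the matching lower bound, write $a_i,b_i$ for the centred functions $v_i^0$, $u_i^{\ve,P_i}$ and $\varphi_i=a_i-b_i=\varphi_i^{\ve,P_i}$. From $0\le b_i\le a_i$,
\begin{eqnarray*}
0\le a_1^2a_2^2-b_1^2b_2^2=a_1^2(a_2^2-b_2^2)+b_2^2(a_1^2-b_1^2)\le2a_1^2a_2\varphi_2+2a_2^2a_1\varphi_1,
\end{eqnarray*}
so it suffices to prove
\begin{eqnarray*}
\int_{\bbr^4}|v_1^0(y)|^2\,v_2^0(y+q/\ve)\,\varphi_2^{\ve,P_2}(y+q/\ve)\,dy=o\Big(\int_{\bbr^4}|v_1^0(y)|^2|v_2^0(y+q/\ve)|^2\,dy\Big)
\end{eqnarray*}
together with the analogous estimate with the two components interchanged.

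To establish this I would use the reflection-type decay of $\varphi_i^{\ve,P}$ obtained in the style of \cite[Lemma~5.1]{LW05} and \cite[Section~5]{NW95} and already invoked in the proof of Lemma~\ref{lem0011}: the normalised functions $V_i^{\ve,P}=\varphi_i^{\ve,P}(0)^{-1}\varphi_i^{\ve,P}$ converge to $V_i^{*,0}$ and satisfy $V_i^{\ve,P}(z)\le Ce^{(1+\sigma)\sqrt{\lambda_i}|z|}$ uniformly for $\ve$ small, while $\varphi_i^{\ve,P}(0)\to0$ at an exponential rate governed by $\text{dist}(P,\partial\Omega)$. Splitting the $y$-integral into a fixed ball $\mathbb B_R(-q/\ve)$ centred at the core of $v_2^0(\cdot+q/\ve)$, where $\varphi_2^{\ve,P_2}(\cdot+q/\ve)\le\sup_{\mathbb B_R}\varphi_2^{\ve,P_2}=o(1)$ and which is compared with the lower bound for the main integral obtained by restricting the latter to $\mathbb B_1(-q/\ve)$ (where $v_2^0\ge c$), and its complement, where one combines $\varphi_2^{\ve,P_2}\le v_2^0$, or the sharper bound $\varphi_2^{\ve,P_2}(z)\le C\varphi_2^{\ve,P_2}(0)e^{(1+\sigma)\sqrt{\lambda_2}|z|}$, with the exponential tails of $v_1^0$ and $v_2^0$ so as to dominate the tail by a vanishing multiple of the main integral, gives the required bound. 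Adding the two cross-term estimates to the pointwise inequality above yields $0\le\ve^4\int_{\bbr^4}|v_1^0|^2|v_2^0(\cdot+q/\ve)|^2-\int_{\Omega}|\widetilde u_1^{\ve,P_1}|^2|\widetilde u_2^{\ve,P_2}|^2=o\big(\ve^4\int_{\bbr^4}|v_1^0|^2|v_2^0(\cdot+q/\ve)|^2\big)$, which is the assertion.

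The hard part will be this cross-term estimate: one must control $\varphi_i^{\ve,P_i}$ against $v_i^0$ precisely on the region carrying the overlap integral $\int_{\bbr^4}|v_1^0(\cdot-P_1/\ve)|^2|v_2^0(\cdot-P_2/\ve)|^2$, namely the ``bridge'' joining the two concentration points, and when $\text{dist}(P_i,\partial\Omega)$ is comparable with or smaller than $|P_1-P_2|$ this is not uniformly a region on which $\varphi_i^{\ve,P_i}\ll v_i^0$. Thus the soft facts $0\le\varphi_i^{\ve,P_i}\le v_i^0$ and $\varphi_i^{\ve,P_i}\to0$ in $\mathcal H_i$ only give the difference $=o(\ve^4)$, not $o$ of the (exponentially small) right-hand side; overcoming this requires the refined reflection-type decay of $\varphi_i^{\ve,P_i}$ together with the partition of $\bbr^4$ into the cores of the two bumps and the bridge/tail region indicated above.
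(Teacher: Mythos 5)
Your change of variables, the maximum-principle step giving $0\le u_i^{\ve,P_i}\le v_i^0$ (hence the inequality ``$\le$''), and the algebraic reduction of the matching lower bound to the two cross-term estimates are all correct; the paper itself offers no argument here beyond citing \cite[Lemma~5.2(2)]{LW05}, so the entire content of the lemma is the cross-term estimate. That is exactly where your proposal has a genuine gap, as you half-admit in your last paragraph. The only quantitative control you invoke on $\varphi_2^{\ve,P_2}$ is $\varphi_2^{\ve,P_2}\le v_2^0$ together with the radial bound $\varphi_2^{\ve,P_2}(z)\le C\varphi_2^{\ve,P_2}(0)e^{(1+\sigma)\sqrt{\lambda_2}|z|}$ and $\varphi_2^{\ve,P_2}(0)\le Ce^{-2(1-\sigma)\sqrt{\lambda_2}d_2/\ve}$ with $d_2=\text{dist}(P_2,\partial\Omega)$. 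These give $\varphi_2^{\ve,P_2}(z)/v_2^0(z)=o(1)$ only for $|z|\le(1-\sigma')d_2/\ve$. But a fixed (not vanishing) fraction of $I_\ve[P_1,P_2]$ is carried by a bounded neighbourhood of the \emph{other} centre, at distance $L=|P_1-P_2|/\ve$ from the centre of $\varphi_2^{\ve,P_2}$ (for $\lambda_1=\lambda_2$ the two cores contribute comparably, and the connecting tube contributes an $O(R^{-1/2})$ fraction outside $\mathbb{B}_R(0)\cup\mathbb{B}_R(q/\ve)$). On that neighbourhood your bounds only give $\varphi_2^{\ve,P_2}/v_2^0\le\min\{1,\,Ce^{-2(1-\sigma)\sqrt{\lambda_2}d_2/\ve+(2+2\sigma)\sqrt{\lambda_2}L}\}$, which is $o(1)$ only when $|P_1-P_2|<\text{dist}(P_2,\partial\Omega)$. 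In the application (Lemma~\ref{lem0013}) the lemma is used at the optimal configuration $(P_1^*,P_2^*)$, where for $\lambda_1=\lambda_2$ one has precisely $|P_1^*-P_2^*|=\text{dist}(P_i^*,\partial\Omega)$, so the regime you cannot handle is the relevant one.

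The missing ingredient is a direction-dependent barrier rather than the radial one: $\varphi_i^{\ve,P_i}(z)\le Ce^{-(1-\sigma)\sqrt{\lambda_i}\rho_i(z)}$ with $\rho_i(z)=\inf\{|w|+|z-w|:\,w\in\partial\Omega_{\ve,P_i}\}$, i.e.\ the decay of $\varphi_i^{\ve,P_i}$ at $z$ is governed by the shortest path from the centre to $z$ \emph{via the boundary}. This is what the functions $\psi_i^{\ve,P}$ in \eqref{eqn0075} and the viscosity estimates of \cite[Section~4]{NW95}, \cite[Lemma~5.1]{LW05} encode, and it is the substance the paper imports through its citation. With it, $\varphi_i^{\ve,P_i}(z)/v_i^0(z)\le Ce^{-(1-2\sigma)\sqrt{\lambda_i}(\rho_i(z)-|z|)}e^{C\sigma\sqrt{\lambda_i}|z|}$, and for fixed $P_1\ne P_2$ with $[P_1,P_2]\subset\Omega$ the excess $\rho_i(z)-|z|$ is bounded below by $c_0(P_1,P_2)/\ve$ with $c_0>0$ uniformly for $z$ in the region carrying $I_\ve[P_1,P_2]$ (the shortest detour of the segment $[P_i,Q]$, $Q\in[P_1,P_2]$, through $\partial\Omega$ exceeds $|P_i-Q|$ by a fixed amount); the far tail off that region is then absorbed by the crude exponential bounds. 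Without this refinement your argument only yields that the difference of the two sides is $o(\ve^4)$, which, as you correctly note, is strictly weaker than the stated multiplicative $(1+o(1))$ since $I_\ve[P_1,P_2]$ is itself exponentially small.
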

\begin{proof}
With some trivial modifications, the conclusion follows from the similar argument as used in the proof of $(2)$ of \cite[Lemma~5.2]{LW05}.
\end{proof}

Now, as in \cite{LW05}, for every $(P_1, P_2)\in\Omega^2$, we denote
\begin{eqnarray*}
I_\ve[P_1, P_2]=\int_{\bbr^4}\bigg|v_1^0\bigg(x-\frac{P_1}{\ve}\bigg)\bigg|^2\bigg|v_2^0\bigg(x-\frac{P_2}{\ve}\bigg)\bigg|^2dx
\end{eqnarray*}
and
\begin{eqnarray*}\label{eqn0076}
\delta_\ve[P_1, P_2]=\sum_{i=1}^2\varphi_{i}^{\ve,P_i}(0)+I_\ve[P_1, P_2].
\end{eqnarray*}
\begin{lemma}\label{lem0012}
For $\ve>0$ sufficiently small, we have
\begin{eqnarray*}
C'e^{-\frac{2(1+\sigma)\varphi(P_1,P_2)}{\ve}}\leq\delta_\ve[P_1, P_2]\leq C e^{-\frac{2(1-\sigma)\varphi(P_1,P_2)}{\ve}}
\end{eqnarray*}
for $\sigma>0$ small enough, where
\begin{eqnarray}\label{eqn9960}
\varphi(P_1,P_2)=\min_{i=1,2}\{\min\{\sqrt{\lambda_i}|P_1-P_2|, \sqrt{\lambda_i}dist(P_i, \partial\Omega)\}\}.
\end{eqnarray}
\end{lemma}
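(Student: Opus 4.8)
The plan is to estimate the two ingredients of $\delta_\ve[P_1,P_2]$ separately, namely the boundary term $\sum_{i=1}^2\varphi_i^{\ve,P_i}(0)$ and the overlap integral $I_\ve[P_1,P_2]$, and then observe that the exponent $\varphi(P_1,P_2)$ in \eqref{eqn9960} is exactly the minimum of the decay rates of these two contributions. First I would recall from Proposition~\ref{prop0004} that $v_i^0$ is a ground state solution of $(\mathcal{P}_i)$, hence radial and (by the standard ODE/maximum-principle asymptotics, as used for Lemmas~\ref{lem0006}, \ref{lem0008}) satisfies $C'e^{-(1+\sigma)\sqrt{\lambda_i}|x|}\leq v_i^0(x)\leq Ce^{-(1-\sigma)\sqrt{\lambda_i}|x|}$ for $\ve$-independent constants and every small $\sigma>0$. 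Similarly $|\nabla v_i^0|$ obeys the same upper bound.

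For the boundary term: $\varphi_i^{\ve,P_i}=v_i^0-u_i^{\ve,P_i}$ is harmonic-type, more precisely $-\Delta\varphi_i^{\ve,P_i}+\lambda_i\varphi_i^{\ve,P_i}=0$ in $\Omega_{\ve,P_i}$, with boundary values $\varphi_i^{\ve,P_i}=v_i^0$ on $\partial\Omega_{\ve,P_i}$. By the maximum principle the value $\varphi_i^{\ve,P_i}(0)$ is controlled above and below by the boundary data of $v_i^0$, which lives at distance $\mathrm{dist}(0,\partial\Omega_{\ve,P_i})=\mathrm{dist}(P_i,\partial\Omega)/\ve$ from the origin; comparison with the explicit radial sub/supersolutions $e^{\pm(1\mp\sigma)\sqrt{\lambda_i}|x|}$ (exactly as in \cite[Lemma~5.1]{LW05} and \cite{NW95}, whose integrals the authors have already checked make sense here in Lemma~\ref{lem0011}) gives
$$
C'e^{-\frac{2(1+\sigma)\sqrt{\lambda_i}\,\mathrm{dist}(P_i,\partial\Omega)}{\ve}}\leq\varphi_i^{\ve,P_i}(0)\leq Ce^{-\frac{2(1-\sigma)\sqrt{\lambda_i}\,\mathrm{dist}(P_i,\partial\Omega)}{\ve}}.
$$
(The factor $2$ arises because $u_i^{\ve,P_i}\to v_i^0$ and the difference is quadratically small in the exponential; this is the same bookkeeping as in \cite[Proposition~5.1]{NW95}.)

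For the overlap integral: writing $I_\ve[P_1,P_2]=\int_{\bbr^4}|v_1^0(x-P_1/\ve)|^2|v_2^0(x-P_2/\ve)|^2dx$ and using the pointwise exponential bounds on $v_i^0$, a routine Laplace-type estimate shows that $I_\ve[P_1,P_2]$ is comparable to $e^{-2\min\{\sqrt{\lambda_1},\sqrt{\lambda_2}\}|P_1-P_2|/\ve}$ up to factors $e^{\pm\sigma|P_1-P_2|/\ve}$; the standard interaction-integral computation (see \cite[Lemma~5.2]{LW05}) yields
$$
C'e^{-\frac{2(1+\sigma)\min_i\sqrt{\lambda_i}\,|P_1-P_2|}{\ve}}\leq I_\ve[P_1,P_2]\leq Ce^{-\frac{2(1-\sigma)\min_i\sqrt{\lambda_i}\,|P_1-P_2|}{\ve}}.
$$
Combining the two estimates, $\delta_\ve[P_1,P_2]$ is squeezed between $C'e^{-2(1+\sigma)m/\ve}$ and $Ce^{-2(1-\sigma)m/\ve}$ where $m=\min\{\min_i\sqrt{\lambda_i}|P_1-P_2|,\ \min_i\sqrt{\lambda_i}\,\mathrm{dist}(P_i,\partial\Omega)\}=\varphi(P_1,P_2)$, which is the claim.

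The step I expect to be the main obstacle is the sharp two-sided control of $\varphi_i^{\ve,P_i}(0)$, i.e. transferring the decay of the boundary data of $v_i^0$ into a matching decay at the interior point $0$ with the correct rate $\sqrt{\lambda_i}$ and with constants uniform in $\ve$ and in $P_i$ ranging over a compact subset of $\Omega$. This is precisely where \cite{NW95}'s barrier construction for $V_i^{\ve,P}$ solving $(\widehat{\mathcal{P}}_i^{\ve,P})$ is needed; since Lemma~\ref{lem0011} already verifies that those barriers and the associated integrals carry over to the present (critical, coupled) setting, I would invoke that machinery rather than redo it, and devote the remaining work to checking the uniformity in $(P_1,P_2)$ and assembling the min in \eqref{eqn9960}.
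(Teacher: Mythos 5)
Your proposal is correct and follows essentially the same route as the paper: the paper likewise splits $\delta_\ve[P_1,P_2]$ into the boundary term and the overlap integral, quotes \cite[Lemma~4.4]{NW95} for the two-sided bound on $\varphi_i^{\ve,P_i}(0)$ (which is exactly the barrier argument you sketch), obtains the two-sided exponential bounds on $v_i^0$ from \cite[Proposition~2.1]{BZZ13} plus the maximum principle, and then combines them into the $\min$ defining $\varphi(P_1,P_2)$. No substantive difference.
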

\begin{proof}
By \cite[Lemma~4.4]{NW95}, we have
\begin{eqnarray}\label{eqn0069}
C'e^{-\frac{2(1+\sigma)\sqrt{\lambda_i}dist(P_i, \partial\Omega)}{\ve}}\leq\varphi_{i}^{\ve,P_i}(0)\leq Ce^{-\frac{2(1-\sigma)\sqrt{\lambda_i}dist(P_i, \partial\Omega)}{\ve}}
\end{eqnarray}
for any $0<\sigma<1$.  On the other hand, by scaling technique and \cite[Proposition~2.1]{BZZ13}, for any $0<\delta<1$, there exists $C_\delta>0$ such that
\begin{eqnarray}\label{eqn0067}
v_i^0\leq C_\delta e^{-(1-\delta)\sqrt{\lambda_i}|x|}.
\end{eqnarray}
Moreover, since $v_i^0$ satisfies $(\mathcal{P}_{i})$, by \eqref{eqn0067}, we can apply the maximum principle in a standard way to show that for any $0<\delta<1$, there exists $C_\delta'>0$ such that
\begin{eqnarray}\label{eqn0068}
v_i^0\geq C_\delta' e^{-(1+\delta)\sqrt{\lambda_i}|x|}
\end{eqnarray}
for $|x|$ large enough.
Now, by the translation  and \eqref{eqn0067} and \eqref{eqn0068}, we have  that
\begin{eqnarray}\label{eqn0070}
C'e^{-\frac{2(1+\delta)\min_{i=1,2}\{\sqrt{\lambda_i}|P_1-P_2|\}}{\ve}}\leq I_\ve[P_1, P_2]\leq Ce^{-\frac{2(1-\delta)\min_{i=1,2}\{\sqrt{\lambda_i}|P_1-P_2|\}}{\ve}}.
\end{eqnarray}
The conclusion follows immediately from \eqref{eqn0069} and \eqref{eqn0070}.
\end{proof}

Now, we can obtain an upper bound for $c_{\ve,\Omega,T}$ in the case $\beta<0$.
\begin{lemma}\label{lem0013}
Let $\alpha_T$ be given in Lemma~\ref{lem0002} and $\ve>0$ be small enough.  Then for
\begin{enumerate}
\item $-\sqrt{\mu_1\mu_2}<\beta<0$ and $\alpha_1,\alpha_2>0$,
\item $\beta\leq-\sqrt{\mu_1\mu_2}$ and $\alpha_1,\alpha_2>0$ with $|\overrightarrow{\mathbf{\alpha}}|<\alpha_T$,
\end{enumerate}
we have
\begin{eqnarray*}
c_{\ve,\Omega,T}\leq\ve^4(\sum_{i=1}^2d_{i,\bbr^4}+a_1e^{-\frac{2(1-\sigma)\varphi(P_1^*,P_2^*)}{\ve}}),
\end{eqnarray*}
where $P_i^*$ satisfy $\varphi(P_1^*,P_2^*)=\max_{(P_1,P_2)\in\Omega^2}\varphi(P_1,P_2)$.
\end{lemma}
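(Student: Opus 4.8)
The plan is to bound $c_{\ve,\Omega,T}$ from above by the energy of the Ni--Wei competitor built from the two functions $\widetilde u_1^{\ve,P_1^*}$ and $\widetilde u_2^{\ve,P_2^*}$ glued at the maximizing pair $(P_1^*,P_2^*)$, projected onto $\mathcal{N}_{\ve,\Omega,T}$, and then to read off the exponential smallness of the remainder from Lemmas~\ref{lem0010}--\ref{lem0012}. Set $\overrightarrow{\widetilde{\mathbf{u}}}^{\ve}=(\widetilde u_1^{\ve,P_1^*},\widetilde u_2^{\ve,P_2^*})$, regarded via extension by $0$ as an element of $\mathcal{H}_{\ve,\Omega}$, and recall $\widetilde u_i^{\ve,P}(\ve y+P)=u_i^{\ve,P}(y)\to v_i^0(y)$ strongly in $\mathcal{H}_{i,\bbr^4}$ as $\ve\to0^+$, with $v_i^0$ a ground state of $(\mathcal{P}_i)$ lying on its Nehari manifold and $d_{i,\bbr^4}=\mathcal{E}_{i,\bbr^4}(v_i^0)$. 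Since $\mathcal{B}_{|\widetilde u_1^{\ve,P_1^*}|^{2}|\widetilde u_2^{\ve,P_2^*}|^{2},1,\Omega}=\ve^4(1+o(1))I_\ve[P_1^*,P_2^*]\to0$ by Lemma~\ref{lem0010} and Lemma~\ref{lem0012}, the rescaled fibering system $\ve^{-4}\mathcal{J}_{\ve,\Omega,T}'(\overrightarrow{\mathbf{t}}\circ\overrightarrow{\widetilde{\mathbf{u}}}^{\ve})(\overrightarrow{\mathbf{t}}\circ\overrightarrow{\widetilde{\mathbf{u}}}^{\ve})^i=0$ tends, on the range where $\chi_\beta\equiv1$, to the decoupled system $t_i^2\|v_i^0\|_{i,\bbr^4}^2-t_i^p\alpha_i\mathcal{B}_{v_i^0,p,\bbr^4}^{p}-t_i^4\mu_i\mathcal{B}_{v_i^0,4,\bbr^4}^{4}=0$, $i=1,2$, whose only positive root is the nondegenerate $\overrightarrow{\mathbf{1}}$. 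Hence Miranda's theorem applied on arbitrarily small boxes around $\overrightarrow{\mathbf{1}}$, exactly as for $\eta_i(t_1,t_2)$ in the proof of Lemma~\ref{lem0003}, yields $\overrightarrow{\mathbf{t}}_\ve\to\overrightarrow{\mathbf{1}}$ with $\overrightarrow{\mathbf{t}}_\ve\circ\overrightarrow{\widetilde{\mathbf{u}}}^{\ve}\in\mathcal{N}_{\ve,\Omega,T}$; that $\chi_\beta(\|\overrightarrow{\mathbf{t}}_\ve\circ\overrightarrow{\widetilde{\mathbf{u}}}^{\ve}\|_{\ve,\Omega}^2/(T^2\ve^4))\equiv1$ on the competitor (and on the box used for Miranda) follows from Lemma~\ref{lem0011}(1), which gives $\|\overrightarrow{\mathbf{t}}_\ve\circ\overrightarrow{\widetilde{\mathbf{u}}}^{\ve}\|_{\ve,\Omega}^2=\ve^4(\sum_{i=1}^2\|v_i^0\|_{i,\bbr^4}^2+o(1))$, together with $\|v_i^0\|_{i,\bbr^4}^2\leq\frac{2p}{p-2}d_{i,\bbr^4}<\frac{p}{2\mu_i(p-2)}\mathcal{S}^2$ and the choice \eqref{eqn0008} of $T$, which leaves ample slack.

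With $\overrightarrow{\mathbf{t}}_\ve$ in hand and $\chi_\beta\equiv1$, so that $\mathcal{J}_{\ve,\Omega,T}=\sum_i\mathcal{E}_{i,\ve,\Omega}-\frac{\beta}{2}\mathcal{B}_{|u_1|^2|u_2|^2,1,\Omega}$ on the competitor,
\begin{eqnarray*}
c_{\ve,\Omega,T}\leq\mathcal{J}_{\ve,\Omega,T}(\overrightarrow{\mathbf{t}}_\ve\circ\overrightarrow{\widetilde{\mathbf{u}}}^{\ve})=\sum_{i=1}^2\mathcal{E}_{i,\ve,\Omega}(t_i^\ve\widetilde u_i^{\ve,P_i^*})-\frac{\beta}{2}(t_1^\ve)^2(t_2^\ve)^2\mathcal{B}_{|\widetilde u_1^{\ve,P_1^*}|^2|\widetilde u_2^{\ve,P_2^*}|^2,1,\Omega}.
\end{eqnarray*}
For each $i$ one has $\mathcal{E}_{i,\ve,\Omega}(t_i^\ve\widetilde u_i^{\ve,P_i^*})\leq\sup_{t>0}\mathcal{E}_{i,\ve,\Omega}(t\widetilde u_i^{\ve,P_i^*})$, and by Lemma~\ref{lem0011}(1)--(3) this supremum equals the maximum over $t>0$ of $\ve^4(\frac{t^2}{2}a_i-\frac{t^p}{p}b_i-\frac{t^4}{4}c_i)$ with coefficients $(a_i,b_i,c_i)$ equal to $(\|v_i^0\|_{i,\bbr^4}^2,\alpha_i\mathcal{B}_{v_i^0,p,\bbr^4}^{p},\mu_i\mathcal{B}_{v_i^0,4,\bbr^4}^{4})$ perturbed by terms of order $\varphi_{i}^{\ve,P_i^*}(0)$; since the unperturbed maximum is $d_{i,\bbr^4}$, attained at $t=1$, where the $v_i^0$--Nehari identity kills the first variation in $t$, a one--variable computation gives $\sup_{t>0}\mathcal{E}_{i,\ve,\Omega}(t\widetilde u_i^{\ve,P_i^*})\leq\ve^4(d_{i,\bbr^4}+(a_0+o(1))\varphi_{i}^{\ve,P_i^*}(0))$, i.e.\ exactly the bound stated in Lemma~\ref{lem0011}. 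For the coupling term, $\beta<0$ makes it positive, but $t_1^\ve t_2^\ve$ stays bounded and Lemma~\ref{lem0010} controls $\mathcal{B}_{|\widetilde u_1^{\ve,P_1^*}|^2|\widetilde u_2^{\ve,P_2^*}|^2,1,\Omega}$ by $C\ve^4 I_\ve[P_1^*,P_2^*]$, so the whole coupling contribution is $\leq C\ve^4 I_\ve[P_1^*,P_2^*]$. Summing, using $\sum_{i=1}^2\varphi_{i}^{\ve,P_i^*}(0)+I_\ve[P_1^*,P_2^*]=\delta_\ve[P_1^*,P_2^*]$ and then Lemma~\ref{lem0012} with $\varphi(P_1^*,P_2^*)=\max_{(P_1,P_2)\in\Omega^2}\varphi(P_1,P_2)$, we obtain $c_{\ve,\Omega,T}\leq\ve^4(\sum_{i=1}^2 d_{i,\bbr^4}+C'\delta_\ve[P_1^*,P_2^*])\leq\ve^4(\sum_{i=1}^2 d_{i,\bbr^4}+a_1 e^{-2(1-\sigma)\varphi(P_1^*,P_2^*)/\ve})$, which is the claim.

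The step I expect to be the main obstacle is the first one: producing the Nehari projection $\overrightarrow{\mathbf{t}}_\ve$ and checking that the truncated functional genuinely coincides with the untruncated one along the relevant rays. Concretely, one must verify that $\overrightarrow{\widetilde{\mathbf{u}}}^{\ve}$ has exponentially small Nehari defect in $\mathcal{N}_{\ve,\Omega,T}$ (this is where the facts that $\widetilde u_i^{\ve,P}$ solves $(\overline{\mathcal{P}}_i^{\ve,P})$ rather than the full equation, and the resulting $O(\varphi_i^{\ve,P}(0))$ error from the proof of Lemma~\ref{lem0011}, must be combined with the exponentially small coupling from Lemma~\ref{lem0012}), that the limiting fibering system is nondegenerate at $\overrightarrow{\mathbf{1}}$ so that Miranda applies with shrinking boxes, and that $\|\overrightarrow{\mathbf{t}}_\ve\circ\overrightarrow{\widetilde{\mathbf{u}}}^{\ve}\|_{\ve,\Omega}^2<\ve^4T^2$, which reduces to the purely numerical inequality $\sum_i\|v_i^0\|_{i,\bbr^4}^2<T^2$ guaranteed by \eqref{eqn0008}; in case (2) the smallness condition $|\overrightarrow{\mathbf{\alpha}}|<\alpha_T$ enters only as in Lemmas~\ref{lem0001}--\ref{lem0003}. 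Once these points are settled the rest is bookkeeping with Lemmas~\ref{lem0010}--\ref{lem0012}; in particular, no quantitative control of $|\overrightarrow{\mathbf{t}}_\ve-\overrightarrow{\mathbf{1}}|$ is needed, since each single-component term is already dominated by its $t$-maximum and the coupling term only needs $t_1^\ve t_2^\ve$ bounded.
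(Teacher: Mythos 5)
Your proposal is correct and uses the same competitor and the same supporting estimates (Lemmas~\ref{lem0010}--\ref{lem0012}, the Miranda projection onto $\mathcal{N}_{\ve,\Omega,T}$, and the verification that the truncation $\chi_\beta$ is inactive on the relevant rays), but it handles the decisive final estimate differently from the paper. The paper invests its main effort in a quantitative control of the projection parameters: it introduces the family $\Gamma_i(\overrightarrow{\mathbf{t}},\tau)$, extends it continuously differentiably to $\tau=0$ using the sign information $\gamma_i t_i^2 - t_i\gamma_i(t_i)<0$ for $t_i\geq1$ and $\beta<0$, and applies the implicit function theorem to obtain $t_i^\ve=1+O(\delta_\ve[P_1^*,P_2^*])$; it then Taylor-expands $\mathcal{J}_{\ve,\Omega,T}$ around $\overrightarrow{\mathbf{1}}$, where the second-order remainder $O(|\overrightarrow{\widetilde{\mathbf{t}}}_\ve-\overrightarrow{\mathbf{1}}|^2)$ is harmless precisely because of that estimate. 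You bypass this entirely: since for $\beta<0$ the functional decouples into $\sum_i\mathcal{E}_{i,\ve,\Omega}(t_i\widetilde u_i^{\ve,P_i^*})$ plus a nonnegative coupling term, you dominate each single-component energy by its one-dimensional fibering maximum (an envelope argument at the nondegenerate critical point $t=1$, which is exactly the computation behind the ``Moreover'' part of Lemma~\ref{lem0011}) and need only boundedness of $t_1^\ve t_2^\ve$ for the coupling term, which Lemma~\ref{lem0010} controls by $C\ve^4 I_\ve[P_1^*,P_2^*]$. Both routes are valid; yours is shorter and avoids the delicate extension to $\tau=0$, while the paper's yields the additional information $t_i^\ve=1+O(\delta_\ve)$, which is not needed for the stated upper bound. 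Your side remarks are also accurate: the norm check $\sum_i\|v_i^0\|_{i,\bbr^4}^2\leq\sum_i\frac{2p}{p-2}d_{i,\bbr^4}<\sum_i\frac{p}{2\mu_i(p-2)}\mathcal{S}^2<T^2$ does follow from \eqref{eqn0008}, so $\chi_\beta\equiv1$ on the Miranda box in case (2) as well.
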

\begin{proof}
Clearly $\varphi(P_1^*,P_2^*)>0$ and $P_1^*\not=P_2^*$.  Now, consider $\overrightarrow{\widetilde{\mathbf{u}}}_{\ve,\overrightarrow{\mathbf{P}}_*}=(\widetilde{u}_1^{\ve,P_1^*}, \widetilde{u}_2^{\ve,P_2^*})$.  Then we must have $\overrightarrow{\widetilde{\mathbf{u}}}_{\ve,\overrightarrow{\mathbf{P}}_*}\in\widetilde{\mathcal{H}}_{\ve,\Omega}$.  By Lemma~\ref{lem0010} and \eqref{eqn0070}, we also have that
\begin{eqnarray}\label{eqn0071}
\mathcal{B}_{|\widetilde{u}_1^{\ve,P_1^*}|^2|\widetilde{u}_2^{\ve,P_2^*}|^2,1,\Omega}=o(1).
\end{eqnarray}
By  \eqref{eqn0008} and the fact that $u_i^{\ve,P_i^*}\to v_i^0$ strongly in $\mathcal{H}_{i,\bbr^4}$ as $\ve\to0^+$, we can apply Miranda's theorem similar to that for $\eta_i(t_1,t_2)$ in the proof of Lemma~\ref{lem0003} to show that there exists $\overrightarrow{\widetilde{\mathbf{t}}}_{\ve}\in(\bbr^+)^2$ with $0<\widetilde{t}_i^\ve<t_i^{\ve,*}$ such that $\overrightarrow{\widetilde{\mathbf{t}}}_{\ve}\circ\overrightarrow{\widetilde{\mathbf{u}}}_{\ve,\overrightarrow{\mathbf{P}}_*}\in \mathcal{N}_{\ve,\Omega,T}$, where
\begin{eqnarray*}
t_i^{\ve,*}=\bigg(\frac{\|\widetilde{u}_i^{\ve,P_i^*}\|_{i,\ve,\Omega}^2}{\mu_i\mathcal{B}_{\widetilde{u}_i^{\ve,P_i^*},4,\Omega}^{4}}\bigg)^{\frac12}
\end{eqnarray*}
are bounded from above by a constant $C$ independent of $\ve$.  By \eqref{eqn0071} and the fact that $u_i^{\ve,P_i^*}\to v_i^0$ strongly in $\mathcal{H}_{i,\bbr^4}$ as $\ve\to0^+$, we also have that $$\prod_{i=1}^2\mu_i\mathcal{B}_{\widetilde{u}_i^{\ve,P_i^*},4,\Omega}^{4}-\beta^2\mathcal{B}_{|\widetilde{u}_1^{\ve,P_1^*}|^2|\widetilde{u}_2^{\ve,P_2^*}|^2,1,\Omega}\geq \ve^4C; \quad \mathcal{B}_{\widetilde{u}_i^{\ve,P_i^*},4,\Omega}^{4}\geq \ve^4C.$$
Thus, by applying the implicit function theorem as used for $\Gamma_i^n(\overrightarrow{\mathbf{t}},\tau)$ with $\beta<0$ in the proof of Lemma~\ref{lem0001}, we can also see that $\overrightarrow{\widetilde{\mathbf{t}}}_{\ve}$ is unique.
Now, let us consider the system $\overrightarrow{\mathbf{\Gamma}}(\overrightarrow{\mathbf{t}}, \tau)=\overrightarrow{\mathbf{0}}$, where $\overrightarrow{\mathbf{\Gamma}}(\overrightarrow{\mathbf{t}}, \tau)=(\Gamma_1(\overrightarrow{\mathbf{t}},\tau), \Gamma_2(\overrightarrow{\mathbf{t}},\tau))$ with
\begin{eqnarray*}
\Gamma_i(\overrightarrow{\mathbf{t}},\tau)&=&\tau^{-4}(\|t_i\widetilde{u}_i^{\tau,P_i^*}\|_{i,\tau,\Omega}^2-\alpha_i\mathcal{B}_{t_i\widetilde{u}_i^{\tau,P_i^*},p,\Omega}^{p}
-\mu_i\mathcal{B}_{t_i\widetilde{u}_i^{\tau,P_i^*},4,\Omega}^{4}\\
&&-\beta\mathcal{B}_{|t_1\widetilde{u}_1^{\tau,P_1^*}|^2|t_2\widetilde{u}_2^{\tau,P_2^*}|^2,1,\Omega}).
\end{eqnarray*}
Clearly, $\overrightarrow{\mathbf{\Gamma}}$ is of $C^1$ and $\overrightarrow{\mathbf{\Gamma}}(\overrightarrow{\widetilde{\mathbf{t}}}_{\ve}, \ve)=\overrightarrow{\mathbf{0}}$.  Moreover, as stated in the proof of Lemma~\ref{lem0011}, all the integrals in the proof of \cite[Lemmas~5.2]{NW95} also make sense in our case, thus, by the similar arguments with some trivial modifications, we can see from Lemma~\ref{lem0010} that
\begin{eqnarray*}
\Gamma_i(\overrightarrow{\mathbf{t}},\tau)&=&\|t_iv_i^0\|_{i,\bbr^4}^2-\alpha_i\mathcal{B}_{t_iv_i^0,p,\bbr^4}^{p}-\mu_i\mathcal{B}_{t_iv_i^0,4,\bbr^4}^{4}\\
&&-(\gamma_it_i^2-t_i\gamma_i(t_i)+o(1))\varphi_{i}^{\tau,P_i^*}(0))-\beta t_1^2t_2^2(1+o(1))I_\tau[P_1^*, P_2^*],
\end{eqnarray*}
where $o(1)$ is uniformly   bounded and  $t_i$, $\gamma_i$ are given by \eqref{eqn0072} and
\begin{eqnarray*}
\gamma_i(t_i)&=&\int_{\bbr^4}(\alpha_ip (t_iv_i^0)^{p-1}+4\mu_i(t_iv_i^0)^3)V_i^{*,0}.
\end{eqnarray*}
Since $p>2$, it is easy to see that $\gamma_it_i^2-t_i\gamma_i(t_i)<0$ for $t_i\geq1$.  Thus, by $\beta<0$, we can extend the functions $\Gamma_i(\overrightarrow{\mathbf{t}},\tau)$ to $\tau=0$ as continuous differentiable maps  for $t_i\geq1$.  Since $p>2$, by applying the implicit function theorem, we can see that
\begin{eqnarray}\label{eqn0073}
t_i^\ve=1+O(\delta_\ve[P_1^*, P_2^*]), \;\;\; i=1,2.
\end{eqnarray}
Now, by similar computations as  in \cite[(5.15)]{LW05} and the Taylor's expansion, we see that
\begin{eqnarray}
c_{\ve,\Omega,T}&\leq&\mathcal{J}_{\ve,\Omega,T}(\overrightarrow{\widetilde{\mathbf{t}}}_{\ve}\circ\overrightarrow{\widetilde{\mathbf{u}}}_{\ve,\overrightarrow{\mathbf{P}}_*})\notag\\
&=&\mathcal{J}_{\ve,\Omega,T}(\overrightarrow{\widetilde{\mathbf{u}}}_{\ve,\overrightarrow{\mathbf{P}}_*})
+\mathcal{J}_{\ve,\Omega,T}'(\overrightarrow{\widetilde{\mathbf{u}}}_{\ve,\overrightarrow{\mathbf{P}}_*})
\bigg((\overrightarrow{\widetilde{\mathbf{t}}}_{\ve}-\overrightarrow{\mathbf{1}})\circ\overrightarrow{\widetilde{\mathbf{u}}}_{\ve,\overrightarrow{\mathbf{P}}_*}\bigg)\notag\\
&&+O(|\overrightarrow{\widetilde{\mathbf{t}}}_{\ve}-\overrightarrow{\mathbf{1}}|^2).\label{eqn0074}
\end{eqnarray}
Thanks to Lemmas~\ref{lem0011}-\ref{lem0012}, \eqref{eqn0073} and \eqref{eqn0074}, we obtain the conclusion.
\end{proof}


\vskip0.3in

 {Due to the criticalness,  we do not know whether the solution of $(\mathcal{P}_i)$ in $\bbr^4$ is non-degeneracy,}  we have to  establish the low-bound of $c_{\ve,T,\Omega}$ for $\beta<0$ in a way different from that in \cite{LW05} which is inspired by \cite{DSW11}.  Consider the following equations
$$
\left\{\aligned&-\ve^2\Delta u+\lambda_iu=\mu_iu^3+\alpha_iu^{p-1}\quad&\text{in }\mathbb{B}_{r_i}(P_i),\\
&u>0\quad\text{in }\mathbb{B}_{r_i}(P_i),\quad&u=0\quad\text{on }\partial\mathbb{B}_{r_i}(P_i),\endaligned\right.\eqno{(\mathcal{P}_{i}^{\ve,P_i})}
$$
where $P_i\in\Omega, i=1,2, $ are two different points.  By Proposition~\ref{propA0001}, we can see that $(\mathcal{P}_{i}^{\ve,P_i})$ has a solution $u_{i,P_i}^\ve$ satisfying the following
\begin{eqnarray*}
\mathcal{E}_{i,\ve,\mathbb{B}_{r_i}(P_i)}(u_{i,P_i}^\ve)=d_{i,\ve,\mathbb{B}_{r_i}(P_i)}
\end{eqnarray*}
and
\begin{eqnarray}\label{eqn0082}
u_{i,P_i}^\ve(\ve y+P_i)\to v_i^0\text{ strongly in }\mathcal{H}_{i,\bbr^4},
\end{eqnarray}
where $v_i^0$ is a ground state solution of $(\mathcal{P}_i)$ and $\mathcal{E}_{i,\ve,\mathbb{B}_{r_i}(P_i)}(u), d_{i,\ve,\mathbb{B}_{r_i}(P_i)}$ are given by \eqref{eqn0065}.  Moreover,   similar to the proof of Lemmas~\ref{lem0007}-\ref{lem0009}, we also have
\begin{eqnarray*}
d_{i,\bbr^4}+Ce^{-\frac{2(1+\sigma)\sqrt{\lambda_i}r_i}{\ve}}\leq \frac{d_{i,\ve,\mathbb{B}_{r_i}(P_i)}}{\ve^4}\leq d_{i,\bbr^4}+Ce^{-\frac{2(1-\sigma)\sqrt{\lambda_i}r_i}{\ve}}.
\end{eqnarray*}
where $d_{i,\bbr^4}$ is given by \eqref{eqn0066} and $\sigma\in(0, 1)$.  For every $b_1,b_2>0$, we also define
\begin{equation}\label{eqn5453}
\aligned
&\varphi_{b_1,b_2}^*(p_1^\ve,p_2^\ve)\\
&=\min\bigg\{\frac{\sqrt{\lambda_1}b_1+\sqrt{\lambda_2}b_2}{b_1+b_2}|p_1^\ve-p_2^\ve|,
\sqrt{\lambda_1}dist(p_1^\ve, \partial\Omega), \sqrt{\lambda_2}dist(p_2^\ve, \partial\Omega)\bigg\},
\endaligned
\end{equation}
where $p_i^\ve$ is the maximum point of $u_i^\ve$  and $\overrightarrow{\mathbf{u}}_\ve=(u_1^\ve, u_2^\ve)$ is the positive solution obtained  by Proposition~\ref{prop0001}.

\vskip0.23in

\begin{lemma}\label{lemn0016}
Let $\alpha_T$ be  given in Lemma~\ref{lem0002} and $\ve>0$ be small enough.  Then for $\beta<0$ and $|\overrightarrow{\mathbf{\alpha}}|<\alpha_T$, we have
\begin{eqnarray*}
c_{\ve,\Omega,T}\geq\ve^4(\sum_{i=1}^2d_{i,\bbr^4}+Ce^{-\frac{2(1+\sigma)(\varphi_{b_1,b_2}^*(p_1^\ve,p_2^\ve)-\delta)}{\ve}}),
\end{eqnarray*}
where $O(\delta)\to0$ as $\delta\to0$ uniformly for  $\ve$.
\end{lemma}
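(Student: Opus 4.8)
Since $\beta<0$ the coupling term in the energy has the favorable sign, so the natural starting point is the identity
$$c_{\ve,\Omega,T}=\mathcal{J}_{\ve,\Omega,T}(\overrightarrow{\mathbf{u}}_\ve)=\sum_{i=1}^2\mathcal{E}_{i,\ve,\Omega}(u_i^\ve)+\frac{|\beta|}{2}\mathcal{B}_{|u_1^\ve|^2|u_2^\ve|^2,1,\Omega},$$
which is legitimate because $\|\overrightarrow{\mathbf{u}}_\ve\|_{\ve,\Omega}^2<\ve^4T^2$ (Lemma~\ref{lem0001}) forces $\chi_\beta\equiv1$ along $\overrightarrow{\mathbf{u}}_\ve$. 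The plan is to bound $\sum_i\mathcal{E}_{i,\ve,\Omega}(u_i^\ve)$ from below by $\ve^4\sum_i d_{i,\bbr^4}$ plus an exponentially small positive remainder governed by the geometry, and then add on the nonnegative coupling contribution. The two preliminary ingredients are: (a) exponential localization of the spikes --- arguing exactly as in Lemma~\ref{lem0008}, but now starting from the strong convergence $(v_1^\ve,\widetilde v_2^\ve)\to(v_1^0,v_2^0)$ of Proposition~\ref{prop0004}$(i)$, the Moser iteration as in \cite{BZZ13}, and the fact that (since $\beta<0$) $u_i^\ve$ is a subsolution of the scalar equation $-\ve^2\Delta u+\lambda_iu=\mu_iu^3+\alpha_iu^{p-1}$, one gets for every small $\sigma>0$ both $u_i^\ve(x)+\ve|\nabla u_i^\ve(x)|\le Ce^{-\frac{(1-\sigma)\sqrt{\lambda_i}|x-p_i^\ve|}{\ve}}$ and, near $p_i^\ve$, the matching lower bound $u_i^\ve(x)\ge C'e^{-\frac{(1+\sigma)\sqrt{\lambda_i}|x-p_i^\ve|}{\ve}}$; and (b) the $\beta<0$--adapted separation in the spirit of \cite{DSW11}: given $b_1,b_2>0$, one selects a hyperplane $H_\ve\in\Lambda_{b_1,b_2}(p_1^\ve,p_2^\ve)$ dividing $\Omega$ into $\Omega_1^\ve\ni p_1^\ve$, $\Omega_2^\ve\ni p_2^\ve$ so that $\sqrt{\lambda_i}\,\mathrm{dist}(p_i^\ve,\partial\Omega_i^\ve)\ge\varphi_{b_1,b_2}^*(p_1^\ve,p_2^\ve)-\delta$ for $i=1,2$; this balanced choice of $H_\ve$ is precisely what the objects $\varphi^*_{b_1,b_2}$ and $\Lambda_{b_1,b_2}$ are designed for.

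Next I would estimate each $\mathcal{E}_{i,\ve,\Omega}(u_i^\ve)$ separately. On $\Omega\setminus\Omega_i^\ve$ (the far side of $H_\ve$) the decay estimate makes $u_i^\ve$ so small that the energy density $\tfrac12(\ve^2|\nabla u_i^\ve|^2+\lambda_i(u_i^\ve)^2)-\tfrac{\alpha_i}{p}(u_i^\ve)^p-\tfrac{\mu_i}{4}(u_i^\ve)^4$ is nonnegative there, hence $\mathcal{E}_{i,\ve,\Omega}(u_i^\ve)\ge\mathcal{E}_{i,\ve,\Omega_i^\ve}(u_i^\ve|_{\Omega_i^\ve})$. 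Fix $r_i^\ve\le\mathrm{dist}(p_i^\ve,\partial\Omega_i^\ve)$ with $\mathbb{B}_{r_i^\ve}(p_i^\ve)\subset\Omega_i^\ve$ and $\sqrt{\lambda_i}r_i^\ve\ge\varphi_{b_1,b_2}^*(p_1^\ve,p_2^\ve)-\delta$, and compare $u_i^\ve|_{\Omega_i^\ve}$ with the ground state $u_{i,p_i^\ve}^\ve$ of $(\mathcal{P}_{i}^{\ve,p_i^\ve})$ on $\mathbb{B}_{r_i^\ve}(p_i^\ve)$: cutting $u_i^\ve$ off with a function whose transition layer sits inside $\Omega_i^\ve$ where $u_i^\ve$ is already exponentially small, projecting onto the scalar Nehari manifold on $\mathbb{B}_{r_i^\ve}(p_i^\ve)$ as in Lemma~\ref{lem0001}, and using the decay bounds of step (a) and Lemma~\ref{lem0010}--type estimates for the coupling, one arrives at $\mathcal{E}_{i,\ve,\Omega}(u_i^\ve)\ge d_{i,\ve,\mathbb{B}_{r_i^\ve}(p_i^\ve)}-(\mathrm{error}_i)$ with $\mathrm{error}_i$ controlled by the cut-off and boundary-mismatch terms plus the coupling term. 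Finally the nondegeneracy--free ball estimate recalled just before the statement gives
$$\frac{d_{i,\ve,\mathbb{B}_{r_i^\ve}(p_i^\ve)}}{\ve^4}\ \ge\ d_{i,\bbr^4}+Ce^{-\frac{2(1+\sigma)\sqrt{\lambda_i}r_i^\ve}{\ve}}\ \ge\ d_{i,\bbr^4}+Ce^{-\frac{2(1+\sigma)(\varphi_{b_1,b_2}^*(p_1^\ve,p_2^\ve)-\delta)}{\ve}},$$
and summing over $i$, adding the nonnegative coupling term, and absorbing the error terms and the polynomial losses into the factors $1\pm\sigma$ and the $-\delta$ slack yields the asserted lower bound.

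The main obstacle is exactly the one flagged in the remark: since $v_i^0$ is not known to be nondegenerate in $\bbr^4$, the sharp Ni--Wei energy expansion is unavailable and one is forced to control $\mathcal{E}_{i,\ve,\Omega}(u_i^\ve)$ by purely variational and maximum-principle comparisons rather than by a Lyapunov--Schmidt reduction. The technically delicate point is the bookkeeping of the errors: a priori the cut-off cost and the boundary mismatch on $H_\ve$ live at a cruder exponential order than the target remainder $e^{-2(1+\sigma)(\varphi^*_{b_1,b_2}-\delta)/\ve}$, so the entire purpose of carrying the weights $b_1,b_2$ --- i.e.\ of choosing $H_\ve$ inside $\Lambda_{b_1,b_2}(p_1^\ve,p_2^\ve)$ so that the quantities $\sqrt{\lambda_i}\,\mathrm{dist}(p_i^\ve,\partial\Omega_i^\ve)$ are balanced against one another and against the boundary distances --- and of allowing the $-\delta$ loss, is to push these errors below the remainder. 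The sign $\beta<0$ is used twice: once to obtain the scalar-subsolution property underlying the decay rates, and once to keep the coupling term on the favorable side of the energy identity.
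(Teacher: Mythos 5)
Your reduction to the energy identity and your treatment of the boundary--distance case are consistent with the paper: the paper first projects $t_i^\ve u_i^\ve$ onto the scalar Nehari manifolds $\mathcal{M}_{i,\ve,\Omega}$ and uses $\mathcal{E}_{i,\ve,\Omega}(t_i^\ve u_i^\ve)\ge d_{i,\ve,\Omega}\ge\ve^4\big(d_{i,\bbr^4}+Ce^{-2(1+\sigma)\sqrt{\lambda_i}\,\mathrm{dist}(p_i^\ve,\partial\Omega)/\ve}\big)$, which settles the case where $\varphi^*_{b_1,b_2}$ is realized by one of the boundary distances. The genuine gap is in the other case, $\varphi^*_{b_1,b_2}(p_1^\ve,p_2^\ve)=\frac{\sqrt{\lambda_1}b_1+\sqrt{\lambda_2}b_2}{b_1+b_2}|p_1^\ve-p_2^\ve|$. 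You discard the coupling term after noting only that it is nonnegative, and you try to extract the $|p_1^\ve-p_2^\ve|$--dependent part of the remainder from the two \emph{scalar} energies by cutting $\Omega$ along a surface in $\Lambda_{b_1,b_2}(p_1^\ve,p_2^\ve)$ (which, incidentally, is an Apollonius sphere, not a hyperplane) and comparing each $u_i^\ve$ with a ball ground state. That mechanism cannot produce the claimed sign-definite remainder: there is no Dirichlet condition on the artificial interface, hence no reflection effect there, and the only sign-definite exponential correction the scalar energies carry is the one generated by the true boundary $\partial\Omega$. Your own accounting shows the failure quantitatively: the cutoff and Nehari-projection errors near $|x-p_i^\ve|=r_i^\ve$ are of indefinite sign and of order $e^{-2(1-\sigma)\sqrt{\lambda_i}r_i^\ve/\ve}$, which dominates the gain $Ce^{-2(1+\sigma)\sqrt{\lambda_i}r_i^\ve/\ve}$ from the ball estimate no matter where the interface is placed; the $-\delta$ slack and the weights $b_1,b_2$ do not reverse the order of these two exponentials.

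The paper's proof instead keeps the coupling term: for $\beta<0$ it enters the Nehari-constrained energy with the positive coefficient $-\beta/4$, and it is bounded from below by integrating the two-sided decay estimates $u_i^\ve\ge Ce^{-(1+\sigma)\sqrt{\lambda_i}|x-p_i^\ve|/\ve}$ over the lens $\Gamma_\ve=\mathbb{B}_{D_1^{\ve,*}+\ve\delta}(p_1^\ve)\cap\mathbb{B}_{D_2^{\ve,*}+\ve\delta}(p_2^\ve)$ centered at the point $p^\ve\in\Lambda_{b_1,b_2}(p_1^\ve,p_2^\ve)$ on the segment joining the spikes. This gives $\mathcal{B}_{|u_1^\ve|^2|u_2^\ve|^2,1,\Omega}\ge Ce^{-\frac{2(1+\sigma)}{\ve}(\sqrt{\lambda_1}(D_1^{\ve,*}-\delta)+\sqrt{\lambda_2}(D_2^{\ve,*}-\delta))}$ with $\sqrt{\lambda_1}D_1^{\ve,*}+\sqrt{\lambda_2}D_2^{\ve,*}=\varphi^*_{b_1,b_2}(p_1^\ve,p_2^\ve)$, so the role of $b_1,b_2$ is to select where on the segment the coupling integrand is evaluated, not to balance cutoff errors. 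To repair your argument, reinstate the coupling integral as the source of the spike-separation exponent and use the domain decomposition only (if at all) for the boundary contributions.
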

\begin{proof}
Since $p>2$, it is standard to show that there exists $t_i^\ve>0$ such that $t_i^\ve u_i^\ve\in\mathcal{M}_{i,\ve,\Omega}$, where $\mathcal{M}_{i,\ve,\Omega}$ is given by \eqref{eqn0087}.  Moreover, by Proposition~\ref{prop0004}, we also have $t_i^\ve=1+o(1)$.  Thus, by $|\overrightarrow{\mathbf{\alpha}}|<\alpha_T$ and $\beta<0$,   Lemma~\ref{lem0001}, we see  that
\begin{eqnarray}\label{eqn0088}
c_{\ve,\Omega,T}\geq\mathcal{J}_{\ve,\Omega,T}(\overrightarrow{\mathbf{t}}_\ve\circ\overrightarrow{\mathbf{u}}_\ve)
\geq\sum_{i=1}^2\mathcal{E}_{i,\ve,\Omega}(t_i^\ve u_i^\ve)-\frac{\beta}{4}\mathcal{B}_{|t_1^\ve u_1^\ve|^2|t_2^\ve u_2^\ve|^2,1,\Omega}
\end{eqnarray}
for $\ve>0$ small enough.
By  Lemma~\ref{lem0008},   similar to the proof of Lemma~\ref{lem0009}, we have
\begin{eqnarray}\label{eqn9987}
d_{i,\ve,\Omega}\geq \ve^4(d_{i,\bbr^4}+Ce^{-\frac{2(1+\sigma)\sqrt{\lambda_i}dist(p_i^\ve, \partial\Omega)}{\ve}}),\;\;\; i=1, 2.
\end{eqnarray}
On the other hand, let $p^\ve$ be the point in $\Lambda_{b_1,b_2}(p_1^\ve,p_2^\ve)$ such that
$$
|p_1^\ve-p^\ve|+|p_2^\ve-p^\ve|=|p_1^\ve-p_2^\ve|,
$$
where $\Lambda_{b_1,b_2}(p_1^\ve,p_2^\ve)$ is defined by
\begin{eqnarray}\label{eqn5454}
\Lambda_{b_1,b_2}(p_1^\ve,p_2^\ve)=\{x\in\Omega\mid b_1|x-p_1^\ve|=b_2|x-p_2^\ve|\}.
\end{eqnarray}
We re-denote $|p_i^\ve-p^\ve|$ by $D_i^{\ve,*}$.  For the sake of clarity, we divide the following proof into two cases.

{\bf Case.~1}\quad We have either $\varphi_{b_1,b_2}^*(p_1^\ve,p_2^\ve)=\sqrt{\lambda_1}dist(p_1^\ve, \partial\Omega)$ or $\varphi_{b_1,b_2}^*(p_1^\ve,p_2^\ve)=\sqrt{\lambda_2}dist(p_2^\ve, \partial\Omega)$.

Indeed, since $\beta<0$, by \eqref{eqn0088} and \eqref{eqn9987}, we can see that
\begin{eqnarray*}
c_{\ve,\Omega,T}&\geq&\ve^4(\sum_{i=1}^2(d_{i,\bbr^4}+Ce^{-\frac{2(1+\sigma)\sqrt{\lambda_1}dist(p_1^\ve, \partial\Omega)}{\ve}}))\\
&=&\ve^4(\sum_{i=1}^2d_{i,\bbr^4}+Ce^{-\frac{2(1+\sigma)\varphi_{b_1,b_2}^*(p_1^\ve,p_2^\ve)}{\ve}}).
\end{eqnarray*}

{\bf Case.~2}\quad  We have $\varphi_{b_1,b_2}^*(p_1^\ve,p_2^\ve)=\frac{\sqrt{\lambda_1}b_1+\sqrt{\lambda_2}b_2}{b_1+b_2}|p_1^\ve-p_2^\ve|$.

By Proposition~\ref{prop0004}, for every $\delta>0$ we have $\mathbb{B}_{D_i^{\ve,*}+\ve\delta}(p_i^\ve)\subset\Omega$ for $\ve>0$ small enough.  Now, let us consider the intersection $\Gamma_\ve=\mathbb{B}_{D_1^{\ve,*}+\ve\delta}(p_1^\ve)\cap\mathbb{B}_{D_2^{\ve,*}+\ve\delta}(p_2^\ve)$.  Then $\Gamma_\ve\subset\mathbb{B}_{D_i^{\ve,*}+\ve\delta}(p_i^\ve)\backslash\mathbb{B}_{D_i^{\ve,*}-\ve\delta}(p_i^\ve)$.  Note that $D_i^{\ve,*}=\frac{b_i}{b_1+b_2}|p_1^\ve-p_2^\ve|$, by Proposition~\ref{prop0004} and a standard comparison argument, we can see that $u_i^\ve\geq Ce^{-\frac{(1+\sigma)\sqrt{\lambda_i}|x-p_i^\ve|}{\ve}}$ in $\Gamma_\ve$ with $\ve>0$ small enough.  It follows that
\begin{eqnarray*}
\mathcal{B}_{|u_1^\ve|^2|u_2^\ve|^2,1,\Omega}\geq Ce^{-\frac{2(1+\sigma)}{\ve}(\sqrt{\lambda_1}(D_1^{\ve,*}-\delta)+\sqrt{\lambda_2}(D_2^{\ve,*}-\delta))}.
\end{eqnarray*}
This together with \eqref{eqn0088}--\eqref{eqn9987} and $\beta<0$, implies
\begin{eqnarray*}
c_{\ve,\Omega,T}&\geq&\ve^4(\sum_{i=1}^2(d_{i,\bbr^4}+Ce^{-\frac{2(1+\sigma)\sqrt{\lambda_1}dist(p_1^\ve, \partial\Omega)}{\ve}})\\
&&+Ce^{-\frac{2(1+\sigma)}{\ve}(\sqrt{\lambda_1}(D_1^{\ve,*}-\delta)+\sqrt{\lambda_2}(D_2^{\ve,*}-\delta))})\\
&=&\ve^4(\sum_{i=1}^2d_{i,\bbr^4}+Ce^{-\frac{2(1+\sigma)(\varphi_{b_1,b_2}^*(p_1^\ve,p_2^\ve)-\delta)}{\ve}}).
\end{eqnarray*}
It completes the proof.
\end{proof}

Now, we can obtain the following
\begin{proposition}\label{prop0006}
Let $\beta<0$.  Then we have $\varphi(p_1^\ve,p_2^\ve)\to\varphi(P_1^*, P_2^*)$ as $\ve\to0^+$, where $\varphi(P_1, P_2)$ is given by \eqref{eqn9960}.
\end{proposition}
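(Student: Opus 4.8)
The plan is to deduce Proposition~\ref{prop0006} by squeezing $\varphi(p_1^\ve,p_2^\ve)$ between an upper bound coming from Lemma~\ref{lem0013} and a lower bound coming from Lemma~\ref{lemn0016}, exactly in the spirit of the proof of Proposition~\ref{prop0005} (which used Lemmas~\ref{lem0007} and \ref{lem0009}). First I would record the obvious fact that $\mathbf{u}_\ve$ is the ground state solution of $(\mathcal{S}_\ve)$ for $\beta<0$ with $|\overrightarrow{\mathbf{\alpha}}|<\alpha_T$ (when $\beta\le -\sqrt{\mu_1\mu_2}$), so that $\mathcal{J}_{\ve,\Omega,T}(\mathbf{u}_\ve)=c_{\ve,\Omega,T}$, and that by Proposition~\ref{prop0004}(i) the spikes $p_i^\ve$ stay in $\overline\Omega$ so (up to a subsequence) $p_i^\ve\to P_i^\infty\in\overline\Omega$ and $\varphi(p_1^\ve,p_2^\ve)\to\varphi(P_1^\infty,P_2^\infty)$; it suffices to show $\varphi(P_1^\infty,P_2^\infty)=\varphi(P_1^*,P_2^*)$.

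Next I would combine the two estimates. From Lemma~\ref{lem0013}, for every small $\sigma>0$,
\begin{eqnarray*}
c_{\ve,\Omega,T}\le\ve^4\Big(\sum_{i=1}^2d_{i,\bbr^4}+a_1e^{-\frac{2(1-\sigma)\varphi(P_1^*,P_2^*)}{\ve}}\Big),
\end{eqnarray*}
while from Lemma~\ref{lemn0016}, for every small $\sigma>0$ and $\delta>0$ and every admissible choice of weights $b_1,b_2>0$,
\begin{eqnarray*}
c_{\ve,\Omega,T}\ge\ve^4\Big(\sum_{i=1}^2d_{i,\bbr^4}+Ce^{-\frac{2(1+\sigma)(\varphi_{b_1,b_2}^*(p_1^\ve,p_2^\ve)-\delta)}{\ve}}\Big).
\end{eqnarray*}
Subtracting $\ve^4\sum_i d_{i,\bbr^4}$ and taking logarithms, dividing by $-\ve$ and letting $\ve\to0^+$ along the chosen subsequence yields
\begin{eqnarray*}
(1+\sigma)\big(\varphi_{b_1,b_2}^*(P_1^\infty,P_2^\infty)-\delta\big)\ge(1-\sigma)\varphi(P_1^*,P_2^*),
\end{eqnarray*}
and since $\sigma,\delta$ are arbitrary this gives $\varphi_{b_1,b_2}^*(P_1^\infty,P_2^\infty)\ge\varphi(P_1^*,P_2^*)$. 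The delicate point is that the lower bound in Lemma~\ref{lemn0016} is phrased in terms of the auxiliary function $\varphi_{b_1,b_2}^*$ with the $b_i$-dependent weight $\frac{\sqrt{\lambda_1}b_1+\sqrt{\lambda_2}b_2}{b_1+b_2}$ on $|p_1^\ve-p_2^\ve|$, not in terms of $\varphi$ itself; so the crux is to show that, by choosing $b_1,b_2$ appropriately (for instance $b_i$ proportional to $1/\sqrt{\lambda_i}$, or whichever choice makes the weighted coefficient equal to $\min_i\sqrt{\lambda_i}$), one has $\varphi_{b_1,b_2}^*(P_1^\infty,P_2^\infty)\le\varphi(P_1^\infty,P_2^\infty)$, or at least that $\sup_{b_1,b_2}\varphi_{b_1,b_2}^*=\varphi$ at the relevant configuration. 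Combining with the definition $\varphi(P_1^*,P_2^*)=\max_{\Omega^2}\varphi$ then forces $\varphi(P_1^\infty,P_2^\infty)\ge\varphi(P_1^*,P_2^*)\ge\varphi(P_1^\infty,P_2^\infty)$, hence equality.

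I expect the main obstacle to be precisely this reconciliation between $\varphi_{b_1,b_2}^*$ and $\varphi$: one must check that the weighted distance $\frac{\sqrt{\lambda_1}b_1+\sqrt{\lambda_2}b_2}{b_1+b_2}|P_1-P_2|$ can be made to match (or dominate, in the right direction) the term $\min_i\{\sqrt{\lambda_i}|P_1-P_2|\}$ appearing in $\varphi$, while simultaneously the boundary terms $\sqrt{\lambda_i}\,\mathrm{dist}(P_i,\partial\Omega)$ in both functionals already agree. Since the weight $\frac{\sqrt{\lambda_1}b_1+\sqrt{\lambda_2}b_2}{b_1+b_2}$ ranges over the whole interval $(\min_i\sqrt{\lambda_i},\max_i\sqrt{\lambda_i})$ as $(b_1,b_2)$ varies, taking $b_1/b_2\to\infty$ or $\to0$ recovers $\sqrt{\lambda_i}$ in the limit; one then uses a limiting argument over a sequence of weights together with the continuity of $\varphi_{b_1,b_2}^*$ in $(b_1,b_2)$ to conclude $\sup_{b_1,b_2}\varphi^*_{b_1,b_2}(P_1^\infty,P_2^\infty)=\varphi(P_1^\infty,P_2^\infty)$. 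Once this identity is in hand, the squeeze closes and the proof is complete; the remaining steps (passing from the subsequence to the full limit, since the limit $\varphi(P_1^*,P_2^*)$ is independent of the subsequence) are routine.

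\begin{proof}
Let $\beta<0$; in the case $\beta\le-\sqrt{\mu_1\mu_2}$ we also require $|\overrightarrow{\mathbf{\alpha}}|<\alpha_T$, so that by Proposition~\ref{prop0001} the solution $\overrightarrow{\mathbf{u}}_\ve$ is the ground state of $(\mathcal{S}_\ve)$ with $\mathcal{J}_{\ve,\Omega,T}(\overrightarrow{\mathbf{u}}_\ve)=c_{\ve,\Omega,T}$. By Proposition~\ref{prop0004}(i), $p_i^\ve\in\overline{\Omega}$, so along any sequence $\ve\to0^+$ we may extract a subsequence with $p_i^\ve\to P_i^\infty\in\overline{\Omega}$, whence $\varphi(p_1^\ve,p_2^\ve)\to\varphi(P_1^\infty,P_2^\infty)$ by continuity of $\varphi$. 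It suffices to prove $\varphi(P_1^\infty,P_2^\infty)=\varphi(P_1^*,P_2^*)$, since the right-hand side does not depend on the subsequence.

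Combining Lemma~\ref{lem0013} and Lemma~\ref{lemn0016}, for every small $\sigma,\sigma'>0$, every $\delta>0$ and every $b_1,b_2>0$ we have, with $\ve>0$ small enough,
\begin{eqnarray*}
Ce^{-\frac{2(1+\sigma')(\varphi_{b_1,b_2}^*(p_1^\ve,p_2^\ve)-\delta)}{\ve}}\le a_1e^{-\frac{2(1-\sigma)\varphi(P_1^*,P_2^*)}{\ve}}.
\end{eqnarray*}
Taking logarithms, multiplying by $-\ve/2$ and letting $\ve\to0^+$ along the subsequence gives
\begin{eqnarray*}
(1+\sigma')\big(\varphi_{b_1,b_2}^*(P_1^\infty,P_2^\infty)-\delta\big)\ge(1-\sigma)\varphi(P_1^*,P_2^*).
\end{eqnarray*}
Since $\sigma,\sigma',\delta>0$ are arbitrary, we obtain $\varphi_{b_1,b_2}^*(P_1^\infty,P_2^\infty)\ge\varphi(P_1^*,P_2^*)$ for every $b_1,b_2>0$.

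On the other hand, from the definition \eqref{eqn5453}, for each fixed $(P_1,P_2)\in\overline{\Omega}^2$ the coefficient $\frac{\sqrt{\lambda_1}b_1+\sqrt{\lambda_2}b_2}{b_1+b_2}$ lies in the closed interval $[\min_i\sqrt{\lambda_i},\max_i\sqrt{\lambda_i}]$, and choosing $b_1/b_2\to+\infty$ (resp. $b_1/b_2\to0^+$) makes it tend to $\sqrt{\lambda_1}$ (resp. $\sqrt{\lambda_2}$). Hence
\begin{eqnarray*}
\inf_{b_1,b_2>0}\varphi_{b_1,b_2}^*(P_1,P_2)
=\min\Big\{\min_{i=1,2}\sqrt{\lambda_i}|P_1-P_2|,\ \sqrt{\lambda_1}\mathrm{dist}(P_1,\partial\Omega),\ \sqrt{\lambda_2}\mathrm{dist}(P_2,\partial\Omega)\Big\}
=\varphi(P_1,P_2),
\end{eqnarray*}
where in the last equality we used \eqref{eqn9960}. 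Applying this with $(P_1,P_2)=(P_1^\infty,P_2^\infty)$ and using the inequality of the previous paragraph for every $(b_1,b_2)$,
\begin{eqnarray*}
\varphi(P_1^\infty,P_2^\infty)=\inf_{b_1,b_2>0}\varphi_{b_1,b_2}^*(P_1^\infty,P_2^\infty)\ge\varphi(P_1^*,P_2^*).
\end{eqnarray*}
Since $(P_1^*,P_2^*)$ maximizes $\varphi$ over $\Omega^2$ (and by continuity over $\overline{\Omega}^2$), we also have $\varphi(P_1^\infty,P_2^\infty)\le\varphi(P_1^*,P_2^*)$. Therefore $\varphi(P_1^\infty,P_2^\infty)=\varphi(P_1^*,P_2^*)$, and consequently $\lim_{\ve\to0^+}\varphi(p_1^\ve,p_2^\ve)=\varphi(P_1^*,P_2^*)$.
\end{proof}
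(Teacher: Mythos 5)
Your proof is correct and follows essentially the same route as the paper: squeeze $c_{\ve,\Omega,T}$ between the upper bound of Lemma~\ref{lem0013} and the lower bound of Lemma~\ref{lemn0016}, conclude $\lim\varphi_{b_1,b_2}^*(p_1^\ve,p_2^\ve)\ge\varphi(P_1^*,P_2^*)$ for all $b_1,b_2>0$ since $\sigma,\sigma',\delta$ are arbitrary, and then recover $\varphi$ from $\varphi_{b_1,b_2}^*$ by sending $b_1/b_2$ to $0$ or $+\infty$. You merely make explicit two points the paper leaves terse (the subsequence extraction $p_i^\ve\to P_i^\infty$ and the identity $\inf_{b_1,b_2}\varphi_{b_1,b_2}^*=\varphi$), which is a faithful elaboration rather than a different argument.
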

\begin{proof}
By Lemmas~\ref{lem0013} and \ref{lemn0016}, we can see from Proposition~\ref{prop0004} that
\begin{eqnarray*}
\lim_{\ve\to0^+}\varphi_{b_1,b_2}^*(p_1^\ve,p_2^\ve)\geq\frac{(1-\sigma)(\varphi(P_1^*,P_2^*)-\delta)}{1+\sigma'}.
\end{eqnarray*}
Since $\sigma,\sigma'$ and $\delta$ are all arbitrary, we must have that
\begin{eqnarray*}
\lim_{\ve\to0^+}\varphi_{b_1,b_2}^*(p_1^\ve,p_2^\ve)\geq\varphi(P_1^*,P_2^*).
\end{eqnarray*}
Now, the conclusion follows from letting $b_1\to0^+$ or $b_2\to0^+$.
\end{proof}

\noindent\textbf{Proof of Theorem~\ref{thm0003}:}\quad  It follows immediately from Propositions~\ref{prop0005} and \ref{prop0006}.
\hfill$\Box$


\vskip0.36in

\section{Appendix}
In this section, we will list some known results which will be used frequently in this paper.  Let $\mathcal{H}_{i,\bbr^4}$ be the Hilbert space $H^1(\bbr^4)$ equipped with the inner product
$$
\langle u,v\rangle_{i,\bbr^4}=\int_{\bbr^4}\nabla u\nabla v+\lambda_i uv dx.
$$
The corresponding norm is  given by $\|u\|_{i,\bbr^4}=\langle u,u\rangle_{i,\bbr^4}^{\frac12}$.
Define
\begin{eqnarray}\label{eqn0095}
\mathcal{E}_{i,\bbr^4}(u)=\frac{1}{2}\|u\|_{i,\bbr^4}^2-\frac{\alpha_i}{p}\mathcal{B}_{u_,p, \bbr^4}^p-\frac{\mu_i}{4}\mathcal{B}_{u_,4, \bbr^4}^4.
\end{eqnarray}
Then it is well known that $\mathcal{E}_{i,\bbr^4}(u)$ is of $C^2$ in $\mathcal{H}_{i,\bbr^4}$.
Set
\begin{eqnarray}\label{eqnew8006}
\mathcal{M}_{i,\bbr^4}=\{u\in\mathcal{H}_{i,\bbr^4}\backslash\{0\}\mid \mathcal{E}_{i,\bbr^4}'(u)u=0\}.
\end{eqnarray}
and define
\begin{eqnarray}\label{eqn0066}
d_{i,\bbr^4}=\inf_{\mathcal{M}_{i,\bbr^4}}\mathcal{E}_{i,\bbr^4}(u).
\end{eqnarray}
\begin{proposition}\label{propAnew0002}
There holds $0< d_{i,\bbr^4}<\frac{1}{4\mu_i}\mathcal{S}^2$ for both $i=1,2$, where $\mathcal{S}$ is best embedding constant from $H^1(\bbr^4)\to L^4(\bbr^4)$ defined by
\begin{equation*}
\mathcal{S}=\inf\{\mathcal{B}_{\nabla u,2, \bbr^4}^2\mid u\in H^1(\bbr^4), \mathcal{B}_{u_,4, \bbr^4}^2=1\}.
\end{equation*}
Moreover, $d_{i,\bbr^4}$ can be attained by some $U_{i,\bbr^4}\in\mathcal{M}_{i,\bbr^4}$, which is also a solution of the following equation
$$
\left\{\aligned&-\Delta u+\lambda_iu=\mu_iu^3+\alpha_iu^{p-1}\quad&\text{in }\bbr^4,\\
&u>0\text{ in }\bbr^4,\quad&u\to0\text{ as }|x|\to+\infty.\endaligned\right.\eqno{(\mathcal{P}_{i})}
$$
\end{proposition}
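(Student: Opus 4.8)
The plan is to realize $(\mathcal{P}_i)$ as a Brezis--Nirenberg type problem on $\bbr^4$ and to minimize $\mathcal{E}_{i,\bbr^4}$ over the Nehari manifold $\mathcal{M}_{i,\bbr^4}$; the core of the argument will be a strict energy inequality that restores compactness against the critical term $\mu_iu^3$. Since $2<p<4$, for every $u\in\mathcal{H}_{i,\bbr^4}\setminus\{0\}$ the fibering map $t\mapsto\mathcal{E}_{i,\bbr^4}(tu)$ has a unique positive critical point $t(u)$, which is its strict maximum, with $t(u)u\in\mathcal{M}_{i,\bbr^4}$; hence $\mathcal{M}_{i,\bbr^4}\neq\emptyset$ and $d_{i,\bbr^4}=\inf_{u\neq0}\max_{t>0}\mathcal{E}_{i,\bbr^4}(tu)$. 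Passing to the Schwarz symmetrization of $|u|$ does not increase $\|u\|_{i,\bbr^4}^2$ and leaves every $L^q$ norm unchanged, so it does not increase $\max_{t>0}\mathcal{E}_{i,\bbr^4}(tu)$; therefore the infimum may be computed over radial functions, where the embedding $\mathcal{H}_{i,\bbr^4}\to L^q_{Rad}(\bbr^4)$ is compact for $2<q<4$ and translations play no role. For $u\in\mathcal{M}_{i,\bbr^4}$ one has $\|u\|_{i,\bbr^4}^2=\alpha_i\mathcal{B}_{u,p,\bbr^4}^p+\mu_i\mathcal{B}_{u,4,\bbr^4}^4$, so the Sobolev inequality gives $\|u\|_{i,\bbr^4}^2\le C(\|u\|_{i,\bbr^4}^p+\|u\|_{i,\bbr^4}^4)$, hence $\|u\|_{i,\bbr^4}^2\ge c_0>0$; and
$$\mathcal{E}_{i,\bbr^4}(u)=\mathcal{E}_{i,\bbr^4}(u)-\tfrac1p\mathcal{E}_{i,\bbr^4}'(u)u=\Big(\tfrac12-\tfrac1p\Big)\|u\|_{i,\bbr^4}^2+\mu_i\Big(\tfrac1p-\tfrac14\Big)\mathcal{B}_{u,4,\bbr^4}^4\ge\Big(\tfrac12-\tfrac1p\Big)c_0>0,$$
which proves $d_{i,\bbr^4}>0$ and shows that every minimizing (or Palais--Smale) sequence is bounded in $\mathcal{H}_{i,\bbr^4}$.

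For the upper bound I would test with the truncated Aubin--Talenti function $v_\sigma$ introduced just before \eqref{eq0072} (the cut-off performed on a fixed ball of $\bbr^4$), so that \eqref{eq0072}--\eqref{eq0073} give $\|v_\sigma\|_{i,\bbr^4}^2=\mathcal{S}^2+O(\sigma^2|\ln\sigma|)$, $\mathcal{B}_{v_\sigma,4,\bbr^4}^4=\mathcal{S}^2+O(\sigma^4)$ and $\mathcal{B}_{v_\sigma,p,\bbr^4}^p\ge C(\sigma^{4-p}+\sigma^p)$. Maximizing $t\mapsto\tfrac{t^2}{2}\|v_\sigma\|_{i,\bbr^4}^2-\tfrac{\mu_it^4}{4}\mathcal{B}_{v_\sigma,4,\bbr^4}^4$ over $t>0$ and then subtracting the subcritical contribution, which near the maximizing $t$ is bounded below by $C'\sigma^{4-p}$, yields
$$d_{i,\bbr^4}\le\max_{t>0}\mathcal{E}_{i,\bbr^4}(tv_\sigma)\le\frac{\mathcal{S}^2}{4\mu_i}+O(\sigma^2|\ln\sigma|)-C'\sigma^{4-p}.$$
Since $2<p<4$ forces $4-p<2$, the term $-C'\sigma^{4-p}$ dominates $O(\sigma^2|\ln\sigma|)$ as $\sigma\to0^+$, so $d_{i,\bbr^4}<\tfrac1{4\mu_i}\mathcal{S}^2$ for $\sigma$ small; this is precisely where the subcritical perturbation $\alpha_iu^{p-1}$ is used.

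For attainment I would apply Ekeland's variational principle on $\mathcal{M}_{i,\bbr^4}$ restricted to radial functions to obtain a bounded $(PS)_{d_{i,\bbr^4}}$ sequence $\{u_n\}$ for $\mathcal{E}_{i,\bbr^4}$; up to a subsequence $u_n\rightharpoonup u_0$ in $\mathcal{H}_{i,\bbr^4}$, $u_n\to u_0$ in $L^p(\bbr^4)$ by radial compactness, and $u_0$ is a radial critical point of $\mathcal{E}_{i,\bbr^4}$. If $u_0\equiv0$, then $\mathcal{B}_{u_n,p,\bbr^4}^p\to0$, so the Nehari identity gives $\|u_n\|_{i,\bbr^4}^2=\mu_i\mathcal{B}_{u_n,4,\bbr^4}^4+o(1)$ and $d_{i,\bbr^4}=\tfrac14\|u_n\|_{i,\bbr^4}^2+o(1)$; combining this with $\|u_n\|_{i,\bbr^4}^2\ge\mathcal{B}_{\nabla u_n,2,\bbr^4}^2\ge\mathcal{S}\mathcal{B}_{u_n,4,\bbr^4}^2$ and $\|u_n\|_{i,\bbr^4}^2\ge c_0>0$ forces $d_{i,\bbr^4}\ge\tfrac1{4\mu_i}\mathcal{S}^2$, contradicting the previous step. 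Hence $u_0\not\equiv0$, so $u_0\in\mathcal{M}_{i,\bbr^4}$ and $\mathcal{E}_{i,\bbr^4}(u_0)\ge d_{i,\bbr^4}$; writing $u_n=u_0+v_n$ and using the Brez\'is--Lieb lemma together with $\mathcal{E}_{i,\bbr^4}'(u_n)u_n=o(1)$ one obtains $d_{i,\bbr^4}=\mathcal{E}_{i,\bbr^4}(u_0)+\tfrac14\lim_n\|v_n\|_{i,\bbr^4}^2$, so $v_n\to0$ strongly and $\mathcal{E}_{i,\bbr^4}(u_0)=d_{i,\bbr^4}$. Replacing $u_0$ by $|u_0|$ (still a minimizer) and using the Lagrange multiplier rule on $\mathcal{M}_{i,\bbr^4}$ --- whose multiplier vanishes because $p>2$ --- the function $U_{i,\bbr^4}:=|u_0|$ solves $-\Delta u+\lambda_iu=\mu_iu^3+\alpha_iu^{p-1}$ weakly; classical elliptic regularity, the strong maximum principle and the usual decay estimate then give $U_{i,\bbr^4}>0$ in $\bbr^4$ and $U_{i,\bbr^4}\to0$ as $|x|\to+\infty$, i.e.\ $U_{i,\bbr^4}$ solves $(\mathcal{P}_i)$.

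The main obstacle is the loss of compactness coming from the critical exponent: a radial Palais--Smale sequence at level $d_{i,\bbr^4}$ might split off a standard bubble carrying energy $\tfrac1{4\mu_i}\mathcal{S}^2$ at the origin, and it is exactly the strict inequality $d_{i,\bbr^4}<\tfrac1{4\mu_i}\mathcal{S}^2$ of the second step --- the Brezis--Nirenberg estimate, which relies on $4-p<2$ --- that excludes this scenario; the remaining arguments are routine.
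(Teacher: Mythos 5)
Your proof is correct, but note that the paper does not actually prove Proposition 6.1 at all: its ``proof'' is the single line ``See the results in \cite{ZZ12}'', i.e.\ it defers entirely to Zhang--Zou's Berestycki--Lions-type existence theory for critical scalar equations. What you supply is a self-contained Nehari-manifold argument, and all the load-bearing points are in place: the unique fibering maximum and Schwarz symmetrization reduce the problem to radial functions where Strauss compactness handles the subcritical term; the coercivity identity $\mathcal{E}_{i,\bbr^4}(u)-\tfrac1p\mathcal{E}_{i,\bbr^4}'(u)u$ together with $\|u\|_{i,\bbr^4}^2\ge c_0$ gives $d_{i,\bbr^4}>0$; the Brezis--Nirenberg test with $v_\sigma$ gives the strict bound because $\sigma^{4-p}$ dominates $\sigma^2|\ln\sigma|$ precisely when $p>2$ (this is the same expansion the paper itself uses in Lemma 2.3 via \eqref{eq0072}--\eqref{eq0073}, so your upper bound is consistent with the machinery already present); the strict level excludes vanishing of the radial Palais--Smale sequence; and the Lagrange multiplier vanishes because $G'(u)u=-(p-2)\alpha_i\mathcal{B}_{u,p,\bbr^4}^p-2\mu_i\mathcal{B}_{u,4,\bbr^4}^4$ is bounded away from $0$ on $\mathcal{M}_{i,\bbr^4}$. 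The only cosmetic slip is writing $(\tfrac12-\tfrac1p)c_0$ where $c_0$ bounds $\|u\|_{i,\bbr^4}^2$ rather than $\|u\|_{i,\bbr^4}$. What your route buys over the paper's citation is transparency and uniformity: the same scheme is reused throughout Sections 2--5 (Lemmas 2.2, 2.4 and Proposition 2.1 are the two-component analogues), so having the scalar case argued in the identical Nehari/energy-threshold language makes the whole paper self-contained, whereas \cite{ZZ12} works in the general Berestycki--Lions framework and yields the result as a special case.
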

\begin{proof}
See the results in \cite{ZZ12}.
\end{proof}

Let $\Omega\subset\bbr^4$ be a bounded domain, $\lambda_i,\mu_i,\alpha_i>0(i=1,2)$ are constants, $\ve>0$ is a small parameter and $2<p<2^*=4$.  Let
\begin{eqnarray}\label{eqn0015}
\mathcal{E}_{i,\ve,\Omega}(u)=\frac12\|u\|_{i,\ve,\Omega}^2-\frac{\alpha_i}{p}\mathcal{B}_{u_,p, \Omega}^p-\frac{\mu_i}{4}\mathcal{B}_{u_,4, \Omega}^4.
\end{eqnarray}
Then it is well known that $\mathcal{E}_{i,\ve,\Omega}(u)$ is of $C^2$ in $\mathcal{H}_{i,\ve,\Omega}$.
Set
\begin{eqnarray}\label{eqn0087}
\mathcal{M}_{i,\ve,\Omega}=\{u\in\mathcal{H}_{i,\ve,\Omega}\backslash\{0\}\mid \mathcal{E}_{i,\ve,\Omega}'(u)u=0\}
\end{eqnarray}
and define
\begin{eqnarray}\label{eqn0065}
d_{i,\ve,\Omega}=\inf_{\mathcal{M}_{i,\ve,\Omega}}\mathcal{E}_{i,\ve,\Omega}(u),\quad i=1, 2.
\end{eqnarray}

\begin{proposition}\label{propA0001}
Let $\ve>0$ be small enough.  Then $C'\leq d_{i,\ve,\Omega}\leq\frac{\ve^4}{4\mu_i}\mathcal{S}^2-\ve^4C$ for both $i=1,2$.  Moreover, there exists $\widetilde{U}_{i,\ve}\in\mathcal{M}_{i,\ve,\Omega}$ such that $\mathcal{E}_{i,\ve,\Omega}(\widetilde{U}_{i,\ve})=d_{i,\ve,\Omega}$, which is also a solution of the following equation
$$
\left\{\aligned&-\ve^2\Delta u+\lambda_iu=\mu_iu^3+\alpha_iu^{p-1}\quad&\text{in }\Omega,\\
&u>0\quad\text{in }\Omega,\quad&u=0\quad\text{on }\partial\Omega,\endaligned\right.\eqno{(\mathcal{P}_{i,\ve})}
$$
$i=1,2$,
Moreover, $\widetilde{U}_{i,\ve}(\ve y+q_i^\ve)\to v_i^0$ strongly in $\mathcal{H}_{i,\bbr^4}$ as $\ve\to0^+$ and $\{\widetilde{U}_{i,\ve}\}$ is uniformly bounded in $L^\infty(\Omega)$, where $q_i^\ve$ is the maximum point of $\widetilde{U}_{i,\ve}$ and $v_i^0$ is a ground state solution of $(\mathcal{P}_{i})$.
\end{proposition}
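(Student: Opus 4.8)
The plan is to reduce everything to the $\ve=1$ problem on the expanding domain $\Omega_\ve$ via the scaling $v(y)=u(\ve y)$, $y\in\Omega_\ve$, under which $\mathcal{E}_{i,\ve,\Omega}(u)=\ve^4\mathcal{E}_{i,\Omega_\ve}(v)$, the Nehari manifolds $\mathcal{M}_{i,\ve,\Omega}$ and $\mathcal{M}_{i,\Omega_\ve}$ correspond, and hence $d_{i,\ve,\Omega}=\ve^4d_{i,\Omega_\ve}$; it then suffices to study $d_{i,\Omega_\ve}$ and its minimizers as $\ve\to0^+$, i.e. as $\Omega_\ve$ exhausts $\bbr^4$. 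For the lower bound, on $\mathcal{M}_{i,\Omega_\ve}$ one has $\mathcal{E}_{i,\Omega_\ve}(u)=(\tfrac12-\tfrac1p)\|u\|_{i,\Omega_\ve}^2+\mu_i(\tfrac1p-\tfrac14)\mathcal{B}_{u,4,\Omega_\ve}^4\geq(\tfrac12-\tfrac1p)\|u\|_{i,\Omega_\ve}^2$ since $2<p<4$, while the Nehari identity together with the Sobolev inequality and $p>2$ forces $\|u\|_{i,\Omega_\ve}^2\geq c_0>0$; hence $d_{i,\Omega_\ve}\geq C'>0$ uniformly. For the upper bound, fix a ball $\mathbb{B}_{r_0}(x_0)\subset\Omega$, so that after translation $\mathbb{B}_{r_0/\ve}\subset\Omega_\ve$; by monotonicity under domain inclusion (extension by zero), $d_{i,\Omega_\ve}\leq d_{i,\mathbb{B}_{r_0/\ve}}$, and testing $\mathcal{E}$ along the fibre through a cut-off of the ground state $U_{i,\bbr^4}$ of Proposition~\ref{propAnew0002} shows $\limsup_{\ve\to0^+}d_{i,\mathbb{B}_{r_0/\ve}}\leq d_{i,\bbr^4}<\tfrac1{4\mu_i}\mathcal{S}^2$. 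Taking $C:=\tfrac12\big(\tfrac1{4\mu_i}\mathcal{S}^2-d_{i,\bbr^4}\big)>0$ gives $d_{i,\Omega_\ve}\leq\tfrac1{4\mu_i}\mathcal{S}^2-C$ for $\ve$ small, that is $C'\leq d_{i,\ve,\Omega}\leq\ve^4\big(\tfrac1{4\mu_i}\mathcal{S}^2-C\big)$ after rescaling.

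For the existence of a minimizer one runs the Nehari argument for the critical single equation on a bounded domain (cf.\ \cite{ZZ12}): Ekeland's variational principle on $\mathcal{M}_{i,\Omega_\ve}$ produces a minimizing sequence $\{u_n\}$ which, after a fibre-map/Lagrange-multiplier correction as in Lemma~\ref{lem0001}, is a $(PS)_{d_{i,\Omega_\ve}}$ sequence; it is bounded by the coercivity just used, so $u_n\rightharpoonup u_0$ in $\mathcal{H}_{i,\Omega_\ve}$. Since $\Omega$ is bounded, $\Omega_\ve$ is bounded for fixed $\ve$ and $\mathcal{H}_{i,\Omega_\ve}\hookrightarrow L^r(\Omega_\ve)$ is compact for $2\leq r<4$; thus if $u_0\equiv0$ the subcritical term vanishes, $\|u_n\|_{i,\Omega_\ve}^2=\mu_i\mathcal{B}_{u_n,4,\Omega_\ve}^4+o_n(1)$, and the Sobolev inequality forces $\|u_n\|_{i,\Omega_\ve}^2\geq\tfrac{\mathcal{S}^2}{\mu_i}+o_n(1)$, hence $d_{i,\Omega_\ve}=\tfrac14\|u_n\|_{i,\Omega_\ve}^2+o_n(1)\geq\tfrac1{4\mu_i}\mathcal{S}^2+o_n(1)$, contradicting the strict upper bound. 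Therefore $u_0\not\equiv0$, $\mathcal{E}'_{i,\Omega_\ve}(u_0)=0$, and by weak lower semicontinuity and the Nehari characterization $\mathcal{E}_{i,\Omega_\ve}(u_0)=d_{i,\Omega_\ve}$ with $u_n\to u_0$ strongly; replacing $u_0$ by $|u_0|$ and using the strong maximum principle and elliptic regularity yields a positive $C^2$ minimizer, which rescaled is $\widetilde{U}_{i,\ve}$ solving $(\mathcal{P}_{i,\ve})$.

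It remains to treat the concentration and the $L^\infty$ bound. Set $\overline{W}_\ve(y)=\widetilde{U}_{i,\ve}(\ve y)$, extended by zero, so $\overline{W}_\ve$ solves the $\ve=1$ equation on $\Omega_\ve$, is bounded in $\mathcal{H}_{i,\bbr^4}$, and $\mathcal{E}_{i,\Omega_\ve}(\overline{W}_\ve)=d_{i,\Omega_\ve}\to d_{i,\bbr^4}$. First I would exclude vanishing: if $\sup_{y}\int_{\mathbb{B}_1(y)}|\overline{W}_\ve|^2\,dx\to0$, then by Lions' lemma $\overline{W}_\ve\to0$ in $L^r(\bbr^4)$ for $2<r<4$, and substituting into the equation forces $\mathcal{E}_{i,\Omega_\ve}(\overline{W}_\ve)\geq\tfrac1{4\mu_i}\mathcal{S}^2+o(1)$, contradicting $d_{i,\bbr^4}<\tfrac1{4\mu_i}\mathcal{S}^2$. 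Hence there are $y_\ve$ with $\int_{\mathbb{B}_1(y_\ve)}|\overline{W}_\ve|^2\geq c>0$; translating, $\overline{W}_\ve(\cdot+y_\ve)\rightharpoonup w_0\not\equiv0$, which solves $(\mathcal{P}_i)$ on $\bbr^4$ or the Dirichlet problem on a half space $\bbr^4_+$, the latter being ruled out exactly as in Proposition~\ref{prop0003} (multiply the limit equation by $\partial_{x_4}w_0$, integrate by parts, and use $\lambda_i>0$ with the strong maximum principle). Thus $w_0$ is a nontrivial solution of $(\mathcal{P}_i)$, so $\mathcal{E}_{i,\bbr^4}(w_0)\geq d_{i,\bbr^4}$; a Brez\'is--Lieb splitting $d_{i,\Omega_\ve}=\mathcal{E}_{i,\bbr^4}(w_0)+\mathcal{E}_{i,\bbr^4}(\overline{W}_\ve(\cdot+y_\ve)-w_0)+o(1)$, the nonnegativity of the residual energy, and $d_{i,\Omega_\ve}\to d_{i,\bbr^4}$ then give $\mathcal{E}_{i,\bbr^4}(w_0)=d_{i,\bbr^4}$ (so $w_0=:v_i^0$ is a ground state) and $\overline{W}_\ve(\cdot+y_\ve)\to v_i^0$ strongly in $\mathcal{H}_{i,\bbr^4}$; since a maximum point $q_i^\ve$ of $\widetilde{U}_{i,\ve}$ satisfies $|q_i^\ve/\ve-y_\ve|\leq C$ once the profile has converged, we conclude $\widetilde{U}_{i,\ve}(\ve y+q_i^\ve)\to v_i^0$ strongly in $\mathcal{H}_{i,\bbr^4}$. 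Finally, with this strong convergence the critical coefficient $\mu_i(\overline{W}_\ve)^2$ is small in $L^2$ outside a large ball uniformly in $\ve$, so the Moser iteration as in \cite{BZZ13,CZ121} can be started from the uniform $\mathcal{H}_{i,\bbr^4}$ bound, gives uniform $L^q(\bbr^4)$ bounds for every $q$, and hence $\|\widetilde{U}_{i,\ve}\|_{L^\infty(\Omega)}=\|\overline{W}_\ve\|_{L^\infty(\bbr^4)}\leq C$ by \cite[Theorem~8.17]{GT98}.

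The hard part will be the concentration step: because the nonlinearity is Sobolev-critical in dimension four, one must rule out both the loss of mass into a critical bubble — handled by the \emph{strict} gap $d_{i,\bbr^4}<\tfrac1{4\mu_i}\mathcal{S}^2$ of Proposition~\ref{propAnew0002} together with $d_{i,\Omega_\ve}\to d_{i,\bbr^4}$ — and the appearance of a half-space profile — handled by the $\partial_{x_4}$-Pohozaev identity of Proposition~\ref{prop0003}; moreover the uniform $L^\infty$ bound cannot be obtained by a direct Moser iteration because the cubic term has no a priori control, so it can only be deduced after the strong $\mathcal{H}_{i,\bbr^4}$ convergence has been established.
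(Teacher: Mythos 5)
Your proposal is correct and follows essentially the same route as the paper's (much more compressed) proof: energy bounds as in \cite{BZZ13}, existence via the Nehari manifold and the strict gap $d_{i,\bbr^4}<\frac{1}{4\mu_i}\mathcal{S}^2$, exclusion of the half-space profile by the Liouville argument of Proposition~\ref{prop0003}, strong convergence via a Brez\'is--Lieb splitting, and the uniform $L^\infty$ bound by Moser iteration only \emph{after} strong $\mathcal{H}_{i,\bbr^4}$ convergence. The only minor point is that locating $q_i^\ve$ within bounded distance of $y_\ve$ should be done after (or together with) the Moser iteration and the decay estimate, using the pointwise lower bound $\lambda_i u(q_i^\ve)\leq\mu_i u(q_i^\ve)^3+\alpha_i u(q_i^\ve)^{p-1}$ at the maximum point, exactly as in Case~1 of the proof of Proposition~\ref{prop0004}.
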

\begin{proof}
By the results in \cite{BZZ13}, we have $C'\leq d_{i,\ve,\Omega}\leq\frac{\ve^4}{4\mu_i}\mathcal{S}^2-\ve^4C$ for both $i=1,2$.  For the remaining results, we believe that they exist but we can not find the references around, thus, we will sketch the proofs of them here.  We only give the proof for $(\mathcal{P}_{1,\ve})$ since that of $(\mathcal{P}_{2,\ve})$ is similar.  By the result in \cite{BZZ13} once more, we can see that $d_{1,\ve,\Omega}<\frac{\ve^4}{4\mu_1}\mathcal{S}^2-\ve^4C$ for $\ve>0$ small enough.  Since $\mathcal{M}_{i,\ve,\Omega}$ is a natural constraint due to $p>2$, by applying the concentration-compactness principle in a standard way, we can show that $(\mathcal{P}_{i,\ve})$ has a ground state solution $\widetilde{U}_{i,\ve}$ satisfying $\widetilde{U}_{i,\ve}\in\mathcal{M}_{i,\ve,\Omega}$ and $\mathcal{E}_{i,\ve,\Omega}(\widetilde{U}_{i,\ve})=d_{i,\ve,\Omega}$.  Now, let us consider the functions $\widetilde{U}_{i,\ve}(\ve y+q_i^\ve)$.  By a similar argument as used for Proposition~\ref{prop0003}, we can show that the only solution of the following equation is $u\equiv0$, where the equation is
$$
\left\{\aligned&-\Delta u+\lambda_1u=\mu_1u^3+\alpha_1u^{p-1}\quad&\text{in }\bbr^4_+,\\
&u\geq0\quad\text{in }\bbr^4_+,\quad&u=0\quad\text{on }\partial\bbr^4_+.\endaligned\right.
$$
Thus, if $\Omega^*_{i,\ve}\to\bbr^4_+$ as $\ve\to0^+$, then by a similar argument as used for Lemma~\ref{lem0004}, we can obtain a contradiction due to $d_{1,\ve,\Omega}<\frac{\ve^4}{4\mu_1}\mathcal{S}^2-\ve^4C$ for $\ve>0$ small enough.  Hence, we must have $\Omega^*_{i,\ve}\to\bbr^4$ as $\ve\to0^+$.  Now, by the result in \cite{BZZ13} and a similar argument as used in the Case.~1 of the proof to Proposition~\ref{prop0004}, we can show that $\widetilde{U}_{i,\ve}(\ve y+q_i^\ve)\to v_i^0$ strongly in $\mathcal{H}_{i,\bbr^4}$ as $\ve\to0^+$, where $v_i^0$ is a ground state solution of $(\mathcal{P}_{i})$.  The uniformly boundedness of $\{\widetilde{U}_{i,\ve}\}$ in $L^\infty(\Omega)$ can be obtained by standard elliptic estimates (cf. \cite{BZZ13,CZ121}).
\end{proof}

Let
\begin{eqnarray}\label{eqnew8001}
\mathcal{I}_{\ve}(\overrightarrow{\mathbf{u}})=\sum_{i=1}^2(\frac{\ve^2}{2}\mathcal{B}_{\nabla u_i,2,\bbr^4}^{2}-\frac{\mu_i}{4}\mathcal{B}_{u_i,4,\bbr^4}^{4})-\frac{\beta}{2}\mathcal{B}_{|u_1|^2|u_2|^2,1,\bbr^4}.
\end{eqnarray}
Then $\mathcal{I}_{\ve}$ is of $C^2$ in $\mathcal{D}=D^{1,2}(\bbr^4)\times D^{1,2}(\bbr^4)$.  Set $A_\ve=\inf_{\mathcal{V}_{\ve}}\mathcal{I}_{\ve}(\overrightarrow{\mathbf{u}})$, where
\begin{eqnarray}\label{eqn0034}
\mathcal{V}_{\ve}=\{\overrightarrow{\mathbf{u}}\in\widetilde{\mathcal{D}}\mid \mathcal{I}'_{\ve}(\overrightarrow{\mathbf{u}})\overrightarrow{\mathbf{u}}_1=\mathcal{I}'_{\ve}(\overrightarrow{\mathbf{u}})\overrightarrow{\mathbf{u}}_2=0\}
\end{eqnarray}
with $\widetilde{\mathcal{D}}=(D^{1,2}(\bbr^4)\backslash\{0\})\times(D^{1,2}(\bbr^4)\backslash\{0\})$.
\begin{proposition}\label{propAnew0001}
There holds $A_\ve=\ve^4A_1$.  Moreover, $A_1=\frac{1}{4\mu_1}\mathcal{S}^2+\frac{1}{4\mu_2}\mathcal{S}^2$ for $\beta<0$ and $A_1=\frac{k_1+k_2}{4}\mathcal{S}^2$ for $0<\beta<\min\{\mu_1,\mu_2\}$ or $\beta>\max\{\mu_1,\mu_2\}$ with $k_1,k_2$ satisfying
\begin{equation}\label{eqnew0003}
\left\{\aligned\mu_1k_1+\beta k_2&=1,\\
\mu_2k_2+\beta k_1&=1.\endaligned\right.
\end{equation}
\end{proposition}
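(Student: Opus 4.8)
The plan is to reduce everything to the scale-invariant problem at $\ve=1$ and then carry out two explicit computations on $D^{1,2}(\bbr^4)\times D^{1,2}(\bbr^4)$.

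\emph{Step 1: $A_\ve=\ve^4A_1$.} For $\overrightarrow{\mathbf{u}}\in D^{1,2}(\bbr^4)\times D^{1,2}(\bbr^4)$ I set $\overrightarrow{\mathbf{u}}_\ve(x)=\overrightarrow{\mathbf{u}}(\ve^{-1}x)$. Since $N=4$, the change of variables gives $\mathcal{B}_{\nabla (u_i)_\ve,2,\bbr^4}^2=\ve^2\mathcal{B}_{\nabla u_i,2,\bbr^4}^2$ while $\mathcal{B}_{(u_i)_\ve,4,\bbr^4}^4$ and $\mathcal{B}_{|(u_1)_\ve|^2|(u_2)_\ve|^2,1,\bbr^4}$ pick up a factor $\ve^4$; hence $\mathcal{I}_\ve(\overrightarrow{\mathbf{u}}_\ve)=\ve^4\mathcal{I}_1(\overrightarrow{\mathbf{u}})$, and testing against $\overrightarrow{\mathbf{v}}_\ve$ shows $\mathcal{I}_\ve'(\overrightarrow{\mathbf{u}}_\ve)\overrightarrow{\mathbf{v}}_\ve=\ve^4\mathcal{I}_1'(\overrightarrow{\mathbf{u}})\overrightarrow{\mathbf{v}}$. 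Thus $\overrightarrow{\mathbf{u}}\mapsto\overrightarrow{\mathbf{u}}_\ve$ is a bijection from $\mathcal{V}_1$ onto $\mathcal{V}_\ve$ that rescales $\mathcal{I}$ by $\ve^4$, so $A_\ve=\ve^4A_1$. Subtracting $\tfrac14\mathcal{I}_1'(\overrightarrow{\mathbf{u}})\overrightarrow{\mathbf{u}}$ from $\mathcal{I}_1(\overrightarrow{\mathbf{u}})$ kills the quartic and coupling terms, so $\mathcal{I}_1(\overrightarrow{\mathbf{u}})=\tfrac14\sum_{i=1}^2\mathcal{B}_{\nabla u_i,2,\bbr^4}^2$ on $\mathcal{V}_1$; therefore $A_1=\tfrac14\inf_{\mathcal{V}_1}(\mathcal{B}_{\nabla u_1,2,\bbr^4}^2+\mathcal{B}_{\nabla u_2,2,\bbr^4}^2)$.

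\emph{Step 2: the case $\beta<0$.} On $\mathcal{V}_1$ the Nehari identity $\mathcal{B}_{\nabla u_i,2,\bbr^4}^2=\mu_i\mathcal{B}_{u_i,4,\bbr^4}^4+\beta\mathcal{B}_{|u_1|^2|u_2|^2,1,\bbr^4}\le\mu_i\mathcal{B}_{u_i,4,\bbr^4}^4$ (as $\beta<0$), combined with $\mathcal{S}\mathcal{B}_{u_i,4,\bbr^4}^2\le\mathcal{B}_{\nabla u_i,2,\bbr^4}^2$ and $u_i\not\equiv0$, forces $\mathcal{B}_{\nabla u_i,2,\bbr^4}^2\ge\mathcal{S}^2/\mu_i$, hence $A_1\ge\tfrac14(\mu_1^{-1}+\mu_2^{-1})\mathcal{S}^2$. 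For the reverse inequality I would use the Aubin--Talenti instantons $V_\sigma$ (which lie in $D^{1,2}(\bbr^4)$ with $\mathcal{B}_{\nabla V_\sigma,2,\bbr^4}^2=\mathcal{B}_{V_\sigma,4,\bbr^4}^4=\mathcal{S}^2$), fix $\sigma$, and take $\overrightarrow{\mathbf{U}}_R=(V_\sigma(\cdot-Re_1),V_\sigma)$; since $\mathcal{B}_{|V_\sigma(\cdot-Re_1)|^2|V_\sigma|^2,1,\bbr^4}\to0$ as $R\to+\infty$, the $2\times2$ Nehari system for the multipliers decouples in the limit, giving $\overrightarrow{\mathbf{t}}_R\to(\mu_1^{-1/2},\mu_2^{-1/2})$ with $\overrightarrow{\mathbf{t}}_R\circ\overrightarrow{\mathbf{U}}_R\in\mathcal{V}_1$ and $\mathcal{I}_1(\overrightarrow{\mathbf{t}}_R\circ\overrightarrow{\mathbf{U}}_R)=\tfrac14(|t_{1,R}|^2+|t_{2,R}|^2)\mathcal{S}^2\to\tfrac14(\mu_1^{-1}+\mu_2^{-1})\mathcal{S}^2$. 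Hence $A_1=\tfrac14(\mu_1^{-1}+\mu_2^{-1})\mathcal{S}^2$.

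\emph{Step 3: the cases $0<\beta<\min\{\mu_1,\mu_2\}$ and $\beta>\max\{\mu_1,\mu_2\}$.} Cramer's rule solves \eqref{eqnew0003}: $k_1=(\mu_2-\beta)/(\mu_1\mu_2-\beta^2)$, $k_2=(\mu_1-\beta)/(\mu_1\mu_2-\beta^2)$; in both ranges the numerators and the denominator share a common sign, so $k_1,k_2>0$, with $k_1+k_2=(\mu_1+\mu_2-2\beta)/(\mu_1\mu_2-\beta^2)$. For the upper bound, plugging $(\sqrt{k_1}\,V_\sigma,\sqrt{k_2}\,V_\sigma)$ into the two Nehari identities and dividing by $k_i\mathcal{S}^2$ reduces them exactly to \eqref{eqnew0003}, so this pair lies in $\mathcal{V}_1$ and $A_1\le\mathcal{I}_1(\sqrt{k_1}\,V_\sigma,\sqrt{k_2}\,V_\sigma)=\tfrac14(k_1+k_2)\mathcal{S}^2$. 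For the lower bound I would follow the computation behind the compactness threshold in \cite{CZ121}: for a direction $\overrightarrow{\mathbf{w}}$ with $\mathcal{B}_{\nabla w_i,2,\bbr^4}^2=1$, writing $p_i=\mathcal{B}_{w_i,4,\bbr^4}^4\in(0,\mathcal{S}^{-2}]$ and $q=\mathcal{B}_{|w_1|^2|w_2|^2,1,\bbr^4}\in[0,\sqrt{p_1p_2}\,]$, the scaling onto $\mathcal{V}_1$ exists and is unique (here the sign of $\mu_1\mu_2-\beta^2$ enters), and on $\mathcal{V}_1$ one gets $\mathcal{I}_1=\tfrac14\cdot\frac{\mu_1p_1+\mu_2p_2-2\beta q}{\mu_1\mu_2p_1p_2-\beta^2q^2}$; a monotonicity analysis of the right-hand side over the admissible region shows its infimum is attained at $p_1=p_2=\mathcal{S}^{-2}$, $q=\mathcal{S}^{-2}$, i.e.\ for two proportional Aubin--Talenti bubbles, which forces $A_1\ge\tfrac14(k_1+k_2)\mathcal{S}^2$. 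Combined with Step 1 this completes the proof. The main obstacle is precisely this last point: verifying that the fibering reduction is valid in the two $\beta$-ranges and performing the two-variable optimization over $(p_1,p_2,q)$; this is exactly the threshold computation of \cite{CZ121}, which I would invoke rather than reproduce.
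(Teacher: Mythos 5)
Your proposal is correct, and it is worth noting that the paper itself offers no argument here at all: its entire ``proof'' of Proposition 6.2 is the citation ``See the results in \cite{CZ121}.'' Your write-up is therefore strictly more informative. Step 1 (the scaling identity $\mathcal{I}_\ve(\overrightarrow{\mathbf{u}}(\ve^{-1}\cdot))=\ve^4\mathcal{I}_1(\overrightarrow{\mathbf{u}})$ and the reduction $\mathcal{I}_1=\tfrac14\sum_i\mathcal{B}_{\nabla u_i,2,\bbr^4}^2$ on $\mathcal{V}_1$) and Step 2 (the two-sided estimate for $\beta<0$ via the Nehari--Sobolev chain $\mathcal{S}\mathcal{B}_{u_i,4,\bbr^4}^2\le\mathcal{B}_{\nabla u_i,2,\bbr^4}^2\le\mu_i\mathcal{B}_{u_i,4,\bbr^4}^4$ and the separated-bubble test pair) are complete and self-contained. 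In Step 3 the upper bound via $(\sqrt{k_1}V_\sigma,\sqrt{k_2}V_\sigma)$ is an exact computation, and you correctly isolate the only nontrivial remaining point: the lower bound, i.e.\ the minimization of $\tfrac14(\mu_1p_1+\mu_2p_2-2\beta q)/(\mu_1\mu_2p_1p_2-\beta^2q^2)$ over the admissible cone, together with the solvability (with positive $t_1^2,t_2^2$) of the $2\times2$ fibering system in the regime $\beta>\max\{\mu_1,\mu_2\}$, where $\mu_1\mu_2-\beta^2<0$ and Cramer's rule requires the sign analysis of both numerators and the determinant. Deferring exactly that computation to the threshold analysis of \cite{CZ121} is the same dependency the paper has, so nothing is lost; if you wanted a fully self-contained appendix you would need to reproduce that optimization, but as a proof of the stated proposition your argument stands.
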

\begin{proof}
See the results in \cite{CZ121}.
\end{proof}

\end{document}